\begin{document}
\newenvironment{eq}{\begin{equation}}{\end{equation}}
\newenvironment{proof}{{\bf Proof}:}{\vskip 5mm }
\newenvironment{rem}{{\bf Remark}:}{\vskip 5mm }
\newenvironment{remarks}{{\bf Remarks}:\begin{enumerate}}{\end{enumerate}}
\newenvironment{examples}{{\bf Examples}:\begin{enumerate}}{\end{enumerate}}  
\newtheorem{proposition}{Proposition}[subsection]
\newtheorem{piece}[proposition]{}
\newtheorem{lemma}[proposition]{Lemma}
\newtheorem{definition}[proposition]{Definition}
\newtheorem{theorem}[proposition]{Theorem}
\newtheorem{cor}[proposition]{Corollary}
\newtheorem{conjecture}{Conjecture}
\newtheorem{pretheorem}[proposition]{Pretheorem}
\newtheorem{hypothesis}[proposition]{Hypothesis}
\newtheorem{example}[proposition]{Example}
\newtheorem{remark}[proposition]{Remark}
\newtheorem{ex}[proposition]{Exercise}
\newtheorem{cond}[proposition]{Conditions}
\newtheorem{cons}[proposition]{Construction}
\newcommand{\llabel}[1]{\label{#1}}
\newcommand{\comment}[1]{}
\newcommand{\sr}{\rightarrow}
\newcommand{\dw}{\downarrow}
\newcommand{\bdl}{\bar{\Delta}}
\newcommand{\zz}{{\bf Z\rm}}
\newcommand{\zq}{{\bf Z}_{qfh}}
\newcommand{\nn}{{\bf N\rm}}
\newcommand{\qq}{{\bf Q\rm}}
\newcommand{\nq}{{\bf N}_{qfh}}
\newcommand{\oo}{\otimes}
\newcommand{\uu}{\underline}
\newcommand{\ih}{\uu{Hom}}
\newcommand{\af}{{\bf A}^1}
\newcommand{\dsr}{\stackrel{\sr}{\scriptstyle\sr}}
\newcommand{\PP}{$P_{\infty}$}
\newcommand{\DD}{D}
\newcommand{\tp}{\tilde{D}}
\newcommand{\HH}{$H_{\infty}$}
\newcommand{\ii}{\stackrel{\scriptstyle\sim}{\sr}}
\newcommand{\BB}{_{\bullet}}

\begin{center}
{\Large\bf Lectures on motivic cohomology 2000/2001}\\ 
\vskip 2mm
{\Large\bf (written by
Pierre Deligne)}\\
\vskip 4mm {\large\bf by Vladimir Voevodsky}\footnote{Supported by the
NSF grants DMS-97-29992 and DMS-9901219 and The Ambrose Monell
Foundation}$^,$\footnote{School of Mathematics, Institute for Advanced
Study, Princeton NJ, USA. e-mail: vladimir@ias.edu}\\
\end{center}
\vskip 4mm
\tableofcontents

\numberwithin{equation}{proposition}

%
\section{Introduction}

The lectures which provided the source for these notes covered several
different topics which are related to each other but which do not in
any reasonable sense form a coherent whole. As a result, this text is
a collection of four parts which refer to each other but otherwise are
independent.

In the first part we introduce the motivic homotopy category and
connect it with the motivic cohomology theory discussed in
\cite{CC}. The exposition is a little unusual because we wanted to
avoid any references to model structures and still prove the main
theorem \ref{1.6}. We were able to do it modulo \ref{6.3} where we had
to refer to the next part.

The second part is about we the motivic homotopy category of
$G$-schemes where $G$ is a finite flat group scheme with respect to an
equivariant analog of the Nisnevich topology. Our main result is a
description of the class of $\af$-equivalences (formerly called
$\af$-weak equivalences) given in \ref{postponed} (also in
\ref{conv}). For the trivial group $G$ we get a new description of the
$\af$-equivalences in the non equivariant setting. Most of the
material of this part can also be found in \cite{HH0} and \cite{HH2}.

In the third part we define a class of sheaves on $G$-schemes which
we call solid sheaves. It contains all representable sheaves and
quotients of representable sheaves by subsheaves corresponding to open
subschemes. In particular the Thom spaces of vector bundles are solid
sheaves. The key property of solid sheaves can be expressed by saying
that any right exact functor which takes open embeddings to
monomorphisms is left exact on solid sheaves. A more precise statement
is \ref{15.1}. 

In the fourth part we study two functors. One is the extension to
pointed sheaves of the functor from $G$-schemes to schemes which takes
$X$ to $X/G$. The other one is extension to pointed sheaves of the
functor which takes $X$ to $X^W$ where $W$ is a finite flat
$G$-scheme. We show that both functors take solid sheaves to
solid sheaves and preserve local and $\af$-equivalences between
termwise (ind-)solid sheaves.

The material of all the parts of these notes but the first one was
originally developed with one particular goal in mind - to extend
non-additive functors, such as the symmetric product, from schemes to
the motivic homotopy category. More precisely, we were interested in
functors given by
$$T:X\mapsto (X^W\times E)/G$$
where $G$ is a finite flat group scheme, $W$ is a finite flat
$G$-scheme and $E$ any $G$-scheme of finite type.  The equivariant
motivic homotopy category was introduced to represent $T$ as a
composition
$$X\mapsto X^W\mapsto X^W\times E\mapsto (X^W\times E)/G$$
and solid sheaves as a natural class of sheaves on which the derived
functor ${\bf L}T$ coincides with $T$. 

In the present form these notes are the result of an interactive
process which involved all listeners of the lectures.  A very special
role was played by Pierre Deligne. The text as it is now was
completely written by him. He also cleared up a lot of messy parts and
simplified the arguments in several important places.

\comment{There is no exact correspondence between sections in the notes the
lectures. The actual contents of individual lectures is given by the
following list:
\begin{description}
\item[Lecture 1] Sections \ref{2}, \ref{3}, Section \ref{4} -
Proposition \ref{1.7} with the proof, Section \ref{5} up to
(\ref{chain})
\item[Lecture 2] Section \ref{4} - Proposition \ref{1.8} with the
proof, Section \ref{5} up to the formulation of Proposition
\ref{3p1}, Section \ref{5.5} up to Proposition \ref{3p2}
\item[Lecture 3] Section \ref{4} - from Proposition \ref{3.3} to
Corollary \ref{c3.6}, Section \ref{5} - proof of Proposition
\ref{3p1}, Section \ref{5.5} - Proposition \ref{3p2}, Section
\ref{6}.
\item[Lecture 4] Section \ref{7}
\item[Lecture 5] Section \ref{8}
\item[Lecture 6] Section \ref{10}
\item[Lecture 7] Section \ref{11} 
\item[Lecture 8] ----
\item[Lecture 9] Section \ref{12}
\end{description}}

\section{Motivic cohomology and motivic homotopy category}

We will recall first some of last year results (see \cite{CC}).

\subsection{Last year}\label{2}
\paragraph{1.1} We work over a field $k$ which sometimes will have to
be assumed to be perfect. The schemes over $k$ we consider will
usually be assumes separated and smooth of finite type over $k$. We
note $Sm/k$ their category. Three Grothendieck topologies on $Sm/k$
will be useful: Zariski, Nisnevich and etale. For each of these
topologies a sheaf on $(Sm/k)$ amounts to the data for $X$ smooth over
$k$ of a sheaf $F_{X}$ on the small site $X_{Zar}$ (resp. $X_{Nis}$,
$X_{et}$) of the open subsets $U$ of $X$ (resp. of $U\sr X$ etale),
with $F_X$ functorial in $X$: a map $f:X\sr Y$ induces $f^*:f^*F_Y\sr
F_X$.
\paragraph{1.2} The definition of the motivic cohomology groups of $X$
smooth over $k$ has the following form.

\noindent
{\bf a.} One defines for each $q\in\zz$ a complex of presheaves of
abelian groups $\zz(q)$ on $Sm/k$. It is in fact a complex of sheaves
for the etale topology, hence a fortiori for the Nisnevich and Zariski
topology. For any abelian group $A$ the same applies to $A(q):=A\oo
\zz(q)$. 

\noindent
{\bf b.} The motivic cohomology groups of $X$ with coefficients in $A$
are the hypercohomology groups of the $A(q)$, in the Nisnevich
topology:
$$H^{p,q}(X,A):={\bf H}^p(X_{Nis},A(q))$$
For $A=\zz$ we will write simply $H^{p,q}(X)$.

Motivic cohomology has the following properties:
\begin{enumerate}
\item the complex $\zz(q)$ is zero for $q<0$. For any $q$ it lives in
cohomological degree $\le q$. As a complex of Nisnevich sheaves it is
quasi-isomorphic to $\zz$ for $q=0$ and to ${\bf G}_m[-1]$ for $q=1$
\item $H^{p,p}(Spec(k))=K_p^M(k)$ for any $p\ge 0$
\item for any $X$ in $Sm/k$ one has
$$H^{p,q}(X)=CH^q(X,2q-p)$$
where $CH^q(X,2q-p)$ is the $(2q-p)$-th higher Chow group of cycles of
codimension $q$
\item in the etale topology, for $n$ prime to the characteristic of
$k$, the complex $\zz/n(q)$ is quasi-isomorphic to $\mu_{n}^{\oo q}$,
giving for the etale analog of $H^{p,q}$ the formula
$$H^{p,q}_{et}(X,\zz/n):={\bf
H}^{p}(X_{et},\zz/n(q))=H^p(X_{et},\mu_{n}^{\oo q})$$
\end{enumerate}
\paragraph{1.3} The category $SmCor(k)$ is the category with objects
separated schemes smooth of finite type over $k$, for which a morphism
$Z:X\sr Y$ is a cycle $Z=\sum n_i Z_i$ on $X\times Y$ each of whose
irreducible components $Z_i$ is finite over $X$ and projects onto a
connected component of $X$. A morphism $Z$ can be thought of as a
finitely valued map from $X$ to $Y$. For $x\in X$, with residue field
$k(x)$, it defines a zero cycle $Z(x)$ on $Y_{k(x)}$, and the
assumption made on $Z$ implies that the degree of this 0-cycle is
locally constant on $X$.

A morphism of schemes $f:X\sr Y$ defines a morphism in $SmCor(k)$: the
graph of $f$. This graph construction defines a faithful functor from
$Sm/k$ to the additive category $SmCor(k)$.

A {\em presheaf with transfers} is a contravariant additive functor
from the category $SmCor(k)$ to the category of abelian groups. The
embedding of $Sm/k$ in $SmCor(k)$ allows us to view a presheaf with
transfers as a presheaf on $Sm/k$ endowed with an extra structure. A
sheaf with transfers (for a given topology on $Sm/k$, usually the
Nisnevich topology) is a presheaf with transfers which, as a presheaf
on $Sm/k$, is a sheaf. The Nisnevich and the etale topologies have the
virtue that if $F$ is a presheaf with transfers, the associated sheaf
$a(F)$ carries a structure of a sheaf with transfers. This structure
is uniquely determined by $F\sr a(F)$ being a morphism of presheaves
with transfers. For any sheaf with transfers $G$, one has
$$Hom(a(F),G)\ii Hom(F,G)$$
(Hom of presheaves with transfers). All of this fails for the Zariski
topology.

The complexes $\zz(q)$ (or $A(q)$) start life as complexes of sheaves
with transfers. 

\paragraph{1.4} A presheaf $F$ on $Sm/k$ is called {\em homotopy invariant} if
$F(X)=F(X\times\af)$. As the point $0$ of $\af$ defines a section of
the projection of $X\times\af$ to $X$, for any presheaf of abelian
groups $F$, $F(X)$ is naturally a direct factor of $F(X\times\af)$; it
follows that the condition ``homotopy invariant'' is stable by kernels,
cokernels and extensions of presheaves. The following construction is
a derived version of the left adjoint to the inclusion
$$(homotopy\,\,\,\,\,invariant\,\,\,\,\, presheaves)\subset
(all\,\,\,\,\, presheaves)$$
\begin{description}
\item[a.]
For $S$ a finite set, let $A(S)$ be the affine space freely spanned
(in the sense of barycentric calculus) by $S$. Over $\bf C$, or $\bf
R$, $A(S)$ contains the standard topological simplex spanned by
$S$. The schemes $\Delta^m:=A(\{0,\dots,n\})$ form a cosimplicial
scheme. 
\item[b.] For $F$ a presheaf, $C_{\bullet}(F)$ (the ``singular complex of
$F$'') is the simplicial presheaf $C_n(F):X\mapsto F(X\times\Delta^{n})$.
\end{description}
Arguments imitated from topology show that for $F$ a presheaf of
abelian groups, the cohomology presheaves of the complex $C_{*}(F)$,
obtained from $C_{\bullet}(F)$ by taking alternating sum of the face
maps, are homotopy invariant. If $F$ has transfers so do the $C_n(F)$
and hence the $H_{n}C_*(F)$. A basic theorem proved last year is:
\begin{theorem}
\llabel{basic}
Let $F$ be a homotopy invariant presheaf with transfers over a perfect
field with the associated Nisnevich sheaf $a_{Nis}(F)$. Then the
presheaves with transfers 
$$X\mapsto H^i(X_{Nis},a_{Nis}(F))$$
are homotopy invariant as well.
\end{theorem}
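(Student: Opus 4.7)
The plan is to split the theorem into two implications. The first is that $a_{Nis}(F)$ itself is homotopy invariant as a sheaf, and the second is that the Nisnevich cohomology presheaves of any homotopy invariant Nisnevich sheaf with transfers are homotopy invariant.

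For the first implication, I would use that $a_{Nis}$ preserves transfers (see 1.3), so $a_{Nis}(F)$ is a sheaf with transfers. Homotopy invariance of $F$ is equivalent to saying that the projection $p : X \times \af \to X$ and the zero-section $i_0 : X \to X \times \af$ induce mutually inverse isomorphisms on $F$-sections, a relation between morphisms in $SmCor(k)$ that sheafifies to the analogous statement for $a_{Nis}(F)$. Having established this, I can replace $F$ by $G := a_{Nis}(F)$ for the remainder.

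The heart of the matter is the second implication. I would attack it by constructing, functorially in $X \in Sm/k$, a Cousin/Gersten-type resolution of $G|_{X_{Nis}}$ whose degree-$p$ term is a direct sum of skyscraper sheaves at codimension-$p$ points, with stalks built from $p$-fold contractions $G_{-p}$ applied to residue fields. Granted such a resolution, $H^{i}_{Nis}(X, G)$ is computed by an explicit complex in which the points appearing for $X \times \af$ split naturally into those pulled back from $X$ and those supported on horizontal divisors dominating $X$. Homotopy invariance of $G$ and of the contractions $G_{-p}$, together with the one-variable computation $G(\mathrm{Spec}\, k(x)[t]) = G(\mathrm{Spec}\, k(x))$, would then force the complexes for $X$ and $X \times \af$ to have the same cohomology.

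The main obstacle is the construction of the Gersten resolution. This comes down to proving that for a homotopy invariant sheaf with transfers $G$ over a perfect field, sections of $G$ on the Henselization of a smooth variety at a point are controlled by residues at generic points of a codimension stratification, and that a section on a dense open whose residues vanish extends. Perfectness of $k$ enters essentially here: it allows one to assume the relevant residue fields are separable, enabling Noether-normalization / standard-triple geometric arguments that produce the étale neighborhoods on which the transfer structure of $G$ can be used to push sections across closed subschemes of positive codimension. Modulo this geometric input, the homotopy invariance of $H^{i}_{Nis}(-, G)$ is essentially a formal consequence.
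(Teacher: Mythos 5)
First, a point of comparison: the paper does not prove Theorem \ref{basic} at all --- it is recalled as ``a basic theorem proved last year'' and cited from \cite{CC} --- so your proposal can only be measured against the standard proof in that reference, whose overall shape (homotopy invariance of the sheaf, then of its cohomology presheaves) your plan does reproduce. The first genuine gap is that the first implication does not ``sheafify'' as you claim. The relation $p\circ i_0=id_X$ is indeed an identity in $SmCor(k)$ and gives $i_0^*p^*=id$ for any presheaf, but the substantive half of homotopy invariance is $p^*i_0^*=id$ on $F(X\times\af)$, i.e.\ that $i_0\circ p$ \emph{acts} as the identity; this is a property of $F$, not an identity of correspondences, and sheafification does not preserve it for free: a Nisnevich cover of $X\times\af$ is not pulled back from a cover of $X$, so a section of $a_{Nis}(F)$ over $X\times\af$, given only locally by sections of $F$ over such a cover, cannot be compared with anything over $X$ using homotopy invariance of $F$ alone. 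Homotopy invariance of $a_{Nis}(F)$ is itself a substantial theorem; it is exactly where the injectivity results $F(S)\hookrightarrow F(\mathrm{Spec}\,k(S))$ for $S$ smooth semilocal, and hence the standard-triple/geometric-presentation machinery over a perfect field, are first needed.

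The second gap is that essentially all of the remaining content is deferred to your ``main obstacle.'' In the standard treatment the Gersten/Cousin resolution with terms $\bigoplus_{x\in X^{(p)}}(i_x)_*\,G_{-p}(k(x))$ is a \emph{consequence} of the homotopy invariance of the cohomology presheaves (together with the vanishing of higher Nisnevich cohomology of $G$ on henselian local schemes and the purity computation identifying local cohomology along a codimension-$p$ point with the $p$-fold contraction), not an input to it; the logical order in the reference is the reverse of yours. A Gersten-first route is possible in the style of Quillen and of Colliot-Th\'el\`ene--Hoobler--Kahn, by induction on dimension using a presentation lemma to prove effaceability directly, but your proposal neither carries out that effaceability argument nor addresses the evident risk of circularity in assuming exactness of the Cousin complex whose exactness encodes precisely the cohomological statements being proved. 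As written, the proposal correctly locates where perfectness must enter but supplies none of the arguments that constitute the proof.
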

The particular case of this theorem for $i=0$ claims the homotopy
invariance of the sheaf with transfers $a_{Nis}(F)$.

\vskip 5mm

Last year, the equivalence of a number of definitions of $\zz(q)$ was
proven. Equivalence means: a construction of an isomorphism in a
suitable derived category, implying an isomorphism for the
corresponding motivic cohomology groups. For our present purpose the
most convenient definition is as follows. 

Let $\zz_{tr}(X)$ be the sheaf with transfers represented by $X$ (on
the category $SmCor(k)$). We set 
$$
K_q=\left\{
\begin{array}{ll}
0 & \mbox{\rm for $q<0$}\\
\zz_{tr}({\bf
A}^q)/\zz_{tr}({\bf A}^{q}-\{0\})& \mbox{\rm for $q\ge 0$}
\end{array}
\right.
$$ 
and $\zz(q)=C_*(K_q)[-q]$.

\subsection{Motivic homotopy category}\label{3}
The motivic homotopy category $Ho_{\af,\bullet}(S)$ (pointed
$\af$-homotopy category of $S$), for $S$ a finite dimensional
noetherian scheme, will be the category deduced from a category of
simplicial sheaves by two successive localizations\footnote{In the
Appendix we have assembled the properties of ``localization'' to be
used in this talk and in the next}. 

One starts with the category $Sm/S$ of schemes smooth over $S$, with
the Nisnevich topology, and the category of pointed simplicial sheaves
on $Sm/S$. For any site $\cal S$ (for instance $(Sm/S)_{Nis}$), there
is a notion of {\em local  equivalence} of (pointed) simplicial
sheaves. It proceeds as follows.
\begin{description}
\item[a.] A sheaf $G$ defines a simplicial sheaf $G_*$ with all
          $G_n=G$ and all simplicial maps the identities. The functor
          $G\mapsto G_*$ has a left adjoint $F\mapsto \pi_0(F)$:
$$Hom(F_{*},G_{*})=Hom(\pi_0(F_*),G)$$
The sheaf $\pi_0(F_*)$ can be described as the equalizer of  $F_1\dsr
F_0$, as well as the sheaf associated to the presheaf
$$U\mapsto \pi_0(|F_{*}(U)|)$$
The same holds in the pointed context. We will often write simply $G$
for $G_*$.
\item[b.] If $F_*$ is a simplicial sheaf, and $u$ a section of $F_0$
over $U$, one also disposes of sheaves $\pi_i(F_*,u)$ over $U$: the
sheaves associated to the presheaves
$$V/U\mapsto \pi(|F_{*}(V)|,u)$$
\item[c.] A morphism $F_*\sr G_*$ is a local  equivalence, if it induces
an isomorphism on $\pi_0$ as well as, for any local section $u$ of
$F_0$, an isomorphism on all $\pi_i$. This applies also to pointed
simplicial sheaves: one just forgets the marked point.
\end{description}
One defines $Ho_{\bullet}(Sm/S)$ as the category derived from the
category of pointed simplicial sheaves on $(Sm/S)_{Nis}$ by formally inverting
local  equivalences. Until made more concrete, this definition
could lead to set-theoretic difficulties, which we leave the reader to
solve in its preferred way. 

For $G$ a pointed sheaf on $Sm/S$,  \ref{1.7} applies to
$G_*$ and to the localization by local equivalences: one has
\begin{equation}
\llabel{1.3}
Hom_{Ho_{\bullet}}(F_*,G_*)=Hom(F_*,G_*)=Hom(\pi_0(F_*),G)
\end{equation}
\begin{definition}
\llabel{d1.1}
An object $X$ of $Ho_{\bullet}(Sm/S)$ is called $\af$-local if for any
simplicial sheaf $Y$, one has
$$Hom_{Ho_{\bullet}}(Y,X)\ii Hom_{Ho_{\bullet}}(Y\times\af/*\times
\af,X)$$
At the right hand side, $/*\times \af$ means that in the product,
$*\times\af$ is contracted to a point, the new base point.
\end{definition} 
\begin{proposition}
\llabel{1.2}
For $G$ a pointed sheaf on $Sm/S$, the simplicial sheaf $G_{*}$ is
$\af$-local if and only if $G$ is homotopy invariant.
\end{proposition}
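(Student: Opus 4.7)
The plan is to reduce both sides of the $\af$-locality condition to $\mathrm{Hom}$ sets of ordinary pointed sheaves, via (\ref{1.3}), and then match that up directly with homotopy invariance tested on representables.

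First I would apply (\ref{1.3}) to rewrite
$$\mathrm{Hom}_{Ho_\bullet}(Y,G_*) = \mathrm{Hom}_{Sh_*}(\pi_0(Y),G),\qquad
\mathrm{Hom}_{Ho_\bullet}(Y\times\af/*\times\af,G_*) = \mathrm{Hom}_{Sh_*}(\pi_0(Y\times\af/*\times\af),G).$$
Since $\pi_0$ is a left adjoint it preserves colimits, and since $\af$ (regarded as a constant simplicial sheaf) is cartesian with a colimit-preserving $-\times\af$ (the Nisnevich site gives a topos, so products have right adjoints), I get
$$\pi_0(Y\times\af/*\times\af)\;=\;(\pi_0(Y)\times\af)/(*\times\af).$$
As $Y$ ranges over all pointed simplicial sheaves, $H:=\pi_0(Y)$ ranges over all pointed sheaves (any such $H$ is $\pi_0$ of its constant simplicial object). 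Hence $G_*$ is $\af$-local if and only if for every pointed sheaf $H$ the map
$$p^*\colon \mathrm{Hom}_{Sh_*}(H,G)\longrightarrow \mathrm{Hom}_{Sh_*}\bigl((H\times\af)/(*\times\af),\,G\bigr)$$
induced by the projection is a bijection.

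For the ``only if'' direction I would specialize $H=X_+$ for $X\in Sm/S$ (disjoint base point). Then $X_+\times\af=(X\times\af)\sqcup\af$ with $*\times\af$ equal to the second summand, so
$$(X_+\times\af)/(*\times\af)\;=\;(X\times\af)_+.$$
Unwinding $\mathrm{Hom}_{Sh_*}(Z_+,G)=G(Z)$ for $Z\in Sm/S$, the $\af$-local condition specializes to $G(X)\cong G(X\times\af)$, with the isomorphism induced by the projection: this is exactly the homotopy invariance of $G$.

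For the converse, assume $G$ is homotopy invariant and I need to promote this from representables to an arbitrary pointed sheaf $H$. Every sheaf on $(Sm/S)_{Nis}$ is a colimit $\mathrm{colim}_i X_i$ of representables; since $-\times\af$ preserves colimits I get $H\times\af=\mathrm{colim}_i(X_i\times\af)$ and hence
$$\mathrm{Hom}_{Sh}(H\times\af,G)=\lim_i G(X_i\times\af)=\lim_i G(X_i)=\mathrm{Hom}_{Sh}(H,G),$$
via $p^*$, so $p^*$ is bijective at the level of unpointed sheaves. The final step is to check it remains bijective after imposing the pointed condition: a map $f\colon H\times\af\to G$ satisfies $f|_{*\times\af}=*_G$ iff, when written as $f=g\circ p$ with $g\colon H\to G$, one has $g(*_H)=*_G$; so the correspondence restricts to the pointed Homs, which is what we need.

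The only mildly delicate step is the compatibility of $\pi_0$ with the pointed quotient $Y\times\af/*\times\af$; everything else is formal once one is careful about pointing. I expect no genuine obstacle beyond bookkeeping.
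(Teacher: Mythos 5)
Your proposal is correct and follows essentially the same route as the paper: both reduce via (\ref{1.3}) and the computation of $\pi_0$ to a statement about maps of pointed sheaves, obtain homotopy invariance from ${\bf A}^1$-locality by testing on $X_+$, and prove the converse by reducing an arbitrary pointed sheaf to representables (your colimit-of-representables argument is the paper's description of a morphism $Y\sr G$ as a functorial family $y\in Y(U)\mapsto f(y)\in G(U)$, written in different language). No gaps.
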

\begin{proof}
We have $\pi_0(Y\times \af)=\pi_0(Y)\times \af$, so that by
(\ref{1.3}) ``$\af$-local'' means that for any pointed sheaf $Y$, one
has
$$Hom(Y,G)=Hom(Y\times\af/*\times\af,G)$$
A morphism $Y\sr G$ can be viewed as the data, for each $y\in Y(U)$,
of $f(y)\in G(U)$, functorial in $U$ and marked point going to marked
point. A morphism $g:Y\times\af\sr G$ can similarly be described as
data for $y\in Y(U)$ of $g(y)\in G(U\times\af)$. Homotopy invariance
hence implies $\af$-locality. The converse is checked by taking for
$Y$ the disjoint sum of a representable sheaf and the base point.
\end{proof}
\begin{definition}
\llabel{d1.4}
{\bf (i)} A morphism $f:Y_1\sr Y_2$ in $Ho_{\bullet}(Sm/S)$ is an
$\af$-equivalence if for any $\af$-local $X$, one has in
$Ho_{\bullet}(Sm/S)$
$$Hom(Y_2,X)\ii  Hom(Y_1,X)$$ 
{\bf (ii)} The category $Ho_{\af,\bullet}(Sm/S)$ is deduced from
$Ho_{\bullet}(Sm/S)$ by formally inverting $\af$-equivalences.
\end{definition}
\begin{remark}\rm
\llabel{rm3.4} If a morphism in $Ho_{\bullet}(Sm/S)$ becomes an
isomorphism in $Ho_{\bullet,\af}(Sm/S)$ it is an
$\af$-equivalence. Indeed, if $X$ in $Ho_{\bullet}(Sm/S)$ is
$\af$-local, an application of \ref{1.7} shows that for any $Y$,
$$Hom_{Ho_{\bullet}}(Y,X)\sr Hom_{Ho_{\bullet,\af}}(Y,X)$$
is bijective. If $f:Y_1\sr Y_2$ in $Ho_{\bullet}(Sm/S)$ has an image
in $Ho_{\bullet,\af}(Sm/S)$ which is an isomorphism, it follows that
for any $\af$-local $X$, one has 
$$Hom_{Ho_{\bullet}}(Y_2,X)\ii Hom_{Ho_{\bullet}}(Y_1,X).$$
Such an $f$ is an $\af$-equivalence.
\end{remark}
\begin{example}\rm
\llabel{1.5}
Arguments similar to those given before show that if $G$ is a homotopy
invariant pointed sheaf, then for any simplicial pointed sheaf $F_*$,
one has
$$Hom_{Ho_{\af,\bullet}}(Sm/S)(F_*,G)=Hom(F_*,G)=Hom(\pi_0(F_*),G)$$
in particular, if $U$ is smooth over $S$ and if $U_+$ is the disjoint
union of $U$ and of a base point,
$$Hom_{Ho_{\af,\bullet}}(U_+,G)=G(U)$$
\end{example}

\subsection{Derived categories versus homotopy categories}\label{5}
For any topos $T$, which for us will be the category of sheaves on
some site $S$, the pointed homotopy category $Ho_{\bullet}(S)$ as well
as the derived category $D(S)$ are obtained by localization. For the
derived category, one starts with the category of complexes of abelian
sheaves. The subcategory of complexes living in homological degree
$\ge 0$ is naturally equivalent, by the Dold Puppe construction, to
the category of simplicial sheaves of abelain groups. The equivalence
is
$$N:(simplicial\,\, F_*)\mapsto complex(\bigcap_{i\ne 0}
ker(\partial_i), \partial_0)$$ 
We will write $K$ for the inverse equivalence. For $S$ a point, and
$\pi$ an abelian group, $|K(\pi[n])|$ is indeed the Eilenberg-Maclane
space $K(\pi,n)$. For a complex $C$ not assumed to live in homological
degree $\ge 0$, we define
$$K(C):= K(\tau_{\ge 0} C)$$
where $\tau_{\ge n}C$ is the subcomplex in $C$ of the form
$$\dots C_{n+2}\stackrel{d_{n+1}}{\sr} C_{n+1}\sr ker(d_{n})\sr 0$$
Note that $C\mapsto \tau_{\ge 0}C$ is right adjoint to the inclusion
functor 
$$
\left(
\mbox{\rm complexes in homological degree $\ge 0$}
\right)
\hookrightarrow
\left(
\mbox{\rm all complexes}
\right)
$$
so that $(N,K)$ form a pair of adjoint functors:
$$
\left(
\mbox{\rm simplicial abelian sheaves}
\right)
\rightleftarrows
\left(
\mbox{\rm complexes of sheaves}
\right)
$$
\begin{theorem}
\llabel{1.6}
Assume that $S=Spec(k)$ with $k$ perfect. Then, for $F$ a presheaf
with transfers, and $U_+$ as above, and $p\ge 0$
$$Hom_{Ho_{\af,\bullet}}(U_+,K(F[p]))={\bf H}^p(U_{Nis},C_*(F))$$
\end{theorem}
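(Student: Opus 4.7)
My plan is to factor the computation through the intermediate object $K(C_*(F)[p])$, treating $C_*$ as an $\af$-local replacement for $F$ at the level of complexes. Specifically, I would establish three ingredients: (i) the canonical map $K(F[p]) \to K(C_*(F)[p])$, induced by the inclusion $F = C_0(F) \hookrightarrow C_*(F)$ into degree zero of the normalized Moore complex, is an $\af$-equivalence in $Ho_\bullet(Sm/k)$; (ii) $K(C_*(F)[p])$ is itself $\af$-local; and (iii) for any complex of sheaves $C$, one has $Hom_{Ho_\bullet}(U_+, K(C[p])) \cong {\bf H}^p(U_{Nis}, C)$. Given these, Remark \ref{rm3.4} applied to the $\af$-local target $K(C_*(F)[p])$ yields
$$Hom_{Ho_{\af,\bullet}}(U_+, K(F[p])) \cong Hom_{Ho_{\af,\bullet}}(U_+, K(C_*(F)[p])) \cong Hom_{Ho_\bullet}(U_+, K(C_*(F)[p])) \cong {\bf H}^p(U_{Nis}, C_*(F)).$$

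The key ingredient (ii) is where the presheaf-with-transfers hypothesis enters, via Theorem \ref{basic}. The cohomology presheaves $H^j(C_*(F))$ are homotopy invariant by the classical simplicial argument recalled just before Theorem \ref{basic}, and they inherit transfers from $F$. Theorem \ref{basic} then guarantees that each presheaf $U \mapsto H^i(U_{Nis}, a_{Nis} H^j(C_*(F)))$ is homotopy invariant, and via the hypercohomology spectral sequence, so is $U \mapsto {\bf H}^p(U_{Nis}, C_*(F))$. Granting (iii), the presheaf $U \mapsto Hom_{Ho_\bullet}(U_+, K(C_*(F)[p]))$ is therefore homotopy invariant. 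A Postnikov-tower argument built on the cohomology sheaves of $C_*(F)$, together with the single-sheaf base case handled by Proposition \ref{1.2}, then promotes this homotopy invariance on representables to genuine $\af$-locality of $K(C_*(F)[p])$.

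For (i), the map of simplicial sheaves $F_\bullet \to C_\bullet(F)$ (with $F$ the constant simplicial sheaf and the $n$th component induced by pullback along the projection $U \times \Delta^n \to U$) becomes an $\af$-equivalence because each $\Delta^n$ is $\af$-contractible via the straight-line contraction $\Delta^n \times \af \to \Delta^n$, $(t,s) \mapsto s \cdot t$, to the vertex $0$. Applying the normalized Moore functor $N$, shifting by $p$, and then $K$ preserves this $\af$-equivalence, giving $K(F[p]) \to K(C_*(F)[p])$. Step (iii) is the standard comparison in simplicial sheaf homotopy theory, proven via a Godement (or other) fibrant replacement for the Nisnevich topology together with the Dold--Kan correspondence. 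The main obstacle is (ii): translating the concrete cohomological homotopy invariance provided by Theorem \ref{basic} into abstract $\af$-locality of the Eilenberg--MacLane sheaf $K(C_*(F)[p])$, which requires care in assembling the Postnikov pieces and knowing that $K$ of a shifted homotopy invariant sheaf remains $\af$-local.
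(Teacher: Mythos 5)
Your overall architecture coincides with the paper's: the chain (\ref{chain}) factors through $K(C_*(F)[p])$ using exactly your three ingredients, and your treatment of (iii) and of the homotopy invariance of ${\bf H}^*(U_{Nis},C_*(F))$ via Theorem \ref{basic} and the hypercohomology spectral sequence matches Sections \ref{5} and \ref{5.5}. However, the two steps you leave as sketches are precisely where the work lies, and your proposed mechanisms would not close them. For (ii), the problem is not homotopy invariance on representables (which you establish correctly) but the passage from ``$Hom_{Ho_{\bullet}}(U_+,-)$ is homotopy invariant for all representable $U_+$'' to ``$Hom_{Ho_{\bullet}}(Y,-)$ is unchanged by $-\wedge\af_+$ for \emph{all} simplicial sheaves $Y$''. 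A Postnikov tower on the target combined with Proposition \ref{1.2} cannot do this: Proposition \ref{1.2} only treats constant simplicial sheaves, where maps out of $Y$ are controlled by $\pi_0(Y)$; for $K(A[n])$ with $n>0$ the mapping space sees the full homotopy type of $Y$, $\af$-locality is strictly stronger than homotopy invariance of $A$ (it already encodes Theorem \ref{basic} for $H^i$, $i\le n$), and the same quantifier problem recurs at each Postnikov layer. The paper's Proposition \ref{3p1} resolves it by a d\'evissage on the \emph{source}: via the adjunction of Proposition \ref{2.2} the condition becomes $Hom_D(L\oo\zz(\af),C)\ii Hom_D(L,C)$ for every complex $L$ in cohomological degrees $\le 0$, and an arbitrary such $L$ is reduced to sums of $\zz(U)$'s by resolutions, truncations and the $lim^1$ sequence (steps (a)--(e) there). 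Some argument of this kind is unavoidable and is absent from your proposal.

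For (i), the assertion that ``applying $N$, shifting by $p$, and then $K$ preserves this $\af$-equivalence'' is exactly the content of Lemma \ref{6.2} and is not formal: the shift $[p]$ corresponds under Dold--Puppe to the $p$-fold delooping $B^p$, and delooping does not preserve $\af$-equivalences in general (if it did, $\af$-locality of a homotopy invariant sheaf $A$ would imply $\af$-locality of every $K(A[n])$, which fails without transfers). The paper circumvents this by applying Proposition \ref{6.3} directly to $B^pG_{\bullet}$, using that $\Delta C_{\bullet}$ commutes with $B^p$ and that $B^pL_{\bullet}$ is simplicially homotopy equivalent to $K((NL_{\bullet})[p])$. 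Moreover, even the unshifted statement that $F\sr\Delta C_{\bullet}(F)$ is an $\af$-equivalence does not follow from the $\af$-contractibility of the $\Delta^n$ alone: one needs Proposition \ref{13.6}, that a levelwise $\af$-equivalence of bisimplicial sheaves induces an $\af$-equivalence on diagonals, a substantial result proved via the Brown--Gersten model structure. So your decomposition is the right one, but both (i) and (ii) conceal the actual theorems (\ref{3p1}, \ref{6.3}, \ref{13.6}) that make the argument go through.
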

In this theorem $K(F[p])$ is the simplicial sheaf of abelian groups
whose normalized chain complex is $F$ in homological degree $p$.

To prove the theorem we establish the chain of equalities,
\begin{equation}\llabel{chain}
\begin{array}{cc}
{\bf
H}^p(U_{Nis},C_*(F))=Hom_{Ho_{\bullet}}(U_+,K(C_*(F)[p]))=\\\\
=Hom_{Ho_{\bullet,\af}}(U_+,
K(C_*(F)[p]))=Hom_{Ho_{\bullet,\af}}(U_+, K(F[p])),
\end{array}
\end{equation}
the first equality is proved right before  \ref{3p1}, the
second right after  \ref{3p2} and the last one follows
from  \ref{6.2}.

Let $forget$ be the forgetting functor from abelian sheaves to sheaves
of sets. Its left adjoint is $F\mapsto \zz[F]$: the sheaf associated
to the presheaf 
$$U\mapsto (\mbox{\rm abelian group freely generated by $F(U)$})$$
In the pointed context , the adjoint is
$$(F,*)\mapsto \tilde{\zz}(F):\zz(F)/\zz(*)$$
We have the same adjunction for (pointed) simplicial objects.
\begin{proposition}
\llabel{2.1}
On a site with enough points (and presumably always) , one has

\noindent
{\bf (i)} The functor $F_*\mapsto N\zz(F_*)$ from pointed simplicial
sheaves to complexes of abelian groups transforms local 
equivalences into quasi-isomorphisms

\noindent
{\bf (ii)} The right adjoint $C\mapsto forget(K(C))$ transforms
quasi-isomorphisms to local equivalences.
\end{proposition}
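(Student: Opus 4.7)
The plan is to reduce both parts to stalkwise statements, using the enough-points hypothesis, and then to invoke classical simplicial homotopy theory. First I would observe that, with enough points, a morphism of sheaves is an isomorphism iff it is so on every stalk; moreover the formation of $\pi_0(F_*)$ and of the sheaves $\pi_i(F_*,u)$ commutes with taking stalks, since stalks commute with the simplicial face and degeneracy maps, and $\pi_i$ of (pointed) simplicial sets commutes with filtered colimits (compactness of $|S^i|$). Consequently, $F_* \sr F'_*$ is a local equivalence iff the induced map of stalk simplicial sets is a weak equivalence at each point. In the same way, $C \sr C'$ is a quasi-isomorphism of complexes of sheaves iff it is a quasi-isomorphism on every stalk.

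For (i), both $F \mapsto \tilde{\zz}(F)$ and the normalization $N$ commute with filtered colimits, hence with stalks, so the stalk of $N\tilde{\zz}(F_*)$ at a point $p$ is $N\tilde{\zz}(F_{*,p})$. It therefore suffices to check that for a weak equivalence $X \sr Y$ of pointed simplicial sets, $N\tilde{\zz}(X) \sr N\tilde{\zz}(Y)$ is a quasi-isomorphism. This is classical: $H_*(N\tilde{\zz}(X))$ is the reduced singular homology of $|X|$, and a weak equivalence of spaces induces an isomorphism on reduced homology (Whitehead/Hurewicz).

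For (ii), the stalkwise reduction shows that it is enough to prove the following: for a quasi-isomorphism $C \sr C'$ of complexes of abelian groups, $K(C) \sr K(C')$ is a weak equivalence of pointed simplicial sets (pointed at $0$). Since $K = K \circ \tau_{\ge 0}$ and $\tau_{\ge 0}$ preserves quasi-isomorphisms, one may assume $C$ and $C'$ live in homological degree $\ge 0$. The Dold--Kan correspondence identifies $H_n(C)$ with $\pi_n K(C)$, so a quasi-isomorphism between nonnegatively graded complexes becomes a weak equivalence of the simplicial abelian groups $K(C)$ and $K(C')$. Since a simplicial abelian group is a Kan complex whose homotopy groups (based at $0$) coincide with those of the underlying simplicial set, the image under $forget$ is also a weak equivalence of pointed simplicial sets.

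The main obstacle is the first reduction step: confirming that ``local equivalence'' really coincides with ``stalkwise weak equivalence''. This requires checking that the sheafifications defining $\pi_0(F_*)$ and $\pi_i(F_*,u)$ are computed stalkwise by the honest homotopy groups of the stalk simplicial sets, which rests on enough points together with the fact that $\pi_i$ commutes with filtered colimits of simplicial sets. The parenthetical ``presumably always'' in the statement flags that a point-free argument (via hypercovers or a Kan-fibrant replacement intrinsic to the topos) should also work, but is more delicate; everything else in the proof is standard simplicial algebra.
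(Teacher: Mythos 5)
Your proposal is correct and follows essentially the same route as the paper: reduce to stalks using the enough-points hypothesis and the fact that both functors commute with passage to fibers, then invoke Whitehead (weak equivalences induce isomorphisms on reduced homology) for (i) and the Dold--Kan identification $\pi_i(K(C))=H_i(C)$ together with the Kan property of simplicial abelian groups for (ii). You merely spell out in more detail the stalkwise characterization of local equivalences, which the paper takes for granted.
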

The assumption ``enough points'' applies to $Sm/k$ with the Nisnevich
topology: for any $U$ in $Sm/k$ and any point $x$ of $U$, $F\mapsto
F(Spec({\cal O}^h_{U,x}))$ is a point, and they form a conservative
system.  

\vskip 2mm
\noindent
\begin{proof}
Local equivalence (resp. quasi-isomorphism) can be checked point by
point, and the two functors considered commute with passage to the
fiber at a point. This reduces our proposition to the case when $S$ is
just a point, i.e. to usual homotopy theory. In that case, (i) boils
down to the fact that a weak equivalence induces an isomorphism on
reduced homology, a theorem of Whitehead, and (ii) reduces to the
fact: for a complex $C$, $\pi_i(K(C))$, computed using any base point,
is $H_i(C)$. The $\pi_i(K(C))$ are easy to compute because $K(C)$ has
the Kan property.
\end{proof}
Applying  \ref{1.8}, we deduce from  \ref{2.1}
the following.
\begin{proposition}
\llabel{2.2}
Under the same assumptions as in \ref{2.1}, for $F_{*}$ a pointed
simplicial sheaf and $C$ a complex of abelian sheaves, one has
\begin{equation}
\label{2.3}
Hom_{Ho_{\bullet}}(F_{*}, K(C))=Hom_{D}(N\tilde{\zz}(F_*), C)
\end{equation}
\end{proposition}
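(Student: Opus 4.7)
The plan is to obtain the statement by descending a Quillen-style adjunction from the unlocalized categories to the localizations, using Proposition \ref{2.1} as the input into the general localization principle \ref{1.8}.

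First I would assemble, at the level of unlocalized categories, the adjunction
$$
N\tilde{\zz} : (\text{pointed simplicial sheaves}) \rightleftarrows (\text{complexes of abelian sheaves}) : \text{forget}\circ K,
$$
obtained by composing the adjoint pair $(\tilde{\zz},\text{forget})$ between pointed simplicial sheaves and simplicial abelian sheaves with the Dold--Kan adjoint pair $(N, K\circ\tau_{\ge 0})$ between simplicial abelian sheaves and all complexes (the truncation $\tau_{\ge 0}$ being right adjoint to the inclusion, as recalled in the excerpt). At this unlocalized stage one simply has
$$
\mathrm{Hom}(F_*, \mathrm{forget}\,K(C)) = \mathrm{Hom}(N\tilde{\zz}(F_*), C).
$$

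Next, Proposition \ref{2.1} says precisely that the left adjoint $N\tilde{\zz}$ sends local equivalences of pointed simplicial sheaves to quasi-isomorphisms of complexes, and the right adjoint $\mathrm{forget}\circ K$ sends quasi-isomorphisms to local equivalences. This is exactly the hypothesis needed to apply the localization lemma \ref{1.8}: an adjoint pair in which both functors send the classes of arrows being inverted to one another descends to an adjoint pair between the two localized categories, with the hom-adjunction on localizations computed by the hom-adjunction upstairs. Applying this to the pair above, with local equivalences inverted on the left and quasi-isomorphisms inverted on the right, yields
$$
\mathrm{Hom}_{Ho_{\bullet}}(F_*, K(C)) = \mathrm{Hom}_{D}(N\tilde{\zz}(F_*), C),
$$
which is (\ref{2.3}).

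The step I expect to require the most care is the bookkeeping around the truncation and the forgetful functor: $K(C)$ is defined via $\tau_{\ge 0}$, so one must check that the unit/counit of the composite adjunction really is the expected one and that passing to the localizations does not lose any information hidden in the non-positive part of $C$ (it does not, because the $\mathrm{Hom}_D$ into $C$ from an object concentrated in degree $\ge 0$ sees only $\tau_{\ge 0}C$). Once this routine verification is in place, the proposition is a formal consequence of \ref{2.1} and \ref{1.8}, as asserted just before the statement.
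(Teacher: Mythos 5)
Your proof is correct and is exactly the paper's argument: the paper proves \ref{2.2} by the single remark that one applies the localization lemma \ref{1.8} to the adjoint pair $(N\tilde{\zz},\,forget\circ K)$, whose compatibility with local equivalences and quasi-isomorphisms is the content of \ref{2.1}. Your additional care about $\tau_{\ge 0}$ and the composite unit/counit is a reasonable elaboration of the same route, not a departure from it.
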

Let $F$ be a sheaf and $F_+$ be deduced from $F$ by adjunction of a
base point. We also write $F$ and $F_+$ for the corresponding
``constant'' simplicial sheaf. One has
$$N\tilde{\zz}(F_+)=N\zz(F)=(\zz(F)\,\,\,\mbox{\rm in degree zero})$$
For the pointed simplicial sheaf $F_+$, the group $Hom_D(\zz(F),C)$
which now occurs at the right hand side of (\ref{2.3}) can be
interpreted as hypercohomology of $C$ ``over $F$ viewed as a space'',
i.e. in the topos of sheaves over $F$. For $F$ defined by an object
$U$ of the site $S$, this is the same as hypercohomology of the site
$S/U$. As we do not want to assume $C$ bounded below (in cohomological
numbering), checking this requires a little care. 

For a complex of sheaves $K$ over a site $S$, not necessarily bounded
below, ${\bf H}^0(S,K)$ can be defined as the Hom group in the derived
category $Hom_{D}(\zz,K)$. For $F$ in a topos $T$ and the topos $T/F$:
``F viewed as a space'', besides the morphism of toposes
$(T/F)\stackrel{j}{\sr} T$, i.e. the adjoint pair $(j^*,j_*)$, we have
for abelian sheaves an adjoint pair $(j_!,j^*)$, with $j_!$ and $j^*$
both exact. By  \ref{1.8}, $(j_!,j^*)$ induce an adjoint
pair for the corresponding derived categories. As $j_!\zz=\zz[F]$, we
get
\begin{equation}
\llabel{2.4}
Hom_D(\zz(F),C)={\bf H}^0(T/F, j^*(C))
\end{equation} 
hence
\begin{equation}
\llabel{2.5}
Hom_{Ho_{\bullet}}(F_+,K(C))={\bf H}^0(T/F, j^*(C))
\end{equation} 

Let us consider the particular case of $Sm/k$ with the Nisnevich
topology. For any complex of sheaves, (\ref{2.5}) gives for $U$
smooth over $k$
\begin{equation}
\llabel{2.6}
Hom_{Ho_{\bullet}}(U_+,K(C))={\bf H}^0(U_{big-Nis}, C)
\end{equation}  
Here, $U_{big-Nis}$ is the site $(Sm/S)/U$ with the Nisnevich
topology. It has however the same hypercohomology as the small
Nisnevich site $U_{Nis}$. Indeed, one has a morphism
$\epsilon:U_{big,Nis}\sr U_{Nis}$ and the functors $\epsilon^*$ and
$\epsilon_*$ are exact. One again applies  \ref{1.8}. If
we apply (\ref{2.6}) to a translate (shift) of $C$, we get
\begin{equation}
\llabel{2.7}
Hom_{Ho_{\bullet}}(U_+,K(C[p]))={\bf H}^p(U_{Nis}, C)
\end{equation}  
Applying (\ref{2.7}) to $C_*(F)$ we get the first equality in
(\ref{chain}).
\begin{proposition}
\llabel{3p1}
Let $C$ be a complex of abelian sheaves on $Sm/k$. The following
conditions are equivalent:
\begin{enumerate}
\item $K(C)$ is $\af$-local
\item for $i\le 0$, the functor $U\mapsto {\bf H}^i(U,C)$ is homotopy
invariant
\item for any complex $L$ in cohomological degree $\le 0$, one has in
the derived category
$$Hom(L\oo\zz(\af), C)\ii Hom(L,C)$$
\end{enumerate}
\end{proposition}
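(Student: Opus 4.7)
The plan is to convert $\af$-locality of $K(C)$ into a statement in the derived category via Proposition \ref{2.2}, and then to recognize (2) and (3) as, respectively, a special case and a formally equivalent reformulation of that statement.

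Step 1 (translation via \ref{2.2}). For a pointed simplicial sheaf $F_*$, the smash $F_* \wedge \af_+$ (by definition $F_* \times \af / * \times \af$, with $\af$ viewed as a constant simplicial object) satisfies
\begin{equation*}
\tilde\zz(F_* \wedge \af_+) \;=\; \tilde\zz(F_*) \oo \zz(\af)
\end{equation*}
termwise; since the second factor is a constant simplicial sheaf, this identification survives normalization and gives $N\tilde\zz(F_* \wedge \af_+) = N\tilde\zz(F_*) \oo \zz(\af)$. Applying \ref{2.2} on both sides of the defining isomorphism for $\af$-locality of $K(C)$, condition (1) becomes
\begin{equation*}
(1')\colon\quad Hom_D(N\tilde\zz(F_*) \oo \zz(\af),\, C) \;\ii\; Hom_D(N\tilde\zz(F_*),\, C)
\end{equation*}
for every pointed simplicial sheaf $F_*$.

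Step 2 (implications $(3) \Rightarrow (1)$ and $(1) \Rightarrow (2)$). The complex $N\tilde\zz(F_*)$ lives in non-negative homological, equivalently non-positive cohomological, degree, so (3) applied to $L = N\tilde\zz(F_*)$ immediately yields $(1')$, giving $(3) \Rightarrow (1)$. For $(1) \Rightarrow (2)$, specialize $(1')$ to $F_* = S^n \wedge U_+$ with $U$ smooth over $S$ and $n \ge 0$. Then $N\tilde\zz(F_*) = \zz(U)[n]$, and $(1')$ rewrites as $Hom_D(\zz(U \times \af)[n], C) \ii Hom_D(\zz(U)[n], C)$, which by the hypercohomology interpretation behind (\ref{2.4})--(\ref{2.7}) is precisely ${\bf H}^{-n}(U \times \af, C) \ii {\bf H}^{-n}(U, C)$, the homotopy invariance of ${\bf H}^i(-,C)$ for $i = -n \le 0$.

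Step 3 (implication $(2) \Rightarrow (3)$). Let $E$ be the cokernel of $\zz \sr \zz(\af)$ coming from the point $0 \in \af$; since $\zz(\af)$ is flat, $0 \to \zz \to \zz(\af) \to E \to 0$ is short exact and tensoring with $E$ is exact. Condition (3) is equivalent to $Hom_D(L \oo E, C) = 0$ for every $L$ in cohomological degree $\le 0$, and condition (2) is the same statement restricted to $L = \zz(U)[n]$, $U$ smooth, $n \ge 0$: indeed, the Ext long exact sequence attached to $0 \to \zz(U) \to \zz(U \times \af) \to \zz(U) \oo E \to 0$ combined with (2) forces $\mathrm{Ext}^q(\zz(U) \oo E, C) = 0$ for all such $U$ and all $q \le 0$. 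For a general $L$ in cohomological degree $\le 0$, pick a quasi-isomorphism $P^\bullet \to L$ with $P^i = 0$ for $i > 0$ and each $P^i$ a direct sum of representables $\zz(U_\alpha)$; then $L \oo E \simeq P^\bullet \oo E$, and the hypercohomology spectral sequence
\begin{equation*}
E_1^{p,q} \;=\; \mathrm{Ext}^q(P^{-p} \oo E,\, C) \;\Longrightarrow\; \mathrm{Ext}^{p+q}(L \oo E,\, C)
\end{equation*}
has $E_1^{p,q} = 0$ whenever $p < 0$ (by the support of $P^\bullet$) or $q \le 0$ (by the special case), so the entire diagonal $p + q = 0$ vanishes and $Hom_D(L \oo E, C) = 0$.

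The main obstacle is Step 3: one must construct the resolution $P^\bullet \to L$, using that the representables $\zz(U)$ for $U$ smooth over $S$ form a family of compact generators of the derived category of Nisnevich abelian sheaves, and check convergence of the spectral sequence even when $L$ is unbounded below, for instance by filtering by cohomological degree and invoking the usual convergence for half-plane bicomplexes.
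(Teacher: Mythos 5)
Your proposal is correct and follows essentially the same route as the paper: the translation of $\af$-locality through \ref{2.2} into $Hom_D(N\tilde{\zz}(F_*)\oo\zz(\af),C)\ii Hom_D(N\tilde{\zz}(F_*),C)$, the suspension computation $N\tilde{\zz}(S^n\wedge U_+)\cong \zz(U)[n]$ for $(1)\Rightarrow(2)$, and a devissage through resolutions by direct sums of representables for $(2)\Rightarrow(3)$ are all exactly the paper's steps. Your one repackaging is to split $\zz(\af)=\zz\oplus E$ off the zero section and restate (3) as the vanishing of $\mathrm{Ext}^q(L\oo E,C)$ for $q\le 0$, which lets you run a single spectral sequence where the paper compares two; this is a slightly cleaner bookkeeping device but changes nothing essential. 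The two points you flag as obstacles are precisely where the paper invests its care, and you should cite the corresponding statements rather than leave them as gestures: passing the vanishing from a single $\zz(U)$ to an infinite direct sum $\bigoplus\zz(U_\alpha)$ requires \ref{c3.4} (coproducts in $C(A)$ remain coproducts in $D(A)$), and the convergence of your half-plane spectral sequence for $L$ unbounded below is not automatic --- the paper sidesteps it by first treating bounded $L$, then writing $L=colim\,\sigma^{\ge -n}L$ and using the $lim$--$lim^1$ exact sequence of \ref{3.5} and \ref{c3.6}, which is the precise form of the ``filtering by cohomological degree'' you propose. With those two references supplied, your argument closes.
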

\begin{proof}
By  \ref{2.2}, condition (1) can be rewritten:
for any pointed simplicial sheaf $F_{*}$.
$$Hom_D(N\tilde{\zz}(F_*),
C)=Hom_D(N\tilde\zz(F_*\times\af/*\times\af))$$
The operation $F_*\mapsto F_*\times\af/*\times\af$ is better written
as a smash product $F_*\wedge \af_+$ with $\af_+$. For pointed sets
$E$ and $F$, $\tilde{\zz}(E\wedge
F)=\tilde{\zz}(E)\oo\tilde{\zz}(F)$. It follows that 
$$\tilde{\zz}(F_*\times\af/*\times\af)=\tilde{\zz}(F_*\wedge\af_+)=\tilde{\zz}(F_*)\oo\tilde{\zz}(\af_+)=\tilde{\zz}(F_*)\oo\zz(\af)$$
(isomorphisms of simplicial sheaves), hence 
$$N\tilde{\zz}(F_*\times\af/*\times\af)=N\tilde{\zz}(F_*)\oo\zz(\af)$$
It follows that (1) is the particular case of (3) for $L$ of the form
$\tilde{\zz}(F_*)$. Similarly, (2) is the particular case of (3) for
$L$ of the form $\zz(U)[i]$, with $i\ge 0$. 

The suspension $\Sigma^iF_*$ of a simplicial pointed sheaf $F_*$ is
its smash product with the simplicial sphere $S^i$ (the $i$-simplex
modulo its boundary). It follows that
$$\tilde{\zz}(\Sigma^i F_*)=\tilde{\zz}(F_*)\oo \tilde{\zz}(S^i)$$
(isomorphism of simplicial sheaves), and by Eilenberg-Zilber, the
normalized complex $N\zz(\Sigma^iF_*)$ is homotopic to the tensor
product of the normalized complexes of $\tilde{\zz}(F_*)$ and
$\tilde{\zz}(S^i)$. The latter is simply $\zz[i]$:
$$N\tilde{\zz}(\Sigma^i F_*)\cong \tilde{\zz}(F_*)[i]$$
This is just a high-brown way of telling that the reduced homology of
a suspension is just a shift of the reduced homology of the space one
started with. 

Applying this to $F_*=U_+$, one obtains that
(1)$\Rightarrow$(3). Indeed, $\tilde{\zz}(\Sigma^i U_+)$ is homotopic
to $\tilde{\zz}(U_+)[i]=\zz(U)[i]$.

We now prove that (2)$\Rightarrow$(1). For $L$ a complex, let (*) be
the statement that the conclusion of (3) holds for all $L[i]$, $i\ge
0$. The assumption (2) is that (*) holds  for $L$ reduced to $\zz(U)$
in degree $0$, and we will conclude that it holds for all $L$ in
cohomological degree $\le 0$ by ``devissage'':

\vskip 2mm
\noindent
{\bf (a)} The case of a sum of $\zz(U)$, in degree zero, follows from
Corollary \ref{c3.4}.

\vskip 2mm
\noindent
{\bf (b)} Suppose that $L$ is bounded, is in cohomological degree $\le 0$
and that (*) holds for all $L^n$. The functors 
$$h':L\mapsto Hom^n(L,C)$$
$$h'':L\mapsto Hom^n(L\oo\zz(\af), C)$$
are contravariant cohomological functors, hence give rise to
convergent spectral sequences
$$E^{pq}_1=h^q(L^{-p})\Rightarrow h^{p+q}(L).$$
One has a morphism of spectral sequences
$$E(\mbox{for } h')\sr E(\mbox{for } h'')$$
which is an isomorphism for $q\le 0$, and both $E^{pq}$ vanish for
$p<0$ or $p$ large. It follows that $h^{'n}(L)\ii h^{''n}(L)$ for
$n\le 0$, i.e. that $L$ satisfies (*).

The same argumnet can be expresed as an induction on the number of $i$
such that $L^i\ne 0$. If $n$ is the largest (with $n\le 0$), the
induction assumption applies to $\sigma^{<n}L$, even to $(\sigma^{<n}
L)[-1]$, and one concludes by the long exact sequence defined by
$$0\sr L^{n}[-n]\sr L\sr \sigma^{<n}L\sr 0$$

\vskip 2mm
\noindent
{\bf (c)} Expressing $L$ as the inductive limit of the $\sigma^{\ge -n}L$
and using  \ref{3.5}, one sees that we need not assume
that $L$ is bounded.

\vskip 2mm
\noindent
{\bf (d)} If $L'\sr L''$ is a quasi-isomorphism, $L'\oo\zz(\af)\sr
L''\oo\zz(\af)$ is one too (flatness of $\zz(\af)$), and (*) holds for
$L'$ if and only if it holds for $L''$. 

\vskip 2mm
\noindent
{\bf (e)} Any abelian sheaf $L$ is a quotient of a direct sum of
sheaves $\zz(U)$. For instance, the sum over $(U,s)$, $s\in
\Gamma(U,L)$, of $\zz(U)$ mapping to $L$ by $s$. It follows that $L$
admits a resolution $L^*$ by such sheaves. By (a) and (c), $L^*$
satisfies (*). It follows from (d) that $L$ satisfies (*) and then by
(c) that any complex in degree $\le 0$ satisfies (*).
\end{proof}

\subsection{Application to presheaves with transfers}\label{5.5}
Let $F$ be a presheaf with transfers. A formal argument (\cite{CC})
shows that the presheaves with transfers $H^iC_*(F)$ are homotopy
invariant. By the basic result ( \ref{basic}) recalled in the
first lecture, it follows that for any $U$, one has
\begin{equation}
\llabel{2.8}
{\bf H}^*(U,C_*(F))={\bf H}^*(U\times\af, C_*(F))
\end{equation}  
Indeed, as $U$ and $U\times\af$ are of finite cohomological dimension,
both sides are  abutment of a convergent spectral sequence
$$E^{pq}_2=H^p(U,H^qC_*(F))\Rightarrow {\bf H}^{p+q}(U,C_*(F))$$
and the same for $U\times\af$. By  \ref{basic} applied to
$H^qC_*(F)$, 
$$H^p(X, H^qC_*(F)):=H^p(X, aH^qC_*(F))$$
is the same for $X=U$ or for $X=U\times\af$. Applying (\ref{2.7}), we
conclude from  \ref{3p1}((2)$\Rightarrow$(1)) that
\begin{proposition}
\llabel{3p2}
For $k$ perfect, if $F$ is a presheaf with transfers, for all $p$,
$K(C_*(F)[p])$ is $\af$-local.
\end{proposition}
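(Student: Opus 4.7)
The plan is straightforward given the setup: I would apply the implication (2) $\Rightarrow$ (1) of Proposition \ref{3p1} to the complex $C := C_*(F)[p]$. Since ${\bf H}^i(U, C_*(F)[p]) = {\bf H}^{i+p}(U_{Nis}, C_*(F))$, condition (2) of \ref{3p1} for this $C$ reduces to the assertion that the presheaf $U \mapsto {\bf H}^n(U_{Nis}, C_*(F))$ is homotopy invariant for every $n \le p$; proving it for all integers $n$ is no harder.

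This universal homotopy invariance is exactly equation (\ref{2.8}), whose verification I would carry out in three steps. First, I invoke the formal fact (from \cite{CC}) that when $F$ is a presheaf with transfers, each cohomology presheaf $H^q C_*(F)$ of the singular complex is again a homotopy invariant presheaf with transfers; this uses that $SmCor(k)$ is additive and that the cosimplicial structure on $\Delta^{\bullet}$ is built out of morphisms in $Sm/k$. Second, I apply Theorem \ref{basic}, the main input requiring $k$ perfect, to conclude that the Nisnevich cohomology presheaves $U \mapsto H^a(U_{Nis}, a_{Nis}(H^q C_*(F)))$ are themselves homotopy invariant. Third, I compare the convergent hypercohomology spectral sequences
$$E_2^{a,b}(U) = H^a(U_{Nis}, H^q C_*(F)) \Rightarrow {\bf H}^{a+b}(U_{Nis}, C_*(F))$$
for $U$ and for $U \times \af$ via pullback along the projection: the previous step says these spectral sequences agree on the $E_2$ page, and their convergence is guaranteed by the finite (Krull-bounded) Nisnevich cohomological dimension of smooth $k$-schemes of finite type, so the isomorphism propagates to the abutment.

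The main obstacle, and the reason the hypotheses of the proposition are what they are, lies in the second step: Theorem \ref{basic} crucially requires both that $k$ be perfect and that we be dealing with a presheaf with transfers, not merely a presheaf. Without the transfer structure on $F$, the presheaves $H^q C_*(F)$ would still be homotopy invariant but \ref{basic} could not be applied to them, and the identification of the $E_2$ pages for $U$ and $U \times \af$ would collapse. Once \ref{basic} is granted the spectral sequence argument is purely formal, and the conclusion is then immediate from the criterion supplied by \ref{3p1}.
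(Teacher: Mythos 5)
Your proposal is correct and follows essentially the same route as the paper: homotopy invariance of the presheaves with transfers $H^qC_*(F)$, Theorem \ref{basic} applied to them (the step using perfectness and transfers), the convergent hypercohomology spectral sequence to get (\ref{2.8}), and then the criterion \ref{3p1}((2)$\Rightarrow$(1)). The only blemish is a notational slip in your spectral sequence ($E_2^{a,b}$ should have $H^a(U_{Nis}, H^bC_*(F))$ rather than mixing $b$ and $q$), which does not affect the argument.
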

Combining  \ref{3p2} with  \ref{1.7} we get
the second equality in (\ref{chain}).

\subsection{End of the proof of  \ref{1.6}\label{7}}
For any pointed simplicial sheaf $G_{\bullet}$, $C_{\bullet}(G_{\bullet})$ is
a pointed bisimplicial sheaf of which one can take the diagonal
$\Delta C_{\bullet}(G_{\bullet})$. For any pointed sheaf $G$, one has
a natural map $G\sr C\BB(G)$, and for a pointed simplicial sheaf
$G_{\bullet}$, those maps for the $G_n$ induce
$$a: G_{\bullet}\sr \Delta C_{\bullet}(G_{\bullet})$$
\begin{proposition}
\llabel{6.3}
The morphism $a:G_{\bullet}\sr \Delta C_{\bullet}(G_{\bullet})$ is an
$\af$-equivalence.
\end{proposition}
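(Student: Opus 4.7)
My plan is to prove this in two stages: first, a bisimplicial reduction to the case of a single pointed sheaf, and then an $\af$-contraction argument in that reduced case.

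For the reduction, I would view $C_\bullet(G_\bullet)$ as the diagonal of the bisimplicial sheaf $B$ with $B_{m,n} := C_n(G_m)$, and $G_\bullet$ itself as the diagonal of the constant-in-$n$ bisimplicial sheaf $B'$ with $B'_{m,n} := G_m$. Then $a$ is the diagonal of a morphism $B' \to B$ which, at each row $m$, restricts to the natural map $G_m \to C_\bullet(G_m)$ from a single pointed sheaf to its singular complex. Appealing to a general principle—that the diagonal of a morphism of bisimplicial sheaves which is a rowwise $\af$-equivalence is itself an $\af$-equivalence (provable via a skeletal filtration on the diagonal combined with the appendix localization lemmas \ref{1.7} and \ref{1.8})—this would reduce the problem to the case where $G_\bullet$ is a constant simplicial sheaf.

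For the single-sheaf case I would show that $G \to C_\bullet(G)$ is an $\af$-equivalence for any pointed sheaf $G$, i.e.\ for every $\af$-local simplicial sheaf $X$,
$$Hom_{Ho_{\bullet}}(C_\bullet(G), X) \to Hom_{Ho_{\bullet}}(G, X)$$
is a bijection. The essential ingredient is the $\af$-contractibility of the cosimplicial scheme $\Delta^\bullet$: each $\Delta^n\cong\af^n$ carries the scaling $\af$-homotopy $H^n:\Delta^n\times\af\to\Delta^n$, $(x,t)\mapsto tx$, deforming the identity into the constant map at the $0$-vertex. Pushing this through the internal-hom description $C_n(G)=\underline{Hom}(\Delta^n,G)$, the cosimplicial $\af$-contraction would yield an $\af$-homotopy on $C_\bullet(G)$ exhibiting ``evaluate at $0$-vertex followed by inclusion'' as $\af$-homotopic to the identity. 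Combined with the fact that evaluation at the $0$-vertex splits $a$ on the nose, this would give the desired bijection in $Ho_{\af,\bullet}$.

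The main obstacle I anticipate lies in the second stage: the ``evaluate at $0$-vertex'' retraction $C_\bullet(G) \to G$ is \emph{not} a morphism of simplicial sheaves, because the face map $d_0$ omits the $0$-vertex. Converting the intuition ``$\Delta^\bullet$ is $\af$-contractible'' into a bona fide $\af$-equivalence therefore requires carefully combining the $\af$-homotopies with the simplicial structure, typically by introducing an auxiliary bisimplicial object encoding both the face/degeneracy maps of $C_\bullet$ and the $\af$-deformation of $\Delta^\bullet$ simultaneously. This is morally the $\af$-analog of the classical topological prism decomposition showing $\mathrm{Sing}|X|\simeq X$, and is the only genuinely nontrivial part of the argument.
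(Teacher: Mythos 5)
Your first stage is sound but slices the bisimplicial object in the unprofitable direction, and this is what creates the gap in your second stage. By fixing $m$ and varying the singular degree $n$, you reduce to showing that $G\to C_{\bullet}(G)$ is an $\af$-equivalence for a single pointed sheaf $G$ --- which is just the original statement for a constant simplicial sheaf, and is where all the difficulty lives. Your own ``main obstacle'' paragraph then correctly diagnoses why your contraction argument does not close: evaluation at the $0$-vertex is not a simplicial map, so the scaling homotopies on the individual $\Delta^n\cong{\bf A}^n$ do not assemble into a retraction $C_{\bullet}(G)\to G$, and the ``auxiliary bisimplicial object encoding both structures simultaneously'' is precisely the construction you would need to produce and do not. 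As written, the only genuinely nontrivial step is announced rather than proved.

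The paper's proof avoids this entirely by applying the diagonal lemma \ref{13.6} with the roles of the two indices exchanged: it sets $G_{pq}:=G_p$ and $H_{pq}:=C_qG_p$ and fixes the \emph{singular} degree $q$. For fixed $q$ the slice map is $G_{\bullet}\to C_q(G_{\bullet})=\underline{Hom}((\Delta^q)_+,G_{\bullet})$, which is an honest morphism of simplicial sheaves admitting an honest simplicial quasi-inverse, namely restriction along $0:S\to{\bf A}^q$; one composite is the identity on the nose, and the scaling homotopy $H(t,x)=tx$ shows the other composite agrees with the identity in the $\af$-homotopy category (using that the two restrictions $0^*,1^*:G_{\bullet}(X\times\af)\to G_{\bullet}(X)$ coincide there, together with \ref{rm3.4}). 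No non-simplicial retraction is ever needed, because for fixed $q$ one never has to be compatible with the face maps in the $q$-direction. If you want to salvage your two-stage plan, the single-sheaf case you reduced to must itself be proved by exactly this transposed application of \ref{13.6}; at that point the first reduction is redundant and you may as well run the argument on $G_{\bullet}$ directly.
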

\begin{proof}
We deduce  \ref{6.3} from \ref{13.6}.

The two maps $0,1:F_{\bullet}\sr F_{\bullet}\wedge \af_{+}$ are
equalized by $F_{\bullet}\wedge \af_+\sr F_{\bullet}$, hence become
equal in the $\af$-homotopy category. If two maps of pointed
simplicial sheaves $F\BB\dsr G\BB$ factor as $F\BB\dsr
F\BB\wedge\af_+\sr G\BB$, they also become equal. By the adjunction of
$\wedge\af_+$ and of $C_1(-)=\uu{Hom}(\af_+,-)$, such a factorization
can be rewritten as 
$$F\BB\sr C_1(G\BB)\sr G\BB$$
Particular case: the maps $C_1(G\BB)\sr G\BB$, become equal in the
homotopy category. Evaluated on $X$, these maps are the restriction
maps $0^*,1^*:G\BB(X\times\af)\sr G\BB(X)$.

The affine space ${\bf A}^n$ is homotopic to a point in the sense that
$H:\af\times{\bf A}^n\sr {\bf A}^n:(t,x)\mapsto tx$ interpolates
between the identity map (for $t=1$) and the constant map $0$ (for
$t=0$). The map $H$ induces
$$G\BB(S\times{\bf A}^n)\sr G\BB(S\times{\bf
A}^n\times\af)$$
and, composing with $0$, $1$ in $\af$, we obtain that
$$G\BB(S\times{\bf A}^n)\dsr  G\BB(S\times{\bf A}^n)$$
the identity map, and the map induced by $0:{\bf A}^n\sr {\bf A}^n$,
are equal in the $\af$-homotopy category. The map of simplicial
sheaves $G\BB\sr C_nG\BB$ is hence an $\af$-equivalence. It has
as inverse in the $\af$-homotopy category the map induced by
$0:Spec(k)\sr {\bf A}^n$ and one applies  \ref{rm3.4}. We now
apply  \ref{13.6} to the bisimplicial sheaves 
$$G_{pq}:=G_p$$
$$H_{pq}:=C_q G_p: S\mapsto G_p(S\times \Delta^q)$$
and to the natural map $G_{pq}\sr H_{pq}$. For fixed $q$, this is just
$G(S)\sr G(S\times{\bf A}^q)$, and  \ref{13.6} gives
 \ref{6.3}.
\end{proof}
To prove the last equality in (\ref{chain}), it suffices to show that:
\begin{lemma}
\llabel{6.2} For any abelian sheaf $F$, $F[p]\sr C_*(F)[p]$ induces an
$\af$-equivalence from $K(F[p])$ to $K(C_*(F)[p])$.
\end{lemma}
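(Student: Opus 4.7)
The plan is to factor the natural map $K(F[p]) \sr K(C_*(F)[p])$ as the composition of an $\af$-equivalence coming from Proposition \ref{6.3} and a local equivalence coming from Proposition \ref{2.1}(ii). Assume $p \ge 0$, as in Theorem \ref{1.6}. Set $G\BB := K(F[p])$; by Proposition \ref{6.3} the map $a : G\BB \sr \Delta C\BB(G\BB)$ is an $\af$-equivalence. The target is a simplicial abelian sheaf in non-negative degrees, so since $(N,K)$ are inverse equivalences on such objects, it is canonically isomorphic to $K(N \Delta C\BB K(F[p]))$.

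The second step is to identify $N \Delta C\BB K(F[p])$ with $C_*(F)[p]$ up to quasi-isomorphism. The functor $C_n : F \mapsto (X \mapsto F(X \times \Delta^n))$ is exact and hence commutes with $N$ applied in the $K$-direction of the bisimplicial abelian sheaf $C\BB K(F[p])$. Since $N K(F[p]) = F[p]$, this yields $C_n(F)$ placed in vertical degree $p$ for each horizontal index $n$. Normalizing further in the $n$-direction produces a double complex concentrated in the row of vertical degree $p$ with horizontal entries $C_n(F)$; its total complex is exactly $C_*(F)[p]$. The Eilenberg-Zilber theorem for bisimplicial abelian sheaves (applied stalkwise from the classical statement for bisimplicial abelian groups) identifies $N \Delta C\BB K(F[p])$ with this total complex via a natural quasi-isomorphism.

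Applying $K$ to this quasi-isomorphism and invoking Proposition \ref{2.1}(ii) gives a local equivalence $\Delta C\BB K(F[p]) \sr K(C_*(F)[p])$. Local equivalences become isomorphisms in $Ho_\bullet$, hence in $Ho_{\bullet,\af}$, so they are $\af$-equivalences (by Remark \ref{rm3.4}). Composing with $a$ then yields an $\af$-equivalence $K(F[p]) \sr K(C_*(F)[p])$.

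The point requiring the most care, and the main obstacle to full rigor, is to verify that this composite agrees with the map induced by $F[p] \sr C_*(F)[p]$ itself. The unit $G \sr C\BB(G)$ defining $a$ is the identity $G = C_0(G)$; under normalization and via the Eilenberg-Zilber comparison this corresponds to the inclusion $F[p] \hookrightarrow C_*(F)[p]$ arising from $F = C_0(F)$. Writing out this compatibility between the structure map of Proposition \ref{6.3}, the equivalence $KN$, and the Alexander-Whitney (or shuffle) maps implementing Eilenberg-Zilber is routine bookkeeping, but it is the step that needs the most careful checking.
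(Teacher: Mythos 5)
Your argument is correct in substance but follows a genuinely different route from the paper's. You apply Proposition \ref{6.3} directly to $G\BB=K(F[p])$ and then identify $\Delta C\BB K(F[p])$ with $K(C_*(F)[p])$ by normalizing the bisimplicial abelian sheaf $C\BB K(F[p])$ in both directions (using that $C_n$ preserves the kernels defining $N$, so the vertical normalization collapses to the row $C_\bullet(F)$ in degree $p$), invoking Eilenberg--Zilber to compare $N\Delta$ with the total complex, and converting the resulting quasi-isomorphism into a local equivalence via Proposition \ref{2.1}(ii). The paper instead realizes the shift $[p]$ simplicially by the iterated classifying-space functor $B^p$: it applies \ref{6.3} to $B^pG\BB$ for $G\BB$ the constant simplicial sheaf $F$, uses that $B^p$ commutes with $\Delta C\BB$ \emph{on the nose} (both being diagonals of the trisimplicial sheaf $C\BB B\BB G\BB$), and identifies the two ends with $K(F[p])$ and $K(C_*(F)[p])$ through the simplicial homotopy equivalences $B^pL\BB\cong K((NL\BB)[p])$ coming from $NBG\BB\cong(NG\BB)[1]$. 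What the paper's route buys is that the only up-to-homotopy identifications are chain homotopy equivalences, which $K$ turns into simplicial homotopy equivalences, so \ref{2.1}(ii) is not needed at this point; what your route buys is directness --- no bar construction and no commutation argument. Both routes ultimately rest on Eilenberg--Zilber, yours for $C\BB K(F[p])$, the paper's for $B\BB G\BB$.

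The one soft spot you correctly flag --- that the composite $K(F[p])\sr\Delta C\BB K(F[p])\sr K(C_*(F)[p])$ is the map induced by the inclusion $F[p]\sr C_*(F)[p]$ --- is real but not fatal: the component of the Alexander--Whitney map landing in horizontal degree $0$ composed with $N(a)$ is the identity $F\sr C_0(F)=F$, and $NK(F[p])$ is concentrated in degree $p$, so there is nothing else to match; note also that the paper's own proof leaves the analogous identification implicit. A minor point of hygiene: the paper's $C_*(F)$ is the unnormalized (alternating-sum) complex, whereas your total complex produces the normalized one; these differ by a chain homotopy equivalence, hence by a local equivalence after applying $K$, so this costs nothing but deserves a word.
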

\begin{proof} For $G$ a monoid (with unit), the pointed simplicial
set $B_{\bullet}G$ is given by 
$$
B_nG = \left\{
\begin{array}{c}
\mbox{\rm functors from the ordered set $(0,\dots,n)$ viewed as a category}\\
\mbox{\rm to $G$
viewed as a category with one object}
\end{array}
\right\}
$$
This construction can be sheafified, and can be applied termwise to a
simplicial sheaf of monoids, leading to a pointed bisimplicial sheaf
of which one can take the diagonal
$$BG_{\bullet} := \Delta B_{\bullet}(G_{\bullet})$$
This construction commutes with the construction $G_{\bullet}\sr
\Delta C_{\bullet}(G_{\bullet})$. Indeed,
$B_nG_p$ is naturally isomorphic to
$G_p^n$, the operation $C_m$ commutes with products, and
$B(\Delta C_{\bullet}(G_{\bullet}))$ and $\Delta C_{\bullet}(B
G_{\bullet})$ are both diagonals of the trisimplicial pointed sheaf
$C_{\bullet}B_{\bullet}G_{\bullet}$.

For abelian simplicial sheaves, the operation $B$ gives again abelian
simplicial sheaves, hence can be iterated, and $\Delta C_{\bullet}$
commutes with $B^n$.

Via Dold-Puppe construction, $B$ corresponds, up to homotopy, to the
shift $[1]$ of complexes:
$$N B G_{\bullet}\cong (N G_{\bullet})[1].$$
This can be viewed as an application of the Eilenberg-Zilber Theorem
(see \cite[Th. 8.5.1]{Chuck}): one has 
$$NBG\BB \cong BG_*\cong Tot B_*G_* \,\,\,\,\,\,\,\mbox{\rm
(Eilenberg-Zilber),}$$
and for each $G_q$, the normalization of $B\BB G_q$ is just $G_q[1]$,
so that the double complexes $B_*G_*$ and $H_{pq}:=G_q$ for $p=1$, $0$
otherwise, have homotopic $Tot$.

If $G_{\bullet}$ is an abelian simplicial sheaf, applying 
\ref{6.3} to $B^pG_{\bullet}$, we obtain that 
\begin{equation}
\llabel{new}
B^p G_{\bullet}\sr \Delta C_{\bullet} B^p G_{\bullet}=B^p \Delta
C_{\bullet} G_{\bullet}
\end{equation}
is an $\af$-equivalence. The functor $K$ transforms chain
homotopy equivalences into simplicial equivalences. For any simplicial
abelian group $L\BB$ (to be $G\BB$ or $\Delta C\BB G\BB$), we hence
have a simplicial homotopy equivalence
$$B^pL\BB = KN B^p L\BB \cong K((NL\BB)[p])$$
Simplicial homotopy equivalences being $\af$-equivalences, we
conclude that (\ref{new}) induces an $\af$-equivalence
$$K((NG_{\bullet})[p])\sr K(N(\Delta C_{\bullet}
G_{\bullet})[p])$$ 
\end{proof}

\subsection{Appendix. Localization}\label{4}
Let $C$ be a category and $S$ be a set of morphisms of $C$. The
localizaed category $C[S^{-1}]$ is deduced from $C$ by ``formally
inverting all $s\in S$''. With this definition, it is clear that one
has a natural functor $loc:C\sr C[S^{-1}]$, bijective on the set of
objects, and that for any category $D$, 
$$F\mapsto F\circ loc: Hom(C[S^{-1}], D)\sr Hom(C,D)$$
is a bijection from $Hom(C[S^{-1}], D)$ to the set of functors from
$C$ to $D$ transforming morphisms in $S$ into isomorphisms. 

If one remembers that the categories form a 2-category, and if one
agree with the principle that one should not try to define a category
more precisely than up to equivalence (unique up to unique
isomorphism), the universal property of $C[S^{-1}]$ given above is
doubly unsatisfactory. The easily checked and useful universal
property is the following: $F\mapsto F\circ loc$ is an equivalence
from the category $Hom(C[S^{-1}], D)$ to the full subcategory of
$Hom(C,D)$ consisting of the functors $F$ which map $S$ to
isomorphisms. 
\begin{proposition}
\llabel{1.7}
If $Y$ in $C$ is such that the functor 
$$h_Y:C^{op}\sr Sets:X\mapsto Hom_{C}(X,Y)$$ 
transforms maps in $S$ into bijections , then
$$Hom_{C}(X,Y)\ii Hom_{C[S^{-1}]}(X,Y)$$
\end{proposition}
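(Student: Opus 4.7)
The plan is to factor $h_Y$ through the localization and identify the result with $Hom_{C[S^{-1}]}(-,Y)$ by a Yoneda-type argument. First, I apply the universal property of $C[S^{-1}]$ with target category $D = Sets^{op}$ to the functor $h_Y$, regarded as a covariant functor $C \sr Sets^{op}$. By hypothesis $h_Y$ sends every $s \in S$ to an isomorphism, so there is a unique factorization $h_Y = \bar{h}_Y \circ loc$ through a functor $\bar{h}_Y : C[S^{-1}] \sr Sets^{op}$; equivalently, $\bar{h}_Y$ is a functor $(C[S^{-1}])^{op} \sr Sets$ with $\bar{h}_Y(X) = Hom_C(X,Y)$ on objects and $\bar{h}_Y(loc(f)) = f^*$ for $f$ in $C$.

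Second, I define a natural transformation $\tau : \bar{h}_Y \sr Hom_{C[S^{-1}]}(-, Y)$ of functors $(C[S^{-1}])^{op} \sr Sets$ whose component at $X$ is the localization map $\tau_X = loc : Hom_C(X,Y) \sr Hom_{C[S^{-1}]}(X,Y)$. Since morphisms of $C[S^{-1}]$ are generated under composition by morphisms of $C$ and formal inverses of maps in $S$, naturality reduces to these two cases. For maps in $C$ it is immediate from the functoriality of $loc$. For a formal inverse of $s : X \sr X'$ in $S$, given $f \in Hom_C(X,Y)$ the hypothesis yields a unique $\tilde{f} \in Hom_C(X',Y)$ with $\tilde{f} \circ s = f$, and applying $loc$ gives $loc(\tilde{f}) = loc(f) \circ loc(s)^{-1}$, which is the required identity.

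Third, I construct the inverse $\sigma_X : Hom_{C[S^{-1}]}(X,Y) \sr Hom_C(X,Y)$ by the Yoneda-style formula $\sigma_X(g) := \bar{h}_Y(g)(id_Y)$. The identity $\sigma_X \circ \tau_X = id$ is immediate from $\bar{h}_Y(loc(f))(id_Y) = f^*(id_Y) = f$. Conversely, $\tau_X \circ \sigma_X = id$ follows by chasing the naturality square for $\tau$ at the morphism $g : X \sr Y$ of $C[S^{-1}]$ starting from $id_Y \in \bar{h}_Y(Y)$: one direction around the square returns $g$, the other returns $loc(\sigma_X(g))$. The main obstacle is the naturality check at formal inverses of morphisms in $S$; this is the unique step where the hypothesis on $Y$ is invoked, while every other step is formal.
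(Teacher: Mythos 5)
Your proof is correct. It rests on the same two pillars as the paper's argument --- the universal property of the localization (to factor $h_Y$ as $\bar{h}_Y\circ loc$) and the Yoneda lemma (your formula $\sigma_X(g)=\bar{h}_Y(g)(id_Y)$ is exactly the Yoneda map) --- but it executes them at a different level. The paper works entirely inside the presheaf categories: it identifies $C[S^{-1}]^{\wedge}$ with the full subcategory of $C^{\wedge}$ of presheaves inverting $S$, computes $Hom(h_Y,F)=Hom(h_{\bar{Y}},F)=F(Y)$ for every such $F$ by Yoneda on both sides, concludes that $h_{\bar{Y}}$ is the reflection of $h_Y$ into that subcategory, and observes that the reflection map is an isomorphism precisely when $h_Y$ already lies there. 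You instead build the natural isomorphism $\bar{h}_Y\cong Hom_{C[S^{-1}]}(-,Y)$ by hand, which forces you to verify naturality of $\tau$; your reduction of that check to generators is legitimate (naturality squares paste under composition, and $\tau$ is defined by its components, so there is no well-definedness issue with the relations in $C[S^{-1}]$), but it does import an extra fact that the paper never needs, namely that every morphism of $C[S^{-1}]$ is a composite of images of morphisms of $C$ and formal inverses of elements of $S$ --- a statement about the explicit Gabriel--Zisman construction of the localization rather than its universal property. The trade-off: your argument is more elementary and makes visible exactly where the hypothesis on $Y$ enters (the naturality square at a formal inverse), while the paper's is shorter, avoids any appeal to generators and relations, and packages the statement as the more conceptual assertion that $h_{\bar{Y}}$ solves a universal problem.
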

\begin{proof}
By Yoneda construction $Y\mapsto h_Y$, $C$ embeds
into the category $C^{\wedge}$ of contravariant functors from $C$ to
$Sets$, while $C[S^{-1}]$ embeds into $C[S^{-1}]^{\wedge}$, identified
by (a) with the full subcategory of $C^{\wedge}$ consisting of $F$
transforming $S$ into bijections. For $Y$ in $C$, with image $\bar{Y}$
in $C[S^{-1}]$, and for any $F$ in $C(S^{-1})^{\wedge}\subset
C^{\wedge}$, one has in $C^{\wedge}$
$$Hom(h_Y,F)=Hom(h_{\bar{Y}},F).$$
Indeed, by (a) and Yoneda lemma for $C$ and $C[S^{-1}]$ both sides are
$F(Y)$. This means that $h_{\bar{Y}}$ is the solution of the universal
problem of mapping $h_Y$ into an object of $C[S^{-1}]^{\wedge}\subset
C^{\wedge}$. In particular, for $Y$ as in (b), i.e. in
$C[S^{-1}]^{\wedge}$, $h_{\bar{Y}}$ coincides with $h_Y$, as claimed
by (b).
\end{proof}
\begin{proposition}
\llabel{1.8}
Let $(L,R)$ be a pair of adjoint functors bewteen categories $C$ and
$D$. Let $S$ and $T$ be sets of morphisms in $C$ and $D$. Assume that
$F$ maps $S$ to $T$ and that $G$ maps $T$ to $S$. Then the functors
$\bar{L}$, $\bar{R}$ bewteen $C[S^{-1}]$ and $D[T^{-1}]$ induced by $L$ and
$R$ again form an adjoint pair.
\end{proposition}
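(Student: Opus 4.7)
My plan is to build the induced functors from the universal property of localization, transport the unit and counit of $(L,R)$ to the localized setting, and then verify the triangle identities.

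For $\bar L$, I would observe that the composite $C \stackrel{L}{\sr} D \stackrel{loc_T}{\sr} D[T^{-1}]$ inverts every $s \in S$, since $L(s) \in T$ and $loc_T$ inverts $T$. The universal property recalled before Proposition \ref{1.7} then produces an essentially unique factorization through $loc_S$, giving $\bar L : C[S^{-1}] \sr D[T^{-1}]$ equipped with a canonical isomorphism $\alpha : \bar L \circ loc_S \stackrel{\sim}{\Rightarrow} loc_T \circ L$. Symmetrically, $R$ yields $\bar R : D[T^{-1}] \sr C[S^{-1}]$ together with $\beta : \bar R \circ loc_T \stackrel{\sim}{\Rightarrow} loc_S \circ R$.

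To produce the new unit I would whisker the original unit $\eta : \mathrm{id}_C \Rightarrow RL$ with $loc_S$ to get a natural transformation $loc_S \Rightarrow loc_S \circ R \circ L$, and then paste it with the canonical isomorphism $loc_S \circ R \circ L \cong \bar R \circ loc_T \circ L \cong \bar R \bar L \circ loc_S$ built from $\beta$ and $\alpha$. The source and target are both $S$-inverting functors $C \sr C[S^{-1}]$, so by the 2-categorical form of the universal property of localization (the equivalence onto $S$-inverting functors noted before Proposition \ref{1.7} is an equivalence of functor categories, hence fully faithful on natural transformations), this whiskered transformation descends uniquely to $\bar \eta : \mathrm{id}_{C[S^{-1}]} \Rightarrow \bar R \bar L$. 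The counit $\bar \epsilon : \bar L \bar R \Rightarrow \mathrm{id}_{D[T^{-1}]}$ is constructed dually from $\epsilon$.

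Finally, I would verify the two triangle identities for $(\bar \eta, \bar \epsilon)$. By the same full-faithfulness statement, equality of two natural transformations out of $\bar L$ (resp.\ $\bar R$) can be checked after whiskering with $loc_S$ (resp.\ $loc_T$). Whiskering each side of a triangle identity with the appropriate localization functor and unwinding $\alpha$ and $\beta$ reduces it to the corresponding triangle identity for the original pair $(\eta, \epsilon)$, which holds by assumption. The hard part will be purely bookkeeping: keeping track of $\alpha$ and $\beta$ coherently through these pastings so that the whiskered identities truly reduce to the original ones, rather than to them up to some uncontrolled discrepancy. Once that coherence is verified, the adjunction $(\bar L, \bar R)$ follows formally from the universal property of localization.
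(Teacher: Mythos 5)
Your proposal is correct and follows the same overall skeleton as the paper's proof: induce $\bar{L}$ and $\bar{R}$ from the universal property of localization, descend the unit and counit, and verify the triangle identities by reducing them to the original ones. The one substantive difference is that you invoke the 2-categorical (pseudo) universal property to construct $\bar{L}$ and $\bar{R}$, which saddles you with the coherence isomorphisms $\alpha$ and $\beta$ and the ``bookkeeping'' you rightly flag as the delicate part. The paper avoids this entirely by using the \emph{strict} universal property stated at the start of the appendix: $loc$ is bijective on objects and $F\mapsto F\circ loc$ is a bijection onto the set of $S$-inverting functors, so $\bar{L}$ and $\bar{R}$ can be chosen with $\bar{L}\circ loc_S = loc_T\circ L$ and $\bar{R}\circ loc_T = loc_S\circ R$ holding on the nose. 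Then $\bar{R}\bar{L}\circ loc_S = loc_S\circ RL$ as an equality of functors, the whiskered unit descends with no pasting of isomorphisms, and the whiskered triangle identities are literally the original ones --- there is nothing left to check. You should either carry out the coherence verification you defer (it does work, but it is the only nontrivial content of your argument) or, better, note that you may normalize $\alpha$ and $\beta$ to identities and collapse the proof to the paper's.
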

\begin{proof}
The functors $\bar{L}$ and $\bar{R}$ induced by $F$ and $G$ are
characterized by commutative diagrams
$$
\begin{CD}
C @>L>> D\\
@VVV @VVV\\
C[S^{-1}] @>\bar{L}>> D[T^{-1}]
\end{CD}
\,\,\,\,\,\,\,\,\,\,\,\,\,\,\,\,\,\,\,\,
\begin{CD}
D @>R>> C\\
@VVV @VVV\\
D[T^{-1}] @>\bar{R}>> C[S^{-1}]
\end{CD}
$$
Adjunction can be expressed by the data of $\epsilon:Id\sr RL$ and
$\eta:LR\sr Id$ such that the compositions
$$R\sr RLR\sr R$$
$$L\sr LRL\sr L$$
are the identity automorphisms of $R$ and $L$ respectively (see
e.g. \cite{McLane2}).

By the universal property of localization, $\epsilon$ induces a
morphism $\bar{\epsilon}\sr \bar{R}\bar{L}$, indeed, to define such a
morphism amounts to defining a morphism $loc\sr \bar{R}\bar{L}loc$,
and $\bar{R}\bar{L}loc=loc R L$. Similarly, $\eta$ induces
$\bar{\eta}:\bar{L}\bar{R}\sr Id$. The morphism $\bar{L}\sr
\bar{L}\bar{R}\bar{L} \sr \bar{L}$ is induced by $L\sr LRL\sr L$,
similarly for $\bar{R}\sr \bar{R}\bar{L}\bar{R}\sr \bar{R}$, and the
proposition follows.
\end{proof}
\begin{proposition}
\llabel{3.3}
Suppose that
\begin{enumerate}
\item the localization $C[S^{-1}]$ gives rise to a right calculus of
fractions
\item coproducts exist in $C$, and $S$ is stable by coproducts.
\end{enumerate}
Then, a coproduct in $C$ is also a coproduct in $C[S^{-1}]$.
\end{proposition}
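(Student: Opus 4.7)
The plan is to verify directly that $\coprod X_i$, with its canonical injections $\iota_i : X_i \sr \coprod X_i$ reinterpreted in $C[S^{-1}]$, satisfies the universal property of a coproduct there. Concretely, for every $Y$ I want to show that the map
$$\Phi : Hom_{C[S^{-1}]}\left(\coprod X_i, Y\right) \sr \prod_i Hom_{C[S^{-1}]}(X_i, Y), \qquad g \mapsto (g\circ \iota_i)_i$$
is bijective. I will work throughout with the description of morphisms in $C[S^{-1}]$ furnished by the right calculus of fractions (hypothesis (1)): each morphism $X \sr Y$ is represented by a ``right fraction'' $X \xleftarrow{s} Z \xrightarrow{f} Y$ with $s \in S$, and two fractions with the same source and target represent the same morphism iff they admit a common $S$-refinement.

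For surjectivity, given $(g_i)_i$, pick representatives $X_i \xleftarrow{s_i} Z_i \xrightarrow{f_i} Y$ with $s_i \in S$. Hypothesis (2) ensures $\coprod s_i : \coprod Z_i \sr \coprod X_i$ lies again in $S$, and the universal property of $\coprod Z_i$ in $C$ assembles the $f_i$ into a single $f : \coprod Z_i \sr Y$. The fraction $\coprod X_i \xleftarrow{\coprod s_i} \coprod Z_i \xrightarrow{f} Y$ then represents a morphism $g$ in $C[S^{-1}]$, and the tautological commutative squares $\iota_i\circ s_i = (\coprod s_i) \circ (\text{inclusion of } Z_i)$ in $C$ immediately show $g \circ \iota_i = g_i$.

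For injectivity, suppose $g, g'$ have the same image under $\Phi$. Using right calculus of fractions I may present both by fractions with a common denominator, say $\coprod X_i \xleftarrow{s} Z \xrightarrow{f} Y$ and $\coprod X_i \xleftarrow{s} Z \xrightarrow{f'} Y$. Applying the Ore condition to $s$ and each $\iota_i$ produces $Z_i$, maps $s_i : Z_i \sr X_i$ in $S$, and morphisms $Z_i \sr Z$ so that $g \circ \iota_i$ and $g' \circ \iota_i$ are represented by $X_i \xleftarrow{s_i} Z_i \sr Z \xrightarrow{f,\, f'} Y$. Their equality in $C[S^{-1}]$ provides, for each $i$, a morphism $W_i \sr Z_i$ whose composite $t_i : W_i \sr X_i$ lies in $S$ and on which $f$ and $f'$ become equal after further pullback to $Z$. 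Setting $W := \coprod W_i$ and letting $u : W \sr Z$ be induced by the maps $W_i \sr Z_i \sr Z$, hypothesis (2) yields $s \circ u = \coprod t_i \in S$, while $f \circ u = f' \circ u$ since this is checked componentwise. This is precisely a common $S$-refinement of the two competing fractions, so $g = g'$.

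The main obstacle is the injectivity step: one has to promote a family of index-wise $S$-refinements, supplied by the calculus of fractions one $X_i$ at a time, into a single global $S$-refinement of the two fractions. The gluing succeeds precisely because coproducts of morphisms in $S$ remain in $S$ (hypothesis (2)); everything else amounts to standard Ore bookkeeping, and surjectivity is essentially immediate once the setup is in place.
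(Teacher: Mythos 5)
Your proof is correct and is in substance the same as the paper's: the paper packages the argument as the formula $Hom_{C[S^{-1}]}(X,Y)=colim_{(S/X)}Hom_{C}(X',Y)$ together with the cofinality of the coproduct functor $\prod (S/X_{\alpha})\sr (S/X)$, and your surjectivity and injectivity steps are exactly the two halves of that cofinality claim unwound into explicit fraction manipulations. In both versions the crux is identical, namely that hypothesis (2) lets one assemble index-wise denominators $s_{\alpha}\in S$ into a single denominator $\coprod s_{\alpha}\in S$ over the coproduct.
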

For the definition of ``gives rise to a right calculus of fractions''
see \cite{}. It implies that for $X$ in $C$, the category of $s:X'\sr
X$ with $s$ in $S$ is filtering, and that 
$$Hom_{C[S^{-1}]}(X,Y)=colim_{s:X'\sr X} Hom_{C}(X',Y)$$
\begin{proof}
For $X$ in $C$, let $(S/X)$ be the filtering category of morphisms
$X'\sr X$ in $S$. For $X$ the coproduct of $X_{\alpha}$, $\alpha\in
A$, one has a functor ``coproduct'':
$$\prod (S/X_{\alpha})\sr (S/X)$$
It is cofinal: for $s:X'\sr X$ in $S$, one can construct a diagram
$$
\begin{CD}
X'_{\alpha} @>>> X_{\alpha}\\
@VVV @VVV\\
X' @>>> X
\end{CD}
$$
with $s_{\alpha}:X'_{\alpha}\sr X$ in $S$, and $\coprod s_{\alpha}$
dominates $s$. For any $Y$, it follows that 
$$Hom_{C[S^{-1}]}(X,Y)=colim_{(S/X)}
Hom_{C}(X',Y)=$$
$$=colim_{\prod(S/X_{\alpha})} Hom(\coprod X'_{\alpha}, Y)
= colim_{\prod(S/X_{\alpha})} \prod Hom(X'_{\alpha}, Y)=$$
$$=\prod colim_{S/X_{\alpha}} Hom(X_{\alpha}', Y)=\prod
Hom_{C[S^{-1}]}(X_{\alpha}, Y),$$
meaning that $X$ is also the coproduct of the $X_{\alpha}$ in
$C[S^{-1}]$.
\end{proof}
\begin{cor}
\llabel{c3.4}
Suppose that in the abelian category $A$ arbitrary direct sums exist
and are exact. Then, arbitrary direct sums exist in the derived
category $D(A)$, and the localization functor
$$C(A)\sr D(A)$$
commutes with direct sums.
\end{cor}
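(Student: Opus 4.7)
The plan is to realize $D(A)$ as a localization of the homotopy category $K(A)$ of complexes, and then to apply Proposition \ref{3.3} to $K(A) \to D(A) = K(A)[S^{-1}]$, where $S$ denotes the class of quasi-isomorphisms. It is convenient to insert $K(A)$ because the calculus of fractions that \ref{3.3} requires is traditionally set up at the homotopy level. The preliminary step $C(A) \to K(A)$ already commutes with direct sums, since a family of chain homotopies assembles termwise into a chain homotopy; hence the termwise direct sum of complexes retains its universal property after passing to chain homotopy classes of maps, and direct sums exist in $K(A)$.

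It then remains to verify the two hypotheses of Proposition \ref{3.3} for the pair $(K(A), S)$. That $S$ admits a right calculus of fractions in $K(A)$ is classical, proved via the mapping cone: given $X \to Z$ and a quasi-isomorphism $Y \to Z$ in $K(A)$, one constructs the required commutative square using a shift of the cone of $Y \to Z$. The second hypothesis, stability of $S$ under arbitrary coproducts in $K(A)$, is exactly where the exactness of $\oplus$ in $A$ enters: for a family of quasi-isomorphisms $s_\alpha : X_\alpha \to Y_\alpha$, the functor $H_n$ is built from kernels, images, and cokernels, so exactness of $\oplus$ forces $H_n(\oplus X_\alpha) = \oplus H_n(X_\alpha)$, and under this identification $H_n(\oplus s_\alpha) = \oplus H_n(s_\alpha)$ is an isomorphism.

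Granting these two points, Proposition \ref{3.3} immediately yields that the termwise direct sum of a family in $K(A)$ remains a coproduct in $D(A)$. Combined with the preliminary observation about $C(A) \to K(A)$, this gives simultaneously the existence of arbitrary direct sums in $D(A)$ and the commutation of the localization $C(A) \to D(A)$ with them. The main substantive obstacle is the stability of quasi-isomorphisms under arbitrary direct sums; once the exactness of $\oplus$ in $A$ has been exploited there, the rest of the argument is a formal application of the appendix.
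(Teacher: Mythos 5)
Your proposal is correct and follows essentially the same route as the paper: factor $C(A)\sr D(A)$ through $K(A)$, check that termwise direct sums remain coproducts in $K(A)$, use exactness of $\oplus$ in $A$ to see that a direct sum of quasi-isomorphisms is a quasi-isomorphism, and invoke Proposition \ref{3.3}. The only cosmetic difference is that the paper verifies the $K(A)$-coproduct step via $H^0$ of the Hom-complex and exactness of products of abelian groups, while you assemble chain homotopies directly; these are the same argument in different clothing.
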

\begin{proof}
The functor $C(A)\sr D(A)$ factors through the category $K(A)$ of
complexes and maps up to homotopy. Direct sums in $C(A)$ are also
direct sums in $K(A)$. Indeed,
$$Hom_{K(A)}(\oplus K_{\alpha}, L)=H^0 Hom^{\bullet} (\oplus
K_{\alpha}, L)= $$
$$=H^{0}\prod Hom^{\bullet}(K_{\alpha}, L)=\prod H^0
Hom^{\bullet}(K_{\alpha},L),$$
as $\prod$ is exact for abelian groups. Exactness of $\oplus$ in $A$
ensures that a direct sum of quasi-isomorphisms is again a
quasi-isomorphism, and  \ref{3.3} applies to $K(A)$ and
the set $S$ of quasi-isomorphisms, proving the corollary.
\end{proof}
If $A_i$, $i\ge 0$ is an inductive system of objects of $A$, the
colimit of $A_i$ is the cokernel
$$\oplus A_i\stackrel{d}{\sr} \oplus A_i\sr colim\, A_i\sr 0$$
of the difference of the identity map and of the sum of the $A_i\sr
A_{i+1}$. If taking the inductive limit of a sequence is an exact
functor, the map $d$ is injective: it is the colimit of the 
$$\oplus_{i=0}^n A_i\sr \oplus_{i=0}^{n+1} A_i$$
each of which is injective, as its graded for the filtration by the
$\oplus_{i\ge p}A_i$ is the identity inclusion. 

Under the assumptions of Corollary \ref{c3.4} if a complex $K$ is the
colimit of an inductive sequence $K_{(i)}$, and if the sequence
\begin{equation}
\llabel{1}
0\sr \oplus K_{(i)}\stackrel{d}{\sr} \oplus K_{(i)}\sr K\sr 0
\end{equation}
is exact, then for any $L$, the long exact sequence of cohomology
reads
$$\sr Hom(K,L)\sr \prod Hom(K_{(i)},L)\stackrel{d}{\sr} \prod
Hom(K_{(i)},L)\sr$$
The kernel of $d$ is simply the projective limit of the
$Hom(K_{(i)},L)$. The cokernel is $lim^1$. One concludes.
\begin{proposition}
\llabel{3.5}
Suppose that in $A$ countable direct sums exist and are exact. If the
complex $K$ is the colimit of the $K_{(i)}$, and if the sequence
(\ref{1}) is exact, for instance if either
\begin{enumerate}
\item in $A$ inductive limits of sequences are exact
\item in each degree $n$, each $K_{(i)}^n\sr K_{(i+1)}^n$, is the
inclusion of a direct factor 
\end{enumerate}
then, one has a short exact sequence
$$0\sr lim^1 Hom(K_{(i)},L[-1])\sr Hom(K,L)\sr lim Hom(K_{(i)},L)\sr
0$$
\end{proposition}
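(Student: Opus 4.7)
The plan is to derive the short exact sequence from the distinguished triangle in $D(A)$ that (\ref{1}) defines once exact, combined with the standard description of $\lim$ and $\lim^1$ as kernel and cokernel of ``identity minus shift'' on a product. Viewed degreewise, (\ref{1}) is a short exact sequence of complexes and therefore yields a distinguished triangle
$$\oplus K_{(i)}\stackrel{d}{\sr}\oplus K_{(i)}\sr K\sr \oplus K_{(i)}[1]$$
in $D(A)$; here Corollary \ref{c3.4} lets me identify the $\oplus$ formed in $D(A)$ with the naive one in $C(A)$.

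Applying $Hom_{D(A)}(-,L)$ to this triangle, and using Corollary \ref{c3.4} once more in the form $Hom_{D(A)}(\oplus K_{(i)},M)=\prod Hom_{D(A)}(K_{(i)},M)$, I obtain a long exact sequence in which $Hom(K,L)$ sits between the cokernel of $d^*$ acting on $\prod Hom(K_{(i)},L[-1])$ and the kernel of $d^*$ acting on $\prod Hom(K_{(i)},L)$. Unwinding the definition of $d$ as identity minus the shift induced by the transition maps $K_{(i)}\sr K_{(i+1)}$ shows that the induced $d^*$ on $\prod Hom(K_{(i)},M)$ is the difference between the identity and precomposition with those transition maps, so its kernel is $\lim Hom(K_{(i)},M)$ and its cokernel is $\lim^1 Hom(K_{(i)},M)$ by definition. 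Piecing these identifications together gives the stated short exact sequence.

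It remains to check that each of (1), (2) forces (\ref{1}) to be exact. Hypothesis (1) is essentially treated in the paragraph just before the statement: $d$ is the sequential colimit of the injections $\oplus_{i=0}^n K_{(i)}\sr \oplus_{i=0}^{n+1} K_{(i)}$, whose graded pieces for the filtration by $\oplus_{i\ge p}K_{(i)}$ are identity inclusions, and exactness of sequential colimits propagates injectivity to the limit, with the expected cokernel $K$. For hypothesis (2), compatible splittings $K_{(i+1)}^n=K_{(i)}^n\oplus C_i^n$ render each finite injection $\oplus_{i=0}^n K_{(i)}^n\sr \oplus_{i=0}^{n+1} K_{(i)}^n$ itself split, and the sequential colimit of split short exact sequences is exact without any global exactness hypothesis on inductive limits in $A$.

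The step most in need of care is the identification of $d^*$ with the standard ``identity minus shift'' complex computing $\lim$ and $\lim^1$; everything else reduces to formal manipulation of distinguished triangles together with the coproduct compatibility supplied by Corollary \ref{c3.4}.
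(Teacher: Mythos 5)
Your main argument --- turning the exact sequence (\ref{1}) into a distinguished triangle, applying $Hom_{D(A)}(-,L)$, using Corollary \ref{c3.4} to convert $Hom(\oplus K_{(i)},-)$ into a product, and identifying the kernel and cokernel of the induced ``identity minus shift'' with $lim$ and $lim^1$ --- is exactly the paper's argument (it appears in the text immediately preceding the proposition), and your treatment of hypothesis (1) likewise reproduces the paragraph before the statement. The problem is your treatment of hypothesis (2), which is the one point the paper's own proof is actually devoted to.

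You assert that ``the sequential colimit of split short exact sequences is exact without any global exactness hypothesis on inductive limits in $A$.'' This is not justified, and it is precisely the kind of statement that fails when sequential colimits are not exact --- avoiding that hypothesis is the whole point of case (2). The truncated sequences $0\sr \oplus_{i\le n}A_i\sr \oplus_{i\le n+1}A_i\sr A_{n+1}\sr 0$ (with $A_i:=K_{(i)}^n$) are indeed split, even without hypothesis (2): a section of the quotient map is the inclusion of $A_{n+1}$ as the last summand. But these sections are \emph{not} compatible with the transition maps (including $A_{n+1}$ as the last summand of $\oplus_{i\le n+1}A_i$ and then into $\oplus_{i\le n+2}A_i$ is not the same as first applying $A_{n+1}\sr A_{n+2}$ and including as the new last summand), so splitness does not pass to the colimit; and a ``colimit of monomorphisms is a monomorphism'' argument is exactly the exactness of sequential colimits you are not allowed to assume. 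The paper's proof instead uses hypothesis (2) to produce compatible decompositions $A_i=\oplus_{j\le i}B_j$, regroups $\oplus_i A_i=\oplus_j\bigl(\oplus_{i\ge j}B_j\bigr)$, observes that $1-s$ preserves this decomposition and on the $j$-th block is the tensor product of $B_j$ with the corresponding sequence of free abelian groups, which is \emph{globally} split exact --- so each block of (\ref{1}) is split exact as an infinite sequence, not merely a colimit of split pieces --- and then concludes by the assumed exactness of countable direct sums. You need to replace your one-line appeal by an argument of this kind.
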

\begin{proof}
It remains to check that condition (2) implies the exactness of
(\ref{1}). This is to be seen degree by degree. By assumption, the
$A_i:=K_{(i)}^n$, have decompositions compatible with the transition
maps $A_i=\oplus_{j=0}^i B_i$. A corresponding decomposition of
(\ref{1}) in direct sum follows, and we are reduced to check
exactness of the particular case (\ref{1})$_{(A,n)}$ of (\ref{1})
when $B_i=0$ for $i\ne n$, i.e. when $A_i$ is a fixed $A$ fro $i\ge
n$, and is $0$ otherwise. Let (\ref{1})$_0$ be the sequence
(\ref{1})$_{\zz,n}$ in $Ab$ for $A=\zz$. It is a split exact
sequence of free abelian groups. Because direct sums exist, $L\oo A$,
for $L$ a free abelian group is defined and functorial in $L$. It is a
sum of copies of $A$, indexed by a basis of $L$, and is characterized
by 
$$Hom(L\oo A, B) = Hom(L, Hom(A,B))$$
(functorial in $B$). The sequence (\ref{1})$_{A,n}$ is
(\ref{1})$_{0}\oo A$ and, (\ref{1})$_0$ being split exact, it
splits and in particular is exact.  
\end{proof}

The truncation $\sigma_{\le n}K=\sigma^{\ge -n}K$ of acomplex $K$ is
the subcomplex which coincides with $K$ in homological degree $\le n$
and is $0$ in homological degree $>n$.  For any complex $K$, one has
$$K=colim \sigma_{\le n} K$$
and this colimit satisfies condition (2) of  \ref{3.5}. It
follows that
\begin{cor}
\llabel{c3.6}
Under the assumptions of Corollary \ref{c3.4}, for any $K$ and $L$,
one has a short exact sequence
$$0\sr lim^1 Hom(\sigma_{\le n}K, L[-1])\sr Hom(K,L)\sr lim
Hom(\sigma_{\le n} K, L)\sr 0$$
\end{cor}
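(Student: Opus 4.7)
The plan is to obtain this as a direct application of Proposition \ref{3.5} to the canonical inductive system $K_{(n)} := \sigma_{\le n}K$ that exhausts $K$. The work therefore reduces to checking the two hypotheses of \ref{3.5}, namely that $K$ is the colimit of the $K_{(n)}$ and that the transition maps $K_{(n)} \sr K_{(n+1)}$ satisfy condition (2) of that proposition (inclusion of a direct factor in each degree).

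First I would check the colimit statement. In each fixed homological degree $m$, $(\sigma_{\le n}K)_m$ equals $K_m$ once $n \ge m$ and is $0$ otherwise. Thus the inductive system stabilizes degreewise at $K_m$, so $\text{colim}\, \sigma_{\le n}K = K$ as complexes, and the differentials match because the differential of $\sigma_{\le n}K$ agrees with that of $K$ in all degrees $\le n$.

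Next I would verify condition (2). The map $\sigma_{\le n}K \sr \sigma_{\le n+1}K$ in degree $m$ is the identity $K_m \sr K_m$ when $m \le n$, is the zero map $0 \sr K_{n+1}$ when $m = n+1$, and is $0 \sr 0$ when $m > n+1$. In each case the source is manifestly a direct factor of the target (being either the whole target or the zero summand of the target). Hence condition (2) of \ref{3.5} holds, and in particular the sequence $(\ref{1})$ is exact for the system $K_{(n)} = \sigma_{\le n}K$.

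Applying Proposition \ref{3.5} with this choice of $K_{(n)}$ then yields exactly the claimed short exact sequence
$$0\sr lim^1 Hom(\sigma_{\le n}K, L[-1])\sr Hom(K,L)\sr lim Hom(\sigma_{\le n} K, L)\sr 0.$$
There is essentially no obstacle: all the real content sits in \ref{3.5} (and behind it in \ref{c3.4}), and the only thing specific to $\sigma_{\le n}$ is the trivial observation about direct factors that I just checked.
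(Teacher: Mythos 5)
Your proof is correct and follows exactly the paper's own route: the paper likewise observes that $K=colim\,\sigma_{\le n}K$ and that this system satisfies condition (2) of Proposition \ref{3.5}, then invokes that proposition. The only difference is that you spell out the degreewise verification (identity, $0\sr K_{n+1}$, or $0\sr 0$) that the paper leaves implicit.
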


\section{$\af$-equivalences of simplicial sheaves on $G$-schemes}

\subsection{Sheaves on a site of $G$-schemes}\llabel{10}
We fix a base scheme $S$, supposed to be separated noetherian and of
finite dimension; fiber product $X\times_S Y$ will be written simply
as $X\times Y$. We also fix a group scheme $G$ over $S$, supposed to
be finite and flat. We note $h_X$ the representable sheaf defined by
$X$.

Let $QP/G$ be the category of schemes quasi-projective over $S$, given
with an action of $G$.  Any $X$ in $QP/G$ admits an open covering
$(U_i)$ by affine open subschemes which are $G$-stable. This makes it
possible to define a reasonable quotient $X/G$ in the category of
schemes over $S$ (rather than in the larger category of algebraic
spaces). For each $U_i$, $U_i/G$ is defined as the spectrum of the
equalizer 
$${\cal O}(U_i)\dsr {\cal O}(U_i\times G),$$
and $X/G$ is obtained by gluing the $U_i/G$. It is a categorical
quotient, i.e. the coequalizer of $G\times X\dsr X$. The map $X\sr
X/G$ is finite, open, and the topological space $|X/G|$ is the
coequalizer of the map of topological spaces $|G\times X|\dsr
|X|$. One can show that $X/G$ is again quasi-projective. Remark
\ref{rm9.2} below shows that this fact, while convenient for the
exposition, is irrelevant.

One defines on $QP/G$ a pretopology (\cite[II.1.3]{SGA3}) by taking as
coverings the family of etale maps $Y_i\sr X$ with the following
property: $X$ admits a filtration by closed equivariant subschemes
$\emptyset =X_n\subset\dots\subset X_1\subset X_0=X$ such that for
each $j$, some map $Y_i\sr X$ has a section over $X_j-X_{j+1}$. The
{\em Nisnevich topology} on $QP/G$ is the topology generated by this
pretopology. The category $QP/G$ with the Nisnevich topology is
the {\em Nisnevich site} $(QP/G)_{Nis}$.
\begin{remark}\rm
The corresponding topos is not the classifying topos of
\cite[IV.2.5]{SGA4}. A morphism $X\sr Y$ can become a Nisnevich
covering after forgetting the action of $G$, and not be a Nisnevich
covering. Example: $S=Spec(k)$, $G=\zz/2$, $X=S$, $Y=S\coprod S$ and
$G$ permutes two copies of $S$ in $Y$.
\end{remark}
\begin{remark}
\rm\llabel{rm9.2} Let $(affine/G)_{Nis}$ be the site defined as
above, with ``quasi-projective'' replaced by ``affine''. It is
equivalent to $(QP/G)_{Nis}$, in the sense that restriction to
$(affine/G)_{Nis}$ is an equivalence from the category of sheaves on
$(QP/G)_{Nis}$ to the category of sheaves on $(affine/G)_{Nis}$.
\end{remark}
\begin{remark}
\rm\llabel{rm9.3} If $G$ is the trivial group $e$, the definition
given above recovers the usual Nisnevich topology. For $G=e$, the
condition usually considered: ``every point $x$ of $X$ is the image of
a point with the same residue field of some $Y_i$'', is indeed
equivalent to the condition imposed above. This is checked by
noetherian induction: if a generic point $\xi$ of $X$ can be lifted to
$Y_i$, some open neighborhood $U\subset X$ of $\xi$ can be lifted to
$Y_i$, and one applies the induction hypothesis to
$X_1=(X-U)_{red}$. 
\end{remark}
We write $(QP)_{Nis}$ for the category of
quasi-projective schemes over $S$, with the Nisnevich topology. 
\begin{lemma}
\llabel{10.4}
If ${\cal U}:Y_i\sr X\,\,\,(i\in I)$ is a covering of $X$ in
$(QP/G)_{Nis}$, there is a covering $\cal V$ of $X/G$ in $(QP)_{Nis}$ whose
pull-back to $X$ is finer than $\cal U$.
\end{lemma}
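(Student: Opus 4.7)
The plan is to proceed by induction on the length $n$ of the filtration $\emptyset = X_n \subsetneq \cdots \subsetneq X_0 = X$ witnessing that $\mathcal{U}$ is a covering. First I would record a preliminary observation: since each $X_j$ is closed and $G$-stable, it descends to a closed subscheme $\bar X_j := X_j/G$ of $X/G$, and the quotient $\pi : X \to X/G$ satisfies $\pi^{-1}(\bar X_j) = X_j$. Consequently the strata of $X$ are exactly the preimages along $\pi$ of the corresponding strata of $X/G$, so a Nisnevich stratification of $X/G$ is available automatically from the one given on $X$.

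For the base case $n=1$, the entire scheme $X$ is a single stratum, so some $Y_{i_0} \to X$ admits a scheme-theoretic section $s : X \to Y_{i_0}$. The singleton covering $\mathcal{V} := \{\mathrm{id}_{X/G}\}$ then pulls back to $\mathrm{id}_X : X \to X$, which is refined by $s$ as an $X$-scheme map into $Y_{i_0}$.

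For the inductive step, the restricted family $\{Y_i|_{X_1} \to X_1\}$ together with the filtration $X_1 \supset \cdots \supset X_n$ is a $(QP/G)_{Nis}$-covering of $X_1$; by the inductive hypothesis it admits a refinement coming from a $(QP)_{Nis}$-covering $\mathcal{W} = \{W_k \to \bar X_1\}$ of $\bar X_1$. Separately, over $U := X \setminus X_1$ (with image $\bar U := U/G$ open in $X/G$), some $Y_{i_0}$ has a section $s : U \to Y_{i_0}$. To assemble $\mathcal{V}$, I would extend each $W_k \to \bar X_1$ to an etale morphism $\tilde W_k \to X/G$ defined over a Zariski neighborhood of $\bar X_1$, using the standard fact that etale morphisms to a closed subscheme lift locally to etale morphisms on a neighborhood (Nisnevich-locally via Hensel's lemma for etale neighborhoods; more globally in the quasi-projective setting via Grothendieck's lifting of etale covers along nilpotent thickenings together with algebraization). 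The covering $\mathcal{V}$ is then $\{\bar U \hookrightarrow X/G\}$ together with the $\tilde W_k$, possibly further refined so that the collection covers all of $X/G$ in the Nisnevich sense.

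For the refinement verification: the pullback of $\bar U \hookrightarrow X/G$ is $U \hookrightarrow X$, which is refined by $s$; the pullback $\tilde W_k \times_{X/G} X$ restricts over $X_1$ to $W_k \times_{\bar X_1} X_1$, on which the inductive refinement gives a map to some $Y_i$, and one extends this off $X_1$ by Nisnevich-local analysis. The hard part will be precisely the simultaneous extension of the $W_k$'s and of their refinement maps to neighborhoods of $\bar X_1$ in $X/G$ (respectively of $X_1$ in $X$); this is where the quasi-projective hypothesis is essential, and where the Nisnevich topology provides the flexibility to patch together locally defined extensions into a global covering.
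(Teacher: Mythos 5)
There is a genuine gap at the step you yourself flag as ``the hard part'': extending each \'etale $W_k\sr \bar X_1$ to an \'etale morphism $\tilde W_k\sr X/G$ over a Zariski neighborhood of the closed subscheme $\bar X_1$. The tools you invoke do not provide this. Topological invariance of the \'etale site (lifting along nilpotent thickenings) only concerns infinitesimal neighborhoods, and Elkik/Artin-type algebraization lifts \'etale or smooth algebras along \emph{henselian} pairs, not along a closed immersion into an actual Zariski (or even \'etale) neighborhood. In general an \'etale scheme over a closed subscheme simply does not spread out to an \'etale scheme over a neighborhood, and the same obstruction recurs when you try to extend the refinement maps $W_k\times_{\bar X_1}X_1\sr Y_i$ off $X_1$. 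If you repair this by working Nisnevich-locally, i.e.\ point by point on $\bar X_1$ via henselizations, the stratum-by-stratum induction becomes superfluous and you are led back to the argument the paper actually gives.

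The paper avoids the global extension problem entirely by arguing pointwise on $X/G$: for $x\in X/G$ the reduced fiber $(q^{-1}(x))_{red}$ lies in a single stratum $X_j-X_{j+1}$ because the $X_j$ are $G$-stable and $q$ is finite with $|X/G|$ the topological quotient; some $Y_i$ has an equivariant section there; since $q$ is finite, the pullback of the henselization $(X/G)^h_x$ to $X$ is $\coprod_{q(y)=x}X^h_y$, and Hensel's lemma lifts the section of the \'etale $Y_i$ uniquely (hence equivariantly) from the reduced fiber to this henselian scheme; finally a limit argument over \'etale neighborhoods of $x$ descends the section to some \'etale neighborhood $V(x)$, and the $V(x)$ form the covering $\cal V$ since every point of $X/G$ lifts to one of them with the same residue field. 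Your observation that the fibers of $q$ meet only one stratum is the right starting point, but the passage from henselian local data to an honest \'etale neighborhood must be done one point at a time, not by globally thickening a covering of $\bar X_1$.
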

\begin{proof}
Fix a filtration $\emptyset=X_n\subset\dots\subset X_0=X$ as in the
definition of the Nisnevich topology. We write $q$ for the quotient
map $X\sr X/G$. For $x$ in $X/G$, $(q^{-1}(x))_{red}$ is in some
$X_j-X_{j+1}$, by equivariance of the $X_j$, and one of the maps
$Y_i\sr X$ has an equivariant section $s$ over $X_j-X_{j+1}$. Let
$(X/G)_x^h$ be the henselization of $X/G$ at $x$. The map $q$ being
finite, the pull-back of $(X/G)_x^h$ to $X$ is the coproduct of the
$X_y^h$ for $q(y)=x$. The map from $Y_i$ to $X$ being etale, the
section $s$, restricted to $(q^{-1}(y))_{red}$, extends uniquely to a
section (automatically equivariant) of $Y_i$ over $\coprod_{q(y)=x}
X^h_y$. Writing $(X/G)^h_x$ as the limit of etale neighborhoods of
$x$, one finds that $x$ has an etale neighborhood $V(x)$ such that
$Y_i$ has an equivariant section over $X\times_{X/G} V(x)$. The $V(x)$
form the required covering $\cal V$.
\end{proof}

We define the $G$-local henselian schemes to be the schemes $Y$
obtained in the following way. For $X$ in $(QP/G)$, $y$ a point of
$X/G$, and $(X/G)^h_y$ the henselization of $X/G$ at $y$, take the
fiber product $Y:=X\times_{X/G} (X/G)^h_y$. As $X$ is finite over
$X/G$, this fiber product is a finite disjoint union of local
henselian schemes, and $G$-local henselian schemes are simply the
$G$-equivariant finite disjoint unions of $Y$ of local henselian
schemes, for which $Y/G$ is local. 
\begin{proposition}
\llabel{prop9.star}
If $Y$ is $G$-local henselian, the functor $X\mapsto Hom(Y,X)$ is a
point of the site $(QP/G)_{Nis}$, i.e. it defines a morphism of
the punctual site $(Sets)$ to $(QP/G)_{Nis}$. If $Y=X\times_{X/G}
(X/G)^h_{y}$, the corresponding fiber functor is $F\mapsto colim
F(X\times_{X/G} V)$, the colimit being taken over the etale
neighborhoods of $y$ in $X/G$. The collection of fiber functors so
obtained is conservative.
\end{proposition}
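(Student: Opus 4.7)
My plan is to establish the three assertions in order: (i) that $\phi_Y(F) := \operatorname{colim}_V F(X\times_{X/G} V)$, with $V$ ranging over affine etale neighborhoods of $y$ in $X/G$, is a fiber functor (hence a point of $(QP/G)_{Nis}$); (ii) that when evaluated on the representable $h_{X'}$ it gives $\operatorname{Hom}(Y,X')$; and (iii) that the collection of these functors, as $Y$ varies over $G$-local henselian schemes, is conservative. The general case of a $G$-local henselian $Y$ that is a disjoint union reduces to the connected case $Y = X\times_{X/G}(X/G)^h_y$ since points are stable under finite disjoint union in the obvious way.

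For (i), I would invoke the standard criterion (\cite{SGA4} IV.6): a cofiltered inverse system $(Z_\lambda)$ of objects of the site defines a point by $F\mapsto \operatorname{colim}_\lambda F(Z_\lambda)$ provided that for every covering $\{U_i\to Z_\lambda\}$ in the topology, there exist $\mu\le\lambda$ and an index $i$ such that the transition $Z_\mu\to Z_\lambda$ factors through some $U_i$. The filtered colimit automatically yields finite-limit preservation, and arbitrary colimit preservation holds because each $F\mapsto F(X\times_{X/G}V)$ does and filtered colimits of colimits agree with colimits of filtered colimits. The nontrivial point is cover-lifting: given a Nisnevich covering $\{U_i\to X\times_{X/G}V\}$ in $(QP/G)_{Nis}$, Lemma \ref{10.4} produces a Nisnevich covering $\mathcal V$ of $(X\times_{X/G}V)/G = V$ whose pull-back to $X\times_{X/G}V$ refines the given cover. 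Because the henselization $(X/G)^h_y = \lim V'$ is local henselian, the Nisnevich cover $\mathcal V$ of $V$ admits a section over some smaller etale neighborhood $V'\subset V$ of $y$; pulling back to $X\times_{X/G}V'$ yields the required factorization through some $U_i$.

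For (ii), $\phi_Y(h_{X'}) = \operatorname{colim}_V \operatorname{Hom}_{QP/G}(X\times_{X/G}V,\, X')$, and since $X'$ is of finite presentation over $S$ while $Y = \lim_V X\times_{X/G}V$ is an affine limit of affine schemes with affine transition maps (after localizing on $X/G$), a standard EGA~IV.8 limit argument identifies this colimit with $\operatorname{Hom}(Y,X')$. For (iii), suppose $s\in F(X)$ is a nonzero section of a sheaf $F$ and $\phi_Y(s)=0$ for every $G$-local henselian point $Y=X\times_{X/G}(X/G)^h_y$, $y\in X/G$. Then each $y$ admits an etale neighborhood $V_y$ of $y$ in $X/G$ on which $s$ vanishes after pullback to $X\times_{X/G}V_y$. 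The family $\{V_y\to X/G\}$ is a Nisnevich covering of $X/G$, whose pullback along $X\to X/G$ is a covering of $X$ in $(QP/G)_{Nis}$ (indeed the filtration of $X/G$ witnessing the Nisnevich property pulls back to one for $X$). Since $F$ is a sheaf and $s$ vanishes on this covering, $s = 0$, a contradiction. The same argument applied to kernels and cokernels of a map $F\to F'$ shows that the collection of fiber functors detects isomorphisms.

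The main obstacle is the cover-lifting step in (i): one must reduce an equivariant Nisnevich covering on a $G$-scheme to a non-equivariant Nisnevich covering on the quotient, so that the henselian property of $(X/G)^h_y$ becomes applicable. This is precisely the content of Lemma \ref{10.4}, which is the essential technical input and which makes the whole reduction work.
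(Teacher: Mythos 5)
Your proposal is correct and follows essentially the same route as the paper: the whole argument rests on Lemma \ref{10.4} to push an equivariant Nisnevich covering of $X\times_{X/G}V$ down to a covering of $V$, which then splits over the henselization $(X/G)^h_y$, and your conservativity argument (the \'etale neighborhoods where a stalkwise property holds form a Nisnevich covering of $X/G$ whose pullback covers $X$) is exactly the ``proof parallel to \ref{10.4}'' that the paper leaves to the reader. Two cosmetic points only: phrase the conservativity step for sheaves of \emph{sets} (two sections agreeing at all stalks agree on a covering; a stalkwise-liftable section lifts locally and the lifts glue) rather than via kernels and cokernels, and note that evaluation at an object does not by itself commute with colimits of sheaves --- commutation with colimits is a consequence of the cover-lifting property you verify, not an independent input.
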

\begin{proof}
The functor $X\mapsto Hom(Y,X)$ commutes with finite limits. It
follows from  \ref{10.4} that it transforms coverings into
surjective families of maps, hence is a morphism of sites $(Sets)\sr
(QP/G)_{Nis}$.

To check that the resulting set of fiber functors is conservative, it
suffices to check that a family of etale $f_i:U_i\sr X$ is a covering
if for any $G$-local henselian $Y$,
$$\coprod Hom(Y,U_i)\sr Hom(Y,X)$$
is onto. The proof, parallel to that of  \ref{10.4} is left to the
reader.
\end{proof}

\subsection{The Brown-Gersten closed model structure on simplicial
sheaves on $G$-schemes}\llabel{12}
We recall that a commutative square of simplicial sets (or pointed
simplicial sets) 
\begin{equation}
\llabel{12.1.1}
\begin{CD}
K @>>> L\\
@VVV @VVV\\
M @>>> N
\end{CD}
\end{equation}
is {\em homotopy cartesian} (or a {\em homotopy pull-back square}) if,
when $L$ is replaced by $L'$ weakly equivalent to it and mapping to
$N$ by (Kan) fibration: $L\stackrel{\cong}{\sr}L'\sr N$, the map from
$K$ to $L'\times_N M$ is a weak equivalence.
\begin{definition}
\llabel{d12.5}
A simplicial presheaf $F_{\bullet}$ on $(QP/G)_{Nis}$ is {\em flasque}
if $F(\emptyset)$ is contractible and if for any (upper) distinguished
square:
$$
\begin{CD} 
B @>>> Y\\
@VVV @VVpV\\
A @>j>> X
\end{CD}
$$
($p$ - etale, $j$ open embedding, $B=p^{-1}(A)$ and $Y-B\cong X-A$),
the square
$$
\begin{CD} 
F(X) @>>> F(Y)\\
@VVV @VVpV\\
F(A) @>j>> F(B)
\end{CD}
$$
is homotopy cartesian.
\end{definition}
\begin{theorem}
\llabel{12.6} Let $f:F_{\bullet}\sr F_{\bullet}'$ be a morphism of
flasque simplicial\\ presheaves. If the induced morphism of simplicial
sheaves $aF_{\bullet}\sr aF_{\bullet}'$ is a local equivalence,
then, for any $U$ in 
$QP/G$, $F_{\bullet}(U)\sr F_{\bullet}'(U)$ is a weak equivalence.
\end{theorem}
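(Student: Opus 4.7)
The strategy is to combine the Mayer--Vietoris property of flasque presheaves (Definition \ref{d12.5}) with the conservative family of $G$-local henselian fibre functors supplied by Proposition \ref{prop9.star}, via noetherian induction on $U$. First I would reduce the theorem to a vanishing statement for a single flasque object, and then carry out the induction using distinguished squares.

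\emph{Reduction to a vanishing statement.} Factor $f$ sectionwise as a sectionwise weak equivalence followed by a Kan fibration; the intermediate presheaf is again flasque, because flasqueness is invariant under sectionwise weak equivalence (the defining squares are homotopy Cartesian, and that condition is preserved when one vertex is replaced by a weakly equivalent one). So we may assume $f$ is a sectionwise Kan fibration. For each basepoint $\alpha$ of $F'_\bullet$, form the sectionwise homotopy fibre $H_\bullet(U;\alpha)$; since homotopy fibres of homotopy Cartesian squares are again homotopy Cartesian, $H_\bullet$ is flasque. The local-equivalence hypothesis together with Proposition \ref{prop9.star} then implies that the Nisnevich sheaves $\pi_n^{Nis}(H_\bullet)$ all vanish. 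It suffices to prove: \emph{if $H_\bullet$ is a flasque simplicial presheaf with $\pi_n^{Nis}(H_\bullet) = 0$ for all $n$, then $H_\bullet(U)$ is contractible for every $U \in QP/G$.}

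\emph{Noetherian induction.} For fixed $U$, induct on closed $G$-invariant subschemes $Z \subset U$, proving contractibility of $H_\bullet(U \setminus Z)$. The base case $Z = U$ reduces to $H_\bullet(\emptyset) \simeq *$, which is part of the definition of flasque. For the inductive step, pick a generic point $\xi$ of a maximal component of $(U \setminus Z)/G$. Because $\pi_n^{Nis}(H_\bullet) = 0$, there exists an etale neighborhood $V \to U/G$ of $\xi$ (arranged $G$-equivariantly via Lemma \ref{10.4}) together with an open stratum on which each relevant homotopy class dies. Restricting to this stratum yields a Nisnevich distinguished square around $U \setminus Z$ whose complementary open has strictly larger closed complement in $U$, to which the inductive hypothesis applies. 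Applying $H_\bullet$ and invoking flasqueness turns the square into a homotopy Cartesian square of simplicial sets; the two non-stalk vertices are contractible by induction, and the remaining vertex is contractible because its sections compute the vanishing stalk. Hence $H_\bullet(U \setminus Z)$ is a homotopy pullback of contractible pieces, hence contractible.

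\emph{Main obstacle.} The subtle step is extracting, from the pure stalkwise vanishing of $\pi_n^{Nis}(H_\bullet)$, a \emph{finite-stage} etale neighborhood on which $H_\bullet$ is trivial enough to fit into a distinguished square. This relies on the fact that each $\pi_n$ commutes with the filtered colimit defining the stalk at a $G$-local henselian scheme (Proposition \ref{prop9.star}), so any single null-homotopy is witnessed at some finite etale stage; arranging this simultaneously for sufficiently many $n$ to glue the inductive step, and doing so $G$-equivariantly while controlling the closed complement, is the technical heart of the argument.
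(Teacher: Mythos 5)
There is a genuine gap. Your plan is to re-prove the Brown--Gersten/Morel--Voevodsky descent theorem from scratch in the equivariant setting, and the inductive step does not work as stated. First, the vertex you call ``the remaining vertex'' is $H_{\bullet}(Y)$ for a \emph{finite-stage} etale neighborhood $Y$ of the chosen point; its sections do not compute the stalk, which is a filtered colimit over \emph{all} such neighborhoods, so ``contractible because its sections compute the vanishing stalk'' is false. Relatedly, witnessing the death of a single homotopy class at a finite stage is much weaker than contractibility of the sections: you would need to kill all classes in all degrees simultaneously, and after refining the neighborhood new classes can appear. Second, your induction is on closed $G$-invariant subschemes $Z\subset U$ for a \emph{fixed} $U$, but the distinguished square you build has vertices $H_{\bullet}(Y)$ and $H_{\bullet}(B)$ with $Y\sr U\setminus Z$ etale and not an open of $U$, so the inductive hypothesis does not apply to them; the actual Brown--Gersten-type arguments run a global induction (on Krull dimension, over all objects of the site at once, with convergence coming from finite dimensionality), not a noetherian induction inside one $U$. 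You flag this as ``the technical heart of the argument,'' but that heart is precisely the content of the theorem, so the proposal is a plan rather than a proof. (The reduction to vanishing of homotopy fibres also needs care with base points and with $\pi_0$, but that part is repairable.)

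For comparison, the paper avoids all of this by \emph{reducing to the known non-equivariant statement}: restrict $F_{\bullet}$ and $F_{\bullet}'$ to the small site $U_{Nis}$, push forward along $p:U_{Nis}\sr (U/G)_{Nis}$, and use Lemma \ref{10.4} to see that $p_*$ commutes with sheafification and preserves local equivalences, while $p_*$ of a flasque presheaf is flasque (distinguished squares over $U/G$ pull back to distinguished squares over $U$). One then quotes Morel--Voevodsky's Lemma 3.1.18 for the ordinary Nisnevich site of $U/G$ and evaluates at $U/G$ to get the weak equivalence on $F_{\bullet}(U)$. If you want to argue directly in the equivariant site you would essentially have to reprove that lemma, including the dimension-bounded convergence argument, which your sketch does not supply.
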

\begin{proof}
For a $G$-scheme $X$ let $X_{Nis}$ be the small Nisnevich site of $X$
and for a presheaf $F$ on $(QP/G)$ let $F_{|X}$ be the restriction of
$F$ to $X_{Nis}$. Our assumption that $aF_{\bullet}\sr aF_{\bullet}'$
is a local equivalence implies that $aF_{\bullet,|U}\sr
aF_{\bullet,|U}'$ is a local equivalence. The map $U\sr U/G$
defines a morphism of sites $p:U_{Nis}\sr (U/G)_{Nis}$ and 
\ref{10.4} implies that the direct image functor $p_*$ commutes with
the associated sheaf functor and takes local equivalences to
local  equivalences. Therefore the morphism $a p_*(F_{\bullet,|U})\sr a
p_*(F_{\bullet,|U}')$ is a local  equivalence. The presheaves
$p_*(F_{\bullet,|U})$ 
and $p_*(F_{\bullet,|U}')$ are flasque on $(U/G)_{Nis}$ and by \cite[Lemma
3.1.18]{MoVo} we conclude that
$$F_{\bullet}(U)=p_*(F_{\bullet,|U})(U/G)\sr
p_*(F_{\bullet,|U}')(U/G)=F_{\bullet}'(U)$$
is a weak equivalence.
\end{proof}
In \cite{KSB2}, Brown and Gersten define a simplicial closed model
structure on the category of pointed simpicial sheaves on a Noetherian
topological space of finite dimension. As in Joyal \cite{}, the 
equivalences are the local equivalences . The homotopy category
is hence the same as Joyal's, but the model structure is different:
less cofibrations, more fibrations.

The arguments of \cite{KSB2} work as well in the Nisnevich topology,
for the big as well as for the small Nisnevich site, or for
$(QP/G)_{Nis}$, once  \ref{12.6} is available.

We review the basic definitions, working in $(QP/G)_{Nis}$. Let
$\Lambda^{n,k}$ be the sub-simplicial set of $\partial\Delta^n$, union
of all faces but the $k$-th face. For $n=0$,
$\Lambda^{0,0}=\emptyset$. One takes as {\em generating trivial
cofibrations} the maps of the form $(J)$:
\begin{description}
\item[$(J_a)$] $(\Lambda^{n,k}\times h_X)_+ \sr (\Delta^{n}\times
h_X)_+$
\item[$(J_b)$] for $U\sr X$ an open embedding,
$$(\Delta^n\times h_U\coprod_{\Lambda^{n,k}\times
h_U}\Lambda^{n,k}\times h_X)_+\sr (\Delta^{n}\times
h_X)_+$$
\end{description}
One then defines the {\em fibrations} to be the morphisms $p$ having the
right lifting property with respect to generating trivial cofibrations
(see e.g. \cite{Hovey}), the {\em (weak) equivalences} to be the local 
equivalences, the {\rm trivial fibrations} to be fibrations which are
also (weak) equivalences, and the {\em cofibrations} to be the morphisms
having the left lifting property with respect to trivial fibrations. 

Following \cite{KSB2} and using  \ref{12.6}, one proves that
the trivial fibrations can be equivalently described as morphisms
having the right lifting property with respect to the
following class of morphisms $(I)$:
\begin{description}
\item[$(I_a)$] $(\partial\Delta^n\times h_X)_+\subset (\Delta^n\times
h_X)_+$
\item[$(I_b)$] for $U\sr X$ open embedding,
$$(\Delta^n\times h_U\coprod_{\partial\Delta^n\times
h_U}\partial\Delta^n\times h_X)_+\sr (\Delta^{n}\times
h_X)_+$$
\end{description}
The maps of the form $(I)$ are called {\em generating cofibrations}. 

For $X$ and $Y$ pointed simplicial sheaves, one defines a pointed
simplicial set $S(X,Y)$ by 
$$S(X,Y)_n=Hom(X\wedge (\Delta^n)_+, Y)$$
Following \cite{KSB2}, one sees that the classes of cofibrations,
(weak) equivalences, fibrations, and $S$ are a simplicial closed model
structure in the sense of \cite{}. This has the following
consequences.
\begin{cor}
\llabel{cor12.4}
If $X$ is cofibrant and $Y$ fibrant, for any pointed simplicial set
$K$, one has in the relevant homotopy categories
$$Hom_{Ho}(X\wedge K, Y)=Hom_{Ho}(K, S(X,Y))$$
In particular, taking $k=(\Delta^0)_+$ one gets 
$$Hom_{Ho}(X,Y)=\pi_0 S(X,Y)$$
\end{cor}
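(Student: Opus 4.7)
The plan is to extract this as a formal consequence of the simplicial closed model structure just established, rather than do any new computations with Nisnevich sheaves. The simplicial structure is encoded by the bifunctor $S(-,-)$, together with (implicitly) a tensoring $X\wedge K$ and a cotensoring, satisfying Quillen's axiom SM7: for a cofibration $i:A\hookrightarrow B$ of pointed simplicial sheaves and a fibration $p:Y\sr Z$, the pushout-product map
$$S(B,Y)\sr S(A,Y)\times_{S(A,Z)} S(B,Z)$$
is a Kan fibration of pointed simplicial sets, which is a trivial fibration whenever $i$ or $p$ is a weak equivalence. I would first record the three consequences of SM7 that we will actually use: (a) taking $A=*$ and $Z=*$, if $X$ is cofibrant and $Y$ is fibrant then $S(X,Y)$ is a Kan complex; (b) if $X$ is cofibrant, the left adjoint functor $X\wedge(-)$ from pointed simplicial sets to pointed simplicial sheaves is a left Quillen functor (sends cofibrations to cofibrations and trivial cofibrations to trivial cofibrations); (c) dually, for $Y$ fibrant, $S(-,Y)$ sends (trivial) cofibrations of pointed simplicial sheaves to (trivial) Kan fibrations of simplicial sets.

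Next I would argue the adjunction descends to the homotopy categories. Since pointed simplicial sets with the standard Kan--Quillen structure have every object cofibrant, and since $X\wedge(-)$ is left Quillen for cofibrant $X$, the functor $K\mapsto X\wedge K$ preserves weak equivalences between pointed simplicial sets. Dually, $S(X,-)$ preserves weak equivalences between fibrant objects. Hence the strict adjunction $Hom(X\wedge K,Y)=Hom(K,S(X,Y))$ of pointed hom-sets passes directly to the homotopy categories when $X$ is cofibrant and $Y$ is fibrant: both sides compute the same derived functors (no cofibrant or fibrant replacement is needed because the given $X,K$ are already cofibrant and the given $Y,S(X,Y)$ are already fibrant, the latter by (a)). This yields the claimed equality
$$Hom_{Ho}(X\wedge K,Y)=Hom_{Ho}(K,S(X,Y)).$$

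For the second statement, specialize to $K=(\Delta^0)_+$. On the left, $X\wedge(\Delta^0)_+\cong X$, giving $Hom_{Ho}(X,Y)$. On the right, we are computing $Hom_{Ho_*(sSet)}((\Delta^0)_+,S(X,Y))$, and since $S(X,Y)$ is a Kan complex by (a), this pointed homotopy hom-set equals $\pi_0 S(X,Y)$ by the standard identification for the Kan--Quillen model structure.

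The only real content is the verification of SM7 for the Brown--Gersten/Joyal structure on $(QP/G)_{\mathrm{Nis}}$, and this follows exactly as in \cite{KSB2} once Theorem \ref{12.6} is in hand; I would anticipate that the main subtlety is making sure the generating (trivial) cofibrations $(I)$ and $(J)$ behave correctly under the pushout-product with boundary inclusions $\partial\Delta^n\hookrightarrow\Delta^n$ and horn inclusions $\Lambda^{n,k}\hookrightarrow\Delta^n$, but this is purely combinatorial and identical to the topological-space case of Brown--Gersten.
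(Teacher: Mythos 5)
Your proposal is correct and matches the paper's intent: the paper states \ref{cor12.4} without proof, as a direct consequence of the simplicial closed model structure established via \cite{KSB2}, and your argument is exactly the standard deduction from axiom SM7 (Kan-ness of $S(X,Y)$, descent of the adjunction to homotopy categories, and specialization to $K=(\Delta^0)_+$) that the paper leaves implicit. Nothing is missing.
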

\begin{cor}
\llabel{cor12.5}
If $X\sr Y$ is a cofibration and $Z$ a cofibrant object, then $X\wedge
Z\sr Y\wedge Z$ is a cofibration. 
\end{cor}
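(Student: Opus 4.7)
The plan is a two-step reduction to a pushout-product statement on the generating cofibrations, followed by a direct combinatorial check.

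First, for any pointed simplicial sheaf $Z$, the functor $-\wedge Z$ is left adjoint to the internal hom $\ih(Z,-)$, hence preserves all colimits and retracts. The class of cofibrations is, by construction, the saturation of the generating set $(I)=(I_a)\cup(I_b)$ under pushouts, transfinite compositions, coproducts, and retracts, so it suffices to prove that $i\wedge Z$ is a cofibration whenever $i\in (I)$. Fixing such $i:X\sr Y$ and applying the same saturation argument to the cofibration $*\sr Z$, one exhibits $Z$ as a retract of a transfinite composite of pushouts of maps $j:A\sr B$ in $(I)$. A cellular induction on this filtration reduces the whole claim to the following pushout-product statement: for every pair of generators $i,j\in (I)$, the map
$$(Y\wedge A)\cup_{X\wedge A}(X\wedge B)\sr Y\wedge B$$
is a cofibration. (At each attachment step $Z=Z'\cup_A B$, the map $Y\wedge Z$ is the pushout of $Y\wedge Z'$ along this pushout-product map, so closure of cofibrations under pushout, combined with the inductive hypothesis for $i\wedge Z'$, gives the step.)

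Each generator in $(I)$ has a recognizable form: $(I_a)$ is $\alpha\wedge h_{X,+}$ for the pointed boundary inclusion $\alpha:(\partial\Delta^n)_+\sr(\Delta^n)_+$, and $(I_b)$ is the pushout-product of such an $\alpha$ with an open immersion of representables $\beta:h_{U,+}\hookrightarrow h_{X,+}$. Using associativity and symmetry of the smash product and of the pushout-product, together with $h_{X,+}\wedge h_{Y,+}=h_{X\times Y,+}$, the pushout-product of two generators factors into the smash of (a) a classical simplicial pushout-product of the form $(\partial\Delta^n)_+\,\Box\,(\partial\Delta^m)_+\sr(\Delta^n\times\Delta^m)_+$, which is a finite iterated pushout of boundary inclusions, with (b) an open immersion of representables $h_{(U\times Y)\cup(X\times V),+}\sr h_{X\times Y,+}$. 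Each piece manifestly lies in the saturation of $(I)$.

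The main obstacle is the bookkeeping in this last step, especially for pairs of $(I_b)$ generators: one must confirm that the scheme-theoretic pushout-product of two open immersions of representables is again an open immersion of representables, and that smashing the classical simplicial pushout-product with a representable $h_{X\times Y,+}$ produces an iterated pushout of $(I_a)$ maps. Both facts follow from commutation of the Yoneda embedding $h_{(-),+}$ with the relevant pushouts along open immersions in $QP/G$.
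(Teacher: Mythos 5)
Your argument is correct, but it is not the route the paper takes: the paper offers no proof of \ref{cor12.5} at all, listing it (together with \ref{cor12.4} and \ref{cor12.6}) as a formal consequence of the simplicial closed model structure asserted to exist ``following Brown--Gersten,'' i.e.\ it is an instance of the pushout-product (SM7-type) axiom applied to the cofibrations $X\sr Y$ and $*\sr Z$, whose verification is delegated to the citation. What you have written is essentially that deferred verification, carried out from the generating sets: identify the cofibrations with the saturation of $(I)$ (strictly speaking this uses the small object argument and the retract argument, not merely ``by construction,'' though the domains in $(I)$ are compact so this is unproblematic), reduce by cellular induction in each variable to the pushout-product of two generators, and observe that $(I_a)$ and $(I_b)$ are themselves pushout-products of simplicial boundary inclusions with open immersions of representables, so that associativity and commutativity of the pushout-product together with $h_{U}\coprod_{h_{U\cap U'}}h_{U'}\ii h_{U\cup U'}$ (a Nisnevich-sheaf identity, valid here since $\{U,U'\}$ covers $U\cup U'$) finish the computation. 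This buys a self-contained proof where the paper relies on an external reference. One small correction: in your final reduction the two factors (a) and (b) must be combined by the \emph{pushout-product}, not the smash, and the outcome in general is a finite iterated pushout of $(I_b)$-generators for the open immersion $(U\times Y)\cup(X\times V)\sr X\times Y$ (the $(I_a)$ case being the degenerate instance $U=V=\emptyset$); with that rephrasing the bookkeeping you flag as the main obstacle closes exactly as you indicate.
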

\begin{cor}
\llabel{cor12.6}
If $X$ is cofibrant and $Y$ is fibrant, then for any $Z$
\begin{equation}
\llabel{12.6.1}
Hom_{Ho}(Z,\uu{Hom}(X,Y))=Hom_{Ho}(Z\wedge X,Y)
\end{equation}
\end{cor}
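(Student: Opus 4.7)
The plan is to use the sheaf-level adjunction between $-\wedge X$ and $\uu{Hom}(X,-)$ and reduce the claim to two applications of Corollary \ref{cor12.4}. First I would replace $Z$ by a cofibrant replacement $\tilde Z$; this leaves both sides of (\ref{12.6.1}) unchanged. By Corollary \ref{cor12.5}, $\tilde Z\wedge X$ is then cofibrant, and since $Y$ is fibrant, Corollary \ref{cor12.4} (with $K=(\Delta^0)_+$) gives
$$Hom_{Ho}(\tilde Z\wedge X,Y)=\pi_0 S(\tilde Z\wedge X,Y).$$
Combining the sheaf-level adjunction $Hom(A\wedge X,B)=Hom(A,\uu{Hom}(X,B))$ with the defining formula $S(A,B)_n=Hom(A\wedge(\Delta^n)_+,B)$ yields a natural isomorphism of pointed simplicial sets $S(\tilde Z\wedge X,Y)\cong S(\tilde Z,\uu{Hom}(X,Y))$.

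To finish, I would apply Corollary \ref{cor12.4} a second time, now with the cofibrant object $\tilde Z$ and target $\uu{Hom}(X,Y)$, to identify $\pi_0$ of this last simplicial set with $Hom_{Ho}(\tilde Z,\uu{Hom}(X,Y))=Hom_{Ho}(Z,\uu{Hom}(X,Y))$. This step is legitimate provided the key auxiliary statement holds: \emph{if $X$ is cofibrant and $Y$ is fibrant, then $\uu{Hom}(X,Y)$ is fibrant.}

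The main obstacle is this key lemma. By the sheaf-level adjunction, $\uu{Hom}(X,Y)\sr *$ has the right lifting property against a generating trivial cofibration $i:A\sr B$ of type $(J_a)$ or $(J_b)$ if and only if $Y\sr *$ does against $i\wedge X$; since $Y$ is fibrant, it therefore suffices to show that smashing with the cofibrant sheaf $X$ carries each generating trivial cofibration to a trivial cofibration. For $(J_a)$, the map $(\Lambda^{n,k}\times h_{X'})_+\sr (\Delta^n\times h_{X'})_+$ is the trivial cofibration $\Lambda^{n,k}_+\sr\Delta^n_+$ of pointed simplicial sets smashed with the pointed sheaf $h_{X',+}\wedge X$; the simplicial enrichment recorded in $S(-,-)$ guarantees that smashing a trivial cofibration of simplicial sets with a fixed pointed sheaf remains a trivial cofibration. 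For $(J_b)$, the analogous statement follows from the pushout-product formulation together with the cofibrancy of $X$ and of the open embedding $U\sr X$. In both cases the essential ingredient is the pushout-product (SM7) axiom built into the Brown--Gersten simplicial closed model structure; once it is in hand, the chain of equalities above closes up to give (\ref{12.6.1}).
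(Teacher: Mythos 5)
Your argument is correct and is exactly the standard derivation that the paper leaves implicit: the paper states \ref{cor12.6} (together with \ref{cor12.4} and \ref{cor12.5}) as a formal consequence of the simplicial closed model structure ``following Brown--Gersten,'' i.e.\ of the SM7 pushout-product axiom, which is precisely what your key lemma (fibrancy of $\uu{Hom}(X,Y)$) and your two applications of \ref{cor12.4} rest on. The only step you pass over silently is that replacing $Z$ by $\tilde Z$ leaves the right-hand side unchanged, i.e.\ that $\tilde Z\wedge X\sr Z\wedge X$ is still a weak equivalence even though $Z$ need not be cofibrant; this holds because local equivalences are detected on stalks (the site has enough points) and smashing pointed simplicial sets with a fixed one preserves weak equivalences, so the gap is easily closed.
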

In (\ref{12.6.1}), $\uu{Hom}(X,Y)$ is the pointed simplicial sheaf
with components the sheaves of homomorphisms from $X\wedge
(\Delta^n)_+$ to $Y$. 

We now apply this framework to prove the folloiwng criterion for
$\af$-locality.
\begin{proposition}
\llabel{12.7}
Let $F$ be a pointed simplicial sheaf on $(QP/G)$. If, as a
simplicial presheaf, $F$ is flasque, then $F$ is
$\af$-local if and only if, for any $U$ in $(QP/G)$,
$$F(U)\sr F(U\times\af)$$
is a weak equivalence.
\end{proposition}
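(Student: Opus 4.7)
The strategy is to use the flasqueness hypothesis, via Theorem \ref{12.6}, to collapse the distinction between local and sectionwise weak equivalences, converting $\af$-locality into a sectionwise question about $F$.

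For the forward implication, the plan is to first take a fibrant replacement $F\sr F'$ which one arranges to remain flasque; then by \ref{12.6}, the map $F(U)\sr F'(U)$ is a sectionwise weak equivalence, and $S(U_+,F')=F'(U)$. Corollary \ref{cor12.4} identifies, for every pointed simplicial set $K$, the set $Hom_{Ho_{\bullet}}(U_+\wedge K,F)$ with $[K,F(U)]$. Applying $\af$-locality to the test object $Y=U_+\wedge K$ then yields natural bijections $[K,F(U)]\ii [K,F(U\times\af)]$ for every $K$; Yoneda in the pointed simplicial homotopy category forces $F(U)\sr F(U\times\af)$ to be a weak equivalence of simplicial sets.

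For the converse, the idea is to analyse the internal mapping object $\uu{Hom}(\af_+,F)$, whose sections satisfy $\uu{Hom}(\af_+,F)(U)=F(U\times\af)$ and whose natural map $F\sr\uu{Hom}(\af_+,F)$, induced by $\af\sr Spec(k)$, evaluates on $U$ to $F(U)\sr F(U\times\af)$---a weak equivalence by hypothesis. The key observation is that $\uu{Hom}(\af_+,F)$ inherits flasqueness from $F$: any upper distinguished square $(B\sr Y,\,A\sr X)$ remains distinguished after product with $\af$ (etaleness, openness, and the complement condition $Y-B\cong X-A$ are all stable under the flat base change $(-)\times\af$), so the homotopy cartesian squares certifying flasqueness of $F$ give exactly the ones required for $\uu{Hom}(\af_+,F)$. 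Hence $F\sr\uu{Hom}(\af_+,F)$ is a sectionwise, and therefore local, weak equivalence, that is, an isomorphism in $Ho_{\bullet}$.

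Finally, for any pointed simplicial sheaf $Y$, Corollary \ref{cor12.6}---applicable because the pointed representable $\af_+$ is cofibrant---yields
$$Hom_{Ho_{\bullet}}(Y\wedge\af_+,F)=Hom_{Ho_{\bullet}}(Y,\uu{Hom}(\af_+,F))=Hom_{Ho_{\bullet}}(Y,F),$$
the last equality because $F\sr\uu{Hom}(\af_+,F)$ is an isomorphism in $Ho_{\bullet}$. This is exactly $\af$-locality of $F$. The main obstacle to watch is the flasqueness inheritance step for $\uu{Hom}(\af_+,-)$ together with the justification that the Quillen pair $((-)\wedge\af_+,\uu{Hom}(\af_+,-))$ descends uniformly in $Y$ to the homotopy category; both rest on cofibrancy of $\af_+$ and the stability of the Brown--Gersten generating cofibrations $(I_a),(I_b)$ under the functor $(-)\times\af$.
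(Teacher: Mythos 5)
Your overall strategy is the paper's: use Theorem \ref{12.6} to pass between local and sectionwise statements for flasque presheaves, Corollary \ref{cor12.4} for the ``only if'' direction, and the internal hom $\uu{Hom}((h_{\af})_+,F)$ with Corollary \ref{cor12.6} for the ``if'' direction. The forward implication as you present it is essentially the paper's argument (the ``only if'' half of Lemma \ref{12.9} combined with the fibrant-replacement reduction) and is fine.

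The converse, however, has a genuine gap. Corollary \ref{cor12.6} requires $X$ cofibrant \emph{and $Y$ fibrant} to conclude $Hom_{Ho}(Z,\uu{Hom}(X,Y))=Hom_{Ho}(Z\wedge X,Y)$. You invoke it with $Y=F$, justifying only the cofibrancy of $(h_{\af})_+$; but $F$ is merely flasque, and for a non-fibrant $F$ the underived internal hom $\uu{Hom}((h_{\af})_+,F)$ need not represent the derived mapping object, so the first equality in your final display is not justified as written. The repair is exactly the reduction you already perform in the forward direction, and which the paper isolates as the last step of its proof: replace $F$ by a fibrant $F'$; by Lemma \ref{12.8} $F'$ is again flasque, so by Theorem \ref{12.6} each $F(U)\sr F'(U)$ is a weak equivalence, the hypothesis transfers to $F'$, and your argument applied to $F'$ (where Corollary \ref{cor12.6} does apply) gives $\af$-locality of $F'$, hence of $F$. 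A minor further remark: your flasqueness-inheritance step for $\uu{Hom}((h_{\af})_+,F)$ is correct (upper distinguished squares are stable under $(-)\times\af$), but it is not needed for the conclusion you draw from it --- a sectionwise weak equivalence is always a local equivalence with no flasqueness hypothesis; flasqueness is only needed for the reverse passage, which you do not use at that point.
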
 
We recall that {\em $\af$-local} means that for any $Y$ one has the
following in the homotopy category
\begin{equation}
\llabel{12.7.1}
Hom_{Ho}(Y,F) = Hom_{Ho}(Y\wedge (h_{\af})_+, F)
\end{equation}
\begin{lemma}
\llabel{12.8}
A fibrant pointed simplicial sheaf is flasque.
\end{lemma}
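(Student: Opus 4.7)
I need to establish the two defining conditions of flasqueness. The first is immediate: because the empty family is a covering of $\emptyset$ in $(QP/G)_{Nis}$, the sheaf axiom forces $F(\emptyset)$ to be the terminal pointed simplicial set, hence contractible.

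For the homotopy cartesian condition on a distinguished square with $j:A\hookrightarrow X$ open, $p:Y\sr X$ \'etale, $B=p^{-1}(A)$, and $Y-B\cong X-A$, my plan is to factor the argument through two ingredients. The first is the sheaf-level identity
$$h_{X+}=h_{A+}\cup_{h_{B+}}h_{Y+}$$
as a pushout in pointed Nisnevich sheaves on $(QP/G)$. This is the fact that elementary distinguished squares generate the Nisnevich topology: by Proposition \ref{prop9.star}, sheaves on $(QP/G)_{Nis}$ are determined by their values on $G$-local henselian test schemes $Z$, and for such $Z$ mapping to $X$ the image of the closed points either lies in $A/G$ (so the map factors through $A$) or lies in its complement (so lifts uniquely through $Y$ by \'etaleness of $p$ and the isomorphism $Y-B\cong X-A$, combined with henselianness of $Z$). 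Applying $S(-,F)$ to this pushout, and using that $S(-,F)$ converts colimits to limits in its first variable, yields the strict pullback identity $F(X)=F(Y)\times_{F(B)}F(A)$.

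The second ingredient is that $F(Y)\sr F(B)$ is a Kan fibration. Since $B=p^{-1}(A)$ and open embeddings are stable under base change, $B\hookrightarrow Y$ is an open embedding. The generating trivial cofibration $(J_b)$ associated to this open embedding, combined with the right lifting property given by fibrancy of $F$, translates by adjunction exactly into the horn-filling condition characterizing $F(Y)\sr F(B)$ as a Kan fibration; similarly $(J_a)$ yields that each $F(U)$ is a Kan complex. Combining the two ingredients, the strict pullback $F(Y)\times_{F(B)}F(A)$ coincides with the homotopy pullback, so the square of Definition \ref{d12.5} is homotopy cartesian.

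The main technical obstacle is verifying the sheaf-level pushout identity, which must be checked at the level of sheaves (not presheaves) and in the pointed setting; the reduction to $G$-local henselian points via Proposition \ref{prop9.star} is the natural tool, but one must be careful that the case analysis on where closed points of $Z$ land is genuinely exhaustive and preserves the equivariant structure.
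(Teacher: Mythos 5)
Your proof is correct and follows essentially the same route as the paper: the $(J_b)$ lifting property gives that $F(Y)\sr F(B)$ is a Kan fibration, the sheaf condition makes the square of simplicial sets a strict pullback, and a strict pullback along a Kan fibration is a homotopy pullback. The only difference is that you spell out the justification of the strict-pullback step via the pushout $h_{X+}=h_{A+}\cup_{h_{B+}}h_{Y+}$ checked on $G$-local henselian points (and you add the $F(\emptyset)$ condition), where the paper simply asserts the cartesian square from the sheaf property; this is a welcome elaboration, not a different argument.
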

\begin{proof}
The right lifting property of $F\sr *$ relative the morphisms $(J_b)$
means that for $U\subset X$ an open embedding, the morphism $F(X)\sr
F(U)$ is a Kan fibration. As $F$ is a sheaf, an uper distinguished
square 
$$
\begin{CD}
B @>>> Y\\
@VVV @VVV\\
A @>>> X
\end{CD}
$$
gives rise to a Cartesian square
$$
\begin{CD}
F(X) @>>> F(Y)\\
@VVV @VVV\\
F(A) @>>> F(B)
\end{CD}
$$
As $F(Y)\sr F(B)$ is a Kan fibration, this square is also homotopy
Cartesian.
\end{proof}
\begin{lemma}
\llabel{12.9}
 \ref{12.7} holds of the assumption ``$F$ is flasque'' is
replaced by the assumption ``$F$ is fibrant''.
\end{lemma}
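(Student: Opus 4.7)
The plan is to reformulate the $\af$-locality condition as the statement that a certain internal-hom map is a local equivalence, and then apply Theorem~\ref{12.6} to transfer between local equivalences of sheaves and termwise weak equivalences of sections.

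First, observe that $(h_{\af})_+$ is cofibrant: it is obtained from $*$ by attaching a single $(I_a)$-cell with $n=0$. Since $F$ is fibrant and the Brown-Gersten structure is a simplicial model category, the enrichment axiom implies that $\uu{Hom}((h_{\af})_+,F)$ is also fibrant. By Lemma~\ref{12.8}, both $F$ and $\uu{Hom}((h_{\af})_+,F)$ are therefore flasque as simplicial presheaves.

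Next, use Corollary~\ref{cor12.6} to rewrite the $\af$-locality condition (\ref{12.7.1}) as the assertion that $Hom_{Ho}(Y,F)\ii Hom_{Ho}(Y,\uu{Hom}((h_{\af})_+,F))$ for all $Y$. By Yoneda in the homotopy category, this is equivalent to the natural map $\eta:F\sr \uu{Hom}((h_{\af})_+,F)$ induced by the projection $(h_{\af})_+\sr S^0$ being an isomorphism in $Ho$, i.e., a local equivalence. A direct computation via the adjunction shows $\uu{Hom}((h_{\af})_+,F)(U)=F(U\times\af)$ naturally in $U$, and $\eta(U):F(U)\sr F(U\times\af)$ is the pullback $p^*$ along the projection $p:U\times\af\sr U$.

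Finally, apply Theorem~\ref{12.6} to $\eta$, which is a map of flasque simplicial presheaves that are both sheaves: it says $\eta$ is a local equivalence if and only if $\eta(U)=p^*$ is a weak equivalence of simplicial sets for every $U$ in $QP/G$ — the converse direction being immediate since stalks are filtered colimits of sections and filtered colimits preserve weak equivalences. Combining this with the reformulation above yields the claimed equivalence: $F$ is $\af$-local if and only if $F(U)\sr F(U\times\af)$ is a weak equivalence for all $U$. The main obstacle is justifying that $\uu{Hom}((h_{\af})_+,F)$ is fibrant (hence flasque by Lemma~\ref{12.8}) and identifying its sections correctly as $F(U\times\af)$ with $\eta(U)=p^*$; both are consequences of the standard adjunction calculus in a simplicial model category, but must be unpacked carefully to get the direction of the natural map right.
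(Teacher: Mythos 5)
Your proof is correct, and its ``if'' direction coincides with the paper's: both reduce to showing that $\eta:F\sr \uu{Hom}((h_{\af})_+,F)$ is a local equivalence, using \ref{cor12.6} and the identification $\uu{Hom}((h_{\af})_+,F)(U)=F(U\times\af)$. Where you genuinely diverge is the ``only if'' direction. The paper tests the $\af$-locality condition against $Y=K\wedge (h_U)_+$ and uses the mapping-space identity $S((h_U)_+,F)=F(U)$ from \ref{cor12.4} to conclude directly that $F(U)\sr F(U\times\af)$ is an isomorphism in the homotopy category of simplicial sets, hence a weak equivalence; it never needs \ref{12.6} or any property of the internal hom beyond the adjunction. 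You instead package both directions into the single statement ``$\eta$ is a local equivalence'' (via Yoneda in $Ho$ and saturation of weak equivalences) and then invoke \ref{12.6} to pass from a local equivalence of flasque presheaves to termwise weak equivalences. This is a clean unification, but it makes the ``only if'' direction pay for machinery the paper avoids: you need $\uu{Hom}((h_{\af})_+,F)$ to be flasque, and you obtain this from its fibrancy, which rests on the pushout-product half of the monoidal model axiom (that smashing a trivial cofibration with the cofibrant object $(h_{\af})_+$ yields a trivial cofibration) --- a fact the paper states only in the cofibration form \ref{cor12.5} and never verifies for trivial cofibrations. This is easily repaired without that axiom: since $-\times\af$ takes upper distinguished squares to upper distinguished squares and $\emptyset\times\af=\emptyset$, the presheaf $U\mapsto F(U\times\af)$ is flasque whenever $F$ is, which is all that \ref{12.6} requires. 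With that substitution your argument is complete; the trade-off is that the paper's ``only if'' is more elementary, while yours makes the equivalence ``$\af$-local $\Leftrightarrow$ $\eta$ is a local equivalence'' explicit, which is conceptually closer to how \ref{12.7} is actually used later (e.g.\ in \ref{14.8}).
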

\begin{proof}
``Only if'' $(I_a)$ for $n=0$ says that for any $U$, $(h_U)_+$ is
cofibrant. By \ref{cor12.4}, for any pointed simplicial set $K$, one
has
$$Hom_{Ho}((h_U)_+\wedge K,F)=Hom_{Ho}(K, S((h_U)_+, F))$$
and $S((h_U)_+,F)$ is just $F(U)$. If in (\ref{12.7.1}) we take
$Y=K\wedge (h_U)_+$, so that $Y\wedge (h_{\af})_+=K\wedge
(h_{U\times\af})_+$ we get
$$Hom_{Ho}(K, F(U\times \af))=Hom_{Ho}(K, F(U))$$
That this holds for any $K$ means that $F(U)\sr F(U\times\af)$ becomes
an isomorphism in the homomotopy category , hence is a weak
equivalence. 

\noindent
``If'' We apply \ref{cor12.6}. As $(h_{\af})_+$ is cofibrant and $F$
fibrant, 
$$Hom_{Ho}(Y\wedge (h_{\af})_+, F)=Hom_{Ho}(Y,
\uu{Hom}((h_{\af})_+,F))$$
and it suffice to show that 
$$F\sr \uu{Hom}((h_{\af})_+, F)$$
is a local  equivalence. This $\uu{Hom}$ is a simplicial sheaf
$U\mapsto F(U\times\af)$ and the claim follows.
\end{proof}
\begin{proof}
We can now finish the proof of  \ref{12.7}. Let $F\sr F'$
be a fibrant replacement of $F$. As $F$ and $F'$ are flasque, $F(U)\sr
F'(U)$ is a weak equivalence for any $U$. That all $F(U)\sr
F(U\times\af)$ be weak equivalences is hence equivalent to all
$F'(U)\sr F'(U\times\af)$ be weak equivalences, while $F$ is
$\af$-local if and only if $F'$ is.
\end{proof}

\subsection{$\bdl$-closed classes}\label{6}
The proof of the main theorem of this section will be postponed.
\begin{definition}
\llabel{3d1}
A class $S$ of morphisms of pointed simplicial sheaves is
$\Delta$-closed if 
\begin{enumerate}
\item (simplicial) homotopy equivalences are in $S$
\item if two of $f$, $g$ and $fg$ are in $S$ then so is the third
\item $S$ is stable by finite coproducts
\item if $F_{**}\sr G_{**}$ is a morphism of pointed bisimplicial
sheaves, and if all $F_{*p}\sr G_{*p}$ are in $S$, so is the diagonal
$\Delta(F)\sr \Delta(G)$.
\end{enumerate}
\end{definition}
\begin{definition}
\llabel{3d2}
The class $S$ is $\bdl$-closed if, in addition, it is stable by
arbitrary coproducts and colimits of sequences $(F_*\sr G_*)_n$ with
the property that, degree by degree, $(F_k)_n\sr (F_k)_{n+1}$
(resp. $(G_k)_n\sr (G_k)_{n+1}$) is isomorphic to an embedding
$A\subset A\coprod B$ of pointed sheaves.
\end{definition}
\begin{theorem}
\llabel{postponed}
The class of $\af$-equivalences is the $\bdl$-closure of the
union of the classes of
\begin{enumerate}
\item local  equivalences
\item morphisms $(U\times\af)_+\sr U_+$ for $U$ in $Sm/k$
\end{enumerate}
In particular, the class of $\af$-equivalences is $\bdl$-closed.
\end{theorem}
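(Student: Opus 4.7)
The plan is to prove the two inclusions separately. Let $\mathcal{W}$ denote the $\bdl$-closure of the union of the local equivalences and the projections $(U\times\af)_+ \sr U_+$ for $U$ in $Sm/k$; we must show $\mathcal{W}$ coincides with the class of $\af$-equivalences.

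For the inclusion $\mathcal{W} \subseteq (\af\text{-equivalences})$, I would verify that the class of $\af$-equivalences contains both families of generators and satisfies all the closure axioms of Definitions \ref{3d1}--\ref{3d2}. Containment of local equivalences is immediate from the construction of $Ho_{\af,\bullet}(Sm/S)$, while $(U\times\af)_+ \sr U_+$ is an $\af$-equivalence by \ref{1.2} together with \ref{d1.4} and \ref{rm3.4}. The two-of-three axiom and stability under simplicial homotopy equivalences are formal. The substantive axiom is (4), stability under passage to diagonals of bisimplicial maps; this can be proved by combining \ref{13.6} with a levelwise application of \ref{6.3}, exactly in the spirit of the argument already displayed in the proof of \ref{6.3}. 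Closure under coproducts and under colimits of monomorphic sequences follows from the compatibility of these operations with the Brown-Gersten structure of \ref{12}.

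For the reverse inclusion, the strategy is to construct a natural ``$\af$-fibrant replacement'' map $\eta_X \colon X \sr R(X)$ such that (a) $R(X)$ is Brown-Gersten fibrant and $\af$-local, (b) $\eta_X \in \mathcal{W}$ for every $X$, and (c) every $\af$-equivalence between objects satisfying (a) is already a local equivalence. Granting these, if $f \colon X \sr Y$ is an $\af$-equivalence, naturality gives $\eta_Y \circ f = R(f) \circ \eta_X$; the right-hand side lies in $\mathcal{W}$ because $\eta_X \in \mathcal{W}$ and $R(f)$ is a local equivalence by (c); since $\eta_Y \in \mathcal{W}$ as well, two-of-three places $f$ in $\mathcal{W}$. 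The functor $R$ is built by the small object argument that alternates two operations: Brown-Gersten fibrant replacement along the generators $(J)$ of \ref{12} (which are local equivalences, hence in $\mathcal{W}$), and contractions of the projections $(U\times\af)_+ \sr U_+$ (which are generators of $\mathcal{W}$ by hypothesis). Property (c) itself is the standard Bousfield-localization fact and follows from \ref{cor12.4} together with \ref{d1.4}. Because each attachment may be taken to be a levelwise monomorphism, the transfinite composition of steps lies in $\mathcal{W}$ by the colimit-closure axiom of \ref{3d2}.

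The main obstacle I anticipate is the stability of $\mathcal{W}$ under the pushouts appearing in the small object argument: if $A \sr B$ is in $\mathcal{W}$ and $A \sr X$ is a cofibration, one needs $X \sr X \cup_A B$ to be in $\mathcal{W}$. This is not literally built into Definitions \ref{3d1}--\ref{3d2}, so one must either derive it as an auxiliary lemma — for instance by realizing such a pushout as a diagonal of an auxiliary bisimplicial construction, so that axiom (4) of \ref{3d1} applies — or reorganize the construction of $R$ so that each elementary step is exhibited directly as a diagonal of maps already in $\mathcal{W}$, for example by attaching a $\Delta^1$-cylinder for every offending map simultaneously and expressing the whole replacement as a single bisimplicial diagonal. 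Verifying this stability, and packaging it cleanly with the sequential-colimit axiom, is the step where the delicate interaction between the combinatorics of $\mathcal{W}$ and the Brown-Gersten model structure must be controlled.
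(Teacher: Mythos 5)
Your proposal follows essentially the same route as the paper: one inclusion by verifying that the $\af$-equivalences form a $\bdl$-closed class containing the generators (the paper's \ref{13.4}, \ref{13.5}, \ref{13.6}), and the other by a small-object-argument localization functor (the paper's $Ex$ in \ref{14.6}--\ref{14.8}) whose unit lies in the closure and which turns $\af$-equivalences into local equivalences between flasque $\af$-local objects, concluding by two-out-of-three. You also correctly single out the pushout-stability of the closure as the crux and propose exactly the paper's resolution, namely realizing homotopy push-outs as diagonals of bisimplicial objects and exploiting termwise coprojections (\ref{14.1}, \ref{14.4}, \ref{c14.5}).
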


\subsection{The class of $\af$-equivalences is 
$\bdl$-closed}\llabel{13} 
The properties \ref{3d1}(1), \ref{3d1}(2), \ref{3d1}(3) are
clear. The last property is proved in  \ref{13.6}.
\begin{lemma}\llabel{13.1}
Let $A$ be a pointed simplicial set and $X$ a pointed simplicial
sheaf. If $X$ is fibrant and $\af$-local, then $X^A$ is $\af$-local. 
\end{lemma}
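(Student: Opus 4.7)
My plan is to apply the criterion of Proposition 12.7, which reduces $\af$-locality (for flasque objects) to checking that $X^{A}(U) \to X^{A}(U\times\af)$ is a weak equivalence for every $U$ in $QP/G$. Here $X^{A}$ is the internal mapping object whose value on $U$ is the pointed function complex $X(U)^{A}$. Two things must be verified: (i) $X^{A}$ is flasque, so that Proposition 12.7 is applicable, and (ii) the $\af$-invariance property holds termwise.

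For the preparatory facts, I would first note that since $X$ is fibrant, the right lifting property against the generating trivial cofibrations $(J_a)$ (applied with $h_U$ in place of $h_X$ there) forces each $X(U)$ to be a Kan complex. Consequently $X^{A}(U)=X(U)^{A}$ is again Kan. By Lemma 12.8 $X$ is flasque, so for every upper distinguished square the induced square of Kan complexes $X(X),X(Y),X(A),X(B)$ is homotopy cartesian. The functor $(-)^{A}$ on pointed simplicial sets is right adjoint to $-\wedge A$ and is a right Quillen functor (every pointed simplicial set $A$ is cofibrant), so it preserves fibrations between fibrant objects and weak equivalences between fibrant objects; therefore it preserves homotopy cartesian squares of Kan complexes. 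Applying $(-)^{A}$ entrywise to the flasque square for $X$ gives the flasque square for $X^{A}$, so $X^{A}$ is flasque as well.

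For the $\af$-invariance, apply Proposition 12.7 in the reverse direction to the flasque, $\af$-local sheaf $X$: for every $U$ the map $X(U)\to X(U\times\af)$ is a weak equivalence of Kan complexes. Applying $(-)^{A}$, which preserves such weak equivalences, yields a weak equivalence $X^{A}(U)\to X^{A}(U\times\af)$. Applying Proposition 12.7 once more, now to $X^{A}$, we conclude that $X^{A}$ is $\af$-local.

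The only non-routine ingredient is the stability of ``flasque'' under $(-)^{A}$, which is where one really uses that $X$ is fibrant (to land in Kan complexes, where $(-)^{A}$ is well-behaved). Once that point is secured, the rest is a direct two-fold application of Proposition 12.7.
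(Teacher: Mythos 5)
Your argument is correct, but it takes a genuinely different route from the paper. The paper's proof is a two-line adjunction computation: since $X$ is fibrant one has $Hom_{Ho}(Y,X^A)=Hom_{Ho}(A\wedge Y,X)$ for every $Y$ (this is (13.1.1), i.e.\ Corollary \ref{cor12.6} with the cofibrant pointed simplicial set $A$ in the first slot), and applying this to $Y$ and to $Y\wedge(\af_+)$ together with the associativity $(A\wedge Y)\wedge(\af_+)=A\wedge(Y\wedge(\af_+))$ transports the $\af$-locality of $X$ directly to $X^A$, with no need to identify the sections of $X^A$ or to discuss flasqueness. Your proof instead verifies the sectionwise criterion of Proposition \ref{12.7} twice; this obliges you to check the extra fact that $(-)^A$ preserves flasqueness, which you do correctly: the right-lifting property against $(J_a)$ makes each $X(U)$ Kan, $(-)^A$ is right Quillen on pointed simplicial sets (it preserves strict pullbacks, fibrations, and weak equivalences between Kan complexes), hence it carries the homotopy cartesian squares and the termwise weak equivalences $X(U)\to X(U\times\af)$ to the corresponding data for $X^A$. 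What your approach buys is a concrete, sections-level picture of why the cotensor stays $\af$-local and an explicit identification $X^A(U)=X(U)^A$; what it costs is the reliance on Proposition \ref{12.7} and on standard but nontrivial facts about homotopy pullbacks of Kan complexes, where the paper gets by with the bare adjunction in the homotopy category. Both proofs ultimately lean on the same model-structure input (fibrancy of $X$), so neither is weaker in hypotheses.
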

\begin{proof}
Because $X$ is fibrant, for any $Y$, one has in the homotopy category 
\begin{equation}
\llabel{13.1.1}
Hom_{Ho}(Y,X^A)=Hom_{Ho}(A\wedge Y,X)
\end{equation}
Applying this to $Y$ and $Y\wedge (\af_+)$ and using
$$(A\wedge Y)\wedge (\af_+)=A\wedge (Y\wedge (\af_+))$$
one deduces from the $\af$-locality of $X$ that of $X^A$.
\end{proof}
\begin{lemma}\llabel{13.2}
Let $f:K\sr L$ be a morphism of pointed simplicial sheaves and $A$ be
a pointed simplicial set. If $f$ is an $\af$-equivalence, then so
is $f\wedge A:X\wedge A\sr Y\wedge A$. 
\end{lemma}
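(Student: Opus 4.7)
The statement (reading $X\wedge A\sr Y\wedge A$ as $K\wedge A\sr L\wedge A$, in agreement with $f:K\sr L$) asserts that $f\wedge A$ is an $\af$-equivalence. By Definition \ref{d1.4}, this amounts to showing that for every $\af$-local pointed simplicial sheaf $X$, the restriction
$$Hom_{Ho_{\bullet}}(L\wedge A, X)\sr Hom_{Ho_{\bullet}}(K\wedge A, X)$$
is a bijection. The plan is to move $A$ to the target via the tensor--cotensor adjunction, apply Lemma \ref{13.1} to stay in the $\af$-local world, and then invoke the $\af$-equivalence hypothesis on $f$.

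First I would replace $X$ by a fibrant model $X^f$. Because the fibrant replacement map is a local equivalence, it is an isomorphism in $Ho_{\bullet}$, so it preserves both $\af$-locality (a property intrinsic to $Ho_{\bullet}$ by Definition \ref{d1.1}) and all Hom-sets in $Ho_{\bullet}$ into $X$. With $X^f$ fibrant I can form the cotensor pointed simplicial sheaf $(X^f)^A$ given by $U\mapsto X^f(U)^A$, and the Brown--Gersten simplicial model structure of \S\ref{12} (Corollary \ref{cor12.4} together with SM7) supplies the natural adjunction
$$Hom_{Ho_{\bullet}}(Y\wedge A, X^f) = Hom_{Ho_{\bullet}}(Y, (X^f)^A)$$
for any pointed simplicial sheaf $Y$; if $Y$ is not cofibrant one replaces it cofibrantly first, using that $-\wedge A$ is a left Quillen functor (Corollary \ref{cor12.5}).

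By Lemma \ref{13.1}, $(X^f)^A$ is again $\af$-local. Applying the $\af$-equivalence hypothesis on $f$ to this $\af$-local target, the map
$$Hom_{Ho_{\bullet}}(L, (X^f)^A)\sr Hom_{Ho_{\bullet}}(K, (X^f)^A)$$
is a bijection, and the adjunction displayed above then delivers the required bijection $Hom_{Ho_{\bullet}}(L\wedge A, X^f) \ii Hom_{Ho_{\bullet}}(K\wedge A, X^f)$, which transports back to $X$ via the fibrant replacement. The only real subtlety is the adjunction step for not-necessarily-cofibrant $K$ and $L$, handled as indicated by cofibrant replacement; otherwise the argument is a direct application of Lemma \ref{13.1} through the simplicial structure of the model category.
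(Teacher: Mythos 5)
Your argument is correct and is essentially the paper's own proof: replace the $\af$-local target by a fibrant model, use the adjunction $Hom_{Ho}(Y\wedge A,X)=Hom_{Ho}(Y,X^A)$, and invoke Lemma \ref{13.1} to see that $X^A$ is again $\af$-local so that the hypothesis on $f$ applies. The extra care you take about cofibrancy in the adjunction step is a reasonable refinement but does not change the route.
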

\begin{proof}
One has to check that for any $\af$-local $X$ one has in the homotopy
category 
$$Hom_{Ho}(A\wedge L,X)=Hom_{Ho}(A\wedge K,X)$$
Replacing $X$ by a fibrant replacement, one may assume $X$
fibrant. Applying (\ref{13.1.1}) one is reduced to 
\ref{13.1}.
\end{proof}
\begin{lemma}
\llabel{13.3}
Let $f:K\sr L$ be a morphism of pointed simplicial sheaves. If $K$ and
$L$ are cofibrant, then $f$ is a $\af$-equivalence if and only if
for any fibrant $\af$-local $X$, the morphism of simplicial sets
$$S(L,X)\sr S(K,X)$$
is a weak equivalence.
\end{lemma}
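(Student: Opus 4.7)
The plan is to derive both directions from Corollary \ref{cor12.4} (the enriched adjunction $[M \wedge B, X]_{Ho_\bullet} = [B, S(M,X)]_{Ho(sSet_*)}$ for $M$ cofibrant, $X$ fibrant and $B$ a pointed simplicial set), combined with Lemma \ref{13.2} and a Yoneda argument in $Ho(sSet)$.

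For the direction ($\Rightarrow$), I would fix a fibrant $\af$-local $X$. To show that the pointed map of Kan complexes $S(L,X) \sr S(K,X)$ is a weak equivalence it suffices, by Yoneda in $Ho(sSet)$, to verify that for every unpointed simplicial set $A$ the induced map
$$[A,S(L,X)]_{sSet}\sr [A,S(K,X)]_{sSet}$$
is a bijection. The disjoint basepoint adjunction rewrites $[A,S(-,X)]_{sSet}$ as $[A_+,S(-,X)]_{sSet_*}$, which by Corollary \ref{cor12.4} equals $[A_+\wedge -,\, X]_{Ho_\bullet}$. Lemma \ref{13.2} then says $A_+\wedge f$ is again an $\af$-equivalence, so the $\af$-locality of $X$ gives the needed bijection.

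For the direction ($\Leftarrow$), given an arbitrary $\af$-local $X$, I would pass to a fibrant replacement $X\sr X'$. This is a local equivalence, hence an isomorphism in $Ho_\bullet$; since $\af$-locality depends only on the isomorphism class in $Ho_\bullet$ (cf.\ Remark \ref{rm3.4}), $X'$ is again fibrant and $\af$-local. The hypothesis then gives a weak equivalence $S(L,X')\sr S(K,X')$, in particular a $\pi_0$-bijection. Specializing Corollary \ref{cor12.4} to $B=(\Delta^0)_+$ identifies $\pi_0 S(M,X')$ with $[M,X']_{Ho_\bullet}=[M,X]_{Ho_\bullet}$ for $M=K,L$, so that $f$ induces a bijection $[L,X]_{Ho_\bullet}\sr [K,X]_{Ho_\bullet}$, i.e. is an $\af$-equivalence.

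The one genuine point requiring care, and what I expect to be the main obstacle, is that a pointed map of Kan complexes inducing $\pi_n$-isomorphisms at the distinguished basepoint need not be a weak equivalence; trying to prove ($\Rightarrow$) by testing against spheres at the canonical basepoint would only control one component. The argument avoids this pitfall by running Yoneda against \emph{unpointed} simplicial sets $A$ and using the $A_+$ trick to reduce back to the pointed setting, where Corollary \ref{cor12.4} and Lemma \ref{13.2} apply uniformly.
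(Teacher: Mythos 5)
Your proof is correct and follows essentially the same route as the paper: the ``only if'' direction is the Yoneda argument via Corollary \ref{cor12.4} and Lemma \ref{13.2}, and the ``if'' direction is the $\pi_0$ specialization of Corollary \ref{cor12.4} (with the fibrant replacement step, which the paper leaves implicit). The only cosmetic difference is that you test against unpointed $A$ via $A_+$ where the paper tests against arbitrary \emph{pointed} simplicial sets $A$; both avoid the basepoint pitfall you mention, since Yoneda against all pointed $A$ (not just spheres) already yields an isomorphism in the pointed homotopy category of Kan complexes, hence a weak equivalence.
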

\begin{proof}
``{\bf if}'' Taking $\pi_0$ one deduces from the assumptions that
$$Hom_{Ho}(L,X)\dsr Hom_{Ho}(K,X)$$
``{\bf Only if}'' The assumptions imply that $S(K,X)$ and $S(L,X)$ are
fibrant. For any pointed simplicial set $A$ one has 
$$Hom_{Ho}(A,S(K,X))=Hom_{Ho}(K\wedge A,X)$$
and similarly for $L$ and one applies  \ref{13.2}.
\end{proof}
\begin{proposition}
\llabel{13.4}
The coproduct of a family of $\af$-equivalences 
$$f_{\alpha}:X_{\alpha}\sr Y_{\alpha}$$
is an $\af$-equivalence.
\end{proposition}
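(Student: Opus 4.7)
The plan is to reduce to Lemma \ref{13.3}, which characterizes $\af$-equivalences between cofibrant objects via the simplicial mapping space $S(-,X)$ into fibrant $\af$-local targets, and then exploit the fact that $S(-,X)$ sends coproducts to products.

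First, for each $\alpha$ I would produce, using the Brown--Gersten closed model structure of Section \ref{12}, a cofibrant replacement square: trivial fibrations $p_{\alpha}:QX_{\alpha}\sr X_{\alpha}$ and $q_{\alpha}:QY_{\alpha}\sr Y_{\alpha}$ with $QX_{\alpha}$, $QY_{\alpha}$ cofibrant, and a lift $g_{\alpha}:QX_{\alpha}\sr QY_{\alpha}$ of $f_{\alpha}\circ p_{\alpha}$ through $q_{\alpha}$. Local  equivalences are $\af$-equivalences (they become isomorphisms in $Ho_{\bullet}$, hence in $Ho_{\af,\bullet}$, so Remark \ref{rm3.4} applies), so $p_{\alpha}$ and $q_{\alpha}$ are $\af$-equivalences, and two-out-of-three (Definition \ref{3d1}(2), which is immediate from Definition \ref{d1.4}) shows that each $g_{\alpha}$ is an $\af$-equivalence between cofibrant objects.

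Now take coproducts. Cofibrations are closed under coproducts (they are characterized by a left lifting property), so $\coprod QX_{\alpha}$ and $\coprod QY_{\alpha}$ are cofibrant. Fix a fibrant $\af$-local object $Z$. Since the $S$-construction is defined levelwise by $\mathrm{Hom}$, and $\mathrm{Hom}$ from a coproduct is a product, one has
$$S\bigl(\coprod QY_{\alpha},Z\bigr)=\prod S(QY_{\alpha},Z),\qquad S\bigl(\coprod QX_{\alpha},Z\bigr)=\prod S(QX_{\alpha},Z).$$
Each factor map $S(QY_{\alpha},Z)\sr S(QX_{\alpha},Z)$ is a weak equivalence of Kan complexes by Lemma \ref{13.3} applied to $g_{\alpha}$, and a product of weak equivalences of Kan complexes is again a weak equivalence (homotopy groups commute with products on Kan complexes). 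Invoking Lemma \ref{13.3} in the converse direction, $\coprod g_{\alpha}$ is an $\af$-equivalence.

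To conclude, I need $\coprod QX_{\alpha}\sr \coprod X_{\alpha}$ and $\coprod QY_{\alpha}\sr \coprod Y_{\alpha}$ to be local equivalences: this can be checked on the conservative family of fiber functors of Proposition \ref{prop9.star}, each of which is filtered colimit, hence commutes with coproducts and sends the weak equivalences $p_{\alpha}$, $q_{\alpha}$ to a coproduct of weak equivalences of simplicial sets, which is again a weak equivalence. Two-out-of-three then yields that $\coprod f_{\alpha}$ is an $\af$-equivalence. The only real obstacle is the cofibrant-replacement step, which requires that the Brown--Gersten model structure furnish (functorial) factorizations; this is built into the small-object argument applied to the generating cofibrations $(I)$ in Section \ref{12} and is not a genuine difficulty.
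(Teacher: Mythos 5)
Your proposal is correct and follows essentially the same route as the paper: replace source and target by cofibrant objects via local equivalences, observe that coproducts of cofibrant objects are cofibrant and that $S(\coprod -, Z)=\prod S(-,Z)$, and conclude with Lemma \ref{13.3} together with the fact that a product of weak equivalences of fibrant simplicial sets is a weak equivalence. Your extra step verifying stalkwise that the coproduct of the cofibrant-replacement maps is a local equivalence merely makes explicit what the paper's ``we may and shall assume'' leaves implicit.
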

\begin{proof}
There are commutative diagrams 
$$
\begin{CD} 
* @>>> X_{\alpha}' @>f_{\alpha}'>> Y_{\alpha}'\\
@VVV @VVV @VVV\\
* @>>> X_{\alpha}  @>f_{\alpha}>> Y_{\alpha}
\end{CD}
$$
where morphisms on the first line are cofibrations, and where the
vertical maps are local  equivalences, and similarly for $Y$. 
Replacing $X_{\alpha}$ (resp. $Y_{\alpha}$) by $X_{\alpha}'$
(resp. $Y_{\alpha}'$) we may and shall assume that the $X_{\alpha}$
and $Y_{\alpha}$ are cofibrant. The coproducts $\coprod X_{\alpha}$,
$\coprod Y_{\alpha}$ are then cofibrant too. One has 
$$S(\coprod X_{\alpha}, X)=\prod S(X_{\alpha}, X)$$
and similarly for the $Y_{\alpha}$, and one applies  \ref{13.3},
and the fact that a product of a family of weak equivalences of
fibrant pointed simplicial sets is a weak equivalence.
\end{proof}
\begin{proposition}
\llabel{13.5}
The colimit 
$$f:colim F_n\sr colim G_n$$
of an inductive sequence of $\af$-equivalences $f_n:F_n\sr
F_n$ is again an $\af$-equivalence.
\end{proposition}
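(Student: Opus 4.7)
The plan is to follow the template of Proposition \ref{13.4}: reduce to the cofibrant case and then invoke Lemma \ref{13.3} together with a tower-of-fibrations argument on the simplicial function complexes $S(-,X)$.

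First, I would replace the morphism of sequences $(f_n\colon F_n\sr G_n)$ by a morphism of sequences $(f_n'\colon F_n'\sr G_n')$ in which every $F_n'$ and $G_n'$ is cofibrant, every transition map $F_n'\sr F_{n+1}'$ and $G_n'\sr G_{n+1}'$ is a cofibration, and there are compatible local equivalences $F_n'\sr F_n$ and $G_n'\sr G_n$. Such a replacement is built inductively from the Brown--Gersten model structure: start with cofibrant replacements of $F_0,G_0$, and at each stage factor the two composite maps into $F_{n+1}$ and $G_{n+1}$ as cofibrations followed by trivial fibrations, compatibly with $f_n,f_{n+1}$. Because the transition maps in the new sequences are cofibrations, taking colimits preserves local equivalences: $\mathrm{colim}\,F_n'\sr \mathrm{colim}\,F_n$ and $\mathrm{colim}\,G_n'\sr \mathrm{colim}\,G_n$ are local equivalences. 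By the two-out-of-three property (Definition \ref{3d1}(2)), it suffices to prove that $\mathrm{colim}\,f_n'$ is an $\af$-equivalence.

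Fix now any fibrant $\af$-local $X$. By Lemma \ref{13.3}, each $f_n'$ being an $\af$-equivalence between cofibrant objects means that
$$u_n\colon S(G_n',X)\sr S(F_n',X)$$
is a weak equivalence. Since the transitions in the two sequences are cofibrations and $X$ is fibrant, Corollary \ref{cor12.5} together with the right lifting property for trivial cofibrations forces the maps $S(F_{n+1}',X)\sr S(F_n',X)$ and $S(G_{n+1}',X)\sr S(G_n',X)$ to be Kan fibrations between fibrant simplicial sets. Moreover $S$ converts these colimits to limits:
$$S(\mathrm{colim}\,F_n',X)=\lim S(F_n',X),\qquad S(\mathrm{colim}\,G_n',X)=\lim S(G_n',X).$$
A levelwise weak equivalence between towers of fibrations of fibrant simplicial sets induces a weak equivalence on the inverse limit (via the Milnor $\lim^1$ short exact sequence for $\pi_*$ of a tower of Kan fibrations). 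Hence $S(\mathrm{colim}\,G_n',X)\sr S(\mathrm{colim}\,F_n',X)$ is a weak equivalence for every fibrant $\af$-local $X$, and Lemma \ref{13.3} (``only if'' direction used in reverse, exactly as in the proof of Proposition \ref{13.4}) then gives that $\mathrm{colim}\,f_n'$ is an $\af$-equivalence.

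The main obstacle is the cofibrant replacement of \emph{sequences}: one needs to choose replacements $F_n'$ and $G_n'$ simultaneously, with cofibrant transition maps \emph{and} compatibility with the $f_n$, which must be done inductively using the factorization axioms of the Brown--Gersten structure. The other technical point, needed at the very end, is the standard fact that a levelwise weak equivalence between towers of Kan fibrations of fibrant simplicial sets is preserved by $\lim$; the cofibrancy of the transition maps arranged in the first step is precisely what makes the towers $S(F_n',X)$ and $S(G_n',X)$ towers of fibrations, so that this fact applies.
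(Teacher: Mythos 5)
Your proposal is correct and follows essentially the same route as the paper: an inductive cofibrant replacement of the two sequences with cofibration transition maps, reduction by two-out-of-three, and then the observation that $S(-,X)$ turns the colimits into limits of towers of fibrations of fibrant simplicial sets, where the levelwise weak equivalences pass to the limit by the $\lim^1$ argument, so that Lemma \ref{13.3} applies. No substantive differences.
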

\begin{proof}
One inductively constructs an inductive sequence of commutative squares
$$
\begin{CD}
F_n' @>f_n'>> G_n'\\
@VVV @VVV\\
F_n @>f_n>> G_n
\end{CD}
$$
in which the vertical maps are local  equivalences, the $F_n'$ and
$G_n'$ are cofibrant and the transition maps $F_n'\sr F_{n+1}'$,
$G_n'\sr G_{n+1}'$ are cofibrations. A colimit of local 
equivalences being a local  equivalence, it is sufficient to prove
the proposition for the sequence $(f_n')$. We hence may and shall
assume that $*\sr F_1\sr\dots\sr F_n\sr$ is a sequence of cofibrations
and similarly for the $*\sr G_1\sr\dots\sr G_n\sr$. The colimits $F$
and $G$ of those sequences are then cofibrant.

If $X$ is fibrant and $\af$-local, $S(G,X)\sr S(F,X)$ is the limit
of the sequence of weak equivalences
$$S(G_n,X)\sr S(F_n,X)$$
In the sequences $S(G_n,X)$ and $S(F_n,X)$ the transition maps are
fibrations of fibrant objects. It follows that the limit is again a
weak equivalence: the $\pi_i$ of the limit map onto the limit of
$\pi_i$, with fibers $(lim^1 \pi_{i+1})$-torsors. It remains to apply
 \ref{13.3}.
\end{proof}
\begin{proposition}
\llabel{13.6}
Let $F_{**}\sr G_{**}$ be a morphism of pointed bisimplicial
sheaves. If all $F_{p*}\sr G_{p*}$ are $\af$-equivalences, so is
$\Delta(F)\sr \Delta(G)$.
\end{proposition}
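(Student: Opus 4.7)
My plan is to induct on the external skeleton of the bisimplicial sheaves, combining Propositions \ref{13.2}, \ref{13.4}, and \ref{13.5} with a gluing lemma for pushouts along cofibrations.

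For a pointed bisimplicial sheaf $H_{**}$, let $\mathrm{sk}_n H$ denote the bisimplicial subsheaf generated under the external degeneracies by the rows $H_{p*}$ with $p \le n$. Since the diagonal commutes with colimits, $\Delta H = \mathrm{colim}_n \Delta(\mathrm{sk}_n H)$. After a Reedy-cofibrant replacement of $F$ and $G$ in the external direction (the rowwise replacement maps are local equivalences, hence by Lemma \ref{13.2} and 2-out-of-3 the rowwise $\af$-equivalence hypothesis is preserved), the inclusions $\Delta(\mathrm{sk}_{n-1} H) \subset \Delta(\mathrm{sk}_n H)$ become, in each simplicial degree, summand embeddings of pointed sheaves. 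Proposition \ref{13.5} then reduces the problem to showing that $\Delta(\mathrm{sk}_n F) \sr \Delta(\mathrm{sk}_n G)$ is an $\af$-equivalence for every $n$.

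I induct on $n$. The base case $n = 0$ is immediate since $\Delta(\mathrm{sk}_0 H) = H_{0*}$, and the hypothesis gives that $F_{0*} \sr G_{0*}$ is an $\af$-equivalence. For the inductive step, the standard external latching pushout realizes $\Delta(\mathrm{sk}_n H)$ as the pushout of
\[
H_{n*} \wedge \Delta^n_+ \leftarrow (H_{n*} \wedge \partial\Delta^n_+) \cup_{L_n H \wedge \partial\Delta^n_+} (L_n H \wedge \Delta^n_+) \rightarrow \Delta(\mathrm{sk}_{n-1} H)
\]
where $L_n H$, the $n$-th external latching object, is a finite colimit of $H_{p*}$ for $p < n$, and the left-pointing arrow is a cofibration. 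The map $F \sr G$ induces $\af$-equivalences on the three non-pushout corners: on $\Delta(\mathrm{sk}_{n-1} H)$ by the overall induction hypothesis; on $H_{n*} \wedge \Delta^n_+$ by Lemma \ref{13.2} applied to the hypothesis $F_{n*} \sr G_{n*}$; and on the gluing corner by combining Lemma \ref{13.2} with a subsidiary inductive check that $L_n F \sr L_n G$ is an $\af$-equivalence (built from the $F_{p*} \sr G_{p*}$ with $p < n$ via Proposition \ref{13.4} and 2-out-of-3).

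The main obstacle is the concluding step: promoting these $\af$-equivalences on three corners to one on the pushout. This is a gluing lemma for $\af$-equivalences along cofibrations of cofibrant objects. Using Lemma \ref{13.3} (after cofibrant replacement), the statement reduces to the classical fact that, for any fibrant $\af$-local $X$, applying $S(-,X)$ sends pushouts of cofibrations of cofibrant objects to homotopy pullbacks of fibrant simplicial sets, together with the Mayer--Vietoris principle that a map of homotopy-pullback squares which induces weak equivalences on three corners induces one on the fourth. The remaining technical subtleties are verifying the latching pushout identity for pointed bisimplicial sheaves and keeping track of cofibrant replacements so that the external skeletal filtration meshes properly with the closed model structure of Section \ref{12}.
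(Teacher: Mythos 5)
Your overall architecture is sound, and your concluding gluing step is essentially the paper's Lemma \ref{13.11}, proved the same way (apply $S(-,X)$ for $X$ fibrant and $\af$-local, and use that a map of homotopy-cartesian squares of fibrant simplicial sets which is a weak equivalence on three corners is one on the fourth). But you take the standard Reedy route through the external skeletal filtration with latching objects, whereas the paper deliberately avoids latching objects: it first replaces $F$ by $Wr(F)=\omega'\omega F$, obtained by forgetting the external degeneracies and freely adding them back. Lemma \ref{13.8} shows $Wr(F)\sr F$ is a local equivalence, and because the degeneracies of $Wr(F)$ are free, its skeletal pushouts (\ref{13.9.2}) take the simple form $F_{n+1}\wedge(\partial\Delta^{n+1})_+\sr F_{n+1}\wedge(\Delta^{n+1})_+$ with no latching correction and with attaching maps that are automatically termwise coprojections. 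The induction then needs only Lemmas \ref{13.2}, \ref{13.11} and \ref{13.5}. What $Wr$ buys is precisely the elimination of the two points where your argument is incomplete.

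The gap: your subsidiary claim that $L_nF\sr L_nG$ is an $\af$-equivalence does not follow from Proposition \ref{13.4} and two-out-of-three. The latching object is the colimit of the $H_{p*}$ over the category of surjections $[n]\twoheadrightarrow [p]$ with $p<n$; this is a genuine colimit (a quotient of a coproduct by identifications), not a coproduct, and at this stage of the development nothing is known about the behaviour of $\af$-equivalences under such colimits --- establishing closure properties one at a time is the whole point of this section. Handling $L_n$ correctly requires its own nested skeletal induction with further applications of the gluing lemma (the standard Reedy argument), which you have not supplied. Similarly, the Reedy-cofibrant replacement in the external direction is asserted rather than constructed, and one must check that it can be done functorially, compatibly with the rowwise $\af$-equivalence hypothesis and with the formation of the diagonal. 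Neither issue is fatal --- the Reedy approach can be carried through --- but as written these are the two places where real content is missing, and they are exactly the places the paper's wrapping-functor trick is designed to make disappear.
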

To prove  \ref{13.6} we will functorially attach to
$F_{**}$ an inductive sequence of pointed simplicial sheaves
$F^{(n)}$, whose colimit maps to $\Delta(F)$ by a local 
equivalence. We will then inductively prove that $F^{(n)}\sr G^{(n)}$
is an $\af$-equivalence, and apply  \ref{13.5}. We
begin with preliminaries to the construction of the $F^{(n)}$.
\begin{piece}\llabel{13.7}\rm
Let $\Delta_{inj}$ be the category of finite ordered sets
$\Delta^n=(0,\dots,n)$ and increasing  injective maps. For any category $C$
with finite coproducts, the forgetting functor
$$\omega:\Delta^{op}C\sr \Delta^{op}_{inj}C$$
has a left adjoint $\omega'$: ``formally adding degenerate
simplicies'': $(\omega'X)_n$ is the coproduct, over all $p$ and all
increasing surjective maps $s:\Delta^n\sr \Delta^p$, of copies of $X_p$ 
$$(\omega'X)_n=\coprod_{s} X_p$$
We define the {\em wrapping functor} $Wr:\Delta^{op}C\sr \Delta^{op}C$
as the composite $Wr:=\omega'\omega$. For $C$ the category of sets or
of pointed sets one has the folloiwng.
\end{piece}
\begin{lemma}
\llabel{13.8}
The adjunction map $a:Wr(X)\sr X$ is a weak equivalence.
\end{lemma}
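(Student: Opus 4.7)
Recall that an $n$-simplex of $Wr(X) = \omega'\omega X$ is a pair $(s, x)$ with $s: [n] \twoheadrightarrow [p]$ a surjection and $x \in X_p$; the simplicial structure of $Wr(X)$ is determined by epi--mono factorization (degeneracies just act on the $s$-component, faces may decrease $p$), and the counit $a: Wr(X) \to X$ sends $(s, x)$ to $s^*(x)$. Note that $a$ has an obvious semi-simplicial section $\sigma: x \mapsto (\mathrm{id}_{[n]}, x)$, which commutes with face maps but not with degeneracies.

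My plan is to compare the two geometric realizations by identifying $|Wr(X)|$ with the semi-simplicial (``fat'') realization $\|\omega X\| := \coprod_p X_p \times \Delta^p /\!\sim_{\mathrm{faces}}$. In the thin realization of $Wr(X)$, any cell $(s, x) \in Wr(X)_n$ with $s: [n] \twoheadrightarrow [p]$ not the identity is formally a degeneracy of the cell $(\mathrm{id}_{[p]}, x) \in Wr(X)_p$, so the cell $(s, x) \times \Delta^n$ collapses onto $(\mathrm{id}_{[p]}, x) \times \Delta^p$ along the affine map $\Delta^n \to \Delta^p$ induced by $s$. The surviving cells are thus indexed by the simplices of $X$ (via $\sigma$), and the only residual gluing is by face maps (the degeneracies having been absorbed into the collapses), which reproduces exactly the face relations of $\omega X$. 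This should give a natural homeomorphism $|Wr(X)| \cong \|\omega X\|$, under which $|a|$ becomes the canonical fat-to-thin comparison map $\|\omega X\| \to |X|$, obtained by further quotienting by the degeneracy relations. The latter is a classical weak equivalence for every simplicial set. The pointed case is identical after passing to reduced realizations.

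The main obstacle is the geometric identification: one must carefully verify that the degeneracy collapses in $|Wr(X)|$ introduce no extra identifications beyond the face relations of $\omega X$ (in particular, that non-identity surjections $s$ and $s'$ with $s^*(x) = (s')^*(x')$ in $X$ do not produce spurious gluings in $|Wr(X)|$). An alternative, purely simplicial route avoiding realizations proceeds by skeletal induction on $X$: the functor $Wr$ preserves all colimits (since $\omega$ is computed degreewise and $\omega'$ is a left adjoint) and sends boundary inclusions $\partial\Delta^n \hookrightarrow \Delta^n$ to cofibrations (degreewise monomorphisms). Hence it is enough to show that $a_{\Delta^n}: Wr(\Delta^n) \to \Delta^n$ is a weak equivalence, and since $\Delta^n$ is contractible this reduces to weak contractibility of $Wr(\Delta^n)$. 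The latter can be established by an explicit simplicial deformation retraction onto a vertex, built from the standard affine contraction of $\Delta^n$ by functoriality of $Wr$.
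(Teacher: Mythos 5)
Your main argument is correct, but it takes a genuinely different route from the paper's. You identify $|Wr(X)|$ with the fat realization $\|\omega X\|$ of the underlying semi-simplicial set and reduce the lemma to the classical fact that the fat-to-thin comparison $\|\omega X\|\to|X|$ is a homotopy equivalence for every simplicial set. The identification is cleaner than you fear: a simplex $(s,x)$ of $Wr(X)$ is degenerate exactly when $s$ is not the identity, so the non-degenerate simplices are the pairs $(\mathrm{id}_{[p]},x)$, in bijection with \emph{all} simplices of $X$, and every face of such a simplex is again of this form; hence $|Wr(X)|$ is the CW complex with one $p$-cell per element of $X_p$ glued only along the face maps of $X$, which is $\|\omega X\|$ on the nose, with no spurious identifications. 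The paper argues intrinsically instead: it shows that $a$ induces an isomorphism of fundamental groupoids by a generators-and-relations computation (the non-degenerate $1$- and $2$-simplices of $Wr(X)$ yield the same presentation as those of $X$), reduces the higher $\pi_i$ to the connected and simply connected case using that $Wr$ commutes with connected components and with passage to coverings, and finishes with a homology comparison (the normalized chains of $Wr(X)$ and of $X$ agree) together with Whitehead's theorem. Your route treats all homotopy groups at once and outsources the work to one standard realization statement; the paper's stays inside simplicial homotopy theory at the cost of the covering-space reduction and the Whitehead input.

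One caution about your ``alternative, purely simplicial route'': the skeletal-induction framework is sound ($Wr$ does preserve colimits and monomorphisms), but the final step is not justified as stated. $Wr$ does not commute with products and there is no evident natural map $\Delta^1\times Wr(Y)\to Wr(\Delta^1\times Y)$, so the affine contraction of $\Delta^n$ does not transport ``by functoriality'' to a simplicial contraction of $Wr(\Delta^n)$; already $Wr(\Delta^0)$ is an infinite-dimensional complex rather than a point. The weak contractibility of $Wr(\Delta^n)$ is true, but the easiest proof of it is again the fat-realization identification, so the alternative is not actually independent of your first argument.
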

\begin{proof}
We will prove it for $C$ the category of sets. The pointed case is
similar. The {\em fundamental groupoid} of $X$ is the category with
set of objects $X_0$, in which all maps are isomorphisms, and
universal for the property that 
\begin{enumerate}[(1)]
\item $\sigma\in X_1$ defines a morphism
$f(\sigma):\partial_1(\sigma)\sr \partial_0(\sigma)$
\item for $\tau\in X_2$,
$f(\partial_1\tau)=f(\partial_0\tau)f(\partial_2\tau)$. 
\end{enumerate}
One has $X_0=Wr(X)_0$. To handle $\pi_0$ and $\pi_1$ it suffice to
show that $a$ induces an isomorphism of fundamental groupoids. For any
$X$ and any $p\in X_0$, $f(s_0(p))$ is the identity of $p$. This
results from (2) applied to $s_0s_0(p)$ which gives
$$f(s_0(p))=f(s_0(p))f(s_0(p))$$
As generators of the fundamental groupoid, it hence suffices to take
non degenerate $\sigma\in X_1$. For relations, it then suffices to
take those coming from non degenerate $\tau\in X_2$: the degenerate
$\tau$ give nothing new.

If we apply this to $Wr(X)$, we find as set of generators $X_1$, and
relations indexed by $X_2$, the same relations as for $X$. 

The functor $Wr$ commutes with passage to connected components and to
passage to a covering. To handle higher $\pi_i$, this reduces us to
the case where $X$ (and hence $Wr(X)$) is connected and simply
connected. In this case it suffices to check that $a$ induces an
isomorphism in homology. It does because one has a commutative diagram   
$$
\begin{array}{ccc}
C_*(X)&\ii&C_*(Wr(X))/degeneracies\\
&\searrow&\dw\\
&&C_*(X)/degeneracies
\end{array}
$$
in which the first arrow is an isomorphism, the second the effect of
$a$ on homology, and the composite is a homotopy equivalence. 
\end{proof}
\begin{piece}\llabel{13.9}\rm
For $X$ a pointed simplicial sheaf, let $sk_n(X)$ be the
n-th skeleton of $X$ i.e. simplicial subsheaf of $X$ for which
$(sk_n(X))_p$ is the union of the images of the degeneracies $X_q\sr X_p$
for $q\le n$. One has push-out squares
\begin{equation}\llabel{13.9.1}
\begin{CD}
X_{n+1}\wedge (\partial\Delta^{n+1})_+ @>>> sk_n(Wr(X))\\
@VVV @VVV\\
X_{n+1}\wedge (\Delta^{n+1})_+ @>>> sk_{n+1}(Wr(X))
\end{CD}
\end{equation}
Let now $F$ be bisimplicial. Each $F_{n,\BB}$ is simplicial, and they
form a simplicial system of pointed simplicial sheaves. Let us apply
$Wr$ and $sk_n$ to the first variable i.e. to the simplicial sheaf
$F_{\BB,m}$ for each fixed $m$. We again have diagrams
(\ref{13.9.1}) and, taking the diagonal $\Delta$, one obtains
push-out squares:
\begin{equation}\llabel{13.9.2}
\begin{CD}
F_{n+1}\wedge (\partial\Delta^{n+1})_+ @>>> \Delta(sk_n(Wr(F)))\\
@VVV @VVV\\
F_{n+1}\wedge (\Delta^{n+1})_+ @>>> \Delta(sk_{n+1}(Wr(F)))
\end{CD}
\end{equation}
where $F_n$ now stands for the pointed simplicial sheaf
$F_{n,\BB}$. This way the simplicial sheaf $\Delta(Wr(F))$, which by
\ref{13.8} maps to $\Delta(F)$ by a local  equivalence, appears as
an inductive limit of (\ref{13.9.2}).
\end{piece}
\vskip 3mm

\noindent
{\bf Proof of  \ref{13.6}:} With the notations of
(\ref{13.9}) it suffice to show that the
$$\Delta\, sk_n\, Wr (F)\sr \Delta\, sk_n\, Wr(G)$$
(with $sk_n\, Wr$ applied in the first variable) are
$\af$-equivalences. We prove it by induction on $n$.

For $n=0$, $\Delta\, sk_0\, Wr (F)=F_{0}$, and $F_0\sr G_0$ is assumed to
be an $\af$-equivalence. From $n$ to $n+1$, we have a morphism of
push out squares
$$\mbox{\rm (\ref{13.9.2}) for $F$}\sr \mbox{\rm (\ref{13.9.2})
for $G$}$$
As $F_{n+1}\sr G_{n+1}$ is an $\af$-equivalence, by 
\ref{13.2}, so are its smash product with  $(\partial\Delta^{n+1})_+$
and $(\Delta^{n+1})_+$. It remain to apply the
\begin{lemma}
\llabel{13.11}
Suppose given a morphism of push out squares
$$
\begin{CD}
{\bf 1} @>>> {\bf 2}\\
@VVV @VVV\\
{\bf 3} @>>> {\bf 4}
\end{CD}
\,\,\,\,\,\,\,\longrightarrow\,\,\,\,\,\,\,
\begin{CD}
{\bf 1}' @>>> {\bf 2}'\\
@VVV @VVV\\
{\bf 3}' @>>> {\bf 4}'
\end{CD}
$$
which is an $\af$-equivalence in positions ${\bf 1}, {\bf 2}$ and ${\bf
3}$. If in each square the first vertical map is injective, then the
morphism of squares is an $\af$-equivalence in position ${\bf 4}$
as well.
\end{lemma}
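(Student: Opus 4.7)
The plan is to use Lemma \ref{13.3} to translate the question into one about pullbacks of pointed simplicial sets, and then invoke the classical gluing lemma for homotopy pullbacks.

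First, by functorial cofibrant replacement and factorization in the Brown--Gersten simplicial closed model structure of Section \ref{12}, I would replace the two push-out squares --- compatibly with the morphism between them --- by locally equivalent squares in which every corner object is cofibrant and both $\mathbf{1}\to\mathbf{3}$ and $\mathbf{1}'\to\mathbf{3}'$ are BG-cofibrations. The injectivity hypothesis in the statement is precisely what makes it reasonable to insist on a cofibration in this leg of each span. The standard gluing lemma for local equivalences (applicable since we are pushing out along cofibrations between cofibrant objects) ensures that the new pushouts $\tilde{\mathbf{4}}$ and $\tilde{\mathbf{4}}'$ are locally, hence $\af$-, equivalent to $\mathbf{4}$ and $\mathbf{4}'$, and that the maps $\tilde{\mathbf{i}}\to\tilde{\mathbf{i}}'$ remain $\af$-equivalences for $\mathbf{i}=\mathbf{1},\mathbf{2},\mathbf{3}$. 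It therefore suffices to show that $\tilde{\mathbf{4}}\to\tilde{\mathbf{4}}'$ is an $\af$-equivalence.

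Now, by Lemma \ref{13.3} applied to the cofibrant objects $\tilde{\mathbf{4}}$ and $\tilde{\mathbf{4}}'$, it is enough to show that for every fibrant $\af$-local pointed simplicial sheaf $X$ the induced map
\[
S(\tilde{\mathbf{4}}',X)\longrightarrow S(\tilde{\mathbf{4}},X)
\]
is a weak equivalence of pointed simplicial sets. Since $S(-,X)$ carries pushouts to pullbacks, both sides take the form $S(\tilde{\mathbf{2}},X)\times_{S(\tilde{\mathbf{1}},X)}S(\tilde{\mathbf{3}},X)$, respectively with primed entries. By the SM7 axiom for the simplicial BG structure, applied to the cofibration $\tilde{\mathbf{1}}\to\tilde{\mathbf{3}}$ and the fibrant $X$, the map $S(\tilde{\mathbf{3}},X)\to S(\tilde{\mathbf{1}},X)$ is a Kan fibration, and likewise for the primed version; consequently both pullbacks are honest homotopy pullbacks. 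Lemma \ref{13.3} applied in the converse direction to each of the three hypothesized $\af$-equivalences $\tilde{\mathbf{i}}\to\tilde{\mathbf{i}}'$ produces weak equivalences $S(\tilde{\mathbf{i}}',X)\to S(\tilde{\mathbf{i}},X)$, and the classical gluing lemma for homotopy pullbacks of pointed simplicial sets then yields the required weak equivalence at position $\mathbf{4}$.

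The main obstacle is the bookkeeping in the simultaneous cofibrant-and-cofibration replacement of both squares compatibly with the morphism between them; this is handled by a standard functorial factorization via the small object argument in the Brown--Gersten structure, as in \cite{KSB2}.
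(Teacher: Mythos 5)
Your proposal is correct and follows essentially the same route as the paper: replace both squares by locally equivalent ones with cofibrant corners and cofibrations along one leg, apply $S(-,X)$ for $X$ fibrant and $\af$-local to get homotopy-cartesian squares of pointed simplicial sets, invoke the gluing lemma for homotopy pullbacks, and translate back and forth via Lemma \ref{13.3}. Your version merely spells out the SM7 and bookkeeping details that the paper leaves implicit.
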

\begin{proof}
Replacing the push out squares by push out squares of local 
equivalent objects, we may and shall assume that all objects
considered are cofibrant, and that the vertical maps are cofibrant.

If $X$ is fibrant applying $S(-,X)$ to each of the squares we get a
morphism of cartesian squares, of pointed simplicial sets in which the
vertical maps are cofibrations:
$$
\begin{CD}
S({\bf 4}, X) @>>> S({\bf 3}, X)\\
@VVV @VVV\\
S({\bf 2}, X) @>>> S({\bf 1}, X)
\end{CD}
\,\,\,\,\,\,\,\longleftarrow\,\,\,\,\,\,\,
\begin{CD}
S({\bf 4}', X) @>>> S({\bf 3}', X)\\
@VVV @VVV\\
S({\bf 2}', X) @>>> S({\bf 1}', X)
\end{CD}
$$
If $X$ is in addition $\af$-local, it is a weak equivalence in
positions ${\bf 1}, {\bf 2}$ and ${\bf 3}$, hence also in position
${\bf 4}$. By  \ref{13.3}, this proves  \ref{13.11},
finishing the proof of  \ref{13.6} as well as of the claim
that the class of $\af$-equivalences is $\bdl$-closed.
\end{proof}

\subsection{The class of $\af$-equivalences as a
$\bdl$-closure}\llabel{14} 
In this section we finish the proof of  \ref{postponed}. 
\begin{piece}\llabel{14.1}\rm
The homotopy push-out of a diagram
\begin{equation}\llabel{14.1.1}
Q\,\,\,\,\,\,\,:\,\,\,\,\,\,\,
\begin{CD}
K @>>> L\\
@VVV\\
M
\end{CD}
\end{equation}
is the push-out $K_Q$ of 
\begin{equation}\llabel{14.1.2}
Q\,\,\,\,\,\,\,:\,\,\,\,\,\,\,
\begin{CD}
K\vee K @>>> M\vee L\\
@VVV\\
K\wedge (\Delta^1)_+
\end{CD}
\end{equation}
where the vertical map $K\wedge (\Delta^0\coprod \Delta^0)_+\sr K\wedge
(\Delta^1)_+$ is induced by $\partial_1,\partial_0:\Delta^0\sr
\Delta^1$ mapping $\Delta^0$ to $0$ (resp. $1$) in $\Delta^1$. In the
case of simplicial sets, $|K_Q|$ maps to $|\Delta^1|=[0,1]$ with
fibers $|M|$ above $0$, $|L|$ above $1$, and $|K|$ above $(0,1)$. 

The homotopy push-out $K_Q$ is the diagonal of the bisimplicial object 
with columns $M\vee K^{\vee n}\vee L$ obtained by formally adding
degeneracies to 
$$K\dsr M\vee L$$
in $\Delta^{op}_{inj}\Delta^{op}(Sh_{\BB})$ (cf. \ref{13.7})
[$\partial_0$ maps $K$ to $L$, $\partial_1$ maps $K$ to $M$]. If
$f:Q\sr Q'$ is a morphism of diagrams (\ref{14.1.1}), the induced
morphism from $K_Q$ to $K_{Q'}$ is hence in the closure of the three
components of $f$ for the operation of finite coproduct and diagonal
(\ref{3d1}(3), (4)). 

A commutative square 
\begin{equation}\llabel{14.1.3}
\begin{CD}
K @>>> L\\
@VVV @VVV\\
M @>>> N
\end{CD}
\end{equation}
induces a morphism $K_Q\sr N$. 
\end{piece}
\begin{example}\llabel{ex14.2}\rm
Let $f:K\sr L$ be a morphism. The homotopy push-out of 
$$
\begin{CD}
K @>>> L\\
@VidVV\\
K
\end{CD}
$$
is the cylinder $cyl(f)$ of $f$. The morphisms 
$$L\sr cyl(f)\sr L$$
are homotopy equivalences. To check that the composite $cyl(f)\sr L\sr
cyl(f)$ is homotopic to the identity, one observes that $cyl(f)$ is
the push-out of 
$$
\begin{CD}
K @>>> L\\
@VVV\\
K\wedge (\Delta^1)_+
\end{CD}
$$
(the vertical map induced by $\partial_0:\Delta^0\sr \Delta^1$ mapping
$\Delta^0$ to $1$) and that the composite $cyl(f)\sr cyl(f)$ is
induced by $\Delta^1\sr \Delta^0\sr \Delta^1$, homotopic to the
identity by a homotopy fixing $1$. 

Similar arguments would show that the homotopy push out $cyl'(f)$ of 
$$
\begin{CD}
K @>id>> K\\
@VVV\\
M
\end{CD}
$$
is homotopic to $L$ by $L\sr cyl'(L)\sr L$. 
\end{example}
\begin{example}\llabel{ex14.3}\rm
In any category with finite coproducts, a {\em coprojection} is a map
isomorphic to the natural map $A\sr A\coprod B$ for some $A$ and
$B$. If in a push-out square of pointed simplicial sheaves
\begin{eq}\llabel{14.3.1}
\begin{CD}
K @>f>> L\\
@VVV @VVV\\
M @>>> N
\end{CD}
\end{eq}
the morphism $f$ is a coprojection: $L= K\vee A$, the square
(\ref{14.3.1}) is the coproduct of the squares
\begin{eq}\llabel{14.3.2}
\begin{CD}
K @>id>> K\\
@VVV @VVV\\
M @>id>> M
\end{CD}
\,\,\,\,\,\,\,
{\rm and}
\,\,\,\,\,\,\,
\begin{CD}
* @>>> A\\
@VVV @VVV\\
* @>>> A
\end{CD}
\end{eq}
and the resulting morphism $K_Q\sr N$ is a homotopy equivalence, being
the coproduct of the homotopy equivalences of Example \ref{ex14.2}
resulting from the two squares (\ref{14.3.2}). The same conclusion
applies if $K\sr M$ is a coprojection. 

A morphism of pointed simplicial sheaves $K\sr L$ is a termwise
coprojection if each $K_n\sr L_n$ is a coprojection of pointed
sheaves. Example: for any diagram (\ref{14.1.1}), the morphisms
$L,M\sr K_Q$ are termwise coprojections. For any morphism $f:K\sr L$,
this applies in particular to $K,L\sr cyl(f)$. 
\end{example}
\begin{proposition}\llabel{14.4}
If in a cocartesian square (\ref{14.1.3}) either $K\sr L$ or $K\sr
M$ is a termwise coprojection, then the resulting morphism from $K_Q$
to $N$ is in the $\Delta$-closure of the empty set of morphisms.
\end{proposition}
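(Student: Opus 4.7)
The plan is to treat the case where $K \sr L$ is a termwise coprojection (the case $K \sr M$ being symmetric): I want to exhibit $K_Q \sr N$ as the diagonal of a morphism of bisimplicial pointed sheaves whose columns are simplicial homotopy equivalences, and then conclude via clauses (1) and (4) of Definition \ref{3d1}.

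Following the construction in 14.1, $K_Q$ is the diagonal of the bisimplicial pointed sheaf
$$H_{n, p} := M_p \vee K_p^{\vee n} \vee L_p,$$
obtained by formally adjoining degeneracies in the $n$-direction to the $\Delta_{inj}^{op}$-object $K \dsr M \vee L$ (with face maps the coprojections of $K \sr L$ and $K \sr M$), with $p$ running through the original simplicial direction of $K, L, M$. Set $N'_{n, p} := N_p$, so that $\Delta(N') = N$; the natural projections assemble into a morphism $H \sr N'$ whose diagonal is $K_Q \sr N$. For each fixed $p$, the column $H_{*, p}$ is exactly the bar construction (in $n$) associated to the push-out square of pointed sheaves $(K_p \sr L_p, K_p \sr M_p)$ with push-out $N_p$, viewed as a square of constant pointed simplicial sheaves. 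Since $K \sr L$ is a termwise coprojection, $K_p \sr L_p$ is a coprojection of pointed sheaves at every $p$, so Example \ref{ex14.3} applied degree by degree shows that $H_{*, p} \sr N'_{*, p} = N_p$ (constant in $n$) is a simplicial homotopy equivalence.

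By clause (1) of Definition \ref{3d1}, each column morphism $H_{*, p} \sr N'_{*, p}$ lies in the $\Delta$-closure $S_0$ of the empty set of morphisms; by clause (4), the diagonal $\Delta(H) \sr \Delta(N')$, which is $K_Q \sr N$, lies in $S_0$ as well. The main point requiring real care is the first step: identifying $H$ explicitly and checking that its diagonal, together with its map to $N'$, genuinely reproduces $K_Q \sr N$ as constructed in 14.1. Once this bookkeeping is in place, the termwise coprojection hypothesis feeds directly into Example \ref{ex14.3} one simplicial degree at a time, and clause (4) of $\Delta$-closure completes the argument; in particular, the fact that the decomposition in Example \ref{ex14.3} is at the level of the degree-$p$ pointed sheaves (where $L_p \cong K_p \vee A_p$ holds), not at the level of simplicial sheaves, is exactly why we must work column by column on a bisimplicial model rather than try to decompose the original square globally.
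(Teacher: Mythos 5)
Your proof is correct and follows essentially the same route as the paper's: both reduce to the degree-$p$ cocartesian squares of pointed sheaves, apply Example \ref{ex14.3} to each (where the coprojection hypothesis actually lives), and recover $K_Q\sr N$ as the diagonal of this simplicial system of homotopy equivalences, concluding by clauses (1) and (4) of Definition \ref{3d1}. Your explicit identification of the columns $M_p\vee K_p^{\vee n}\vee L_p$ and your closing remark on why the decomposition only exists degreewise are exactly the content the paper leaves implicit.
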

\begin{proof}
For each $n$, we have a cocartesian square of pointed sheaves 
$$
Q_n\,\,\,\,\,\,\, :\,\,\,\,\,\,\,
\begin{CD}
K_n @>>> L_n\\
@VVV @VVV\\
M_n @>>> N_n
\end{CD}
$$
Let us view it as a cocartesian square of pointed simplicial
sheaves. By  \ref{ex14.3}, it gives rise to a homotopy
equivalence $K_{Q_n}\sr N_n$. One concludes by observing that $K_Q\sr
N$ is the diagonal of this simplicial system of morphisms. 
\end{proof}
\begin{cor}\llabel{c14.5}
If in a cocartesian square (\ref{14.1.3}):
$$
\begin{CD}
K @>f>> L\\
@VgVV @VVg'V\\
M @>f'>> N
\end{CD}
$$
$f$ or $g$ is a termwise coprojection, then 
\begin{enumerate}
\item $f'$ is in the $\Delta$-closure of $\{f\}$
\item $g'$ is in the $\Delta$-closure of $\{g\}$
\end{enumerate}
\end{cor}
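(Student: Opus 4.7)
The plan is to reduce both claims to Proposition~\ref{14.4} by factoring $f'$ and $g'$ through homotopy push-outs of auxiliary diagrams obtained by replacing one leg of $Q$ with an identity arrow. For part (1), I would consider the morphism of span-shaped diagrams
$$
Q_0\ :\  M \xleftarrow{g} K \xrightarrow{\mathrm{id}} K \quad\longrightarrow\quad Q\ :\ M \xleftarrow{g} K \xrightarrow{f} L
$$
whose components at the left leg target, apex, and right leg target are $(\mathrm{id}_M, \mathrm{id}_K, f)$. The strict push-out of $Q_0$ is $M$ and that of $Q$ is $N$, and the induced map on strict push-outs is precisely $f' : M \to N$. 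Proposition~\ref{14.4} applies to both diagrams: for $Q_0$, the arrow $\mathrm{id}_K$ is trivially a termwise coprojection (since $K \vee * = K$ in the pointed setting), and for $Q$ either $f$ or $g$ is one by hypothesis. Hence both $K_{Q_0} \to M$ and $K_Q \to N$ belong to the $\Delta$-closure of the empty set, so to every $\Delta$-closed class.

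By the naturality statement at the end of \S\ref{14.1}, the induced map $K_{Q_0} \to K_Q$ on homotopy push-outs lies in the $\Delta$-closure of the three component maps $\{\mathrm{id}_M, \mathrm{id}_K, f\}$. Since identity maps are simplicial homotopy equivalences, property \ref{3d1}(1) places them in every $\Delta$-closed class, so this coincides with the $\Delta$-closure of $\{f\}$. One therefore has a commutative square
$$
\begin{CD}
K_{Q_0} @>>> K_Q \\
@VVV @VVV \\
M @>f'>> N
\end{CD}
$$
in which three of the four edges lie in the $\Delta$-closure of $\{f\}$; two successive applications of the 2-out-of-3 property \ref{3d1}(2) then force $f'$ into it as well.

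Part (2) is entirely symmetric: one replaces $g : K \to M$ by $\mathrm{id}_K$ to form $Q_0'\ :\ K \xleftarrow{\mathrm{id}} K \xrightarrow{f} L$, whose strict push-out is $L$, and takes the morphism $Q_0' \to Q$ with components $(g, \mathrm{id}_K, \mathrm{id}_L)$, which induces $g' : L \to N$ on strict push-outs; the same argument gives $g'$ in the $\Delta$-closure of $\{g\}$. The only step requiring any thought is the verification that $\mathrm{id}_K$ counts as a termwise coprojection so that Proposition~\ref{14.4} can be invoked on the auxiliary diagrams; everything else is a formal manipulation of the closure axioms and of the naturality of the homotopy push-out construction.
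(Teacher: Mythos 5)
Your argument is correct and is essentially the paper's own proof: the paper likewise forms the auxiliary square with one leg replaced by an identity (whose homotopy push-out is $M$, resp.\ $L$), applies Proposition \ref{14.4} to both squares and the naturality of \ref{14.1} to the comparison map, and concludes by the two-out-of-three property. The only cosmetic difference is that you spell out why $\mathrm{id}_K$ is a termwise coprojection, which the paper leaves implicit.
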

{\bf Proof of (1)}: The morphism of cocartesian squares
$$
Q'
\,\,\,\,\,\,\,:\,\,\,\,\,\,\,
\begin{CD}
K @>id>> K\\
@VVV @VVV\\
M @>id>> M
\end{CD}
\,\,\,\,\,\,\,
\longrightarrow
\,\,\,\,\,\,\,
Q \,\,\,\,\,\,\,:\,\,\,\,\,\,\,
\begin{CD}
K @>>> L\\
@VVV @VVV\\
M @>>> N
\end{CD}
$$
defines a commutative  square 
$$
\begin{CD}
K_{Q'} @>>> K_Q\\
@VVV @VVV\\
M @>>> N
\end{CD}
$$
in which the vertical maps are in the $\Delta$-closure of the empty
set by \ref{14.4}, while the first horizontal map is in the
$\Delta$-closure of $f$ by \ref{14.1}.
\vskip 3mm

\noindent
{\bf Proof of (2)}: One similarly uses
$$
Q'
\,\,\,\,\,\,\,:\,\,\,\,\,\,\,
\begin{CD}
K @>>> L\\
@VidVV @VVidV\\
K @>i>> L
\end{CD}
\,\,\,\,\,\,\,
\longrightarrow
\,\,\,\,\,\,\,
Q \,\,\,\,\,\,\,:\,\,\,\,\,\,\,
\begin{CD}
K @>>> L\\
@VVV @VVV\\
M @>>> N.
\end{CD}
$$
\begin{piece}\llabel{14.5}\rm
A pointed simplicial sheaf $F_{\BB}$ is {\em reliably compact} if it
coincides with its $n$-skeleton for some $n$ and each $F_i$ is compact
in the sense that the functor $Hom(F_i,-)$ commutes with filtering
colimits. This property is stable by $F_{\BB}\sr F_{\BB}\wedge K$ for
$K$ a finite pointed simplicial set (finite number of non degenerate
simplices) and implies that $F_{\BB}$ is compact.
\end{piece}
\begin{cons}
\llabel{14.6}\rm
Let $E$ and $N$ be classes of morphisms such that 
\begin{enumerate}[(a)]
\item sources and targets are reliably compact
\item each $f$ in $N$ is a termwise coprojection
\end{enumerate}
We will construct a functor $Ex$ from pointed simplicial sheaves
to pointed simplicial sheaves and a morphism $Id\sr Ex$ such that:
\begin{enumerate}[(i)]
\item For any $F$, $F\sr Ex(F)$ is in the $\bdl$-closure of $E$
\item If $f:K\sr L$ is in $E$, the morphism 
\begin{eq}\llabel{14.6.1}
S(L,Ex(X))\sr S(K,Ex(X)) 
\end{eq}
is a weak equivalence.
\item If $f:K\sr L$ is in $N$, the morphism (\ref{14.6.1}) is a Kan
fibration. 
\end{enumerate}
\comment{For $f:K\sr L$ in $E$, the factorization of $f$ as $K\sr cyl(f)\sr L$
induces a factorization of (\ref{14.6.1})
$$S(L,Ex(F))\sr S(cyl(f), Ex(F))\sr S(K, Ex(F))$$}
Let us factorize $f:K\sr L$ as $K\sr cyl(f)\sr L$. As the second map
is a homotopy equivalence, the first is in the $\Delta$-closure of
$E$. In the corresponding factorization of (\ref{14.6.1}):
$$S(L,Ex(F))\sr S(cyl(f), Ex(F))\sr S(K, Ex(F))$$
the first map is a homotopy equivalence.  To obtain (ii), it hence
suffices that $S(cyl(f),Ex(F))\sr S(K,Ex(F))$ be a weak equivalence. 

Replacing each $f:K\sr L$ in $E$ by the corresponding $K\sr cyl(f)$,
this reduces us to the case where 
\vskip 2mm

\noindent
(c) each $f$ in $E$ is a termwise coprojection,
\vskip 2mm

\noindent
and we will construct in this case a functor $Ex$ such that  
\vskip 2mm

\noindent
(iii)$^*$ for $f$ in $E$, (\ref{14.6.1}) is a trivial fibration.
\vskip 2mm

The conditions (ii), (iii)$^*$ are lifting properties:
\vskip 2mm

\noindent
for $f$ in $E$, in squares:
$$
\begin{CD}
\partial\Delta^n_+ @>>> S(L,Ex(F))\\
@VVV @VVV\\
\Delta^n_+ @>>> S(K,Ex(F))
\end{CD}
$$
\vskip 2mm

\noindent
for $f$ in $N$, in squares:
$$
\begin{CD}
(\Lambda^n_k)_+ @>>> S(L,Ex(F))\\
@VVV @VVV\\
\Delta^n_+ @>>> S(K,Ex(F))
\end{CD}
$$
In the first case, the data are morphisms $\Delta^n_+\wedge K\sr Ex(F)$
and $\partial\Delta^n_+\wedge L\sr Ex(F)$ agreeing on
$\partial\Delta^n_+\wedge K$ i.e. a morphism 
$$(\Delta^n_+\wedge K)\coprod_{\partial\Delta^n_+\wedge
K}(\partial\Delta^n_+\wedge L)\sr Ex(F)$$
and we want it to extend to $\Delta^n_+\wedge L$. Similarly in the
second case, with $\partial\Delta^n$ replaced by $\Lambda^n_k$:
\vskip 2mm

\noindent
for $f$ in $E$:
\begin{eq}\llabel{14.6.2}
\begin{CD}
(\Delta^n_+\wedge K)\coprod_{\partial\Delta^n_+\wedge
K}(\partial\Delta^n_+\wedge L) @>>> Ex(F)\\
@VVV @VVV\\
\Delta^n_+\wedge L @>>> *
\end{CD}
\end{eq}
\vskip 2mm

\noindent
for $f$ in $N$:
\begin{eq}\llabel{14.6.3}
\begin{CD}
(\Delta^n_+\wedge K)\coprod_{(\Lambda^n_k)_+\wedge
K}((\Lambda^n_k)_+\wedge L) @>>> Ex(F)\\
@VVV @VVV\\
\Delta^n_+\wedge L @>>> *
\end{CD}
\end{eq}
The left vertical maps are termwise coprojections, and their sources
are compact. One now uses the standard trick of defining $Ex(F)$ as
the inductive limit of the iterates of functors $F\sr T(F)$, where
$T(F)$ is deduced from $F$ by push out, simultaneously for all 
$$(\Delta^n_+\wedge K)\coprod_{\partial\Delta^n_+\wedge
K}(\partial\Delta^n_+\wedge L)\sr Ex(F)\,\,\,\,\,\,\,(f:K\sr L \,\,\,{\rm
in} \,\,\,E)$$
and
$$(\Delta^n_+\wedge K)\coprod_{(\Lambda^n_k)_+\wedge
K}((\Lambda^n_k)_+\wedge L)\sr Ex(F)\,\,\,\,\,\,\,(f:K\sr L \,\,\,{\rm
in} \,\,\,N)$$
The push out is by
$$\bigvee ({\rm sources}) \sr \bigvee (\Delta^n_+\wedge L)$$
a morphism which is a termwise coprojection. By \ref{14.5}, to check
that the resulting $F\sr Ex(F)$ is in the $\bdl$-closure of $E$, it
suffices to check that the left vertical morphism in (\ref{14.6.2})
(resp. (\ref{14.6.3})) is in the $\Delta$-closure of $E$ (resp. of the
empty set). 

For (\ref{14.6.2}), this is the map marked $\bf 3$ in
$$
\begin{CD}
\partial\Delta^n_+\wedge K @>{\bf 1}>> \partial\Delta^n_+\wedge L\\
@VVV @VVV\\
\Delta^n_+\wedge K @>{\bf 2}>> \dots @>{\bf 3}>> \Delta^n_+\wedge L
\end{CD}
$$
The morphisms $\bf 1$ and ${\bf 3}\circ {\bf 2}$ are in the
$\Delta$-closure of $E$. So is $\bf 2$ by \ref{14.5} and one applies
the 2 out of 3 property .

For (\ref{14.6.3}), the diagram is
$$
\begin{CD}
(\Lambda^n_k)_+\wedge K @>>> (\Lambda^n_k)_+\wedge L\\
@VV{\bf 1}V @VV{\bf 2}V\\
\Delta^n_+\wedge K @>>> \dots @>{\bf 3}>> \Delta^n_+\wedge L
\end{CD}
$$
with $\bf 1$ and ${\bf 3}\circ {\bf 2}$ in the $\Delta$-closure of the
empty set. Indeed, $\Lambda^n_k$ and $\Delta^n$ are both
contractible. 
\end{cons}
\begin{remark}\llabel{14.7}\rm
Let $P$ be a property of pointed simplicial schemes stable by
coproduct, and suppose that
\begin{enumerate}[(a)]
\item for $f:K\sr L$ in $E$, the $K_n$ and $L_n$ have property $P$
\item for $f:K\sr L$ in $N$, $f$ is in degree $n$ isomorphic to the
natural map 
$K_n\sr K_n\vee A$ for some $A$ having property $P$.
\end{enumerate}
The functor $Ex$ constructed in \ref{14.6} is then such that for any
$K$, each morphism $K_n\sr Ex(K)_n$ is isomorphic to some $K_n\sr
K_n\vee A$ where $A$ has property $P$. In particular, if the $K_n$
have property $P$, so have the $Ex(K)_n$.
\end{remark}
\begin{piece}[Proof of \ref{postponed}]\label{14.8}\rm
We apply construction \ref{14.6}, on the site $QP/G$, taking for
$E$ and $N$ the following classes.
\vskip 2mm
$E$: For any $X$ in the site, the morphism 
\begin{eq}\llabel{14.7.1}
(X\times\af)_+\sr X_+
\end{eq}
and for any upper distinguished square 
\begin{eq}\llabel{14.7.2}
\begin{CD}
B @>>> Y\\
@VVV @VVV\\
A @>>> X,
\end{CD}
\end{eq}
the morphism
\begin{eq}\llabel{14.7.3}
(K_Q)_+\sr X_+
\end{eq}
\vskip 2mm
$N$: For any $X$ in the site,
\begin{eq}\llabel{14.7.4}
(\emptyset)_+\sr X_+
\end{eq}
\vskip 2mm

If a pointed simplicial sheaf $G$ is of the form $Ex(F)$, that
(\ref{14.7.4}) is in $N$ ensures that each $G(X)$ is Kan. That
(\ref{14.7.3}) is in $E$ ensures that for each upper distinguished
square (\ref{14.7.2}), the morphism 
$$G(X)\sr S((K_{Q})_+, G)$$
is a weak equivalence. As each $G(Y)$ is Kan, $S((K_Q)_+,G)$ is the
homotopy fiber product of $G(A)$ over $G(B)$, and 
$$
\begin{CD}
G(X) @>>> G(A)\\
@VVV @VVV\\
G(Y) @>>> G(B)
\end{CD}
$$
is homotopy cartesian: $G$ is flasque.

Further, as (\ref{14.7.1}) is in $E$, for each $X$,
$$G(X)\sr G(X\times\af)$$
is a weak equivalence: by \ref{12.6}, $G$ is $\af$-local.

Suppose now that $f:F\sr G$ is a $\af$-equivalence. In the
commutative diagram 
$$
\begin{CD}
F @>>> G\\
@VVV @VVV\\
Ex(F) @>>> Ex(G)
\end{CD}
$$
the vertical maps are in the $\bdl$-closure of the morphisms
(\ref{14.7.1}) and (\ref{14.7.2}), the later being local 
equivalences. In particular, they are $\af$-equivalences and
$Ex(F)\sr Ex(G)$ is an $\af$-equivalence between $\af$-local
objects , hence is a local  equivalence. It follows that $f$ is in
the required $\bdl$-closure, proving \ref{postponed}.
\end{piece}
The functor $Ex$ used introduced in \ref{14.8} can also be used to
prove the following lemma.
\begin{lemma}
\llabel{new6} If $F^{(i)}\sr F^{(j)}$ is a filtering system of
$\af$-equivalences, then $F^{(n)}\sr colim_{i}F^{(i)}$ is again an
$\af$-equivalence. 
\end{lemma}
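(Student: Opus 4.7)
The plan is to pass through the fibrant $\af$-local replacement functor $Ex$ built in Construction~\ref{14.6} (with the classes $E$, $N$ prescribed in \ref{14.8}) and to exploit the fact that this functor preserves filtered colimits.

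First I would verify the following. For each $i$, the natural map $F^{(i)}\sr Ex(F^{(i)})$ lies in the $\bdl$-closure of $E$ by property (i) of \ref{14.6} and is therefore an $\af$-equivalence by Theorem~\ref{postponed}; moreover $Ex(F^{(i)})$ is flasque and $\af$-local, exactly as shown in \ref{14.8}. Combining this with the hypothesis, the square
$$
\begin{CD}
F^{(i)} @>>> F^{(j)}\\
@VVV @VVV\\
Ex(F^{(i)}) @>>> Ex(F^{(j)})
\end{CD}
$$
and two-out-of-three show that the transition maps $Ex(F^{(i)})\sr Ex(F^{(j)})$ are $\af$-equivalences between $\af$-local objects. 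Any such map is a local equivalence: testing the definition of $\af$-equivalence against the $\af$-local target (and source) itself produces a two-sided inverse in $Ho_{\bullet}$.

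The key technical step is to verify that $Ex$ commutes with filtered colimits. The construction of \ref{14.6} presents $Ex(F)$ as $colim_{n}\,T^{n}(F)$, where $T$ is the single-step pushout of $F$ along a fixed family of attaching maps whose sources and targets are reliably compact. Compactness of the sources means that $Hom$ out of them commutes with filtered colimits, so the set of filling problems for a filtered colimit is the filtered colimit of the sets of filling problems; together with the fact that pushouts commute with filtered colimits, this makes $T$ preserve filtered colimits, and then $colim_n T^n$ does so by interchange of colimits. Hence
$$Ex(colim_i F^{(i)})=colim_i Ex(F^{(i)}).$$
The right hand side is a filtered colimit of local equivalences: since $(QP/G)_{Nis}$ has enough points by \ref{prop9.star}, local equivalences can be checked stalkwise, and filtered colimits of weak equivalences of simplicial sets are weak equivalences. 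Therefore each canonical map $Ex(F^{(n)})\sr colim_i Ex(F^{(i)})$ is a local equivalence.

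To finish, consider the commutative square
$$
\begin{CD}
F^{(n)} @>>> colim_i F^{(i)}\\
@VVV @VVV\\
Ex(F^{(n)}) @>>> colim_i Ex(F^{(i)}).
\end{CD}
$$
The vertical maps are $\af$-equivalences (first paragraph, applied at $i=n$ and to the colimit), and the bottom map is a local equivalence hence an $\af$-equivalence; two-out-of-three then forces the top map to be an $\af$-equivalence. The main obstacle is the compatibility of $Ex$ with filtered colimits; this rests entirely on the reliable compactness hypothesis (a) of \ref{14.6}, and once that is granted the rest is formal.
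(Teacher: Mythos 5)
Your proposal is correct and follows essentially the same route as the paper: compare the colimit square with its image under $Ex$, use that $Ex$ commutes with filtering colimits (via the reliable compactness of the sources of the attaching maps) to identify the bottom arrow with a filtering colimit of local equivalences between $\af$-local objects, and conclude by two-out-of-three. The paper states this very tersely; you have merely filled in the details (stalkwise checking of local equivalences, the colimit-preservation of $T$ and hence of $Ex$) that the paper leaves implicit.
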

\begin{proof}
Consider the square:
$$
\begin{CD}
F^{(n)} @>>> colim_{i}F^{(i)}\\
@VVV @VVV\\
Ex(F^{(n)}) @>>> Ex(colim_{i}F^{(i)}).
\end{CD}
$$
Since the functor $Ex$ commutes with filtering colimits, the bottom
arrow is a filtering colimit of local equivalences, hence a local
equivalence. The vertical maps are $\af$-equivalences, hence the top
map is an $\af$-equivalence.
\end{proof}

\subsection{One more characterization of equivalences}
Denote by $[QP/G]_+$ the full subcategory in the category of pointed
sheaves on $QP/G$ generated by all coproducts of sheaves of the form
$(h_X)_+$. 
\begin{theorem}\llabel{conv}
The class of local equivalences (resp. $\af$-equivalences) in
$\Delta^{op}[QP/G]_+$ is the smallest class $W$ which contains
morphisms $(K_Q)_+\sr X_+$ for $Q$ upper distinguished and has the
following properties:
\begin{enumerate}
\item simplicial homotopy equivalences (resp. and $\af$-homotopy
equivalences) are in $W$
\item if two of $f$, $g$ and $fg$ are in $W$ then so is the third
\item if $F^{(i)}\sr F^{(j)}$ is a filtering system of termwise
coprojections in $W$, then $F^{(n)}\sr colim_{i}F^{(i)}$ is again in
$W$
\item if $F_{**}\sr F'_{**}$ is a morphism of bisimplicial objects,
and if all $F_{*p}\sr F'_{*p}$ are in $W$, so is the diagonal
$\Delta(F)\sr \Delta(F')$.
\end{enumerate}
\end{theorem}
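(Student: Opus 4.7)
The plan is to establish the two inclusions: $W\subseteq\{\text{local (resp.\ }\af\text{-)equivalences}\}$ and its converse.

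For the forward direction I would verify that the class of local (resp.\ $\af$-)equivalences satisfies properties (1)--(4) and contains each generator $(K_Q)_+\to X_+$. Property (1) is immediate from the definitions, property (2) is the standard 2-out-of-3, property (4) is Proposition \ref{13.6} in the $\af$-case and is proved by the same diagonal argument applied stalkwise (via the fiber functors of Proposition \ref{prop9.star}) in the local case, and property (3) is Lemma \ref{new6} in the $\af$-case with the analogous stalkwise argument for local equivalences (using exactness of filtered colimits of pointed simplicial sets). That $(K_Q)_+\to X_+$ is a local equivalence follows from the observation that an upper distinguished square splits on every $G$-local henselian scheme (Lemma \ref{10.4}), so that at each fiber functor the homotopy pushout maps to the actual pushout by a simplicial homotopy equivalence.

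For the reverse direction I would apply the construction $Ex$ of \ref{14.6} as in paragraph \ref{14.8}: take $E=\{(K_Q)_+\to X_+\,:\,Q\text{ upper distinguished}\}$, augmented by $\{(X\times\af)_+\to X_+\}$ in the $\af$-case, and $N=\{(\emptyset)_+\to X_+\}$. By Remark \ref{14.7}, the construction preserves the subcategory $\Delta^{op}[QP/G]_+$. Given $f:F\to G$ a local (resp.\ $\af$-)equivalence, consider the commutative square
\[
\begin{CD}
F @>f>> G\\
@VVV @VVV\\
Ex(F) @>Ex(f)>> Ex(G)
\end{CD}
\]
By property (2) it suffices to show all three other arrows lie in $W$. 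By construction the vertical maps are transfinite colimits of pushouts of coproducts of the generators in $E\cup N$ smashed with the inclusions $\partial\Delta^n\hookrightarrow\Delta^n$ or $\Lambda^n_k\hookrightarrow\Delta^n$. The $E$-generators are in $W$ by hypothesis (the $\af$-contractions being $\af$-homotopy equivalences in the $\af$-case, hence in $W$ by property (1)), while the $N$-generators contribute only maps of the form $(\Lambda^n_k)_+\wedge X_+\to(\Delta^n)_+\wedge X_+$, which are simplicial homotopy equivalences (both sides contractible), so in $W$ by property (1). Closure of $W$ under pushouts along termwise coprojections is obtained by presenting the pushout as the diagonal of the bisimplicial object of \ref{14.1}, then combining Proposition \ref{14.4} and Corollary \ref{c14.5} with property (4); property (3) then promotes finite iterates to the transfinite colimit $Ex(F)$.

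The main obstacle is to show that $Ex(f)$ itself lies in $W$. By construction $Ex(F)$ and $Ex(G)$ are flasque (and $\af$-local in the $\af$-case), so $Ex(f)$ is a local equivalence, and by Theorem \ref{12.6} it evaluates to a weak equivalence of pointed simplicial sets at every $U$ in $QP/G$. The technical core is to upgrade this sectionwise weak equivalence to a simplicial homotopy equivalence, which would place $Ex(f)$ in $W$ by property (1) and finish the proof by property (2). The plan is to arrange a bifibrant replacement within $Ex$ so that both $Ex(F)$ and $Ex(G)$ become cofibrant in the Brown--Gersten model structure: since $(h_X)_+$ is cofibrant (by $(I_a)$ at $n=0$) and coproducts of cofibrants are cofibrant, the individual entries are cofibrant, and a Reedy-type argument (or an additional iterated pushout step inside $Ex$) should yield cofibrancy of the whole simplicial object. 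A weak equivalence between bifibrant objects in a simplicial model category admits a simplicial homotopy inverse, and verifying this bifibrancy within the restricted category $\Delta^{op}[QP/G]_+$ is where the remaining technical work concentrates.
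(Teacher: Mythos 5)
There is a genuine gap, and it sits exactly where the paper's own proof concentrates its effort. Your reverse inclusion builds $Ex(F)$ as a transfinite colimit of pushouts of \emph{coproducts} of generating maps, and you invoke \ref{14.4} and \ref{c14.5} to handle pushouts along termwise coprojections. But \ref{14.1}, \ref{14.4} and \ref{c14.5} all work inside a $\Delta$-closed class in the sense of \ref{3d1}, whose axioms include stability under finite coproducts; the class $W$ of Theorem \ref{conv} is \emph{not} assumed closed under coproducts, finite or infinite --- its axioms are only (1) homotopy equivalences, (2) two-out-of-three, (3) filtering colimits of termwise coprojections, and (4) diagonals. So both the step ``$\bigvee(\mbox{sources})\sr\bigvee(\Delta^n_+\wedge L)$ is in $W$'' and the pushout-closure step are unjustified. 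Establishing that $W$ is in fact closed under arbitrary coproducts is the entire content of the paper's proof \ref{convp}: one writes $\coprod f_\alpha$ as the filtering colimit of the termwise coprojections $\Phi_I\sr\Phi_{I\cup\{\alpha\}}$ over finite subsets $I$, each of which has the form $Id_H\coprod f$; one then uses the diagonal axiom to reduce to $H$ a constant sheaf $\coprod(h_U)_+$, the same filtering trick to reduce to $H=(h_U)_+$, and finally observes that the class of $f$ with $Id_{(h_U)_+}\coprod f\in W$ contains the generators and satisfies all the defining axioms, hence contains $W$. Once coproduct closure is known, $W$ is $\bdl$-closed and Lemma \ref{new1} (whose proof is your $Ex$ argument, but carried out inside the genuinely $\bdl$-closed ambient class) finishes the job. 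Without it, your argument is circular at every appearance of a wedge.

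Your proposed endgame for $Ex(f)$ is also problematic. You want to upgrade the sectionwise weak equivalence $Ex(f)$ to a simplicial homotopy equivalence by bifibrancy in the Brown--Gersten structure, but objects of $\Delta^{op}[QP/G]_+$ need not be cofibrant there: cofibrancy requires the latching maps to be coprojections, and a degeneracy $(h_U)_+\sr(h_V)_+$ is merely a split monomorphism, not in general a summand inclusion. The paper avoids this entirely via the resolution $R(-)$ of \ref{15.4} and Lemma \ref{15.3}, which places a sectionwise weak equivalence between termwise coproducts of representables in the $\bdl$-closure of the empty set --- an argument that, once again, consumes the coproduct closure you have not supplied. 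The forward inclusion of your proposal (via \ref{13.6}, \ref{new6} and the splitting of upper distinguished squares over $G$-local henselian schemes) is fine and agrees with the paper.
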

The proof is given in \ref{convp}
\begin{lemma}\llabel{15.3}
If the morphism $f:F\sr G$ is such that, for each $U$, $F(U)\sr G(U)$
is a weak equivalence, and if the $F_n$ and $G_n$ are all of the form
$(\coprod h_{U_i})_+$, then $f$ is in the $\bdl$-closure of the empty
set.
\end{lemma}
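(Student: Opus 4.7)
The plan is to reduce the map $f$ to a termwise coprojection via the mapping cylinder, and then exhibit the resulting coprojection as an iterated colimit of simplicial homotopy equivalences by the skeletal/diagonal machinery of Sections \ref{13} and \ref{14}.

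First, form the mapping cylinder $cyl(f)$ of Example \ref{ex14.2}, factoring $f$ as $F \stackrel{i}{\sr} cyl(f) \stackrel{p}{\sr} G$. The map $p$ is a simplicial homotopy equivalence, hence in the $\bdl$-closure of $\emptyset$ by \ref{3d1}(1), so the 2-out-of-3 property \ref{3d1}(2) reduces the lemma to showing that $i$ lies in the $\bdl$-closure of $\emptyset$. The levels of $cyl(f)$ retain the form $(\coprod h_{U_i})_+$ since this class is stable under wedges and smashes with $(\Delta^1)_+$, and $i$ is a termwise coprojection by \ref{ex14.3}. It remains a sectionwise weak equivalence, since $f$ is and $p$ is a sectionwise simplicial homotopy equivalence.

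Second, apply the skeletal filtration of \ref{13.9} to the inclusion $F \hookrightarrow cyl(f)$: one expresses the target as an iterated pushout of squares of type (\ref{13.9.2}), attaching cells of the form $X_{n+1}\wedge (\partial\Delta^{n+1})_+ \sr X_{n+1}\wedge(\Delta^{n+1})_+$, where $X_{n+1}$ is controlled by $cyl(f)_{n+1}$ modulo $F_{n+1}$ and remains of the required form. Using \ref{14.4} and Corollary \ref{c14.5} each attaching pushout is controlled, and the sectionwise weak equivalence hypothesis forces the quotient $cyl(f)/F$ to be sectionwise weakly contractible. This allows us to recognize $i$ as the diagonal of a bisimplicial sheaf whose columns are simplicial homotopy equivalences, so \ref{3d1}(4) places the diagonal in the $\bdl$-closure of $\emptyset$; the full colimit is then absorbed by the colimit-stability in \ref{3d2}.

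The main obstacle is the passage from the sectionwise weak equivalence of simplicial sets $F(U)\sr cyl(f)(U)$ to an explicit bisimplicial witness realizing $i$ as a diagonal of simplicial homotopy equivalences. Sectionwise weak equivalences of simplicial sets do not automatically carry functorial contracting homotopies, so the freeness of the levels $(\coprod h_{U_i})_+$ must be exploited: each canonical generating section $\mathrm{id}_{U_j}$ of $cyl(f)_n$ gives a canonical lifting problem for the contraction of the cofiber, and these can be solved simultaneously by horn-filling in the (sectionwise contractible) quotient $cyl(f)/F$, with the freeness of the levels ensuring the compatibility across $U$ needed to globalize the contracting homotopies. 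This step is close in spirit to Construction \ref{14.6} and Remark \ref{14.7}.
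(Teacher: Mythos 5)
Your reduction to the termwise coprojection $F\sr cyl(f)$ is fine, but the argument stalls exactly where you say it does, and the device you propose to get past that point does not work. Two concrete problems. First, a skeletal induction cannot by itself place $i:F\sr cyl(f)$ in the $\bdl$-closure of $\emptyset$: the attaching maps $X_{n+1}\wedge(\partial\Delta^{n+1})_+\sr X_{n+1}\wedge(\Delta^{n+1})_+$ of (\ref{13.9.1}) are never in that closure (they change the homotopy type), and \ref{14.4} and \ref{c14.5} only control the pushouts in terms of these attaching maps; so a cell-by-cell argument necessarily loses the sectionwise weak-equivalence hypothesis, which has to be used globally, in one piece. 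Second, the proposed fix --- producing contracting homotopies of $cyl(f)/F$ by ``horn-filling in the sectionwise contractible quotient'' --- is not available: $cyl(f)(U)/F(U)$ is a weakly contractible pointed simplicial set but has no reason to be Kan, so horns need not fill; and even granting fillers you would still not have exhibited $i$ as the diagonal of a bisimplicial object whose columns are \emph{simplicial homotopy} equivalences, since weak equivalences of non-fibrant simplicial sets are in general not simplicial homotopy equivalences.

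The missing idea is the cotriple resolution of \ref{15.4}. Writing $i_*$ for restriction of a pointed sheaf to a set $C_0$ of connected objects and $i^*$ for its left adjoint $(A_U)\mapsto\bigvee_U\left((h_U)_+\wedge A_U\right)$, one sets $R(F)_n=(i^*i_*)^{n+1}F$ and takes diagonals over the simplicial direction of $F$. Because the levels $F_n$ and $G_n$ are free, i.e. of the form $i^*A$, the augmentations $R(F)\sr F$ and $R(G)\sr G$ are diagonals of simplicial homotopy equivalences, hence lie in the $\bdl$-closure of $\emptyset$; and $R(f)$ is assembled from wedges of maps $(h_U)_+\wedge F(U)\sr(h_U)_+\wedge G(U)$ with $F(U)\sr G(U)$ a weak equivalence of pointed simplicial sets, hence also lies in that closure. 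The two-out-of-three property then concludes. This is precisely the construction that makes the freeness of the levels and the sectionwise hypothesis cooperate; your sketch correctly identifies that both must be used, but does not supply it.
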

The proof will use the following construction.
\begin{cons}\llabel{15.4}\rm
Let $C$ be a category, and let $C_0$ be a set of objects of $C$, such
that any isomorphism class has a representative in $C_0$. Let $i_*$ be
the functor which to a presheaf of pointed sets on $C$ attaches the
family of pointed sets $(F(U))_{U\in C_0}$. It has a left adjoint $i^*$:
$${\rm family}\,\,(A_U)_{U\in C_0}\mapsto \bigvee_{U} ((h_U)_+\wedge
A_U)=$$
$$=(\mbox{\rm disjoint sum over the $U\in C_0$ and $(A_U-*)$ of
$h_U$})_+$$
If $C_0$ is viewed as a category whose only morphisms are identities,
the natural functor
$$i:C_0\sr C$$
defines a morphism of sites $C\sr C_0$, both endowed with the trivial
topology (any presheaf a sheaf), and $i_*$, $i^*$ are the
corresponding direct and inverse image of pointed sheaves. 

By a general story valid for any pair of adjoint functors, for any
pointed presheaf $F$ on $C$, the $(i^*i_*)^{n+1}(F)$ form a pointed
simplicial presheaf $R(F)$ augmented to $F$:
$$a:R(F)\sr F$$
Further
\begin{enumerate}[(a)]
\item If $F$ is of the form $i^*A$, i.e. of the form $(\coprod
h_{U_i})_+$, $a$ is a homotopy equivalence
\item For any $F$, $i^*(a)$ is a homotopy equivalence: for each $U$ in
$C$, $R(F)(U)\sr F(U)$ is a homotopy equivalence
\end{enumerate}
For a simplicial presheaf $F$ we define 
$$R(F)=\Delta(\mbox{\rm simplicial system of $R(F_p)$})$$
\end{cons}
\begin{piece}\llabel{15.5}[Proof of \ref{15.3}]:\rm Let us say that
$X$ in $QP/G$ is connected if it is not empty and is not a disjoint
union: $G$ should act transitively on the set of connected components
of $X$. Let $C\subset QP/G$ be the full subcategory of connected
objects. A sheaf $F$ on $QP/G$ is determined by its restriction to
$C$. Indeed, $F(\coprod X_i)=\prod F(X_i)$. To apply \ref{15.4}, we
will use this remark to identify the category of sheaves on $QP/G$ to
a full subcategory of the category of presheaves on $C$. For any $C_0$
as in \ref{15.4}, the functor $i^*$ takes values in sheaves, that is
in the restriction of sheaves to $C$. Indeed, for $X$ connected,
$(\coprod h_{U_i})_+(X)$ is the same, whether $\coprod$ and $_+$ are
taken in the sheaf or in the presheaf sense. 

Fix $f:F\sr G$ as in \ref{15.3}. For each $n$, the assumption on $F_n$
ensures that $R(F_n)\sr F_n$ is in the $\Delta$-closure of the empty
set, and similarly for $G$. 

For each (connected) $U$, the morphism of pointed simplicial sets
$F(U)\sr G(U)$ is a weak equivalence, hence in the $\bdl$-closure of
the empty set. It follows that $i^*i_*(F)\sr i^*i_*(G)$: the $\vee$
over $C_0$ of the
$$(h_U)_+\wedge F(U)\sr (h_U)_+\wedge G(U)$$
is in the $\bdl$-closure of the empty set. Iterating one finds the
same for $(i^*i_*)^n(F)\sr (i^*i_*)^n(G)$, and $R(F)\sr R(G)$ is in
this $\bdl$-closure too. It remains to apply the two out of three
property to
$$
\begin{CD}
R(F) @>>> R(G)\\
@VVV @VVV\\
F @>>> G
\end{CD}
$$
\end{piece}
\begin{lemma}
\llabel{15.6}
If $f:F\sr G$ is a local  equivalence and if the $F_n$ and $G_n$
are all of the form $(\coprod h_{U_i})_+$, then $f$ is in the
$\bdl$-closure of the $(K_{Q})_+\sr X_+$ for $Q$ upper distinguished. 
\end{lemma}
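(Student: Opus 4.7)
The plan is to apply the flasque replacement functor $Ex$ from Construction \ref{14.6} to both $F$ and $G$, and reduce to the objectwise situation already handled by Lemma \ref{15.3}. I take $E$ to be the class of morphisms $(K_Q)_+ \sr X_+$ for upper distinguished squares $Q$, and $N = \{(\emptyset)_+ \sr X_+ : X \in QP/G\}$; these are the classes used in \ref{14.8} to show that $Ex(F)$ is flasque and that $F \sr Ex(F)$ lies in the $\bdl$-closure of $E$.

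The crucial auxiliary point is that Remark \ref{14.7} applies with the property $P$ being ``pointed sheaf of the form $(\coprod h_{U_i})_+$''. Each level $(K_Q)_n$ is a finite wedge of pointed representables by the explicit bisimplicial description in \ref{14.1} (columns $A_+ \vee B_+^{\vee n} \vee Y_+$), and analogously for the morphisms in $N$; hence the levels of $Ex(F)$ and $Ex(G)$ again have the form $(\coprod h_{U_i})_+$.

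With this in place I form the functorial commutative square
\[
\begin{CD}
F @>>> Ex(F)\\
@VfVV @VVEx(f)V\\
G @>>> Ex(G)
\end{CD}
\]
The horizontal maps lie in the $\bdl$-closure of $E$. Each morphism in $E$ is itself a local equivalence (Nisnevich descent: at a Nisnevich stalk the upper distinguished square pulls back to a trivial cover, so $K_Q\sr X$ is a stalkwise weak equivalence), and local equivalences themselves form a $\bdl$-closed class (the axioms are checked at Nisnevich stalks, using \ref{13.6} for the diagonal condition \ref{3d1}(4)). So the horizontal maps are local equivalences, and by 2-out-of-3 so is $Ex(f)$. Since $Ex(F)$ and $Ex(G)$ are flasque, Theorem \ref{12.6} upgrades $Ex(f)$ to an objectwise weak equivalence; Lemma \ref{15.3} then places $Ex(f)$ in the $\bdl$-closure of the empty set, hence \emph{a fortiori} in the $\bdl$-closure of $E$. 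A final application of 2-out-of-3 to the composite $F \sr Ex(F) \sr Ex(G)$, which equals $F \sr G \sr Ex(G)$, places $f$ in the $\bdl$-closure of $E$.

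The main obstacle I foresee is the bookkeeping needed to verify the hypotheses of Remark \ref{14.7} throughout the transfinite construction of $Ex$---specifically, checking that every attaching map in the iterated pushouts defining $Ex$ preserves the ``coproduct of pointed representables'' property on levels. Once this stability is secured, the remaining steps are essentially formal consequences of \ref{12.6}, \ref{15.3}, and the 2-out-of-3 property of the $\bdl$-closure.
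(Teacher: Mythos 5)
Your proposal is correct and follows essentially the same route as the paper: apply the $Ex$ construction of \ref{14.6} with the same classes $E$ and $N$, use \ref{14.7} to keep the levels of the form $(\coprod h_{U_i})_+$, deduce via flasqueness and Theorem \ref{12.6} that $Ex(f)$ is an objectwise weak equivalence, apply Lemma \ref{15.3}, and conclude by two-out-of-three. The only difference is that you spell out why the maps $F\sr Ex(F)$ are local equivalences and why the levels of $K_Q$ satisfy the hypothesis of \ref{14.7}, details the paper leaves implicit.
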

\begin{proof}
We will use the construction \ref{14.6} for $E$ the class of morphisms 
$(K_{Q})_+\sr X_+$ for $Q$ upper distinguished, and for $N$ the class
of morphisms $*\sr X_+$. By \ref{14.7}, if the $F_n$ are of the form
$(\coprod h_{U_i})_+$, so are the $Ex(F)_n$. In the commutative
diagram 
$$
\begin{CD}
F @>>> G\\
@VVV @VVV\\
Ex(F) @>>> Ex(G)
\end{CD}
$$
the vertical maps are in the required $\Delta$-closure. They are in
particular local equivalences and so is $Ex(f)$. One verifies as in
\ref{14.8} that $Ex(F)$ and $Ex(G)$ are flasque. By \ref{14.6}(ii),
for each $U$, $Ex(f)(U)$ is a weak equivalence, and it remains to
apply \ref{15.3} to Ex(f).
\end{proof}
\begin{lemma}
\llabel{new1}
If $f:F\sr G$ is an $\af$-equivalence and if the $F_n$ and $G_n$
are all of the form $(\coprod h_{U_i})_+$, then $f$ is in the
$\bdl$-closure of the $(K_{Q})_+\sr {X}_+$ for $Q$ upper distinguished
and $(X\times\af\sr X)_+$ for $X\in QP$.  
\end{lemma}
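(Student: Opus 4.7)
The plan is to mimic the proof of Lemma \ref{15.6}, enlarging the class $E$ used in construction \ref{14.6} so that $Ex(F)$ becomes both flasque and $\af$-local. First I would apply Construction \ref{14.6} with
\[
E = \{(K_Q)_+ \sr X_+ : Q \text{ upper distinguished}\} \cup \{(X\times\af)_+ \sr X_+ : X \in QP\},
\]
and $N = \{\emptyset_+ \sr X_+ : X \in QP\}$, exactly as in \ref{14.8}. By the verification given in \ref{14.8}, the presence of $(\emptyset)_+ \sr X_+$ in $N$ ensures each $Ex(F)(U)$ is Kan, the morphisms $(K_Q)_+ \sr X_+$ in $E$ force $Ex(F)$ to be flasque via \ref{14.6}(ii), and the morphisms $(X\times\af)_+ \sr X_+$ in $E$ give that $Ex(F)(X) \sr Ex(F)(X\times\af)$ is a weak equivalence for all $X$; combined with flasqueness and \ref{12.7}, this shows $Ex(F)$ is $\af$-local. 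By \ref{14.7}, since the $F_n$ are of the form $(\coprod h_{U_i})_+$, so are the $Ex(F)_n$, and analogously for $G$.

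Next I would exploit the commutative square
\[
\begin{CD}
F @>f>> G\\
@VVV @VVV\\
Ex(F) @>Ex(f)>> Ex(G).
\end{CD}
\]
By \ref{14.6}(i), both vertical maps lie in the $\bdl$-closure of $E$, hence a fortiori in the $\bdl$-closure asked for in the lemma. Since $f$ is an $\af$-equivalence and the vertical maps are $\af$-equivalences (the members of $E$ are themselves $\af$-equivalences, and the class of $\af$-equivalences is $\bdl$-closed by the first half of \ref{postponed}, proved in section \ref{13}), the two-out-of-three property shows that $Ex(f)$ is also an $\af$-equivalence. But $Ex(F)$ and $Ex(G)$ are $\af$-local, so any $\af$-equivalence between them is a local equivalence.

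Now, with $Ex(F)$ and $Ex(G)$ flasque and $Ex(f)$ a local equivalence, Theorem \ref{12.6} gives that $Ex(f)(U)$ is a weak equivalence of simplicial sets for every $U \in QP/G$. Since the $Ex(F)_n$ and $Ex(G)_n$ are of the form $(\coprod h_{U_i})_+$, Lemma \ref{15.3} applies and shows $Ex(f)$ is in the $\bdl$-closure of the empty set, hence in the $\bdl$-closure specified in the conclusion. A final application of the two-out-of-three property (property (2) of $\bdl$-closedness is implicit in \ref{3d1}(2)) to the square above, using that the vertical maps and $Ex(f)$ all lie in the required $\bdl$-closure, yields that $f$ does too.

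The main obstacle is verifying cleanly that the enlarged $E$ still satisfies the hypotheses of Construction \ref{14.6}: the morphisms $(X\times\af)_+ \sr X_+$ need to have reliably compact source and target, and once factored as in the opening of \ref{14.6} via cylinders, the termwise coprojection condition (c) must be respected. However, $X\times\af$ and $X$ are representable with reliably compact pointed sheaves $(h_{X\times\af})_+$ and $(h_X)_+$, so these hypotheses are met, and the standard cylinder reduction used in \ref{14.6} goes through without change. After that, the argument is a combination of the two specialized versions of \ref{14.6} already carried out in \ref{14.8} and \ref{15.6}, and there are no new difficulties.
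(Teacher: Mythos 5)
Your proposal is correct and is exactly what the paper intends: its proof of this lemma is literally ``Similar to the proof of \ref{15.6}'', and you have spelled out that adaptation faithfully, using the enlarged class $E$ from \ref{14.8} so that $Ex(F)$, $Ex(G)$ become flasque and $\af$-local, deducing that $Ex(f)$ is a local equivalence between them, and then invoking \ref{12.6}, \ref{15.3}, and two-out-of-three. No gaps; the compactness check for the added generators $(X\times\af)_+\sr X_+$ is the right (and only) point needing verification, and it holds as you say.
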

\begin{proof}
Similar to the proof of \ref{15.6}.
\end{proof}
\begin{piece}\llabel{new2}\rm
Since for any simplicial sheaf $F$ the map $R(F)\sr F$ is a local
equivalence Lemmas \ref{15.6} and \ref{new1} imply that for any local
(resp. $\af$-) equivalence $f:F\sr G$, the morphism $R(f)$ belongs to
the $\bdl$-closure of the $(K_{Q})_+\sr X_+$ for $Q$ upper
distinguished (resp. the $(K_{Q})_+\sr X_+$ for $Q$ upper
distinguished and $(X\times\af\sr X)_+$ for $X\in QP$).
\end{piece}
\begin{piece}{Proof of \ref{conv}}\llabel{convp}\rm: We consider only
the case of $\af$-equivalences. Proposition \ref{13.6} and Lemma
\ref{new6} imply that $\af$-equivalences contain the class $W$. In
view of Lemma \ref{new1} it remains to see that $W$ is
$\bdl$-closed. The only condition to check is that it is closed under
coproducts. Let
$$f_{\alpha}:F^{(\alpha)}\sr H^{(\alpha)},\,\,\,\,\,\, \alpha\in
A$$
be a family of morphisms in $W$. For a finite subset $I$ in $A$ set
$$\Phi_I=(\coprod_{\alpha\in I}
H^{(\alpha)})\coprod(\coprod_{\alpha\in A-I} F^{(\alpha)})$$
For $I\in J$ we have a morphism $\Phi_I\sr \Phi_J$ and the map
$\coprod_{f_{\alpha}}$ is isomorphic to the map 
$$\Phi_{\emptyset}\sr colim_{I\subset A} \Phi_I$$
It remains to show that $\Phi_I\sr \Phi_{I\cup\{\alpha}$ is in
$W$. This morphism is of the form $Id_H\coprod (f:F\sr F')$ where $f$
is in $W$. Using the fact that $W$ is closed for diagonals we reduce
to the case $H=\coprod (h_U)_+$. Using the same reasoning as above we
further reduce to the case $H=(h_U)_+$. 

Consider the class of $f$ such that $Id_{(h_U)_+}\coprod f$ is in
$W$. This class clearly contains morphisms $(K_Q)_+\sr X_+$, has the
two out of three property and is closed under filtering colimits. It
also contains simplicial homotopy equivalences. It contains morphisms
of the form $p_+:(X\times\af)_+\sr X_+$ because such morphisms are
$\af$-homotopy equivalences.
\end{piece}

\section{Solid sheaves}

\subsection{Open morphisms and solid morphisms of sheaves\label{8}\label{9}}
We fix $S$ and $G$ as in Section \ref{10}, and will work in
$(QP/G)_{Nis}$. The story could be repeated in $(Sm/S)_{Nis}$. 
\begin{definition}
\llabel{d8.1} A morphism of sheaves $f:F\sr G$ is open if it is
relatively representable by open embeddings, i.e. if for any morphism
$u:h_X\sr G$ (that is, $u\in G(X)$, $X$ in $QP/G$), the fiber product
$F\times_G h_X$ mapping to $h_X$ is isomorphic to $h_U\sr h_X$ for $U$
a $G$-stable open subset of $X$.
\end{definition} 
In other words: $f$ should identify $F$ with a subsheaf of $G$ and,
for any $s\in G(X)$, there is $U$ open in $X$ and $G$-stable such that
the pull-back of $s$ with respect to $Y\sr X$ is in $F(Y)$ if and only
if $Y$ maps to $U$.

The property ``open'' is stable under composition. It is also stable
by pull-back: if in a cartesian square
\begin{equation}
\llabel{8.1.1}
\begin{CD}
F' @>f'>> G'\\
@VVV @VVuV\\
F @>f>> G
\end{CD}
\end{equation}
$f$ is open, then $f'$ is open. This follows from transitivity of
pull-backs. Conversely, if $f'$ is open and $u$ is an epimorphism,
then $f$ is open.  Indeed,
\begin{lemma}
\llabel{8.1.2}
For $F\sr h_X$ a morphism, the property that $F$ is represented by $U$
open in $X$ is local on $X$ (for the Nisnevich topology).
\end{lemma}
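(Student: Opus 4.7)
The plan is to glue $F$ from local data along a Nisnevich cover. Fix a Nisnevich cover on which $F$ becomes representable by an open subscheme; taking the disjoint union of the members we may assume it is a single cover $p : Y \sr X$ in $(QP/G)_{Nis}$ with $F \times_{h_X} h_Y \cong h_V$ for some $G$-stable open $V \subseteq Y$.

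First I would check that $F \sr h_X$ is a monomorphism. Being a monomorphism of sheaves is local on the target, and after pulling back along $p$ the map becomes the open immersion $h_V \hookrightarrow h_Y$, which is mono. Hence $F$ identifies with a subsheaf of $h_X$, and it suffices to produce a $G$-stable open $U \subseteq X$ with $F = h_U$ as subsheaves.

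To construct $U$, I would extract descent data on the double fiber product. Let $q_1, q_2 : Y \times_X Y \dsr Y$ be the two projections. Both opens $q_1^{-1}(V)$ and $q_2^{-1}(V)$ represent $F \times_{h_X} h_{Y \times_X Y}$, hence coincide. Since the etale morphism $p$ is open as a map of schemes, $U := p(V)$ is open in $X$; it is $G$-stable by equivariance of $p$ and $G$-stability of $V$. The set-theoretic identity $p^{-1}(U) = V$ then follows from the descent datum: a point $y \in p^{-1}(U)$ is characterized by the existence of $z \in Y \times_X Y$ with $q_2(z) \in V$ and $q_1(z) = y$; such a $z$ lies in $q_2^{-1}(V) = q_1^{-1}(V)$, forcing $y = q_1(z) \in V$, and the reverse inclusion is clear.

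Finally, both $F$ and $h_U$ are subsheaves of $h_X$ whose pullbacks along $p$ equal $h_V$ (using $V = p^{-1}(U)$ for the second). Since $Y \sr X$ is a Nisnevich cover and both sides are sheaves, equality of subsheaves of $h_X$ can be tested after pulling back, so $F = h_U$. The only non-formal step is the set-theoretic equality $p^{-1}(U) = V$, and this drops out cleanly from the descent condition on $Y \times_X Y$; everything else is bookkeeping with sheaf descent plus the fact that etale maps are open.
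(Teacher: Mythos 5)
Your proof is correct and follows essentially the same route as the paper's: descent of the opens $U_\alpha$ (here packaged as the cocycle condition $q_1^{-1}(V)=q_2^{-1}(V)$ on $Y\times_X Y$ and effectivity via $U=p(V)$) followed by descent of the identification of subsheaves of $h_X$. You merely spell out the two descent steps that the paper invokes in one line each.
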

\begin{proof}
Suppose that the $X_{\alpha}$ cover $X$, and that each $F_{\alpha}=
F\times_{h_X} h_{X_{\alpha}}$ is represented by $U_{\alpha}\subset
X_{\alpha}$. For $Y\sr X_{\alpha}$, $F_Y:=F\times_{h_X} h_Y$ is then
represented by $U_Y\subset Y$ with $U_Y$ the inverse image of
$U_{\alpha}$. By descent for open embedding, the $U_{\alpha}$ come
from some $U\subset X$, we have locally on $X$ an isomorphism $F\ii
h_U$ and by descent for isomorphisms of sheaves one has $F\ii h_U$. 
\end{proof}
Given a square of the form (\ref{8.1.1}) with $f'$ open and $u$ an
epimorphism, if $s$ is in $G(X)$, $s$ can locally be lifted to a
section of $G'$. As $f'$ is open, it follows that locally on $X$,
$F\times_G h_X$ is represented by an open subset. Applying 
\ref{8.1.2} one concludes that $f$ is open. The same argument shows
that if we have cartesian diagrams 
$$
\begin{CD}
F_{\alpha}' @>f_{\alpha}'>>G_{\alpha}'\\
@VVV @VVu_{\alpha}V\\
F @>f>> G
\end{CD}
$$
with each $f'_{\alpha}$ open and $\coprod u_{\alpha}:\coprod
G_{\alpha}'\sr G$ onto, then $f$ is open.
\begin{proposition}
\llabel{8.1.3}
The property ``open'' is stable by push-outs.
\end{proposition}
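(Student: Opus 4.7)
The strategy is to first establish structural properties of the pushout square
\[
\begin{CD}
F @>f>> G\\
@VVV @VVV\\
F' @>f'>> G'
\end{CD}
\]
and then verify openness of $f'$ Nisnevich-locally via Lemma~\ref{8.1.2}. Openness of $f$ already forces $f$ to be a monomorphism of sheaves: if $t_1,t_2\in F(X)$ have common image $u\in G(X)$, they are both sections of $F\times_G h_X=h_U\hookrightarrow h_X$ over $X$, so $U=X$ and $t_1=t_2=\mathrm{id}_X$. I then claim that $f'$ is still a monomorphism and that the square is in addition cartesian, that is $F=F'\times_{G'}G$. Both assertions can be verified at the fiber functors produced by Proposition~\ref{prop9.star}; at each $G$-local henselian stalk the topos collapses to the category of sets, where the pushout of an injection $F\hookrightarrow G$ along any $F\sr F'$ realises $G'$ as $F'\cup G$ with intersection exactly $F$, from which both properties are immediate.

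Next, given $u:h_X\sr G'$ with $X$ in $QP/G$, the assertion that $F'\times_{G'}h_X\sr h_X$ is represented by a $G$-stable open subscheme of $X$ is local on $X$ for the Nisnevich topology by Lemma~\ref{8.1.2}. The natural map $F'\sqcup G\sr G'$ is an epimorphism of sheaves (again checked at stalks via Proposition~\ref{prop9.star}), so after refining $X$ by a Nisnevich cover we may assume $u$ lifts either to $F'$ or to $G$.

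If the lift lands in $F'$, it supplies a section of the monomorphism $F'\times_{G'}h_X\sr h_X$, which is therefore an isomorphism, and $F'\times_{G'}h_X=h_X$ is represented by the full open subscheme $X\subset X$. If instead $u$ lifts to some $v:h_X\sr G$, then using the cartesian property of the pushout square and the standard base-change identity
\[
F'\times_{G'}h_X \;=\; (F'\times_{G'}G)\times_G h_X \;=\; F\times_G h_X,
\]
one represents the left-hand side by a $G$-stable open subscheme of $X$ thanks to the openness of $f$.

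The main obstacle is the opening structural step: showing that in the Nisnevich topos of $(QP/G)$ the pushout of a monomorphism is again a monomorphism and is simultaneously cartesian. Reducing this to stalks via Proposition~\ref{prop9.star} and verifying it in Sets is the non-formal part; once this is in hand, everything else amounts to an application of Lemma~\ref{8.1.2} together with the base-change identity above.
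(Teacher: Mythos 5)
Your proof is correct and follows essentially the same route as the paper's: deduce that $f$ is a monomorphism, hence that the pushout square is also cartesian with $f'$ a monomorphism, then use the joint surjectivity of $F'\coprod G\sr G'$ together with Lemma~\ref{8.1.2} to check openness of $f'$ locally, pulling back either to an isomorphism (lift to $F'$) or to $f$ itself (lift to $G$). The only difference is that you spell out, via the stalks of Proposition~\ref{prop9.star}, the mono/cartesian claim that the paper merely asserts.
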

\begin{proof}
Suppose
$$
\begin{CD}
F @>f>> G\\
@VVV @VVV\\
F' @>f'>> G'
\end{CD}
$$
is a cocartesian diagram, with $f$ open. In particular $f$ is a
monomorphism, and it follows that $f'$ is a monomorphism and that the
square is cartesian as well. The morphism $F'\coprod G\sr G'$ is
onto. The pull-back of $f'$ by $f':F'\sr G'$ is an isomorphism ($f'$
being a monomorphism) hence open. The pull-back of $f'$ by $G\sr G'$
is just $f$, open by assumption. It follows that $f'$ is open.
\end{proof}
We now fix a class $C$ of open embeddings $U\sr V$ in $(QP/G)$. We
require the following stabilities
\begin{cond}\rm\llabel{cond1}
\begin{enumerate}
\item If $U\sr U'\sr V$ are open embeddings and if $U\sr V$ is in $C$,
so is $U'\sr V$.
\item If $U\sr V$ is an open embedding in $C$, and if $f:V'\sr V$ is
etale, with $f^{-1}(Y)\subset Y$ for $Y$ the complement of $U$ in $V$,
then $f^{-1}(U)\sr V'$ is in $C$.
\end{enumerate}
\end{cond}
The classes $C$ we will have to consider are the following
\begin{enumerate}
\item The open embeddings $U\sr V$ with $V$ smooth.
\item The open embeddings $U\sr V$ with $V$ smooth such that the
action of $G$ is free on $V-U$. Equivalently: $V$ is the union of $U$
and the open subset on which the action of $G$ is free.
\item When working in $(Sm/S)$: all open embeddings
\end{enumerate}
\begin{definition}
\llabel{d8.2} A morphism $f:F\sr G$ is $C$-solid if it is a composite
$F=F_0\sr F_1\sr\dots\sr F_n=G$ where each $F_i\sr F_{i+1}$ is deduced
by push-out from some $h_U\sr h_X$, $U\subset X$ in $C$.

A sheaf $F$ is solid if $\emptyset\sr F$ is $C$-solid. 

In the pointed context, a pointed sheaf is (pointed) $C$-solid if the
morphism $pt\sr F$ is $C$-solid.
\end{definition}
\begin{example}
\rm For $U$ open in $X$, let $h_{X/U}$ be the sheaf $h_X$, with the
subsheaf $h_U$ contracted to a point $p$. If $U\sr X$ is in $C$, then
$p:pt\sr h_{X/U}$ is solid: it is the push-out of $h_U\sr h_X$ by
$h_U\sr pt$. Thom spaces are of this form: starting from a vector
bundle $V$ on $Y$, one contracts, in the total space of this vector
bundle, the complement of the zero section to a point.
\end{example} 
The class of solid morphisms is the smallest class closed by
compositions and push-outs which contains all $h_U\sr h_X$ for
$U\subset X$ in $C$. By  \ref{8.1.3} solid morphisms are
open.

For $F\sr G$ a monomorphism of sheaves, define $G/F$ to be the pointed
sheaf obtained by contracting $F$ to a point: one has a cocartesian
square 
$$
\begin{CD}
F @>>> G\\
@VVV @VVV\\
pt @>>> F/G
\end{CD}
$$
By transitivity of push-out, any cocartesian diagram  
$$
\begin{CD}
F @>>> G\\
@VVV @VVV\\
F' @>>> G'
\end{CD}
$$
induces an isomorphism $G/F\sr G'/F'$. 
\begin{proposition}
\llabel{b1}
A morphism of sheaves $f:F\sr G$ is $C$-solid if and only if it is a
composite $F=F_0\sr F_1\sr\dots\sr F_n=G$ of monomorphisms where each
$F_i/F_{i+1}$ is isomorphic to some $h_{V/U}=h_V/h_U$ for $U\subset V$
in $C$. 
\end{proposition}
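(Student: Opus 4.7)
I would prove the two directions separately. The forward direction is essentially an application of \ref{8.1.3}, while the backward direction requires constructing pushout data from quotient data.

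\emph{Forward direction.} Suppose $f:F\sr G$ is $C$-solid, with a decomposition $F=F_0\sr F_1\sr\cdots\sr F_n=G$ where each $F_i\sr F_{i+1}$ is a pushout of some open embedding $h_{U_i}\sr h_{V_i}$ with $U_i\subset V_i$ in $C$. I need to check each step is a monomorphism with quotient $h_{V_i}/h_{U_i}$. For monicity: an open embedding is open in the sense of Definition \ref{d8.1}, so by \ref{8.1.3} its pushout $F_i\sr F_{i+1}$ is open, hence in particular a monomorphism. For the quotient: the pushout square
$$
\begin{CD}
h_{U_i} @>>> h_{V_i}\\
@VVV @VVV\\
F_i @>>> F_{i+1}
\end{CD}
$$
is cocartesian, and by transitivity of pushouts (the remark just before the proposition: any cocartesian square induces an isomorphism $G/F\sr G'/F'$), we get $F_{i+1}/F_i\cong h_{V_i}/h_{U_i}$.

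\emph{Backward direction.} Since composites of $C$-solid morphisms are $C$-solid, it suffices to treat the single-step case: given a monomorphism $F\hookrightarrow G$ with $G/F\cong h_V/h_U$ for some $U\subset V$ in $C$, show $F\sr G$ can be realized as a pushout of $h_U\sr h_V$ along some map $h_U\sr F$. My plan is to form the fiber product $P:=G\times_{G/F} h_V$ using the iso $G/F\cong h_V/h_U$ and the quotient map $h_V\sr h_V/h_U$. The restriction $G\setminus F\ii (G/F)\setminus\{*\}\ii (h_V/h_U)\setminus\{*\}\ii h_{V-U}$ gives a canonical section of $P\sr h_V$ over $h_{V-U}$; extending across $h_U$ amounts to choosing a map $h_U\sr F$, and this extension produces a map $\tilde s:h_V\sr G$. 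The resulting square
$$
\begin{CD}
h_U @>>> h_V\\
@VVV @VV\tilde sV\\
F @>>> G
\end{CD}
$$
should then be verified cocartesian by checking sections: Nisnevich-locally a map $T\sr V$ factors through either $U$ (contributing from $F$) or $V-U$ (contributing from the iso with $h_{V-U}$), and the mono hypothesis on $F\sr G$ ensures uniqueness of the resulting pushout comparison.

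\emph{Main obstacle.} The crux is the construction of the lift $\tilde s:h_V\sr G$ of the given iso $G/F\cong h_V/h_U$: the quotient map $G\sr G/F$ is not a cover, so lifts need not exist a priori, and the iso alone does not produce a canonical section of the fibre product $P$ over $h_V$ (equivalently, a canonical map $h_U\sr F$). Resolving this will require a careful Nisnevich-local analysis: over a point $t\in T$ mapping to $U$ one uses openness of $U$ to factor through $U$, while over a point mapping to $V-U$ the iso $G\setminus F\cong h_{V-U}$ pins down the lift; the non-trivial step is gluing these into a global section using the sheaf property, and the stabilities in \ref{cond1} are presumably used to justify the construction in the required generality.
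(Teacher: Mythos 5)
Your forward direction is correct and is essentially the paper's one-line argument: a push-out of $h_U\sr h_V$ has cofiber $h_{V/U}$, and push-outs of open embeddings are open (hence monic) by \ref{8.1.3}.

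The backward direction has a genuine gap, and it is precisely at the point you flag as the ``main obstacle''. Your plan reduces the single-step case to producing a \emph{global} lift $\tilde s:h_V\sr G$ of the canonical section of $h_{V/U}$ over $V$, and you propose to obtain it by gluing Nisnevich-local lifts. This gluing fails: the epimorphism $pt\coprod G\sr h_{V/U}$ only guarantees that the section lifts locally, and over the $U$-part a lift to $G$ must land in $F$ but is otherwise an \emph{arbitrary} choice of a section of $F$ (only over $V-U$ is the lift pinned down by $G\setminus F\cong h_{V-U}$). Two local lifts therefore need not agree on overlaps over $U$, there is no canonical map $h_U\sr F$, and in general $F\sr G$ is \emph{not} a single push-out of $h_U\sr h_V$ — only a composite of push-outs, which is all that Definition \ref{d8.2} requires. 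The paper's proof accepts this: it takes a filtration $\emptyset=Z_n\subset\dots\subset Z_0=V$ and \'etale maps $\phi_i:Y_i\sr V$ with sections over $Z_i-Z_{i+1}$ and with lifts of $h_{Y_i}\sr h_{V/U}$ to $pt$ or to $G$ (this is exactly the definition of the equivariant Nisnevich covering applied to $pt\coprod G\sr h_{V/U}$), sets $V_i:=V-Z_{i+1}$, and pulls the chain $pt=h_{V_0/U}\sr h_{V_1/U}\sr\dots\sr h_{V/U}$ back along $G\sr h_{V/U}$ to get intermediate sheaves $F=F_0\sr F_1\sr\dots\sr G$. Each step $F_i\sr F_{i+1}$ is then a push-out of $\phi_{i+1}^{-1}(V_i)\sr Y_{i+1}$ — an open embedding among \emph{\'etale neighborhoods of $V$}, not $U\sr V$ itself. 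This also answers your parenthetical guess about where Conditions \ref{cond1} enter: stability (2) under \'etale maps that are isomorphisms over the closed complement is what puts $\phi_{i+1}^{-1}(U)\sr Y_{i+1}$ in $C$, and stability (1) under enlarging the open subset then puts $\phi_{i+1}^{-1}(V_i)\sr Y_{i+1}$ in $C$. Without replacing your single-push-out reduction by such a chain, the argument cannot be completed.
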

\begin{proof}
If a morphism $F\sr G$ is deduced by push-out from $U\sr V$, $G/F$ is
isomorphic to $h_{V/U}$. From this, ``only if'' results. Conversly, if
we have
\begin{equation}
\llabel{bstar}
\begin{CD}
F @>>> G\\
@VVV @VVV\\
* @>>> h_{V/U}
\end{CD}
\,\,\,\,\,\,,\,\,\,\,\,\,
\begin{CD}
h_{U} @>>> h_V\\
@VVV @VVV\\
* @>>> h_{V/U}
\end{CD}
\end{equation}
cocartesian, and if $h_V\sr h_{V/U}$ lifts to $G$, then $F\sr G$ is
deduced by push-out from $h_U\sr h_V$. Indeed, the diagrams
(\ref{bstar}) being cartesian as well as cocartesian, we have a
cartesian 
$$
\begin{CD}
h_U @>>> h_V\\
@VVV @VVV\\
F @>>> G
\end{CD}
$$
If $G_1$ is deduced from $h_U\sr h_V$ by push-out:
$$
\begin{CD}
h_U @>>> h_V @>=>> h_V\\
@VVV @VVV @VVV\\
F @>>> G_1 @>>> G
\end{CD}
$$
then $G_1/F\cong G/F$ and it follows that $G_1=G$. 

Let us suppose only that we have $v:\tilde{V}\sr V$ etale, inducing an
isomorphism from $\tilde{V}-v^{-1}(U)$ to $V-U$ and a lifting of
$h_{\tilde{V}}\sr h_{V/U}$ to $G$. If $\tilde{U}:=v^{-1}(U)$,
$h_{\tilde{V}/\tilde{U}}\sr h_{V/U}$ is an isomorphism. This is most
easily checked by applying the fiber functors defined by a $G$-local
henselian $Y$:  a morphism $Y\sr V$, if it does not map to $U$, lifts
uniquely to a morphism to $\tilde{V}$. The assumptions made hence imply
that $F\sr G$ is a push-out of $h_{\tilde{U}}\sr h_{\tilde{V}}$. Note
that by the second stability property of $C$, $\tilde{U}\sr \tilde{V}$
is in $C$. 

We will reduce the proof of ``if'' to that case. We have to show that
if a monomorphism $f:F\sr G$ is such that $G/F\cong h_{V/U}$ with
$U\sr V$ in $C$, then $f$ is $C$ solid. The cocartesian square  
\begin{equation}\llabel{secstar}
\begin{CD}
F @>>> G\\
@VVV @VVV\\
pt @>>> F/G
\end{CD}
\end{equation}
induces an epimorphism $pt\coprod G\sr h_{V/U}$. The natural section
of $h_{V/U}$ on $V$ can hence locally be lifted to $pt$ or to $G$: for
some filtration $\emptyset=Z_n\subset \dots \subset Z_1\subset Z_0=V$
of $V$ by closed equivariant subschemes, we have etale maps
$\phi_i:Y_i\sr V$ with a (equivariant) section over $Z_i-Z_{i+1}$, and
a lifting of $h_{Y_i}\sr h_{V/U}$ to $pt$ or to $G$. Note $V_i:=
V-Z_{i+1}$. We may: 
\begin{enumerate}
\item start with $V_0=U$, taking $Y_0=U$: here the lifting is to $pt$
\item assume $V_i\ne V_{i+1}$; the succeeding liftings then cannot be
to $pt$: they must be to $G$ 
\item shrink $Y_i$, first so that it maps to $V_i$, next so that it
induces an isomorphism from $Y_i-\phi_i(V_{i-1})$ to $Z_i-Z_{i+1}$
\end{enumerate}
As $F\sr G$ is a monomorphism the cocartesian (\ref{secstar}) is
cartesian as well. The composition
$$pt = h_{V_0/U}\sr h_{V_1/U}\sr \dots \sr h_{V/U}$$
gives by pull-back a factorization of $F\sr G$ as 
$$F\sr F_1\sr\dots\sr G$$
with each 
$$
\begin{CD}
F_i @>>> F_{i+1}\\
@VVV @VVV\\
h_{V_i/U} @>>> h_{V_{i+1}}/U
\end{CD}
$$
cartesian and cocartesian, hence $F_{i+1}/F_i\cong
h_{V_{i+1}}/h_{V_i}$. Further, the morphism $\phi_{i+1}:h_{Y_{i+1}}\sr
h_{V_{i+1}}\sr h_{V/U}$ lifts to $G$, hence $h_{Y_{i+1}}\sr
h_{V_{i+1}/U}$ lifts to $F_{i+1}$. It follows that $F_i\sr F_{i+1}$ is
a push-out of $\phi_{i+1}^{-1}(V_i)\sr Y_{i+1}$, which is in $C$, and
solidity follows. 
\end{proof}
\begin{remark}
\rm\llabel{nrm}
Another formulation of  \ref{b1} is: a morphism $F\sr G$ is
$C$-solid if and only if the pointed sheaf $G/F$ is an iterated
extension of $h_{V/U}$'s with $U\sr V$ in $C$, in the sense that there
are morphisms 
$$pt= H_0\sr \dots\sr H_n=G/F$$
with each $H_{i+1}/H_i$ of the form $h_{V/U}$.
\end{remark}
\begin{proposition}
\llabel{8.3}
If $f:F\sr G$ is open and $G$ is $C$-solid, then $f$ is $C$-solid.
\end{proposition}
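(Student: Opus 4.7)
By Proposition \ref{b1}, fix a factorization $\emptyset = G_0 \hookrightarrow G_1 \hookrightarrow \cdots \hookrightarrow G_n = G$ in which each $G_i \hookrightarrow G_{i+1}$ is the bicartesian push-out of some $h_{U_i} \hookrightarrow h_{V_i}$ with $U_i \hookrightarrow V_i$ in $C$. Since $f$ is open it is in particular a monomorphism, so $F$ may be viewed as a subsheaf of $G$, and I set $F_i := F \cup G_i \subseteq G$. The sequence $F = F_0 \hookrightarrow F_1 \hookrightarrow \cdots \hookrightarrow F_n = G$ will be the candidate factorization witnessing the $C$-solidity of $f$, and the task is to identify each step $F_i \hookrightarrow F_{i+1}$ as a push-out of an open embedding in $C$.

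For each $i$, use openness of $f$ to write $F \times_G h_{V_i} = h_{V_i'}$ with $V_i' \subseteq V_i$ a $G$-stable open; set $W_i := V_i' \cup U_i \subseteq V_i$. Combining this with $G_i \cap h_{V_i} = h_{U_i}$ (furnished by the bicartesianness of the square defining $G_{i+1}$) gives, inside $G$,
\[ F_i \cap h_{V_i} \;=\; (F \cap h_{V_i}) \cup (G_i \cap h_{V_i}) \;=\; h_{V_i'} \cup h_{U_i} \;=\; h_{W_i}. \]
Since $F_{i+1} = F_i \cup h_{V_i}$ in $G$ and, in a topos, a union of two subsheaves $A,B$ of an ambient sheaf realises the push-out $A \cup_{A\cap B} B$, this identification shows that $F_i \hookrightarrow F_{i+1}$ is the push-out of $h_{W_i} \hookrightarrow h_{V_i}$ along the natural map $h_{W_i} \to F_i$ (which is well-defined because the two components $h_{V_i'} \to F$ and $h_{U_i} \to G_i$ agree on the overlap $h_{V_i' \cap U_i}$, both being restrictions of $h_{V_i} \to G$).

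To conclude, one must check $W_i \hookrightarrow V_i$ lies in $C$; but this inclusion factors as $U_i \hookrightarrow W_i \hookrightarrow V_i$ with $U_i \hookrightarrow V_i$ already in $C$, so Condition \ref{cond1}(1) gives $W_i \hookrightarrow V_i \in C$, and composing the $n$ solid steps exhibits $f$ as $C$-solid. The main point of the argument is the intersection identity $F_i \cap h_{V_i} = h_{W_i}$: naively one might hope for the answer to be $h_{U_i}$, but the subsheaf $F$ can meet $h_{V_i}$ in a strictly larger open $V_i'$, so it is $W_i = V_i' \cup U_i$ rather than $U_i$ that governs the $i$-th push-out. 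Stability condition \ref{cond1}(1) is precisely what keeps this enlarged open embedding within $C$.
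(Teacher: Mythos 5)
Your filtration $F_i:=F\cup G_i$ of $G$ is essentially the paper's own argument unwound: the paper's induction filters $G/F$ by the subobjects $F_i/F$, whose preimages in $G$ are exactly your $F_i$. The gap is in the one nontrivial step, where you identify $F_i\hookrightarrow F_{i+1}$ as a push-out of $h_{W_i}\hookrightarrow h_{V_i}$ by treating $h_{V_i}$ as a subsheaf of $G$ and invoking the union formula $A\cup B=A\sqcup_{A\cap B}B$. The attaching map $h_{U_i}\sr G_i$ in a solid presentation of $G$ is an arbitrary morphism, not a monomorphism, so $h_{V_i}\sr G_{i+1}\sr G$ need not be a monomorphism. (Example: $G_1=h_{{\bf A}^1}$ and $G_2$ the push-out of $h_{{\bf A}^1-\{0\}}\subset h_{{\bf A}^1}$ along the attaching map $t\mapsto t^2$; the distinct sections $t$ and $-t$ of $h_{V_1}$ over ${\bf A}^1-\{0\}$ have the same image in $G_2$.) What the union formula actually gives is that $F_i\sr F_{i+1}$ is a push-out of $F_i\cap \mathrm{im}(h_{V_i})\hookrightarrow \mathrm{im}(h_{V_i})$, and this map of images need not be of the form $h_W\hookrightarrow h_V$ for $W\subset V$ in $C$; similarly your ``$G_i\cap h_{V_i}=h_{U_i}$'' is really the fiber product $G_i\times_{G_{i+1}}h_{V_i}$, not an intersection of subsheaves.

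The square you want is in fact cocartesian, but seeing this requires the extra information that $h_{V_i}\sr G_{i+1}$ collapses sections only over $U_i\subseteq W_i$, i.e.\ that $h_{V_i}/h_{U_i}\sr G_{i+1}/G_i$ is an isomorphism; knowing only that the defining square is cartesian and that $G_{i+1}=G_i\cup\mathrm{im}(h_{V_i})$ is strictly weaker than cocartesianness and does not suffice (a cartesian square realizing $D$ as $C\cup\mathrm{im}(B)$ can fail to be cocartesian). The clean repair is the paper's route: check that $F_{i+1}/F_i\cong h_{V_i}/h_{W_i}$ and that $h_{V_i}\sr h_{V_i/W_i}$ lifts to $F_{i+1}$ (it does, through $G_{i+1}$), and then apply the lemma established inside the proof of Proposition \ref{b1}: a monomorphism with quotient $h_{V/W}$ admitting such a lift is a push-out of $h_W\sr h_V$. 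This is exactly what the paper's proof of \ref{8.3} does via the three-by-three diagram exhibiting $G/F$ as an extension of $h_{X/(U\cup Y)}$ by $G'/F'$, together with Remark \ref{nrm}. So: right decomposition, right use of Condition \ref{cond1}(1) to keep $W_i\subset V_i$ in $C$, but the justification of the push-out identification is missing and needs Proposition \ref{b1} rather than the subsheaf-union formula.
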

\begin{proof}
In the proof we say ``solid'' instead of ``$C$-solid''. Let (*) be the
property of a sheaf $G$ that any open $f:F\sr G$ is solid. If $G$ is
solid, $G$ sits at the end of a chain $\emptyset=G_0\sr G_1\sr\dots\sr
G_n=G$ with each $G_i\sr G_{i+1}$ push out of some $h_U\sr h_X$ for
$U\sr X$ in $C$. We prove by induction on $i$ that $G_i$ satisfies
(*).

For $i=1$, $G_1=h_X$ is representable and $\emptyset\sr X$ is in
$C$. If $f:F\sr G_1$ is open, it is of the form $h_U\sr h_X$ for $U$
open in $X$, hence solid by \ref{cond1}(1). It remains to check that
if in a cocartesian square
\begin{equation}
\llabel{8.3.1}
\begin{CD}
h_U @>>> h_X\\
@VVV @VVV\\
G' @>>> G
\end{CD}
\end{equation}
the sheaf $G'$ satisfies (*), so does $G$.  In (\ref{8.3.1}),
$h_U\sr h_X$ is a monomorphism and the square (\ref{8.3.1}) hence
cartesian as well as cocartesian.

Fix $f:F\sr G$ open, and take the pull-back of (\ref{8.3.1}) by
$f$. It is again a cartesian and cocartesian square and, $f$ being
open, it is of the form
\begin{equation}
\llabel{8.3.2}
\begin{CD}
h_V @>>> h_Y\\
@VVV @VVV\\
F' @>>> F
\end{CD}
\end{equation}
where $Y$ is open in $X$ and $V=U\cap Y$. The diagram
$$
\begin{CD}
F' @>>> F @>>> h_{Y/V}\\
@VVV @VVV @VVV\\
G' @>>> G @>>> h_{X/U}\\
@VVV @VVV @VVV\\
G'/F' @>>> G/F @>>> h_{X/(U\cup V)}
\end{CD}
$$
expresses $G/F$ as an extension of $h_{X/(U\cup Y)}$ by $G'/F'$ and
one concludes by  \ref{nrm} using (*) for $G'$ and the fact that
$U\cup Y\sr X$ is in $C$.
\end{proof}
\begin{proposition}
\llabel{9.6new}
The pull-back of a solid morphism $f$ by an open morphism $s$ is
solid. In particular, if $g:F\sr G$ is open and if $G$ is solid, so is
$F$.
\end{proposition}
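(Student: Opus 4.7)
The plan is to pull back a solidity presentation of $f$ step by step, using that pull-back along $s$ is colimit-preserving between slice toposes. I would write $f=f_n\circ\cdots\circ f_1$ with each $f_i:F_{i-1}\sr F_i$ a push-out of some $h_{U_i}\sr h_{X_i}$, $U_i\sr X_i$ in $C$, and $F_0=F$, $F_n=G$. Then setting $F'_i:=F_i\times_G G'$, the pull-back of $f$ by $s$ is the composite $F'_0\sr F'_1\sr\cdots\sr F'_n=G'$, so it suffices to exhibit each induced step $F'_{i-1}\sr F'_i$ as a push-out of some $h_{V_i}\sr h_{Y_i}$ with $V_i\sr Y_i$ in $C$.

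For a single step, I would view the defining cocartesian square
$$
\begin{CD}
h_{U_i} @>>> h_{X_i}\\
@VVV @VVV\\
F_{i-1} @>>> F_i
\end{CD}
$$
as a diagram in the slice topos $T/G$, where $T$ denotes the topos of sheaves on $(QP/G)_{Nis}$. The functor $s^*=(-)\times_G G':T/G\sr T/G'$ is the inverse image of a geometric morphism (equivalently, colimits are universal in a topos), hence preserves all colimits and sends the square to a cocartesian square. Openness of $s$ gives $h_{X_i}\times_G G'\cong h_{Y_i}$ for some open $G$-stable $Y_i\subset X_i$, and $h_{U_i}\times_G G'\cong h_{V_i}$ with $V_i=U_i\cap Y_i$, so $F'_{i-1}\sr F'_i$ is the push-out of $h_{V_i}\sr h_{Y_i}$, as required.

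The hard part will be checking that $V_i\sr Y_i$ belongs to $C$. This is not obviously forced by Conditions \ref{cond1} as stated (their second clause is tailored to the etale maps in upper distinguished squares rather than to restriction to an open subscheme), but for each of the three concrete classes of $C$ listed between \ref{cond1} and Definition \ref{d8.2} it is immediate: smoothness descends to an open subscheme, and if $G$ acts freely on $X_i-U_i$ then it acts freely on $Y_i-V_i\subset X_i-U_i$ a fortiori.

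For the final assertion, I would apply the first part to the solid morphism $\emptyset\sr G$ (solid because $G$ is) pulled back by the open morphism $g:F\sr G$. Since the initial sheaf $\emptyset$ is strict in any topos, $\emptyset\times_G F\cong \emptyset$, so the pull-back of $\emptyset\sr G$ by $g$ is $\emptyset\sr F$, which is exactly the statement that $F$ is solid.
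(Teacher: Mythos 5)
Your proposal is correct and follows essentially the same route as the paper's proof: reduce to a single elementary step, pull back the defining cocartesian square (using that pull-back preserves push-outs), and identify the pulled-back representables with open subschemes $V_i=U_i\cap Y_i\subset Y_i\subset X_i$. Your extra care in checking that $V_i\sr Y_i$ lies in $C$ for each of the three concrete classes, and in handling the final assertion via $\emptyset\times_G F\cong\emptyset$, only makes explicit details the paper leaves implicit.
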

\begin{proof}
Since the pull-back of an open morphism is open, it suffices to check
the proposition for $f$ a push-out of $h_U\subset h_X$ for $U$ open in
$X$:
$$
\begin{CD}
h_U @>>> h_X\\
@VVV @VVV\\
G' @>f>> G
\end{CD}
$$
Pulling back by $g$, we obtain a cocartesian square
$$
\begin{CD}
h_{U'} @>>> h_{X'}\\
@VVV @VVV\\
F' @>>> F
\end{CD}
$$
with $U'$ open in $U$ and $X'$ open in $X$. This shows that $F'\sr F$
is solid.
\end{proof}

Suppose now that we are given two clases $C$ and $C'$ of open
embeddings satisfying conditions \ref{cond1}. We define $C\times C'$
as the smallest class stable by \ref{cond1} containing the
$$(U\times V')\cup (U'\times V)\subset V\times V'$$
for $U\subset V$ in $C$ and $U'\subset V'$ in $C'$. 
\begin{example}\rm\llabel{ex11.9.2}
If $C$ is a class of all open embeddings and $C'$ is the class of
those $U'\subset V'$ for which $G$ acts freely outside $U'$, then
$C\times C'=C'$.
\end{example} 
\begin{proposition}
\llabel{11.2}
If the pointed sheaves $F$ and $F'$ are respectivley $C$ and
$C'$-solid, the $F\wedge F'$ is $C\times C'$-solid.
\end{proposition}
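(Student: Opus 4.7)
The plan is to reduce the claim to the case of basic cells by inducting on the lengths of the $C$-solid and $C'$-solid filtrations of $F$ and $F'$. By Proposition \ref{b1} and Remark \ref{nrm}, the pointed $C$-solid sheaf $F$ admits a chain $pt = F_0 \sr F_1 \sr \cdots \sr F_n = F$ with $F_i/F_{i-1} \cong h_{X_i/U_i}$ for some $U_i \subset X_i$ in $C$, and similarly $F'$ admits such a chain of length $m$ with cofibers $h_{X'_j/U'_j}$ for $U'_j \subset X'_j$ in $C'$. The key pointwise computation is
$$
h_{X/U} \wedge h_{X'/U'} \cong h_{(X \times X')/((U \times X') \cup (X \times U'))}.
$$
This follows from the general identity $(A_1/B_1) \wedge (A_2/B_2) \cong (A_1 \times A_2)/((B_1 \times A_2) \cup (A_1 \times B_2))$, which is a direct application of $A_1 \wedge A_2 = (A_1 \times A_2)/(A_1 \vee A_2)$ together with $A/B = A_+/B_+$ for a subsheaf $B \subset A$. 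By the very definition of $C \times C'$, the open embedding $(U \times X') \cup (X \times U') \subset X \times X'$ lies in $C \times C'$, so $h_{X/U} \wedge h_{X'/U'}$ is a basic $C \times C'$-cell, hence pointed $C \times C'$-solid.

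With this in hand, I first show by induction on $n$ that, for an arbitrary basic $C'$-cell $h_{X'/U'}$, the smash product $F \wedge h_{X'/U'}$ is pointed $C \times C'$-solid. The cocartesian square
$$
\begin{CD}
h_{U_n} @>>> h_{X_n}\\
@VVV @VVV\\
F_{n-1} @>>> F_n
\end{CD}
$$
becomes a pointed pushout after adjoining basepoints to $h_{U_n}$ and $h_{X_n}$ (the new basepoint of $(h_{U_n})_+$ being sent to that of $F_{n-1}$). Smashing with $h_{X'/U'}$ preserves pushouts, since $-\wedge h_{X'/U'}$ is left adjoint to the pointed internal Hom $\uu{Hom}(h_{X'/U'}, -)$, so one obtains a cocartesian square whose cofiber is $(F_n/F_{n-1}) \wedge h_{X'/U'} = h_{X_n/U_n} \wedge h_{X'/U'}$, a basic $C \times C'$-cell by the initial computation. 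Concatenating the smashed filtration steps yields a chain from $pt$ to $F \wedge h_{X'/U'}$ whose successive cofibers are basic $C \times C'$-cells; by Remark \ref{nrm}, $F \wedge h_{X'/U'}$ is pointed $C \times C'$-solid.

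Finally, I induct on $m$ by smashing the $F'$-filtration with $F$. The analogous pointed cocartesian square
$$
\begin{CD}
(h_{U'_m})_+ @>>> (h_{X'_m})_+\\
@VVV @VVV\\
F'_{m-1} @>>> F'
\end{CD}
$$
smashed with $F$ is again a pushout with cofiber $F \wedge h_{X'_m/U'_m}$, which is pointed $C \times C'$-solid by the previous induction, while $F \wedge F'_{m-1}$ is pointed $C \times C'$-solid by the outer induction hypothesis. Concatenating produces the desired $C \times C'$-solid filtration of $F \wedge F'$. The main point requiring care throughout is that each smashed filtration step is a monomorphism and that the cofibers genuinely assemble into an iterated extension in the sense of Remark \ref{nrm}; this reduces to the stability of monomorphisms under pushout in our sheaf category and to the basic fact that $-\wedge Z$ preserves pushouts.
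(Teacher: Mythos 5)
Your proof is correct and follows essentially the same route as the paper: both reduce via the iterated-extension characterization of Remark \ref{nrm} to the computation $h_{V/U}\wedge h_{V'/U'}=h_{V\times V'/((U\times V')\cup(V\times U'))}$ and then assemble the smashed filtrations (the paper's "lexicographical order" is exactly your double induction). Your version merely spells out the details — that $-\wedge Z$ preserves pushouts and that the smashed steps remain monomorphisms — which the paper leaves implicit.
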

\begin{proof}
By assumption, $F$ is an iterated extension in the sense of \ref{nrm} of
pointed sheaves $h_{V_i/U_i}$ with $U_i\sr V_i$ in $C$. Similarly for
$F'$, with $U_j'\sr V_j'$ in $C'$. The smash product $F\wedge F'$ is
then an iterated extension of the 
$$h_{V_i/U_i}\wedge h_{V_j'/U'_j}=h_{V_i\times V_j'/((U_i\times
V_j')\cup(V_i\times U_j'))},$$
taken for instance in the lexicographical order, hence it is $C\times
C'$ solid.
\end{proof}

\begin{definition}
\llabel{d8.3} A morphism is called ind-solid relative to $C$ if it is
a filtering colimit of $C$-solid morphisms.
\end{definition}
\begin{ex}\rm\llabel{ex1}
We take $G$ to be the trivial group. A section on $Y$ of a push-out
$$
\begin{CD}
h_U @>>> h_X\\
@V\psi VV @VVV\\
F @>>> G
\end{CD}
$$
can be described as follows. For an open subset $V$ of $Y$ and a
section $\phi$ of $F$ on $V$ consider on the small Nisnevich site
$Y_{Nis}$ of $Y$ the presheaf $\Phi(V,\phi)$ which sends $a:W\sr Y$ to the
set of morphisms $f:W\sr X$ such that $f^{-1}(U)=a^{-1}(V)$ and
$\phi_{|a^{-1}(V)}=f^*(\psi)$. A section of $G$ on $Y$ is given by data:
\begin{enumerate}
\item an open subset $V$ of $Y$
\item a section $\phi$ of $F$ on $V$
\item a section of $i^*(a_{Nis}\Phi(V,\phi))$ on $Y-V$ where $i$ is
the closed embedding $Y-V\sr Y$ and $a_{Nis}$ denotes the associated
Nisnevich sheaf.
\end{enumerate}
\end{ex}
\begin{ex}\rm\llabel{ex2}
In the notations of \ref{ex1}, if $F$ is a sheaf for the etale
topology, so is $G$. For any $Y$, the $(V,\phi)$ as in (1),(2) above
form a sheaf for the etale topology. It hence suffices to prove that
for $(V,\phi)$ fixed, the datum (3) forms a sheaf for the etale
topology. This is checked by using the following criterion to check if
a Nisnevich sheaf is etale. For $y\in Y$, and for $L$ a finite
separable extension of $k_y$, let ${\cal O}^h_{L,y}$ be deduced by
``extension of the residue field'' from the henselization ${\cal
O}^h_{y}$ of $Y$ at $y$. The criterion is that $Spec(L)\mapsto
F(Spec({\cal O}^h_{L,y}))$ should be an etale sheaf on
$Spec(k_y)_{et}$.
\end{ex}
\begin{ex}\rm
It follows from \ref{ex1} and \ref{ex2} that if $f:F\sr G$ is ind
solid, and if $F$ is etale, then $G$ is etale. In particular, a solid
sheaf, as well as a pointed solid sheaf, are etale sheaves.
\end{ex}
\begin{remark}\rm
The same formalism of open and solid morphisms holds in the site of
all schemes of finite type over $S$ with the etale topology.
\end{remark}

\subsection{A criterion for preservation of local
equivalences}\llabel{15}
We work with pointed sheaves on $QP/G$. Our aim in this section is to
prove the following result
\begin{theorem}\llabel{15.1}
Let $\Phi$ be a functor from pointed sheaves to pointed sets. Suppose
that $\Phi$ commutes with all colimits, and that for any open
embedding $U\sr X$, $\Phi((h_U)_+)\sr \Phi((h_X)_+)$ is a
monomorphism. Then if $f:F_{\BB}\sr G_{\BB}$ is a local 
equivalence and if $F_n$ and $G_n$ are (pointed) ind-solid, then
$\Phi(f)$ is a weak equivalence.
\end{theorem}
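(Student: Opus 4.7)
The plan is to extend $\Phi$ degreewise to a functor from pointed simplicial sheaves to pointed simplicial sets and to reduce, via the simplicial resolution functor $R$ of Construction \ref{15.4}, to the characterization of local equivalences between objects of $\Delta^{op}[QP/G]_+$ provided by Theorem \ref{conv}. Since $\Phi$ commutes with all colimits, the degreewise extension commutes with diagonals and filtering colimits of simplicial sheaves, sends simplicial homotopy equivalences to simplicial homotopy equivalences, and sends termwise coprojections to termwise coprojections.

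First I would verify that $\Phi$ takes each generating morphism $(K_Q)_+ \sr X_+$ for $Q$ upper distinguished to a weak equivalence. By Nisnevich descent, $X_+$ is the pushout $A_+ \cup_{B_+} Y_+$ of pointed sheaves, so since $\Phi$ is cocontinuous, $\Phi(X_+)$ is the corresponding strict pushout in pointed sets. On the other hand, $\Phi((K_Q)_+)$ computes, via the explicit double mapping cylinder of \ref{14.1.2}, the homotopy pushout of the same diagram. Since $B \sr Y$ is an open embedding, the hypothesis on $\Phi$ makes $\Phi(B_+) \sr \Phi(Y_+)$ a monomorphism, so the strict and homotopy pushouts coincide and the comparison $\Phi((K_Q)_+) \sr \Phi(X_+)$ is a weak equivalence. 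Combined with the preservation of closure operations noted above, this shows that $\Phi$ sends every morphism in the $\bdl$-closure of the seeds $(K_Q)_+ \sr X_+$ to a weak equivalence.

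Now given a local equivalence $f \colon F\BB \sr G\BB$ with $F_n, G_n$ termwise ind-solid, I would form the commutative square
\[
\begin{CD}
R(F\BB) @>R(f)>> R(G\BB) \\
@VVV @VVV \\
F\BB @>f>> G\BB
\end{CD}
\]
whose top row lies in $\Delta^{op}[QP/G]_+$ and whose vertical maps are sectionwise simplicial homotopy equivalences on connected objects, hence local equivalences. Thus $R(f)$ is a local equivalence in $\Delta^{op}[QP/G]_+$, so by Theorem \ref{conv} it lies in the $\bdl$-closure of $(K_Q)_+ \sr X_+$, and $\Phi(R(f))$ is a weak equivalence by the previous step.

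By two-out-of-three, the remaining point is that $\Phi$ sends each comparison map $R(H\BB) \sr H\BB$ with $H\BB$ termwise ind-solid to a weak equivalence. This is the main obstacle. Since $R$ and $\Phi$ both commute with filtering colimits and ind-solid simplicial sheaves are filtering colimits of solid ones along termwise coprojections, one first reduces to $H\BB$ termwise solid. An inductive argument on the solid filtration of each $H_n$, using that $R$ and $\Phi$ both commute with the defining push-outs of $(h_U)_+ \sr (h_X)_+$ and that the monomorphism hypothesis ensures these remain homotopy pushouts after $\Phi$, reduces the problem to the case of a single representable $H = (h_X)_+$. In this base case, $\Phi(R((h_X)_+)) \sr \Phi((h_X)_+)$ is the bar-type resolution of the canonical presentation of $\Phi((h_X)_+)$ as $\operatorname{colim}_{V \sr X} \Phi((h_V)_+)$; the monomorphism hypothesis guarantees that this resolution is degreewise cofibrant and hence a weak equivalence. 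Verifying this base case carefully, and checking that each inductive step genuinely preserves the weak-equivalence property across the pushouts, constitute the technical heart of the argument.
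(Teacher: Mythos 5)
Your overall architecture agrees with the paper's: show that $\Phi$ sends the $\bdl$-closure of the morphisms $(K_Q)_+\sr X_+$ to weak equivalences (the paper's Lemma \ref{15.2}), deduce that $\Phi(R(f))$ is a weak equivalence for any local equivalence $f$ (the paper uses Lemma \ref{15.6}; your appeal to Theorem \ref{conv} works equally well once you check, as you do, that the class of maps $\Phi$ sends to weak equivalences has the required closure properties), and then reduce by two-out-of-three to showing that $\Phi(R(H))\sr\Phi(H)$ is a weak equivalence for $H$ termwise ind-solid, hence for $H$ solid after commuting with filtering colimits. Up to that point the argument is correct and essentially the paper's.

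The gap is in your inductive step. You propose to use ``that $R$ and $\Phi$ both commute with the defining push-outs of $(h_U)_+\sr(h_X)_+$''. $\Phi$ does, but $R$ does not: $R$ is built from the composite $i^*i_*$ of Construction \ref{15.4}, and while $i^*$ is a left adjoint, $i_*$ is evaluation of a \emph{sheaf} on connected objects, and pushouts of sheaves along $(h_U)_+\sr(h_X)_+$ are not computed sectionwise (see Exercise \ref{ex1}: a section of such a pushout over $Y$ is glued from an open $V\subset Y$, a section of $F$ on $V$, and data on $Y-V$; it is not simply a section of $F$ or of $(h_X)_+$). Consequently $R(G)$ is \emph{not} the pushout of $R(F)$ along $R((h_U)_+)\sr R((h_X)_+)$, and your reduction to a single representable collapses. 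This is precisely what the paper's Lemma \ref{15.8} is for: one forms the pushout $R$ of the resolutions, observes that the comparison map $R\sr R(G)$ is a local equivalence between objects that are termwise of the form $(\coprod h_{U_i})_+$ (because $i^*i_*$ turns monomorphisms into coprojections), so that $\Phi$ of that comparison is a weak equivalence by Lemmas \ref{15.2} and \ref{15.6} applied a second time, and then compares $\Phi(R)$ with $\Phi(G)$ through the morphism of homotopy cocartesian squares. That extra loop through the local-equivalence machinery is the missing idea; without it the induction does not close. (A minor further point: in your base case $R((h_X)_+)\sr(h_X)_+$ is already a simplicial homotopy equivalence by \ref{15.4}(a), via the extra degeneracy, so $\Phi$ preserves it for free; the monomorphism hypothesis plays no role there.)
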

Suppose that 
$$
Q\,\,\,\,\,\,\,:\,\,\,\,\,\,\,
\begin{CD}
B @>>> Y\\
@VVV @VVV\\
A @>>> X
\end{CD}
$$
is an upper distinguished square. Adding a base point, we obtain
$Q_+$. The morphism $K_{Q_+}\sr X_+$ is then a local 
equivalence. Let us check that $\Phi(K_{Q_+})\sr \Phi(X_+)$ is a weak
equivalence. As $\Phi$ commutes with coproducts, this morphism is
deduced from the commutative square 
$$
\begin{CD}
\Phi((h_B)_+) @>>> \Phi((h_Y)_+)\\
@VVV @VVV\\
\Phi((h_A)_+) @>>> \Phi((h_X)_+)
\end{CD}
$$
by applying the same construction (\ref{14.1.3}). This square is
cocartesian because $Q$ is. The top horizontal line being a
monomorphism, it is homotopy cocartesian, and the claim follows. As
$\Phi$ commutes with colimits, this special case implies that more
generally one has
\begin{lemma}\llabel{15.2}
If $f$ is in the $\bdl$-closure of the $(K_Q)_+\sr X_+$ as above, then
$\Phi(f)$ is a weak equivalence.
\end{lemma}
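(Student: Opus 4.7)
The plan is to introduce the class
\[
W := \{\,f : \Phi(f)\ \text{is a weak equivalence of pointed simplicial sets}\,\}
\]
and show both that $W$ contains the generators $(K_Q)_+\sr X_+$ for upper distinguished $Q$, and that $W$ satisfies all the clauses of Definitions \ref{3d1} and \ref{3d2}; the lemma then follows directly from the universal property of the $\bdl$-closure. The containment of the generators is exactly the special case already carried out in the paragraph immediately preceding this lemma: $\Phi$, commuting with the coproducts used in the construction $K_{(-)}$, converts the cocartesian square on the four representables associated with an upper distinguished $Q$ into a cocartesian square of pointed sets whose top edge is a monomorphism by hypothesis, which forces the resulting square to be homotopy cocartesian and the induced map $\Phi((K_Q)_+)\sr \Phi(X_+)$ to be a weak equivalence.

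For the closure properties most clauses are routine consequences of $\Phi$ being a colimit-preserving functor. For $K$ a pointed simplicial set, the identification $F_n\wedge K_n = \bigvee_{k\in K_n\setminus *} F_n$ gives $\Phi(F\wedge K) = \Phi(F)\wedge K$ levelwise, so $\Phi$ preserves simplicial homotopies (presented by maps out of $F\wedge(\Delta^1)_+$) and hence simplicial homotopy equivalences. Two-out-of-three is automatic for weak equivalences. Finite and arbitrary coproducts in pointed (simplicial) sheaves are computed termwise as wedges and sent by $\Phi$ to wedges; a wedge of weak equivalences of pointed simplicial sets is a weak equivalence. For the diagonal clause, applying $\Phi$ termwise to a bisimplicial pointed sheaf commutes with taking the diagonal, and the standard realization/diagonal lemma converts levelwise weak equivalences $\Phi(F_{*,p})\sr \Phi(G_{*,p})$ into a weak equivalence on diagonals.

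The one point requiring a moment of care is closure under colimits of sequences whose degreewise transition maps are coprojections $A\subset A\coprod B$ of pointed sheaves. Applying $\Phi$ and using commutation with coproducts sends such an embedding to the wedge-summand inclusion $\Phi(A)\hookrightarrow \Phi(A)\vee \Phi(B)$, which is an injection of pointed sets at each simplicial degree, i.e.\ a degreewise cofibration of pointed simplicial sets. The colimit of weak equivalences between two such sequences with degreewise cofibrations as transition maps is again a weak equivalence (a standard consequence of compatibility of homotopy groups with sequential colimits along cofibrations), which finishes the verification that $W$ is $\bdl$-closed and completes the proof. I do not anticipate any genuine obstacle: the argument is essentially a check list against the definition of $\bdl$-closure, with the two hypotheses on $\Phi$ entering precisely at the two nontrivial points, the monomorphism hypothesis delivering the generator case and commutation with coproducts promoting the sheaf-level coprojections to cofibrations of pointed simplicial sets.
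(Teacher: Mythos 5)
Your proposal is correct and follows essentially the same route as the paper: the text establishes the generator case $(K_Q)_+\sr X_+$ exactly as you do (cocartesian square of representables, monomorphic top edge from the open-embedding hypothesis, hence homotopy cocartesian) and then disposes of the lemma in one line by appealing to the fact that $\Phi$ commutes with colimits. You have merely made explicit what that one line leaves implicit, namely the checklist showing that $\{f : \Phi(f)\ \mbox{is a weak equivalence}\}$ satisfies all the clauses of \ref{3d1} and \ref{3d2}, and each of your verifications (preservation of simplicial homotopies via $\Phi(F\wedge K)=\Phi(F)\wedge K$, wedges, diagonals, and sequential colimits along coprojections becoming levelwise cofibrations) is sound.
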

\begin{piece}[Proof of \ref{15.1}]\llabel{15.7}\rm For any pointed sheaf
$F$, $R(F)\sr F$ is a local  equivalence. Indeed for any connected
$X$ in $QP/G$, $R(F)(X)\sr F(X)$ is a weak equivalence by
\ref{15.4}. It follows that for $f:F\sr G$ a local  equivalence,
$$
\begin{CD}
R(F) @>>> R(G)\\
@VVV @VVV\\
F @>>> G
\end{CD}
$$
is a commutative square of local  equivalences. By \ref{15.2} and
\ref{15.6}, $\Phi(R(f))$ is a weak equivalence. It remains to see that
$\Phi(R(F))\sr \Phi(F)$ is a weak equivalence - and the same for
$G$. For this it suffices to see that for a pointed ind-solid sheaf
$F$, $\Phi(R(F))\sr \Phi(F)$ is a weak equivalence. As $\Phi$ and $R$
commute with filtering colimits, the ind-solid reduces to solid, and
by the inductive definition of solid, it suffices to prove the
following lemma.
\end{piece}
\begin{lemma}\llabel{15.8}
Let $U\sr X$ be an open embedding. If in a cartesian square of pointed
sheaves 
$$
Q\,\,\,\,\,\,\,:\,\,\,\,\,\,\,
\begin{CD}
(h_U)_+ @>>> (h_X)_+\\
@VVV @VVV\\
F @>>> G
\end{CD}
$$ 
$F$ is such that $\Phi R(F)\sr \Phi(F)$ is a weak equivalence, the
same holds for $G$.
\end{lemma}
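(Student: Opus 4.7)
The plan is to apply the resolution functor $R$ to the pushout square $Q$, producing two pushout squares of pointed simplicial sets after applying $\Phi$, and then invoke the gluing lemma for homotopy pushouts. Since each of $i^*$, $i_*$, and the diagonal preserves colimits, $R$ commutes with colimits, so $R(Q)$ is again a pushout of pointed simplicial sheaves; applying $\Phi$ (which commutes with all colimits) gives two pushout squares $\Phi R(Q)$ and $\Phi(Q)$ of pointed simplicial sets together with a natural augmentation $\Phi R(Q) \sr \Phi(Q)$. The target is to deduce that the map on the bottom-right corner $\Phi R(G) \sr \Phi(G)$ is a weak equivalence.

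I would next check that both squares are \emph{homotopy} pushouts. For $\Phi(Q)$, the top edge $\Phi((h_U)_+) \sr \Phi((h_X)_+)$ is a monomorphism of discrete pointed sets by the standing hypothesis on $\Phi$, so the pushout is automatically a homotopy pushout. For $\Phi R(Q)$, the key observation is that in each simplicial degree $n$ the map $R((h_U)_+)_n = (i^*i_*)^{n+1}((h_U)_+) \sr R((h_X)_+)_n$ is a wedge-summand inclusion of pointed sheaves: unfolding the definition of $i^*$ as $A \mapsto \bigvee_{V,\, a\in A_V\setminus *}(h_V)_+$ and of $i_*$ as restriction to connected objects, the pointed subsheaf $(h_U)_+ \subset (h_X)_+$ is sent by $i_*$ to a pointed subfamily, and $i^*$ converts pointed subfamily inclusions into wedge-summand inclusions; iterating preserves this. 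Since $\Phi$ commutes with wedges, the top edge of $\Phi R(Q)$ is a termwise summand inclusion, hence a cofibration, so $\Phi R(Q)$ is a homotopy pushout.

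Then I would show that the augmentation $\Phi R(-) \sr \Phi(-)$ is a weak equivalence on three of the four corners. On $F$ this is the hypothesis of the lemma. On $(h_U)_+$ and $(h_X)_+$, decomposing $U$ and $X$ into their $G$-connected components presents these pointed sheaves in the form $(\coprod h_{U_i})_+ = i^*A$ to which part (a) of construction~\ref{15.4} applies, so $R((h_U)_+) \sr (h_U)_+$ and $R((h_X)_+) \sr (h_X)_+$ are simplicial homotopy equivalences of pointed simplicial sheaves (via the extra degeneracy $i^*\eta_A$ coming from the triangle identity of the adjunction $(i^*,i_*)$). A simplicial homotopy is pure simplicial data in the base category, so any functor applied levelwise preserves it; hence $\Phi$ sends these to simplicial homotopy equivalences of pointed simplicial sets, and in particular to weak equivalences. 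The gluing (cube) lemma for homotopy pushout squares in pointed simplicial sets then forces $\Phi R(G) \sr \Phi(G)$ to be a weak equivalence.

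The main obstacle I expect is the cofibrancy claim in the second step: verifying that the top edge of $\Phi R(Q)$ is a termwise wedge-summand inclusion requires carefully unwinding the iterated bar construction $R$ and tracking the monomorphism $(h_U)_+ \subset (h_X)_+$ through both $i_*$ (which preserves pointed subsheaves since it is just restriction to connected objects) and $i^*$ (whose wedge-summand structure converts subfamily inclusions into summand inclusions). A secondary nuisance is checking that $R$ genuinely commutes with colimits in the category of pointed \emph{sheaves} rather than merely presheaves — this is where the remark from~\ref{15.5} that $i^*$ already lands in sheaves, and that $(\coprod h_{U_i})_+$ is the same sheaf as presheaf on connected objects, is used.
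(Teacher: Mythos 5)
Your outline follows the paper's strategy in broad strokes (apply $R$ to the square, use the gluing lemma for homotopy pushouts, use that $R\to\mathrm{id}$ is a homotopy equivalence on objects of the form $(\coprod h_{U_i})_+$ together with the hypothesis on $F$), but it rests on a false claim: $R$ does \emph{not} commute with colimits of sheaves. The functor $i^*$ is a left adjoint and preserves colimits, but $i_*$ is the sections functor $F\mapsto (F(U))_{U\in C_0}$, a right adjoint; colimits in the category of \emph{sheaves} on $(QP/G)_{Nis}$ are not computed sectionwise (they require sheafification), so $i_*$ does not carry the pushout $G$ of $F\leftarrow (h_U)_+\to (h_X)_+$ to the pushout of the corresponding diagram of families. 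Example \ref{ex1} makes this concrete: a section of such a pushout over $Y$ consists of a section of $F$ over an open $V\subset Y$ together with a germ along $Y-V$ of a compatible map to $X$, which is strictly more than an element of the set-theoretic pushout $F(Y)\sqcup_{(h_U)_+(Y)}(h_X)_+(Y)$. The remark you cite from \ref{15.5} only says that $i^*$ needs no sheafification; it does nothing for $i_*$. Consequently the square $R(Q)$ is not cocartesian, and your gluing-lemma argument controls only the map from $\Phi$ of the \emph{actual} pushout of $R(F)\leftarrow R((h_U)_+)\to R((h_X)_+)$ to $\Phi(G)$ --- not the map $\Phi(R(G))\to\Phi(G)$ that the lemma asserts.

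This missing comparison is exactly where the paper does its real work: it names the genuine pushout (call it $R$, as in the text) of $R(F)\leftarrow R((h_U)_+)\to R((h_X)_+)$, shows that the canonical map $R\to R(G)$ is a local equivalence (both compose to local equivalences to $G$, since $Q'$ and $Q$ are pointwise homotopy cocartesian), observes that each $R_n$ and each $R(G)_n$ is of the form $(\coprod h_{U_i})_+$ because $i^*i_*$ turns monomorphisms into coprojections, and then invokes Lemmas \ref{15.6} and \ref{15.2} to conclude that $\Phi(R)\to\Phi(R(G))$ is a weak equivalence; only after that does two-out-of-three yield $\Phi(R(G))\to\Phi(G)$. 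Your homotopy-pushout and cofibrancy analysis of the two squares is essentially the paper's and is sound, but without the intermediate object and the appeal to \ref{15.6} the proposal proves a statement about the wrong corner.
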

\begin{proof}
Consider the cocartesian square 
$$
Q'\,\,\,\,\,:\,\,\,\,\,
\begin{CD}
R((h_U)_+) @>>> R((h_X)_+)\\
@VVV @VVV\\
R(F) @>>> R
\end{CD}
$$
One can easily see that the top morphism is a monomorphism. It follows
that $Q'$ is point by point homotopy cocartesian, and $R\sr R(G)$ is a
local  equivalence. The functor $i^*i_*$ of \ref{15.4} transforms
a monomorphism into a coprojection of the form $A\sr A\vee ((\coprod
h_{U_i})_+)$. It follows that each $R_n$ is of the form $(\coprod
h_{U_i})_+$ and, by \ref{15.2} and \ref{15.6}, $\Phi(R)\sr
\Phi(R(G))$ is a weak equivalence. It remains to show that $\Phi(R)\sr
\Phi(G)$ is a weak equivalence. 

Let us apply $\Phi$ to the morphism of cocartesian squares $Q'\sr
Q$. By \ref{15.4} both $R((h_U)_+)\sr (h_U)_+$ and $R((h_X)_+)\sr
(h_X)_+$ are homotopy equivalences, and remain so by applying
$\Phi$. We assumed $\Phi R(F)\sr \Phi(F)$ to be a weak equivalence. As
$\Phi(Q')$ and $\Phi(Q)$ are cocartesian with a top morphism which is
a monomorphism (by the assumption on $\Phi$, for $Q$), it follows that
$\Phi(R)\sr \Phi(G)$ is a weak equivalence. Hence so is $\Phi(R(G))\sr
\Phi(G)$. 
\end{proof}

\section{Two functors}\llabel{11}

\subsection{The functor $X\mapsto X/G$}
One has a natural morphism of sites
$$\eta:(QP/G)_{Nis}\sr (QP)_{Nis}$$
given by the functor
$$\eta^f:QP\sr QP/G: X\mapsto (\mbox{\rm $X$ with the trivial
$G$-action})$$
Indeed, the functor $\eta^f$ commutes with finite limits and
transforms covering families into covering families. 

In particular the functor $\eta^f$ is continuous: if $F$ is a sheaf on
$(QP/G)_{Nis}$, the presheaf 
$$X\mapsto F(\mbox{\rm $X$ with the trivial
$G$-action})$$
is a sheaf on $(QP)_{Nis}$. The functor $\eta^f$ has a left adjoint
$\lambda^f:X\mapsto X/G$. As $\eta^f$ is continuous, the functor
$\lambda^f$ is cocontinuous, and the functor $\eta^*$ from sheaves on
$(QP)_{Nis}$ to sheaves on $(QP/G)_{Nis}$ is
$$F\mapsto (\mbox{\rm sheaf associated to the presheaf $X\mapsto
F(X/G)$})$$

\begin{proposition}
\llabel{10.3}
The cocontinuous functor $\lambda^f:X\mapsto X/G$ is also continuous,
that is, if $F$ is a sheaf on $(QP)_{Nis}$, the presheaf $X\mapsto
F(X/G)$ on $(QP/G)_{Nis}$ is a sheaf.
\end{proposition}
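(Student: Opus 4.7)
Let $F$ be a sheaf on $(QP)_{Nis}$ and set $P(X) := F(X/G)$; I want to check both separatedness and gluing for $P$ with respect to any covering $\mathcal{U} = \{Y_i \to X\}_{i \in I}$ in $(QP/G)_{Nis}$. The pivotal reduction is  \ref{10.4}: pick a covering $\mathcal{V} = \{V_j \to X/G\}_{j \in J}$ in $(QP)_{Nis}$ whose pullback $\{W_j := V_j \times_{X/G} X \to X\}$ refines $\mathcal{U}$, so that for each $j$ there is $i(j) \in I$ and a $G$-equivariant $X$-morphism $\phi_j:W_j \sr Y_{i(j)}$.

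The essential algebraic input is that the $G$-quotient commutes with étale base change to $X/G$. Because $X/G$ was constructed locally as $Spec$ of $G$-invariants and an étale $V \to X/G$ is flat, taking $G$-invariants on the $\mathcal{O}(X)$-side is compatible with $\otimes_{\mathcal{O}(X/G)} \mathcal{O}(V)$; hence one obtains the identifications
$$W_j/G = V_j, \quad (W_j \times_X W_k)/G = V_j \times_{X/G} V_k, \quad (W_j \times_X Y_i)/G = V_j \times_{X/G} (Y_i/G),$$
and the Nisnevich property passes from $\mathcal{V}$ to $\{W_j \to X\}$ by pulling back the filtration. These identifications mean that the morphism $\phi_j$ descends to $\bar{\phi}_j: V_j \sr Y_{i(j)}/G$ over $X/G$.

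For separatedness, given $s, t \in F(X/G)$ with $s|_{Y_i/G} = t|_{Y_i/G}$ for all $i$, pull back along $\bar{\phi}_j$ to get $s|_{V_j} = t|_{V_j}$; the sheaf property of $F$ on $\mathcal{V}$ forces $s = t$. For gluing, given a compatible family $(s_i) \in \prod F(Y_i/G)$, define $s'_j := \bar{\phi}_j^*(s_{i(j)}) \in F(V_j)$. Compatibility on $V_j \times_{X/G} V_k$ is obtained by quotienting the equivariant map $W_j \times_X W_k \sr Y_{i(j)} \times_X Y_{i(k)}$ to $V_j \times_{X/G} V_k \sr (Y_{i(j)} \times_X Y_{i(k)})/G$ and pulling back the assumed compatibility of $s_{i(j)}, s_{i(k)}$ there. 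The sheaf axiom for $F$ then produces $s \in F(X/G) = P(X)$. Finally, $s|_{Y_i/G} = s_i$ is checked by covering $Y_i/G$ by $\{V_j \times_{X/G} (Y_i/G) \to Y_i/G\}$ and invoking the compatibility of $s_i$ with $s_{i(j)}$ on $(Y_i \times_X Y_{i(j)})/G$, pulled back along the (equivariant quotient of the) natural map $W_j \times_X Y_i \sr Y_{i(j)} \times_X Y_i$.

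The main obstacle is bookkeeping: once the identification ``quotient commutes with pullback along étale maps to $X/G$'' is secured, everything reduces to chasing fiber products and applying  \ref{10.4} plus the sheaf property of $F$. That algebraic fact, while essentially immediate from flatness of étale morphisms and the affine-local construction of $X/G$, is the only non-formal ingredient and the one place where one must exploit the specific form of the categorical quotient adopted in Section \ref{10}.
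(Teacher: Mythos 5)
Your proof is correct and takes essentially the same route as the paper: both rest on Lemma \ref{10.4} together with the compatibility of passage to the quotient with flat (etale) base change, which identifies the sheaf diagram of $X\mapsto F(X/G)$ on a pulled-back covering with the sheaf diagram of $F$ on $(V_j)$ over $X/G$. The only difference is that the paper invokes the reduction to pulled-back coverings in one line, whereas you carry out the refinement bookkeeping (separatedness and gluing via the descended maps $\bar{\phi}_j$) explicitly.
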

\begin{proof}
By  \ref{10.4} it is sufficient to test the sheaf property of
$X\mapsto F(X/G)$ for a covering of $X$ deduced by pull-back from a
Nisnevich covering $V_i\sr X/G$ of $X/G$. Passage to quotient commutes
with flat base change. Taking as base $X/G$, this gives that
$$X\times_{X/G} V_i\sr V_i$$
identifies $V_i$ with the quotient of $X\times_{X/G}V_i$ by
$G$. Similarly, if $V_{ij}=V_i\times_{X/G} V_j$, the quotient by $G$
of the pull-back to $X$ of $V_{ij}$ is $V_{ij}$ again. This reduces
the sheaf property of $X\mapsto F(X/G)$, for the covering of $X$ by
the $X\times_{X/G} V_i$, to the sheaf property of $F$ for the covering
$(V_i)$ of $X/G$.
\end{proof}
The functor $\lambda^f:X\mapsto X/G$ gives rise to a pair of adjoint
functors $(\lambda_*, \lambda^*)$ between the categories of presheaves
on $(QP/G)$ and $(QP)$, with $\lambda_*(F)$ being $X\mapsto
F(X/G)$. As $\lambda^f$ is continuous , it induces a similar pair of
adjoint functors between the categories of sheaves. This pair is 
$$(\eta_{\#}:=(\mbox{\rm associated sheaf})\circ \lambda^*,
\eta^*=\lambda_*),$$
so that one has a sequence of adjunctions
$(\eta_{\#},\eta^*,\eta_*)$. If $F$ on $(QP/G)$ is representable:
$F=h_X$, then $\eta_{\#}(F)=h_{X/G}$. In particular, $\eta_{\#}$
transforms the final sheaf $h_S$ on $(QP/G)_{Nis}$, also called
``point'', into the final sheaf on $(QP)_{Nis}$, and
$(\eta_{\#},\eta^*)$ is a pair of adjoint functors in the category of
pointed sheaves as well.
It is clear that $\eta_{\#}$ takes solid sheaves to solid sheaves. We
also have the following.
\begin{proposition}
\llabel{new3}
Let $F$ be a pointed sheaf solid with respect to open emeddings
$U\subset V$ of smooth schemes such that the action of $G$ on $V$ is
free outside $U$. Then $\eta_{\#}(F)$ is solid with respect to open
embeddings of smooth schemes.
\end{proposition}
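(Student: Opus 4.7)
The plan is to reduce to a single building block via the structure theorem of Proposition \ref{b1} (cf.\ Remark \ref{nrm}), which presents any $C$-solid pointed sheaf $F$ as an iterated extension $\ast = F_0 \to F_1 \to \dots \to F_n = F$ whose successive quotients have the form $h_{V/U}$ with $U \subset V$ in $C$. Since $\eta_{\#}$ is a left adjoint, it commutes with all colimits; in particular it preserves cofiber squares, so the filtration of $F$ induces a filtration of $\eta_{\#}(F)$ with quotients $\eta_{\#}(h_{V/U})$. It therefore suffices to show that for each open embedding $U \subset V$ of smooth schemes with $G$ acting freely on $V - U$, the pointed sheaf $\eta_{\#}(h_{V/U})$ is $h_{V'/U'}$ for some open embedding $U' \subset V'$ of smooth schemes.

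Let $V_{\mathrm{free}} \subset V$ denote the (open, $G$-stable) locus on which $G$ acts freely. By hypothesis $V - U \subset V_{\mathrm{free}}$, so that $V = U \cup V_{\mathrm{free}}$, and we set $W := V_{\mathrm{free}} \cap U$, open in $V_{\mathrm{free}}$. The first key claim is that the natural map $h_{V_{\mathrm{free}}/W} \to h_{V/U}$ is an isomorphism of sheaves on $(QP/G)_{Nis}$. By Proposition \ref{prop9.star} it suffices to verify this on the conservative family of fiber functors attached to $G$-local henselian schemes $Y$. A non-basepoint section of $h_{V/U}$ at $Y$ is an equivariant $f : Y \to V$ not factoring through $U$; since $U$ is $G$-stable, this is equivalent to the induced map $Y/G \to V/G$ sending the unique closed point of $Y/G$ into $V/G - U/G = (V - U)/G$. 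But $(V - U)/G \subset V_{\mathrm{free}}/G$, which is open in $V/G$, so by henselianness of $Y/G$ the map $Y/G \to V/G$ already factors through $V_{\mathrm{free}}/G$; pulling back along $V \to V/G$ (which recovers $V_{\mathrm{free}} = V \times_{V/G} V_{\mathrm{free}}/G$), we conclude that $f$ factors through $V_{\mathrm{free}}$, giving the desired inverse.

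Now apply $\eta_{\#}$. Using that it commutes with colimits and sends $h_X$ to $h_{X/G}$,
$$\eta_{\#}(h_{V/U}) \;\cong\; \eta_{\#}(h_{V_{\mathrm{free}}/W}) \;=\; h_{V_{\mathrm{free}}/G} \big/ h_{W/G}.$$
Since $G$ acts freely on $V_{\mathrm{free}}$, and hence on its open subscheme $W$, the maps $V_{\mathrm{free}} \to V_{\mathrm{free}}/G$ and $W \to W/G$ are $G$-torsors; smoothness descends along fppf morphisms, so $V_{\mathrm{free}}/G$ and $W/G$ are both smooth. Moreover $W/G$ is open in $V_{\mathrm{free}}/G$ because passage to quotient commutes with flat base change along $V_{\mathrm{free}} \to V_{\mathrm{free}}/G$. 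Thus $\eta_{\#}(h_{V/U})$ is of the form $h_{V'/U'}$ with $U' \subset V'$ an open embedding of smooth schemes, and Proposition \ref{b1} applied on the $QP$-side delivers the solidity of $\eta_{\#}(F)$.

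The main obstacle I expect is the sheaf-level identification $h_{V/U} \cong h_{V_{\mathrm{free}}/W}$: it is not a statement about representables but about a sheafified cokernel, and it is only true because the free-outside-$U$ hypothesis lets every non-basepoint section be pushed into the free locus. Everything else is formal: the reduction to a single cofiber $h_{V/U}$ uses only that $\eta_{\#}$ preserves colimits, and the smoothness at the end uses only fppf descent of smoothness for $G$-torsors.
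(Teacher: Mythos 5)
Your argument is correct and follows essentially the same route as the paper: replace each building block $U\subset V$ by $U\cap V_{\mathrm{free}}\subset V_{\mathrm{free}}$ (the paper phrases this as ``a push-out of $U\sr V$ is also a push-out of $U'\sr V'$'', which via Proposition \ref{b1} is the same as your isomorphism $h_{V_{\mathrm{free}}/W}\ii h_{V/U}$), then use that $\eta_{\#}$ is a left adjoint sending $h_X$ to $h_{X/G}$, and finish with smoothness of quotients by free actions (the paper's Lemma \ref{smsm}). Your fiber-functor verification of the key isomorphism and the fppf-descent argument for smoothness are just more explicit versions of the steps the paper leaves terse.
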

\begin{proof}
If $V'$ is the open subset of $V$ where the action of
$G$ is free, then $U\cup V'=V$ and if $U':=U\cap V'$, a push-out of
$U\sr V$ is also a push-out of $U'\sr V'$: we gained that the action
is free everywhere. The next step is applying $\eta_{\#}$, from
pointed sheaves on $(QP/G)$ to pointed sheaves on $(QP)$. This functor
is a left adjoint, hence respects colimits and in particular
push-outs. It transforms $h_U$ to $h_{U/G}$, and in particular, for
$U=S$, the final object into the final object. To check that it
respects solidity it is hence sufficient to apply:
\begin{lemma}
\llabel{smsm}
If $G$ acts freely on $U$ smooth over $S$, then $U/G$ is smooth.
\end{lemma}
\begin{proof}
If $G$ is finite etale, for instance $S_n$, the case which most
interests us, this is clear, resulting from $U\sr U/G$ being etale. In
general one proceeds as follows. The assumption that $G$ acts freely
on $U$ implies that $U$ is a $G$-torsor over $U/G$. In particular,
$U\sr U/G$ is faithfully flat. As $U$ is flat over $S$, this forces
$U/G$ to be flat over $S$. To check smoothness of $U/G$ over $S$ it is
hence sufficient to check it geometric fiber by geometric fiber.  For
$\bar{s}$ a geometric point of $S$, smoothness of $(U/G)_{\bar{s}}$
amounts to regularity. As $U_{\bar{s}}$ is smooth over $\bar{s}$,
hence regular, and $U_{\bar{s}}\sr (U/G)_{\bar{s}}$ is faithfully
flat, this is \cite[??]{EGA4} (an application of Serre's cohomological
criterion for regularity).
\end{proof}
\end{proof}
\begin{proposition}
\llabel{new4}
The functor $\eta_{\#}$ respects local (resp. $\af$-) equivalences
between termwise ind-solid simplicial sheaves. 
\end{proposition}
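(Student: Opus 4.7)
\emph{Local case.} For each point $z$ of $(QP)_{Nis}$, let $\Phi_z := (-)_z \circ \eta_{\#}$, a functor from pointed sheaves on $(QP/G)$ to pointed sets. Both $\eta_{\#}$ (as a left adjoint) and stalking at $z$ commute with all colimits, so $\Phi_z$ does too. For an open embedding $U\subset X$ of $G$-schemes, $U/G\subset X/G$ is an open embedding in $(QP)$, so $\eta_{\#}((h_U)_+) = (h_{U/G})_+ \hookrightarrow (h_{X/G})_+ = \eta_{\#}((h_X)_+)$ is a monomorphism, and remains one at the stalk $z$. The hypotheses of \ref{15.1} are satisfied, so $\Phi_z(f)$ is a weak equivalence for every local equivalence $f$ between termwise ind-solid simplicial sheaves. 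Letting $z$ range over all points shows $\eta_{\#}(f)$ is a local equivalence on $(QP)$.

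\emph{$\af$-case.} Given an $\af$-equivalence $f\colon F\to G$ with $F,G$ termwise ind-solid on $(QP/G)$, apply the $Ex$ functor from \ref{14.8} to form
\[
\begin{CD}
F @>f>> G\\
@VVV @VVV\\
Ex(F) @>Ex(f)>> Ex(G).
\end{CD}
\]
By construction $Ex(F), Ex(G)$ are $\af$-local and flasque, and the vertical maps lie in the $\bdl$-closure of the generating class $E$ consisting of $(K_Q)_+\to X_+$ for upper distinguished $Q$ and $(X\times\af)_+\to X_+$; in particular they are $\af$-equivalences, so $Ex(f)$ is an $\af$-equivalence between $\af$-local objects, hence a local equivalence. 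Remark \ref{14.7}, applied with $P =$ ind-solid, ensures that $Ex(F)$ and $Ex(G)$ are again termwise ind-solid: each $(K_Q)_+$ and $X_+$ is a pointed coproduct of representables (the components of $K_Q$ being the diagonal of a bisimplicial object with columns $M\vee K^{\vee n}\vee L$), hence termwise ind-solid, the generator $*\to X_+$ in $N$ wedges on $X_+$, and ind-solidity is stable under coproduct.

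\emph{Assembly.} Apply $\eta_{\#}$ to the square. By the local case, $\eta_{\#}(Ex(f))$ is a local equivalence, hence an $\af$-equivalence. Being a left adjoint functor, $\eta_{\#}$ commutes with each $\bdl$-closure operation---simplicial homotopies, 2-out-of-3, coproducts, diagonals, filtering colimits of termwise coprojections---so each vertical $\eta_{\#}(F)\to\eta_{\#}(Ex(F))$ lies in the $\bdl$-closure of $\eta_{\#}(E)$. On $(QP)$, $\eta_{\#}(E)$ consists of the maps $((X/G)\times\af)_+\to(X/G)_+$ (using $(X\times\af)/G = (X/G)\times\af$, since $G$ acts trivially on $\af$), which are $\af$-equivalences, and $\eta_{\#}((K_Q)_+\to X_+)$, which are local equivalences by the local case already established (sources and targets being termwise pointed coproducts of representables on $(QP)$, hence termwise ind-solid). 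Since $\af$-equivalences on $(QP)$ are $\bdl$-closed (the analog of \ref{postponed} in that site), both verticals become $\af$-equivalences, and two-out-of-three gives that $\eta_{\#}(f)$ is an $\af$-equivalence.

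The main delicate step is confirming that $Ex$ preserves termwise ind-solidity via \ref{14.7}, which reduces to the identification of $(K_Q)_n$ as a finite pointed coproduct of representables; all other ingredients---the monomorphism on stalks, the preservation of $\bdl$-closure operations by $\eta_{\#}$, and the identity $(X\times\af)/G=(X/G)\times\af$---are formal consequences of $\eta_{\#}$ being a left adjoint and of $G$ acting trivially on $\af$.
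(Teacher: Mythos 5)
Your proof is correct, and the local-equivalence half is essentially word-for-word the paper's argument: compose $\eta_{\#}$ with a fiber functor of $(QP)_{Nis}$, note that the composite commutes with colimits and sends $(h_U)_+\sr (h_X)_+$ to a monomorphism because $U/G\sr X/G$ is again an open embedding, and invoke \ref{15.1}. For the $\af$-half you take a genuinely different, though parallel, route. The paper forms the square with vertical maps $R(F)\sr F$ and $R(F')\sr F'$ (the canonical resolutions of \ref{15.4}, which are automatically termwise of the form $(\coprod h_{U_i})_+$), handles these verticals by the local case, and applies Lemma \ref{new1} to place $R(f)$ --- hence, since $\eta_{\#}$ commutes with all the $\bdl$-closure operations, $\eta_{\#}(R(f))$ --- in the $\bdl$-closure of the $\eta_{\#}((K_Q)_+\sr X_+)$ and $\eta_{\#}((X\times\af)_+\sr X_+)$; it then checks, exactly as you do, that these generators are respectively local equivalences (by the first half) and $\af$-homotopy equivalences (using $(X\times\af)/G=(X/G)\times\af$). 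You instead go up to $Ex$, handle $Ex(f)$ by the local case, and handle the verticals by pushing the $\bdl$-closure through $\eta_{\#}$. The two reductions are interchangeable and rest on the same two computations on generators; the one extra obligation your version incurs is the verification, via \ref{14.7}, that $Ex$ preserves termwise ind-solidity (so that the local case applies to $Ex(f)$), which you correctly identify and discharge, whereas $R$ gives the analogous input for free. Both versions share the same implicit convention that pointed coproducts of representables count as pointed ind-solid for the ambient class $C$.
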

\begin{proof}
Let $f:F\sr F'$ be a local equivalence between termwise ind-solid
simplicial sheaves on $QP/G$. To verify that $\eta_{\#}(f)$ is a local
equivalence it is sufficient to check that for any $X$ in $QP$ and
$x\in X$ the map 
$$\eta_{\#}(F)(Spec {\cal O}^n_{X,x})\sr \eta_{\#}(F')(Spec {\cal
O}^n_{X,x})$$
is a weak equivalence of simplicial sets. Since $\eta_{\#}$ is a left
adjoint, the functor
\begin{equation}\llabel{neq1}
F\mapsto \eta_{\#}(F)(Spec {\cal O}^n_{X,x})
\end{equation}
commutes with colimits. For an open embedding $U\sr V$ in $QP/G$,
$U/G\sr V/G$ is again an open embedding and we can apply to
(\ref{neq1}) Theorem \ref{15.1}.

Let $f:F\sr F'$ be an $\af$-equivalence. Consider the square
$$
\begin{CD}
R(F) @>R(f)>> R(F')\\
@VVV @VVV\\
F @>f>> F'
\end{CD}
$$
By the first part of proposition $\eta_{\#}$ takes the vertical maps
to local equivalences. Since $\eta_{\#}$ commutes with colimits, Lemma
\ref{new1} implies that $\eta_{\#}(R(f))$ is in the $\bdl$-closure of
the class which contains $\eta_{\#}((K_Q)_+\sr X_+)$ for $Q$ upper
distinguished and $\eta_{\#}((X\times\af)_+\sr X_+)$ for $X$ in
$QP/G$. By Theorem \ref{postponed} it suffice to prove that morphisms
of these two types are $\af$-equivalences. For morphisms of the first
type it follows from the first half of the proposition. For the
morphism of the second type it follows from the fact that morphisms
$\eta_{\#}((X\times\af)_+\sr X_+)$ and
$\eta_{\#}(X_+\stackrel{Id\times\{0\}}{\sr}(X\times\af)_+)$ are
mutually inverse $\af$-homotopy equivalences. 
\end{proof}
Define ${\bf L}\eta_{\#}:Ho_{\BB}\sr Ho_{\BB}$ (and similarly on
$Ho_{\af,\BB}$) setting
$${\bf L}\eta_{\#}(F):=\eta_{\#}(R(F))$$
where $R(F)$ is defined in \ref{15.4}. Proposition \ref{new4} shows
that ${\bf L}\eta_{\#}$ is well defined and that for a termwise
ind-solid $F$ one has ${\bf L}\eta_{\#}(F)\cong \eta_{\#}(F)$.

\subsection{The functor $X\mapsto X^{W}$}
As in Section \ref{10}, we fix $G$ and $S$. We also fix $W$ in $QP/G$
which is finite and flat over $S$.

For $F$ a presheaf on $QP/G$, we define $F^W$ to be the internal hom
object $\uu{Hom}(h_W,F)$. Its value on $U$ is $F(U\times_S W)$. If $F$
is a sheaf, so is $F^W$.
\begin{example}\rm\llabel{ex11.1}
Take $G$ and $W$ deduced from the finite group $S_n$ acting on
$\{1,\dots,n\}$ by permutations. In that case, if $F$ is represented
by $X$, with a trivial action of $S_n$, then $F^W$ is represented by
$X^n$, on which $S_n$ acts by permutation of the factors. 
\end{example}
\begin{remark}\rm\label{rm11.2}
If $F$ is representable (resp. and represented by $X$ smooth over $S$),
so is $F^W$. More precisely, if $F$ is represented by $X$ in $QP/G$,
consider the contravariant functor on $Sch/S$ of morphisms of schemes
from $W$ to $X$, that is the functor
$$U\mapsto Hom_U(W\times_S U, X\times_S U)$$
This functor is representable, represented by some $Y$
quasi-projective over $S$ (resp. and smooth). This $Y$ carries an
obvious action $\rho$ of $G$, and $(Y,\rho)$ in $QP/G$ represents
$F^W$. Proof: by attaching to a morphism $W\sr X$ its graph, one maps
the functor considered into the functor of finite subschemes of
$W\times_S X$, of the same rank as $W$, that is the functor
$$U\mapsto 
\left\{
\begin{array}{c}
\mbox{\rm subschemes of $(W\times_S X)\times_S U$ finite
and flat over $U$,}\\
\mbox{\rm with the same rank as $W\times_S U$ over $U$.}
\end{array} 
\right\}
$$
The later functor is represented by a quasi-projective scheme $Hilb$,
by the theory of Hilbert schemes. The condition that $\Gamma\subset
W\times_S X$ be the graph of a morphism from $W$ to $X$ is an open
condition. This means: let $\Gamma\subset (W\times_S X)\times_S U$ be
a subscheme finite and flat over $U$. There is $U'$ open in $U$ such
that for any base change $V\sr U$, the pull-back $\Gamma_V$ of
$\Gamma$ is the graph of some $V$-morphism from $W\times_S V$ to
$W\times_S X$ if and only if $V$ maps to $U'$. This gives the
existence of the required $Y$, and that it is open in $Hilb$. If $X$
is smooth the smoothness of $Y$ follows from the infinitesimal lifting
criterion. The quasi-projectivity follows from that of $Hilb$. On the
functors represented, the action $g(y)=gyg^{-1}$ of $G$ is clear. For
$T$ in $QP/G$, one has
$$Hom_{QP/G}(T,Y)=Hom_{G}(T,\uu{Hom}(W,X))=Hom_{G}(T\times_S W,X)=$$
$$=Hom_{QP/G}(T\times_S W,X)=F^W(T)$$
\end{remark}
Let $C$ be a class of open embeddings in $(QP/G)_{Nis}$. We will
simply say ``solid'' for ``$C$-solid''.
\begin{theorem}\llabel{11.3}
If $F$ is a solid sheaf on $(QP/G)_{Nis}$, so is $F^W$.
\end{theorem}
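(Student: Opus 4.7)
The plan is to argue by induction on the length $n$ of a filtration
$$\emptyset = F_0 \subset F_1 \subset \cdots \subset F_n = F$$
exhibiting $F$ as $C$-solid, which exists by \ref{b1}: each $F_i \hookrightarrow F_{i+1}$ is obtained by pushout from some $h_{U_i} \hookrightarrow h_{V_i}$ with $U_i \subset V_i$ in $C$. I will prove by induction that each $F_i^W$ is solid and that each $F_i^W \hookrightarrow F_{i+1}^W$ is a solid morphism. The base case $F_0 = \emptyset$ is vacuous.

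For the inductive step, I would first exploit the formal properties of $(-)^W = \uu{Hom}(h_W,-)$. Since this is a right adjoint to $(-)\times h_W$, it preserves limits, so applying it to the pushout (which is also cartesian) square defining $F_i \hookrightarrow F_{i+1}$ yields a cartesian square
$$
\begin{CD}
h_{U_i}^W @>>> h_{V_i}^W\\
@VVV @VVV\\
F_i^W @>>> F_{i+1}^W
\end{CD}
$$
whose top row is, by \ref{rm11.2}, an open embedding of representable sheaves $h_{U_i^W} \hookrightarrow h_{V_i^W}$ with $V_i^W$ smooth over $S$ when $V_i$ is; in particular the top row remains in $C$. Moreover, using that $W \to S$ is \emph{finite} (so that $W \times T \to T$ is closed), one checks that $(-)^W$ takes open morphisms of sheaves to open morphisms, because if $f^{-1}(F_i) \subset X\times W$ is open and $Z$ is its complement, then $\{x \in X : \{x\}\times W \cap Z = \emptyset\} = X - pr_X(Z)$ is open in $X$. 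In particular $F_i^W \hookrightarrow F_{i+1}^W$ is open.

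The main obstacle is that the cartesian square displayed above is \emph{not} in general cocartesian, since $(-)^W$ is a right adjoint and does not commute with colimits. So one cannot conclude directly that $F_i^W \hookrightarrow F_{i+1}^W$ is a single-step pushout along $h_{U_i^W} \hookrightarrow h_{V_i^W}$. The remedy is to refine the one-step inclusion into a finite chain
$$F_i^W = H_0 \hookrightarrow H_1 \hookrightarrow \cdots \hookrightarrow H_m = F_{i+1}^W$$
in which each $H_j \hookrightarrow H_{j+1}$ \emph{is} a pushout along an open embedding in $C$, so that \ref{b1} produces the required solid structure on $F_{i+1}^W$. The idea is to stratify sections of $F_{i+1}^W$ by the Hilbert-scheme data of the closed subscheme $Z \subset W\times T$ where a given section fails to factor through $F_i$: since $W\to S$ is finite flat and $F_i \hookrightarrow F_{i+1}$ is open, $Z$ is finite flat over $T$ of bounded rank, and the stratification is indexed by a finite invariant (e.g.\ the rank). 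Representability, quasi-projectivity, and smoothness of the successive strata follow from the Hilbert-scheme/infinitesimal-lifting arguments already used in \ref{rm11.2}, and the $G$-equivariance is automatic because the whole construction is $G$-functorial.

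The hardest part, I expect, will be carrying out this stratification carefully in the Nisnevich sheaf-theoretic setting: verifying that the stratifying invariant takes only finitely many values (so the induction terminates), that each successive difference $H_{j+1}/H_j$ really is of the form $h_{V'/U'}$ with $U' \subset V'$ in $C$, and that the stratification is compatible with $G$-equivariance so that the resulting open embeddings remain in $C$ (this is particularly delicate for the class of $U \subset V$ with free $G$-action on $V-U$). Once that is in place, the inductive step closes and together with Proposition \ref{9.6new} (an open sub of a solid sheaf is solid) yields the theorem.
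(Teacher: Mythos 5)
Your overall strategy is the right one, and your stratification idea is essentially the paper's: the paper filters $G^W$ (for an open morphism $F\sr G$) by the subsheaves $(G,F)^W_k$ of sections whose locus of membership in $F$ has fiber degree at least $k$ over the base, which is exactly your ``rank of the closed subscheme where the section fails to factor through $F_i$'' invariant. The invariant is bounded by the rank of $W$, so the chain is finite, and the inclusions $(G,F)^W_{k+1}\sr (G,F)^W_k$ are open. Up to this point you and the paper agree.

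There is, however, a genuine gap in how you expect the stratification to close up. You predict that each successive difference $H_{j+1}/H_j$ is of the form $h_{V'/U'}$ with $U'\subset V'$ in $C$, i.e.\ that each stratum step is a single push-out of an honest open embedding of schemes, with representability coming from Hilbert-scheme arguments as in \ref{rm11.2}. That is not what happens. In the paper's Lemma \ref{11.4}, the step $(G,F)^W_{k+1}\sr(G,F)^W_k$ is exhibited as a push-out of an open morphism into $(F\coprod h_X,\,F)^W_k$, where $h_U\sr h_X$ is the open embedding from which $F\sr G$ was obtained by push-out. The sheaf $(F\coprod h_X)^W$ is \emph{not} representable in general (its formation mixes the non-representable $F$ with $h_X$ through all partitions of $W$), so the stratum quotients are not of the form $h_{V'/U'}$; instead one must argue that $(F\coprod h_X)^W$ is solid, deduce solidity of its open subsheaf $(F\coprod h_X,F)^W_k$ by \ref{9.6new}, deduce solidity of the open morphism out of the fiber product by \ref{8.3}, and only then conclude that the stratum step is solid as a push-out of a solid morphism. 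This forces a \emph{strengthened} induction hypothesis -- one proves that $(F_i\coprod h_Y)^W$ is solid for \emph{every} representable $h_Y$, not merely that $F_i^W$ is solid -- and your induction as set up does not carry this extra quantifier, so it does not close. Finally, the cocartesianity of the stratification squares (the assertion that each $(G,F)^W_{k+1}\sr(G,F)^W_k$ really is a push-out) is not formal: the paper verifies it by evaluating on the fiber functors given by $G$-local henselian schemes and analyzing the decomposition of $W\times_S Y$ into $G$-local henselian pieces; your proposal defers this entirely to ``carrying out the stratification carefully,'' which is where the actual content of the theorem lies.
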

If a morphism of sheaves $A\sr F$ is open, i.e. relatively
representable by open (equivariant) embeddings, there is a natural
sequence of sheaves intermediate between $A^W$ and $F^W$. In the case
considered in  \ref{ex11.1}, and for $h_U\sr h_X$, they are
represented by the open equivariant subschemes $(X,U)^n_k$ of $X^n$
consisting of those n-uples $(x_1,\dots,x_n)$ for which at least $k$
of the $x_i$ are in $U$. The formal definition is as follows. 

A section of $F^W$ over $T$ is a section $s$ of $F$ over $W\times_S
T$. Let $U(s)$ be the equivariant open subscheme of $W\times_S
T$ on which $s$ is in $A$. The sheaf $(F,A)^W_k$ is the subsheaf of
$F^W$ consisting of those $s$ such that all fibers $U(s)_t$ of $U(s)$
over $T$ are of degree at least $k$. The condition that the fiber at
$k$ be of degree $\ge k$ is open in $t$, and it follows that the
inclusion of $(F,A)^W_k$ in $F^W$ is open. For $k=0$, $(F,A)^W_k$ is
simply $F^W$. For $k$ large, it is $A^W$.
\begin{lemma}
\llabel{11.4}
Suppose that $A\sr F$ is deduced by push-out from an open map $B\sr
G$, so that we have a cocartesian square
\begin{equation}
\llabel{11.4.1}
\begin{CD}
B @>>> G\\
@VVV @VVV\\
A @>>> F
\end{CD}
\end{equation}
Then, for each $k$, the cartesian square
\begin{equation}
\llabel{11.4.2}
\begin{CD}
({\rm fiber\,\,\,\, product})@>>> (A\coprod G, A)^W_k\\
@VVV @VVV\\
(F,A)^W_{k+1} @>>> (F,A)^W_k
\end{CD}
\end{equation}
is cocartesian as well.
\end{lemma}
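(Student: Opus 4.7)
The plan is to check cocartesianness of the cartesian square \ref{11.4.2} on stalks at $G$-local henselian schemes $T$, invoking the conservative family of fiber functors from Proposition \ref{prop9.star}. The crucial geometric observation is that over such $T$, the finite flat $T$-scheme $W \times_S T$ decomposes as a disjoint union $\bigsqcup_\alpha W_\alpha$ of local schemes, so every open subscheme of $W \times T$ is automatically clopen (being a union of some of the $W_\alpha$). In particular $U(s) = s^{-1}(A)$ is such a clopen union; every remaining $W_\alpha$ admits a lift of $s|_{W_\alpha}$ through $G$ because $A \coprod G \to F$ is an epimorphism and $W_\alpha$ is local henselian; and those $W_\alpha$ admitting lifts through both $A$ and $G$ are exactly the ones contained in $s^{-1}(B)$, by the cartesianness of the original square.

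Next I would classify the lifts of $s \in F^W(T)$ to $\sigma \in (A \coprod G)^W(T)$. Such a lift corresponds to a clopen decomposition $W \times T = U_0 \sqcup V_0$ with $s|_{U_0}$ factoring through $A$ and $s|_{V_0}$ factoring through $G$: the $W_\alpha$'s outside $s^{-1}(B)$ are rigid (forced into $U_0$ if contained in $U(s)$, into $V_0$ otherwise), while those inside $s^{-1}(B)$ can be placed freely. The constraint $\sigma \in (A \coprod G, A)^W_k$ is precisely the degree condition $\deg U_0 \geq k$. This yields the crucial dichotomy: if $\deg U(s) \geq k+1$, so $s \in (F, A)^W_{k+1}$, many lifts satisfy the degree condition (for instance $U_0 = U(s)$); but if $\deg U(s) = k$ exactly, the degree condition forces $U_0 = U(s)$, and the lift is unique.

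From here the universal property of the pushout is immediate. Given compatible morphisms $\alpha\colon (F, A)^W_{k+1} \to H$ and $\beta\colon (A \coprod G, A)^W_k \to H$ agreeing on $P$, define $\gamma\colon (F, A)^W_k \to H$ by $\gamma(s) = \alpha(s)$ when $s \in (F, A)^W_{k+1}$, and $\gamma(s) = \beta(\sigma)$ for the unique lift $\sigma$ otherwise; well-definedness in the first case follows since any lift $\sigma$ produces a pair $(s, \sigma) \in P$, forcing $\alpha(s) = \beta(\sigma)$ by compatibility. The step I expect to be most delicate is the classification of how $s|_{W_\alpha}$ lifts to $A$ and to $G$ --- the flexible-versus-rigid dichotomy on components --- which combines the cartesian and cocartesian aspects of the original square with the local henselian structure of each $W_\alpha$; once that bookkeeping is in place, the degree count and the verification of the universal property are routine.
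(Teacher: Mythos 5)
Your proof is correct and follows essentially the same route as the paper: reduce to stalks at $G$-local henselian schemes via the conservative family of fiber functors, decompose $W\times_S T$ into local pieces, observe that a lift to $A\coprod G$ amounts to a clopen decomposition, and show that when $\deg U(s)=k$ exactly the degree constraint forces $U_0=U(s)$ and hence a unique lift, which gives the pushout property. One small imprecision: an open subscheme of a local henselian scheme need not be clopen, so $U(s)$ itself need not be a union of the $W_\alpha$ --- what is true (and is all you use) is that a section of the coproduct $A\coprod G$ does produce a genuinely clopen $U_0\subset U(s)$, and that the fibre degrees at closed points only see the clopen core of $U(s)$.
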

\begin{proof}
The site $(QP/G)_{Nis}$ has enough points: as a consequence of 
\ref{10.4}, for each $X$ in $QP/G$ and $x\in X/G$, the functor
$$F\mapsto colim\,\,\,\, F(X\times_{X/G} V),$$
the limit being taken over the Nisnevich neighborhoods of $x$ in
$X/G$, is a point (= a fiber functor). The class of all such functors
is clearly conservative. Such a functor depends only on
$Y:=X\times_{X/G} (X/G)^h_x$, where $(X/G)^h_x$ is the henselization
of $X/G$ at $x$, and $Y$ can be any equivariant $S$-scheme which is a
finite disjoint union of local heselian schemes essentially of finite
type over $S$, and for which $Y/G$ is local. We call such a scheme
$G$-local henselian, and write $F\mapsto F(Y)$ for the corresponding
fiber functor.

We will show that (\ref{11.4.2}) becomes cocartesian after
application of any of the fiber functors $F\mapsto F(Y)$ defined
above. It suffices to show that for any $s$ in $(F,A)^W_k(Y)$, the
fiber of (\ref{11.4.2})$(Y)$ above $s$ is cocartesian in $Set$. This
fiber is of the form
$$
\begin{CD}
K\times L @>>> K\\
@VVV @VVV\\
L @>>> \{s\}
\end{CD}
$$
and such a square is cocartesian if and only if whenever $K$ or $L$ is
empty, the other is reduced to one element. Here, we also know that
$L\sr \{s\}$ is a injective. It hence suffice to check that if $L$ is
empty, then $K$ is reduced to one element. Fix $s$ in $(F,A)^W_k(Y)$,
and view it as a section of $F$ over $W\times_S Y$. Let $U\subset
W\times_S Y$ be the open equivariant subset where it is in $A$. The
assumption that $s$ be in $(F,A)^W_k$ means that the degree of the
fiber $U_g$ of $U\sr Y$ at a closed point $y$ of $Y$ is at least
$k$. By $G$-equivariance of $U$, this degree is independent of the
chosen $y$. We have to show that if $s$ is not in $(F,A)^W_{k+1}(Y)$,
that is if this degree is exactly $k$, then $s$ is the image of a
unique element of $(A\coprod G, A)^W_k$.

The scheme $W\times_S Y$ is a disjoint union of $G$-local henselian
schemes $(W\times_S Y)_i$. By assumption, (\ref{11.4.1})$((W\times_S
Y)_i)$ is cocartesian, hense if $s$ is not in $A$ on $(W\times_S
Y)_i$, then on $(W\times_S Y)_i$ it comes from a unique $\tilde{s}_i$
in $G$. Let $(W\times_S Y)'$ be the union of those $(W\times_S Y)_i$
on which $s$ is in $A$, and $(W\times_S Y)''$ be the union of
$(W\times_S Y)_i$ on which it is not. That $s$ is in $(F,A)^W_k$ but
not in $(F,A)^W_{k+1}$, means that $(W\times_S Y)'$ is of degree $d=k$
over $Y$. On $(W\times_S Y)'$, $s$ is in $A$.  On $(W\times_S Y)''$,
it comes from a unique $\tilde{s}$ in $G$. The section
$$s_1:=(\mbox{\rm $s$ in $A$ on $(W\times_S Y)'$, $\tilde{s}$ on
$(W\times_S Y)''$})$$ 
of $A\coprod G$ over $W\times_S Y$ is a section of $(A\coprod G,
A)^W_k$ on $Y$ lifting $s$. It is the unique such lifting: any other
lifting $s_2$, viewed as a section of $A\coprod G$ on $W\times_S Y$,
can be in $A$ at most on $(W\times_S Y)'$, hence must be in $A$ on the
whole of $(W\times_S Y)'$ which has just the required degree over
$Y$. This determines $s_2$ uniquely on $(W\times_S Y)'$, where it is
in $A$, as well as on  $(W\times_S Y)''$, where it is the unique
lifting of $s$ to $G$.
\end{proof}
{\bf Proof of  \ref{11.3}:} The induction which works to
prove \ref{11.3} is the following. As $F$ is solid, it sits at the
end of a sequence 
$$\emptyset\sr F_1\sr\dots\sr F_n=F$$
where each $F_i\sr F_{i+1}$ is a push-out of some open embedding in
$QP/G$. We prove by induction on $i$ that for any $Y$, $(F_i\coprod
h_Y)^W$ is solid. For $i=1$, $F_1$ is representable, hence so is
$(F_1\coprod h_Y)^W$ (\ref{rm11.2}). A fortiori, $(F_1\coprod
h_Y)^W$ is solid. For the induction step one applies the following
lemma to $F_i\coprod h_Y\sr F_{i+1}\coprod h_Y$.
\begin{lemma}
\llabel{11.5}
Let 
$$
\begin{CD}
h_U @>>> h_X\\
@VVV @VVV\\
F @>>> G
\end{CD}
$$
be a cocartesian square with $U$ open in $X$. Suppose that for any
$Z$, $(F\coprod h_Z)^W$ is solid. Then $F^W\sr G^W$ is solid.
\end{lemma}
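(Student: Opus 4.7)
The plan is to interpolate between $F^W$ and $G^W$ using the filtration $(G,F)^W_k$ introduced just before Lemma \ref{11.4}, and to exhibit each successive inclusion as a push-out of a solid morphism.

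First I would apply Lemma \ref{11.4} to the given cocartesian square, taking $B = h_U$, $A = F$, the upper-right entry to be $h_X$ and the lower-right to be $G$. This produces, for each $k \geq 0$, a cocartesian square whose bottom row is $(G,F)^W_{k+1} \to (G,F)^W_k$ and whose top row is $P_k \to (F \coprod h_X, F)^W_k$, where $P_k$ denotes the indicated fibre product. Since $W$ is finite and flat of some constant rank $d$ over $S$, we have $(G,F)^W_0 = G^W$ and $(G,F)^W_k = F^W$ as soon as $k \geq d$. Thus $F^W \to G^W$ is the finite composition
$$F^W = (G,F)^W_d \to (G,F)^W_{d-1} \to \cdots \to (G,F)^W_0 = G^W,$$
and the problem reduces to showing each arrow $(G,F)^W_{k+1} \to (G,F)^W_k$ is solid.

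The key input is the hypothesis, used with $Z = X$: the sheaf $(F \coprod h_X)^W$ is solid. From the construction of $(-,-)^W_k$, the inclusion $(F \coprod h_X, F)^W_k \hookrightarrow (F \coprod h_X)^W$ is open, and $P_k \hookrightarrow (F \coprod h_X, F)^W_k$ is open as the pullback of the open embedding $(G,F)^W_{k+1} \hookrightarrow (G,F)^W_k$. Proposition \ref{9.6new} then gives that both $(F \coprod h_X, F)^W_k$ and $P_k$ are solid, and Proposition \ref{8.3} gives that the open morphism $P_k \to (F \coprod h_X, F)^W_k$ with solid target is itself solid. Since push-outs of solid morphisms are solid (immediate from Definition \ref{d8.2}, pushouts of pushouts of $h_U \to h_X$ being again pushouts of $h_U \to h_X$), this promotes to the solidity of $(G,F)^W_{k+1} \to (G,F)^W_k$, and composing over $k = 0, \ldots, d-1$ yields the claim.

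The main obstacle I anticipate is simply the bookkeeping of matching the present situation to the statement of Lemma \ref{11.4} and correctly identifying the endpoints of the filtration; once that is in place, the hypothesis on $F \coprod h_X$ is precisely calibrated so that Propositions \ref{8.3} and \ref{9.6new} close the loop without any further geometric input.
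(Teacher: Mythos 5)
Your proof is correct and follows essentially the same route as the paper's: filter $F^W\sr G^W$ by the $(G,F)^W_k$, apply Lemma \ref{11.4} to exhibit each step as a push-out of the open morphism into $(F\coprod h_X,F)^W_k$, and use the hypothesis with $Z=X$ together with Propositions \ref{9.6new} and \ref{8.3} to see that this morphism is solid. The only differences are cosmetic — you make the finiteness of the filtration explicit via the rank of $W$, and you note the solidity of the fibre product itself, which is not strictly needed.
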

\begin{proof}
As $F\sr G$ is open by \ref{8.1.3}, the $(G,F)^W_j$ are defined. It
suffices to prove that for each $j$, the open morphism
$(G,F)^W_{j+1}\sr (G,F)^W_j$ is solid. 

By  \ref{11.4}, this morphism sits in a cartesian and
cocartesian square
\begin{equation}
\begin{CD}
({\rm fiber\,\,\,\, product})@>[2]>> (F\coprod h_X, F)^W_k\\
@VVV @VVV\\
(G,F)^W_{j+1} @>[1]>> (G,F)^W_j
\end{CD}
\end{equation}
By assumption, $(F\coprod h_X)^W$ is solid. It follows that $(F\coprod
h_X)^W_k$ is solid too (apply  \ref{9.6new} to the open
morphism $(F\coprod h_X)^W_k\sr (F\coprod h_X)^W$). As $[1]$ is open,
so is $[2]$, and by  \ref{8.3}, $[2]$ is solid. The map
$[1]$ is then solid as a push-out of a solid map.
\end{proof}
\begin{example}\rm It is not always true that if $f:A\sr B$ is a solid
morphism, so is $f^W$. Take $G$ the trivial group and $W$ two points
(i.e. $S\coprod S$). Then $F^W=F^2$. For any sheaf $F$, the inclusion
$f$ of $F$ in $F\coprod pt$ is solid (deduced by push-out from
$\emptyset\sr pt$), and applying $(-)^W$, we obtain $F^2\sr F^2\coprod
F\coprod F\coprod pt$. Pulling back by the natural map from $F$ to one
of the summands $F$ (an open map), we see that if $f^W$ is solid, so
is $F$.
\end{example}
\begin{cor}
\llabel{c11.6} If $f:F\sr G$ is open and $G$ solid, then $f^W:F^W\sr
G^W$ is solid. In particular, if $G$ is pointed solid, so is $G^W$.
\end{cor}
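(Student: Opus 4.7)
The plan is to reduce the corollary to two ingredients: that $G^W$ is solid (which is exactly Theorem \ref{11.3}) and that $f^W$ is open. Once both are known, Proposition \ref{8.3} applied to $f^W$ finishes the proof. So the real content is to check that taking $(-)^W$ preserves openness, and then to deduce the pointed version by a short adjunction argument.

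To see that $f^W:F^W\sr G^W$ is open, I would fix a morphism $u:h_X\sr G^W$. By adjunction, $u$ corresponds to a morphism $\tilde u:h_{X\times_S W}\sr G$, and one checks immediately that $F^W\times_{G^W}h_X$ represents the functor sending $Y\sr X$ to the set of liftings of $\tilde u_Y:h_{Y\times_S W}\sr G$ through $f$. Since $f$ is open, there is a $G$-stable open subscheme $U\subset X\times_S W$ such that $\tilde u_Y$ lifts to $F$ if and only if $Y\times_S W\sr X\times_S W$ factors through $U$.

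The key claim is that $V:=\{x\in X \mid \{x\}\times W\subset U\}$ is a $G$-stable open subscheme of $X$, and that $Y\sr X$ factors through $V$ iff $Y\times_S W\sr X\times_S W$ factors through $U$. Openness of $V$ is where I need the hypothesis on $W$: since $W$ is finite over $S$, the projection $\pi:X\times_S W\sr X$ is finite and in particular closed, so $V=X\setminus \pi((X\times_S W)\setminus U)$ is open. The $G$-stability of $V$ follows from the $G$-stability of $U$: for $g\in G$ one has $\{gx\}\times W\subset U$ iff $\{x\}\times g^{-1}W\subset U$ iff $\{x\}\times W\subset U$, using $g^{-1}W=W$. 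The equivalence of the two factorization conditions is immediate fiberwise. Hence $F^W\times_{G^W}h_X\cong h_V$ with $V$ open $G$-stable in $X$, i.e.\ $f^W$ is open. I expect this openness step to be the only non-formal part of the argument; the real input is the properness (finiteness) of $W\sr S$.

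Given this, Theorem \ref{11.3} provides that $G^W$ is solid, and Proposition \ref{8.3} applied to the open morphism $f^W:F^W\sr G^W$ with solid target then produces the required solidity of $f^W$. For the pointed statement, recall that $G$ pointed solid means the morphism $pt\sr G$ is solid, hence in particular open (since by \ref{8.1.3} and the definition of solidity, solid morphisms are open). Applying the first part to $f=(pt\sr G)$, and noting that $pt^W=pt$ because $pt=h_S$ is terminal and $(-)^W$ preserves the terminal object, one concludes that $pt\sr G^W$ is solid, i.e.\ $G^W$ is pointed solid.
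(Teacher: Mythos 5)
Your proposal is correct and follows exactly the paper's route: the paper's entire proof is that $f^W$ is open and one applies \ref{11.3} together with \ref{8.3}. The only difference is that you spell out the openness of $f^W$ (via finiteness of $X\times_S W\sr X$), which the paper asserts without proof (it is implicit in its earlier discussion of the filtration $(F,A)^W_k$); your argument for that step is sound.
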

\begin{proof}
$f^W$ is open and one applies  \ref{11.3} and 
\ref{8.3}.
\end{proof}
We now define for pointed sheaves on $(QP/G)_{Nis}$ a ``smash''
variant of the construction $F\mapsto F^W$. If we assume that the
marked point $pt\sr F$ is open, it is
$$F^{\wedge W}:=F^W/(F,pt)^W_1$$
that is $F^W$ with $(F,pt)^W_1$ contracted to the new base point. This
definition can be repeated for any pointed sheaf if, for any
monomorphism $A\sr F$, we define$(F,A)^W_1$ to be the following
subsheaf of $F^W$: a section $s$ of $F^W(X)=F(X\times W)$ is in
$(F,A)^W_1$ if for any non empty $X'\sr X$, there exists a
commutative diagram 
$$
\begin{CD}
X'' @>>> X\times W\\
@VVV @VVV\\
X' @>>> X
\end{CD}
$$
with $X''$ non empty and $s$ in $A$ on $X''$. 
\begin{example}
\llabel{ex11.9}\rm Under the assumptions of \ref{ex11.1}, if $U$ is
open in $X$ with complement $Z$ and if $F=h_X/h_U$, then $F^{\wedge
W}$ is $h_{X^n}/h_{X^n-Z^n}$, where $X^n$ has the natural action of
$S_n$. In particular, if $F$ is the Thom space of a vector bundle $E$
over $Y$ (that is, $h_E$ with $h_{E-s_0(Y)}$ contracted to the base
point), then $F^{\wedge W}$ is the Thom space of the $S_n$-equivariant
vector bundle $\oplus pr^*_i E$ on $Y^n$.
\end{example}
\begin{proposition}\llabel{5.1.9}
If a pointed sheaf $F$ is pointed solid, that is if $pt\sr F$ is
solid, then so is $F^{\wedge W}$.
\end{proposition}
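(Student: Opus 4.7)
The strategy is to realize $pt\to F^{\wedge W}$ as a pushout of a solid morphism, and then invoke the closure of solid morphisms under pushouts.

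First I would apply Corollary \ref{c11.6}. Since $pt\to F$ is solid and hence open, the corollary yields that $pt^W\to F^W$ is solid. Because $pt=h_S$ is the terminal sheaf, $pt^W=\ih(h_W,pt)=pt$, so $pt\to F^W$ is solid; equivalently, $F^W$ is pointed solid. Combined with the fact that $pt=h_S$ is itself $C$-solid for the natural classes of Definition \ref{d8.2} (the open embedding $\emptyset\to S$ belongs to $C$ under the ambient smoothness and, in case 2, free-action assumptions on $S$), it follows that $F^W$ is solid in the unpointed sense as well.

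Next, the inclusion $(F,pt)^W_1\hookrightarrow F^W$ is open --- this was recorded in the paragraph introducing $(F,A)^W_k$ and used throughout, and in the smash context it is the defining property of $(F,pt)^W_1$. Hence Proposition \ref{8.3}, applied to this open morphism into the $C$-solid target $F^W$, yields that $(F,pt)^W_1\to F^W$ is itself a $C$-solid morphism.

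Finally, by the very definition of $F^{\wedge W}$ one has a cocartesian square
$$
\begin{CD}
(F,pt)^W_1 @>>> F^W\\
@VVV @VVV\\
pt @>>> F^{\wedge W}
\end{CD}
$$
so $pt\to F^{\wedge W}$ is obtained as the pushout of the solid morphism $(F,pt)^W_1\to F^W$ along $(F,pt)^W_1\to pt$. Solid morphisms are closed under pushout by the very definition \ref{d8.2}, so $pt\to F^{\wedge W}$ is solid; that is, $F^{\wedge W}$ is pointed solid.

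The most delicate point I anticipate is the assertion that $F^W$ is solid as an \emph{unpointed} sheaf, as required by Proposition \ref{8.3}. This depends on $\emptyset\to pt$ itself being $C$-solid, which is transparent for classes 1 and 3 of Definition \ref{d8.2} when $S$ is smooth, but in the more restrictive class 2 it requires $G$ to act freely on $S$. If one wishes to avoid any such side hypothesis, the workaround is to establish a relative form of Proposition \ref{8.3} --- an open morphism whose target is built from a fixed sheaf $H$ by pushouts of generating open embeddings is itself built from a pullback of $H$ by such pushouts --- proved by the same induction on the solid filtration as \ref{8.3} but starting from $H=pt$ rather than $H=\emptyset$; applied to $H=pt$ and the open inclusion $(F,pt)^W_1\hookrightarrow F^W$, this gives the solidity of $(F,pt)^W_1\to F^W$ directly from the pointed solidity of $F^W$.
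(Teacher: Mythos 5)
Your argument coincides with the paper's: the paper likewise observes that $F^W$ is solid, applies Proposition \ref{8.3} to the open inclusion $(F,pt)^W_1\sr F^W$, and obtains $pt\sr F^{\wedge W}$ as the push-out of that solid morphism. The delicate point you flag --- unpointed solidity of $F^W$, i.e.\ whether $\emptyset\sr pt$ is $C$-solid, which is automatic for classes 1 and 3 but not for class 2 --- is real but is passed over silently in the paper, which simply asserts ``$F^W$ is solid''; your proposed relative form of \ref{8.3} (using that the base point of $F^W$ already lies in $(F,pt)^W_1$) is a legitimate way to close that gap.
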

\begin{proof}
As $F^W$ is solid, the open morphism $(F,pt)^W_1\sr F^W$ is solid too
(\ref{8.3}), and $pt\sr F^{\wedge W}$ is deduced from it
by push-out.
\end{proof}
The definition of $F^{\wedge W}$ immediately implies the following:
\begin{lemma}
\llabel{new7}
Let $Y$ be a $G$-local henselian scheme. Then 
$$F^{\wedge W}(Y)=\bigwedge_{i} F((W\times Y)_i)$$
where $(W\times Y)_i$ are $G$-local henselian schemes such that
$$W\times Y=\coprod_i (W\times Y)_i$$
\end{lemma}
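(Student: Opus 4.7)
The plan is to unwind the definition of $F^{\wedge W}$ on $Y$ directly, showing that $F^W(Y)/(F,pt)^W_1(Y)$ computes the smash product. The easy half is computing $F^W(Y) = F(W\times Y) = \prod_i F((W\times Y)_i)$: this follows from the sheaf property of $F$ applied to the decomposition $W\times Y=\coprod_i (W\times Y)_i$ into $G$-local henselian pieces. The main work is to identify the subsheaf $(F,pt)^W_1(Y)$, under this product decomposition, with the fat wedge $\{(s_i): s_i=pt \text{ for some } i\}$; once this is established, the quotient is by definition $\bigwedge_i F((W\times Y)_i)$, and the value of $F^{\wedge W}$ on $Y$ coincides with the pointwise quotient because $Y$ is a point of the Nisnevich topos on $QP/G$ (by \ref{prop9.star}), so stalks commute with the pushout of sheaves defining $F^{\wedge W}$.

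For the identification of $(F,pt)^W_1(Y)$, I will work in the case most relevant to the application, where $pt\to F$ is open (which holds for pointed solid $F$, see \ref{5.1.9}). Then $s\in(F,pt)^W_1(Y)$ iff the open $G$-stable locus $U(s)\subset W\times Y$ on which $s$ equals the base point surjects onto $Y$. The key geometric observations are: (i) every open $G$-stable subset of $W\times Y$ is a union of the pieces $(W\times Y)_i$, because connected components of $W\times Y$ are local henselian (hence clopen), so open subsets are unions of components, and $G$-stability forces them to be unions of whole $G$-orbits of components, i.e.\ of the $(W\times Y)_i$; (ii) each $(W\times Y)_i\to Y$ is surjective, since the composition $(W\times Y)_i\hookrightarrow W\times Y\to Y$ is a clopen immersion followed by the finite faithfully flat map $W\times Y\to Y$ (assuming $W\neq\emptyset$; the case $W=\emptyset$ is trivial), hence itself finite flat with clopen image. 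Its image in the local scheme $Y/G$ is a non-empty clopen subset, hence all of $Y/G$, and the $G$-stability of the image in $Y$ then forces it to be all of $Y$. Combining (i) and (ii), $U(s)\to Y$ is surjective iff $U(s)\neq\emptyset$ iff $(W\times Y)_i\subset U(s)$ for some $i$ iff $s_i=pt$ for some $i$, which is exactly the fat wedge condition.

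The main obstacle is observation (ii) — the surjectivity of each $(W\times Y)_i\to Y$ — which relies on combining the finite faithful flatness of $W\to S$, the clopen nature of henselian components of $W\times Y$, and the $G$-local henselian condition on $Y$ (which guarantees $Y/G$ has no non-trivial clopens). Everything else is formal unpacking of the sheaf property, of the definition of $(F,pt)^W_1$ in the open case, and of the fact that the pushout $F^W \cup_{(F,pt)^W_1} pt$ is computed at the point $Y$ by the pushout of pointed sets on sections over $Y$.
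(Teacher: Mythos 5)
The paper offers no argument for this lemma (it is stated as an immediate consequence of the definition of $F^{\wedge W}$), so there is no written proof to compare against; what matters is whether your unwinding is sound. Most of it is, but your observation (i) contains a genuine error. It is not true that an open $G$-stable subset of $W\times Y$ must be a union of the pieces $(W\times Y)_i$: the components of $W\times Y$ are local henselian schemes, and a local henselian scheme of positive dimension has many proper non-empty open subschemes (e.g.\ the complement of its closed point), so ``the components are clopen'' does not yield ``open subsets are unions of components''. In particular the intermediate equivalence ``$U(s)\sr Y$ surjective iff $U(s)\neq\emptyset$'' in your chain is false. The equivalence you actually need --- $U(s)\sr Y$ surjective iff $(W\times Y)_i\subseteq U(s)$ for some $i$ --- is still true, but by a different mechanism: surjectivity forces $U(s)$ to meet the fibre of the finite map $W\times Y\sr Y$ over a closed point of $Y$; that fibre consists of closed points of $W\times Y$, i.e.\ closed points of the local henselian components; an open subset of a local scheme containing the closed point is the whole scheme; and $G$-stability then promotes that component to a whole piece $(W\times Y)_i$. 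Your observation (ii), where the $G$-locality of $Y$ (connectedness of the local scheme $Y/G$) really enters, is correct.

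A second, smaller issue is scope. You establish the identification of $(F,pt)^W_1(Y)$ with the fat wedge only when $pt\sr F$ is open, whereas the lemma is stated for an arbitrary pointed sheaf and is invoked in the proof of \ref{new5} for arbitrary local equivalences $f:F\sr H$; in that generality $(F,pt)^W_1$ is given by the second, non-open definition (the condition quantified over all non-empty $X'\sr X$). You should either verify that this general definition also produces the fat wedge on stalks at a $G$-local henselian $Y$, or justify why the open case suffices for the applications you have in mind (for instance because the sheaves actually fed into the lemma, such as the $R(F)_n$, have open base point). As written, the proof does not cover the statement as it stands.
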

\begin{proposition}
\llabel{new5}
The functor $F^{\wedge W}$ respects local and $\af$-equivalences.
\end{proposition}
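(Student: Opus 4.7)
My plan is to handle the two statements separately, relying throughout on the stalkwise formula of Lemma \ref{new7}.

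For local equivalences, I would reduce via the fiber functors of Proposition \ref{prop9.star} to verifying that $F^{\wedge W}(Y) \to F'^{\wedge W}(Y)$ is a weak equivalence of pointed simplicial sets for every $G$-local henselian $Y$. By Lemma \ref{new7} this map is
$$\bigwedge_i F((W \times Y)_i) \,\longrightarrow\, \bigwedge_i F'((W \times Y)_i),$$
in which each $(W \times Y)_i$ is again $G$-local henselian. A local equivalence $f$ restricts to a weak equivalence on each factor, and smash product of pointed simplicial sets preserves weak equivalences (every pointed simplicial set being cofibrant), so the conclusion follows.

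For $\af$-equivalences I would imitate the strategy of Proposition \ref{new4}. The resolution $R(F) \to F$ of Construction \ref{15.4} is a local equivalence, hence $R(F)^{\wedge W} \to F^{\wedge W}$ is one by the first part, and by two-out-of-three it suffices to show that $R(f)^{\wedge W}$ is an $\af$-equivalence whenever $f$ is. Combining Lemma \ref{new1} and \ref{new2}, $R(f)$ lies in the $\bdl$-closure of the generators $(K_Q)_+ \to X_+$ for upper distinguished $Q$ and $p_+ : (X \times \af)_+ \to X_+$. The first are local equivalences, already handled. For the second, one checks that $((h_X)_+, pt)^W_1$ reduces to the basepoint, so by Remark \ref{rm11.2} the map $(-)^{\wedge W}$ applied to $p_+$ becomes the projection $(h_{X^W \times \af^W})_+ \to (h_{X^W})_+$, which is an $\af$-equivalence because scaling by $\mathbb{G}_m$ is $G$-equivariant on $\af^W$ and contracts it onto its zero section.

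The principal obstacle is to transfer the $\bdl$-closure structure through the functor $(-)^{\wedge W}$. The stalkwise formula immediately handles filtered colimits and bisimplicial diagonals, since smash product of pointed simplicial sets commutes with both. Arbitrary coproducts of morphisms pose the essential difficulty: stalkwise one has $\bigwedge_i \bigvee_\alpha A^{(\alpha)}_i \neq \bigvee_\alpha \bigwedge_i A^{(\alpha)}_i$, and the extra mixed cross-terms that arise must be shown to contribute only $\af$-equivalences. These cross-terms are smash products of representable pieces $(h_{U_i})_+$ which can be analyzed via Proposition \ref{11.2} on $C \times C'$-solidity together with the representable computation of the previous paragraph; making this cross-term analysis rigorous is the main technical step.
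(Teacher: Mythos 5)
Your treatment of local equivalences is exactly the paper's: reduce to $G$-local henselian points, apply Lemma \ref{new7}, and use that a smash product of weak equivalences of pointed simplicial sets is a weak equivalence. That half is fine.

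For $\af$-equivalences there is a genuine gap, and you have in fact put your finger on it yourself: the route through Lemma \ref{new1} and the $\bdl$-closure requires the class $\{f : f^{\wedge W}\mbox{ is an }\af\mbox{-equivalence}\}$ to be stable under \emph{arbitrary coproducts}, and $(-)^{\wedge W}$ does not commute with coproducts --- stalkwise one gets $\bigwedge_i\bigvee_\alpha$ rather than $\bigvee_\alpha\bigwedge_i$. You defer the resulting ``cross-term analysis'' to Proposition \ref{11.2} and the representable computation, but \ref{11.2} is a statement about solidity of smash products, not about $\af$-equivalences, and no argument is actually given that the mixed terms contribute only $\af$-equivalences; as written this is an acknowledged but unresolved obstruction, i.e.\ the proof is incomplete precisely at its load-bearing step. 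The paper avoids the problem entirely by using Theorem \ref{conv} instead of Theorem \ref{postponed}: after replacing $f$ by $R(f)$ (legitimate by the local-equivalence half and two-out-of-three), one characterizes $\af$-equivalences in $\Delta^{op}[QP/G]_+$ as the smallest class containing the $(K_Q)_+\sr X_+$ and all $\af$-homotopy equivalences, closed under two-out-of-three, \emph{filtering colimits of termwise coprojections}, and diagonals of bisimplicial objects --- no coproduct condition appears. Each of these is respected by $(-)^{\wedge W}$: the generators $(K_Q)_+\sr X_+$ are local equivalences (part one), the functor commutes with filtering colimits and with diagonals since it is applied termwise, and it takes $\af$-homotopy equivalences to $\af$-homotopy equivalences via the natural map $F^{\wedge W}\wedge (h_{\af})_+\sr (F\wedge (h_{\af})_+)^{\wedge W}$, which converts an $\af$-homotopy for $f$ into one for $f^{\wedge W}$. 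If you want to salvage your argument, switch from the $\bdl$-closure characterization to the one in Theorem \ref{conv}; otherwise you must genuinely prove the coproduct stability, which is exactly what the paper's formulation of \ref{conv} was designed to make unnecessary. (A minor further point: $((h_X)_+,pt)^W_1$ is not the basepoint --- it is the subsheaf of sections hitting the basepoint on at least one fiber component --- though your conclusion that $((h_X)_+)^{\wedge W}\cong (h_{X^W})_+$ is still correct because $\bigwedge_i (A_i)_+=(\prod_i A_i)_+$.)
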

\begin{proof}
Let $f:F\sr H$ be a local equivalence. To check that $F^{W}\sr
H^{W}$ is a local equivalence it is enough to show that for any
$G$-local henselian $Y$, the map 
$$F^{W}(Y)=F(Y\times W)\sr H(Y\times W)=H^{W}(Y)$$
is a weak equivalence of simplicial sets. This follows from Lemma
\ref{new7}.

Let $f$ be an $\af$-equivalence. By the first part it is sufficient to
show that $R(f)^W:R(F)^W\sr R(H)^W$  is an $\af$-equivalence. We sue
the characterization of $\af$-equivalences given in Theorem
\ref{conv}. Since $F\mapsto F^{\wedge W}$ commutes with filtering
colimits and preserves local equivalences it suffices to check that it
takes $\af$-homotopy equivalences to $\af$-homotopy equivalences. This
is seen using the natural map 
$$F^{\wedge W}\wedge (h_{\af})_+\sr (F\wedge (h_{\af})_+)^{\wedge W}.$$
\end{proof}

\bibliography{alggeom}
\bibliographystyle{plain}
\end{document}